\documentclass{article}
\usepackage{graphicx}
\usepackage[left=3cm, top=2cm, right=3cm, bottom=3cm]{geometry}
\usepackage{listings}
\usepackage{float}
\usepackage{amssymb,amsmath,mathtools}
\usepackage{enumerate}
\usepackage[numbers,square,sort]{natbib}
\usepackage{wrapfig}
\usepackage{framed}
\usepackage{tabstackengine}
\usepackage{braket}
\usepackage[nottoc]{tocbibind}\settocbibname{References}
\usepackage{mathrsfs}

\usepackage{indentfirst}
\usepackage{setspace}
\onehalfspacing
\usepackage{bm}
\usepackage[dvipsnames]{xcolor}

\usepackage{tikz}
\tikzset{every loop/.style={}}
\usetikzlibrary{arrows,shapes,decorations.markings,automata,backgrounds,petri,bending,calc}

\usepackage{fancyhdr}
\fancypagestyle{firststyle}
{
	
	\fancyfoot[l]{\footnotesize\textit{Date:} \today}
	\fancyfoot[c]{1}

}

\usepackage{tikz-cd} 
\tikzset{
    labl/.style={anchor=south, rotate=90, inner sep=.5mm}
}

\usepackage{enumitem}
\setlist[enumerate,1]{label=(\arabic*)}
\newlist{steplist}{enumerate}{1}
\setlist[steplist]{label={Step \arabic*:}, ref={Step \arabic*}}

\usepackage[utf8]{inputenc}
\usepackage[english]{babel} 

\usepackage{amsthm}
\newtheorem{thm}{Theorem}[section]
\newtheorem{lem}[thm]{Lemma}
\newtheorem{prop}[thm]{Proposition}
\newtheorem{cor}[thm]{Corollary}
\numberwithin{equation}{section}
\newtheorem{con}[thm]{Conjecture}


\theoremstyle{definition}
\newtheorem{defn}[thm]{Definition} 
\newtheorem{remk}[thm]{Remark}
\newtheorem{nota}[thm]{Notation}
\newtheorem{cons}[thm]{Construction}

\newenvironment{claim}[1]{\par\noindent\underline{Claim:}\space#1}{}
\newenvironment{claimproof}[1]{\par\noindent\underline{Proof:}\space#1}{\leavevmode\unskip\penalty9999 \hbox{}\nobreak\hfill\quad\hbox{$\blacksquare$}}

\newcommand{\sfC}{\textsf{C}}
\newcommand{\link}{\operatorname{link}}

\newcommand{\LCM}{\operatorname{LCM}}
\newcommand{\calP}{\mathcal{P}}

\newcommand{\calI}{\mathcal{I}}

\newcommand{\WProj}{\operatorname{WProj}}
\newcommand{\calB}{\mathcal{B}}

\newcommand{\wt}{\widetilde}

\newcommand{\N}{\mathbb{N}}

\newcommand{\Z}{\mathbb{Z}}

\newcommand{\acts}{\curvearrowright}

\makeatletter
\newsavebox{\@brx}
\newcommand{\llangle}[1][]{\savebox{\@brx}{\(\m@th{#1\langle}\)}%
	\mathopen{\copy\@brx\kern-0.5\wd\@brx\usebox{\@brx}}}
\newcommand{\rrangle}[1][]{\savebox{\@brx}{\(\m@th{#1\rangle}\)}%
	\mathclose{\copy\@brx\kern-0.5\wd\@brx\usebox{\@brx}}}
\makeatother

\begin{document}
\begin{center}
{\LARGE\bf
Imitator homomorphisms for special cube complexes}\\
\bigskip
\bigskip
{\large Sam Shepherd}
\end{center}
\bigskip

\begin{abstract}
Central to the theory of special cube complexes is Haglund and Wise's construction of the canonical completion and retraction, which enables one to build finite covers of special cube complexes in a highly controlled manner. In this paper we give a new interpretation of this construction using what we call imitator homomorphisms. This provides fresh insight into the construction and enables us to prove various new results about finite covers of special cube complexes -- most of which generalise existing theorems of Haglund--Wise to the non-hyperbolic setting. In particular, we prove a convex version of omnipotence for virtually special cubulated groups.
\end{abstract}
\bigskip
\tableofcontents

\bigskip
\section{Introduction}

Gromov's definition of CAT(0) cube complex \cite{Gromov87} laid the foundation for an active subfield of Geometric Group Theory, driven by Wise, Sageev and others, in which groups are studied via their actions on such cube complexes. 
Many groups have been cubulated (sometimes referred to as cocompactly cubulated), including small cancellation groups, finite-volume hyperbolic 3-manifold groups, limit groups, many Coxeter groups, and hyperbolic free-by-cyclic groups -- see for example \cite{WiseRiches}.
Consequences of cubulating a group include that the group is CAT(0), bi-automatic \cite{Swiatkowski06} and that it satisfies the Tits Alternative \cite{SageevWise05}. Haglund and Wise introduced an important subclass of cube complexes, called special cube complexes, which grant groups even stronger properties \cite{HaglundWise08}. Any group which is virtually the fundamental group of a finite special cube complex, referred to as a \emph{virtually special group}, is residually finite, $\Z$-linear, conjugacy separable and has many separable subgroups \cite{HaglundWise08,Minasyan12}. Agol proved that hyperbolic cubulated groups are virtually special, and used this to resolve the long-standing Virtual Haken Conjecture in 3-manifold theory \cite{Agol13}. It also showed that many groups, including small cancellation groups and closed hyperbolic 3-manifold groups, satisfy the strong properties granted by special cube complexes.
In fact, being hyperbolic and cubulated is equivalent to being hyperbolic and virtually special \cite{BergeronWise12}.
 Many of the greatest achievements in the theory of virtually special groups have thus far been restricted to the (relatively) hyperbolic setting, for example Wise's Quasi Convex Hierarchy Theorem and Malnormal Special Quotient Theorem \cite{WiseQCH,WiseRiches,AgolGrovesManning16}.

At the heart of Haglund and Wise's theory of special cube complexes is the construction of the canonical completion and retraction for a local isometry $\phi:Y\to X$ of (directly) special cube complexes \cite{HaglundWise08}, which enables one to build finite covers of special cube complexes in a highly controlled manner (see Section \ref{subsec:directlyspecial} for the difference between special and directly special cube complexes). In this paper we give a new interpretation of this construction: we imagine two people wandering around in these cube complexes, the \emph{walker} wanders around $X$ while the \emph{imitator} wanders around $Y$, and the imitator tries to copy the walker by crossing over the same hyperplanes (pulled back along $\phi$).
The \emph{imitator homomorphism of $\phi$}, which maps from a certain subgroup of $\pi_1 X$ into $\pi_1 Y$, is defined by inputting the movements of the walker and outputting the movements of the imitator.
The precise descriptions are given in Section \ref{sec:hom}. Our interpretation not only provides greater insight into the Haglund--Wise construction, but also facilitates new applications. We use these constructions to prove various results about virtually special cube complexes and virtually special groups, most of which generalise existing results of Haglund and Wise to the non-hyperbolic setting. We now describe our main results. First we need a definition.

\begin{defn}(Commanding group elements)\\\label{defn:commandingelts}
	A group $G$ \emph{commands} a set of elements $\{g_1,...,g_n\}\subset G$ if there exists an integer $N>0$ such that for any integers $r_1,...,r_n>0$ there exists a homomorphism to a finite group $G\to\bar{G},g\mapsto\bar{g}$ such that the order of $\bar{g}_i$ is $Nr_i$. If this can always be done with $\langle\bar{g}_i\rangle\cap\langle\bar{g}_j\rangle=\{1\}$ for all $i\neq j$ then we say that $G$ \emph{strongly commands} $\{g_1,...,g_n\}$.
\end{defn}

Clearly a group can't command arbitrary sets of elements, for instance conjugate elements must have the same order in finite quotients. The best one can hope for is that a group commands any \emph{independent} set of elements $\{g_1,...,g_n\}$ (meaning the $g_i$ have infinite order and no non-zero power of $g_i$ is conjugate to a non-zero power of $g_j$ for $i\neq j$) -- such a group is called \emph{omnipotent}.
Omnipotence was defined by Wise in \cite{Wise00}, where he proved that free groups are omnipotent. Subsequently omnipotence has been proven for surface groups \cite{Bajpai07}, Fuchsian groups \cite{Wilton10}, and virtually special hyperbolic groups \cite{WiseQCH}. This third example naturally leads to the question of whether all virtually special groups are omnipotent. This however is far from true, for example $\Z^2$ does not command the set of elements $\{(1,0),(0,1),(1,1)\}$ (note this set is independent but not linearly independent). Our big insight is that virtually special groups do command independent sets of elements if we add the additional assumption that the elements are \emph{convex} with respect to the given cubulation (see Definition \ref{defn:convexsubgroups}). For the standard cubulation of $\Z^2$ by a square tiling of the plane, $(1,0)$ and $(0,1)$ are convex but not $(1,1)$.
In general, convexity of a group element depends on the choice of cubulation, however it follows from the Cubical Flat Torus Theorem \cite{WiseWoodhouse17} that an infinite order element will be convex with respect to every cubulation if it has no non-trivial power contained in a $\Z^2$ subgroup.
The following theorem is proved in Section \ref{subsec:thmcommand1}. See Section \ref{subsec:cubulated} for the precise definitions involved. As an application, we use it to deduce that right-angled Artin groups command random sets of elements (Theorem \ref{thm:RAAG}).

\begin{thm}\label{thm:command1}
	Every virtually special cubulated group $G\acts X$ strongly commands every independent set of convex elements.
\end{thm}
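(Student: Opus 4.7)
The plan is to use the imitator homomorphism machinery to build a finite quotient in which the orders of the $\bar g_i$ are prescribed. After replacing $G$ by a finite-index subgroup, we may assume the action is special, so $X/G$ is a compact special cube complex with $\pi_1(X/G)=G$. For each convex element $g_i$, the combinatorial convex hull $A_i$ of its axis is a $\langle g_i\rangle$-invariant convex subcomplex on which $\langle g_i\rangle$ acts cocompactly, and the quotient $Y_i:=A_i/\langle g_i\rangle$ yields a compact local isometry $\phi_i\colon Y_i\to X/G$ with $\pi_1(Y_i)=\langle g_i\rangle\cong\Z$.

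Applying the imitator homomorphism of $\phi_i$ from Section~\ref{sec:hom} gives $\theta_i\colon H_i\to\langle g_i\rangle$ on a finite-index subgroup $H_i\le G$ which contains $\langle g_i\rangle$ and on which $\theta_i$ restricts to the identity. On the common domain $H:=\bigcap_i H_i$, the product $\Theta:=(\theta_1,\ldots,\theta_n)\colon H\to\Z^n$ composed with $\Z^n\to\prod_i\Z/Nr_i\Z$ provides a finite quotient of $H$ in which the image of $g_i$ in the $i$-th factor has order $Nr_i$; passing to a normal core then gives a finite quotient of $G$ itself in which $\bar g_i$ has order exactly $Nr_i$, provided a sufficiently large uniform multiplier $N$ (depending only on $G\acts X$ and the $g_i$, not on the $r_i$) is chosen. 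For the strong commanding clause $\langle\bar g_i\rangle\cap\langle\bar g_j\rangle=\{1\}$, I appeal to separability of convex subgroups and of their double cosets in virtually special groups (Haglund--Wise; Minasyan): independence rules out any equality $g_i^a=hg_j^bh^{-1}$ with $1\le a<Nr_i$, $1\le b<Nr_j$, $i\neq j$ in $G$, so each of these finitely many obstructions can be separated in a further finite quotient, and a common refinement preserves the prescribed orders while achieving the intersection condition.

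The main obstacle I anticipate is controlling the ``cross'' values $\theta_j(g_i)$ for $j\neq i$: for the order of the image of $g_i$ in $\prod_k\Z/Nr_k\Z$ to equal exactly $Nr_i$, these cross values must contribute at most a divisor of $Nr_i$, uniformly in all the $r_k$. This is where independence and convexity must intertwine most delicately. Via a walker/imitator argument one must rule out that the axis of $g_i$ produces large, $r$-dependent motion in $Y_j$; one expects the cross values to be uniformly bounded (and to vanish outright in sufficiently generic cases, as happens already for the standard cubulation of $\Z^n$ with coordinate generators), so that a single multiplier $N$ absorbs all of them. A secondary bookkeeping task is to verify that the passage from a quotient of $H$ to a normal-core quotient of $G$ does not worsen the orders beyond what $N$ can absorb, which should follow from standard arguments after an initial replacement of $H_i$ by a normal refinement.
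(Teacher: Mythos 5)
Your outline follows the same architecture as the paper (imitator retractions onto the cyclic convex subgroups, the product map to $\Z^n$, composition with congruence quotients, then a normal core), but the step you flag as ``the main obstacle'' is in fact the missing theorem, and your proposed resolution of it does not work. Uniform boundedness of the cross values $\theta_j(g_i)$, $j\neq i$, is not enough: if $\theta_j(g_i)=g_j^{c}$ with $c\neq 0$, then the image of $g_i$ in the $j$-th factor $\Z/Nr_j\Z$ has order $Nr_j/\gcd(Nr_j,c)$, and choosing $r_j$ a large prime coprime to $Nr_ic$ forces the order of $\bar g_i$ to be divisible by $r_j$, so no choice of the multiplier $N$ (which must be fixed before the $r_k$ are given) can absorb a nonzero cross value. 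The cross values must actually vanish -- and not just on $g_i$ itself but on all $G$-conjugates of $\langle g_i\rangle$ intersected with the finite-index subgroup, because the normal-core quotient sees every conjugate. This is precisely what Theorem \ref{thm:rhoi} provides ($\rho_i(g\langle g_j\rangle g^{-1}\cap\dot G)=\{1\}$ for $g\in G$, $i\neq j$), and it is not a formal consequence of the single-complex imitator construction: it requires first passing to a cover in which every elevation of $Y_j$ has trivial wall projection onto every elevation of $Y_i$ (Theorem \ref{thm:trivialwallproj1}), which in turn rests on triple coset separability (Proposition \ref{prop:triplecoset}). None of this is supplied or replaced by anything in your sketch. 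Relatedly, the ``secondary bookkeeping'' of the normal core is not routine: $\rho_i$ evaluated on the various $\dot G$-conjugacy representatives of $g_i^{M_i}$ can give powers $A_{ik}$ different from $M_i$ (possibly zero), and the exact-order computation in $G/G'$ is an LCM over all of these; this is what Lemma \ref{lem:commandfromsubgp} and the choice of $N=\prod_i M_i\prod_{k\in K_i}\dot N_{ik}$ are for.

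The strong-commanding step also has a genuine flaw. You propose to separate the finitely many relations $g_i^a=hg_j^bh^{-1}$ in an auxiliary finite quotient and pass to a common refinement, but refining a quotient can only increase element orders: the order of $g_i$ in the refinement is the least common multiple of its orders in the two factors, and you have no control over its order in the auxiliary quotient (which moreover depends on the $r_k$, so it cannot be absorbed into $N$). Hence the refinement generally destroys the requirement that $\bar g_i$ have order exactly $Nr_i$. The paper avoids this entirely: once the cross values vanish, the images of the $\langle g_i^{M_i}\rangle$ land in distinct coordinate factors of $\prod_i\Z/B_i\dot r_i\Z$, so the trivial-intersection condition holds in the very same quotient that prescribes the orders, and the second half of Lemma \ref{lem:commandfromsubgp} transfers it back to $G$.
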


We say that a group is \emph{strongly omnipotent} if it strongly commands any independent set of elements. This notion was considered by Bridson and Wilton in \cite{BridsonWilton15}, where they proved that virtually free groups are strongly omnipotent -- this was a key step in their proof of the undecidability of the triviality problem for profinite completions. As a corollary of Theorem \ref{thm:command1}, we obtain strong omnipotence for virtually special hyperbolic groups (the hyperbolicity ensures that all infinite order elements are convex with respect to any cubulation \cite[Proposition 7.2]{HaglundWise08}), thus generalising the theorems of Wise and Bridson--Wilton. Note that the proof of Theorem \ref{thm:command1} does not rely on any previous omnipotence results, so can also be viewed as a new proof for these results.

\begin{cor}
All virtually special hyperbolic groups are strongly omnipotent.
\end{cor}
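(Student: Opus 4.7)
The plan is to derive the corollary as an immediate consequence of Theorem \ref{thm:command1}, once the convexity hypothesis is shown to be automatic in the hyperbolic case. Let $G$ be a virtually special hyperbolic group. By definition $G$ admits a proper cocompact action $G\acts X$ on a CAT(0) cube complex $X$ whose quotient is virtually special, so Theorem \ref{thm:command1} applies to this action. Let $\{g_1,\ldots,g_n\}\subset G$ be an arbitrary independent set; unpacking the definition, each $g_i$ has infinite order.

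The key input is the hyperbolicity of $G$: as noted in the paragraph preceding the corollary, \cite[Proposition 7.2]{HaglundWise08} says that when $G\acts X$ is a cubulation of a hyperbolic group, every infinite-order element of $G$ is convex with respect to this cubulation. Applying this to each $g_i$ shows that $\{g_1,\ldots,g_n\}$ is an independent set of \emph{convex} elements for $G\acts X$.

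Theorem \ref{thm:command1} then implies that $G$ strongly commands $\{g_1,\ldots,g_n\}$. Since the independent set was arbitrary, $G$ is strongly omnipotent by definition. There is essentially no obstacle here: the corollary is a one-line deduction from Theorem \ref{thm:command1} together with the cited fact that hyperbolicity upgrades every infinite-order element to a convex one, which is exactly the parenthetical observation flagged just before the corollary statement.
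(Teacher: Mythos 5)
Your proof is correct and is essentially the paper's own argument: the corollary is deduced from Theorem \ref{thm:command1} precisely via the parenthetical observation that hyperbolicity makes every infinite-order element convex with respect to any cubulation (\cite[Proposition 7.2]{HaglundWise08}). One small imprecision: a virtually special group does not ``by definition'' act geometrically on a CAT(0) cube complex (the definition only supplies a finite-index subgroup that is the fundamental group of a finite special cube complex), so to get a virtually special cubulation of $G$ itself you should invoke the equivalence, quoted in the introduction from Bergeron--Wise together with Agol's theorem, that a hyperbolic group is virtually special if and only if it is cubulated, any such cubulation being virtually special.
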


The notion of commanding a collection of elements extends naturally to a notion of commanding a collection of subgroups (Definition \ref{defn:commandingsubgroups}). This provides a powerful tool for constructing finite covers of graphs of groups if the vertex groups command their incident edge groups (Proposition \ref{prop:GoG}).
Although the terminology of commanding is new to this paper, there are similar ideas in the literature which have been used to study graphs of groups, which we discuss in Section \ref{subsec:GoG}.
In particular, we have the following deep theorem as a consequence of Wise's Malnormal Special Quotient Theorem.
This theorem was explained to the author by Woodhouse before the author formulated the notion of commanding.
Note that a subgroup of a hyperbolic cubulated group $G\acts X$ is convex if and only if it is quasiconvex \cite[Proposition 7.2]{HaglundWise08}.

\begin{thm}\label{thm:commandqc}
Every virtually special hyperbolic group commands every almost malnormal collection of quasiconvex subgroups.
\end{thm}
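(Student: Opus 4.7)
The plan is to deduce Theorem~\ref{thm:commandqc} from Wise's Malnormal Special Quotient Theorem (MSQT) together with the residual finiteness of virtually special hyperbolic groups. Let $G$ be virtually special and hyperbolic, and let $\{H_1,\ldots,H_n\}$ be an almost malnormal collection of quasiconvex subgroups. Reading Definition~\ref{defn:commandingsubgroups} by analogy with Definition~\ref{defn:commandingelts}, the content to be proved is that there exist finite-index subgroups $K_i\le H_i$ such that, for any further finite-index normal subgroups $M_i\triangleleft H_i$ with $M_i\le K_i$, there is a homomorphism $\rho\colon G\to\bar G$ to a finite group satisfying $\ker(\rho)\cap H_i=M_i$ for every $i$.

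The MSQT, in its form for almost malnormal quasiconvex collections inside virtually special hyperbolic groups (see \cite{WiseQCH, AgolGrovesManning16}), furnishes finite-index subgroups $K_i\le H_i$ with the following property: for any choice of further finite-index normal subgroups $M_i\triangleleft H_i$ contained in $K_i$, the quotient
\[
Q:=G/\llangle M_1\cup\cdots\cup M_n\rrangle
\]
is again hyperbolic and virtually special, and each composition $H_i\hookrightarrow G\twoheadrightarrow Q$ has kernel exactly $M_i$, so $H_i/M_i$ embeds as a finite subgroup of $Q$. I would take these $K_i$ as the witnesses of commanding. Given admissible $M_i$ (replacing each by its normal core in $H_i$, and shrinking further to stay inside $K_i$ if necessary, reduces to the case where the $M_i$ are normal in $H_i$), form $Q$. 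Since $Q$ is virtually special it is residually finite, so one can find a finite quotient $\pi\colon Q\twoheadrightarrow\bar G$ that is injective on each of the finitely many finite subgroups $H_i/M_i\le Q$. The composition $\rho$ of $G\twoheadrightarrow Q$ with $\pi$ then satisfies $\ker(\rho)\cap H_i=M_i$ for every $i$, as required.

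The main obstacle is invoking the appropriate form of MSQT, since the cleanest statements in the literature are for strictly malnormal rather than almost malnormal collections. This is standardly circumvented by first passing to a torsion-free finite-index subgroup $G_0\le G$ -- which exists because virtually special hyperbolic groups are virtually torsion-free -- so that the finite intersections of conjugates allowed by the almost malnormal hypothesis collapse to the trivial subgroup, promoting the induced collection $\{H_i\cap G_0\}$ (together with additional conjugacy representatives) to an honestly malnormal quasiconvex collection in $G_0$. One then applies MSQT inside $G_0$ to obtain the desired quotients of $G_0$, and bootstraps back to finite quotients of $G$ using the separability of $G_0$ in $G$ and the residual finiteness of the virtually special hyperbolic quotients produced. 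Once this reduction is in place the remainder of the argument is essentially formal, packaging MSQT plus residual finiteness into the commanding framework.
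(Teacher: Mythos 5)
Your overall architecture is close to the paper's: use the Malnormal Special Quotient Theorem to get virtually special hyperbolic quotients $Q=G/\llangle M_1,\dots,M_n\rrangle$, and then use residual finiteness of $Q$ (the paper uses a torsion-free finite-index normal subgroup of the quotient, you use a finite quotient injective on the finite subgroups $H_i/M_i$ — these endgames are equivalent and both correct). The genuine gap is the clause you attribute to the MSQT, namely that ``each composition $H_i\hookrightarrow G\twoheadrightarrow Q$ has kernel exactly $M_i$.'' The MSQT as stated in \cite{WiseQCH} (Theorem 12.3) and in \cite{AgolGrovesManning16} asserts only that the quotient is hyperbolic and virtually special; it does not assert $\llangle M_1,\dots,M_n\rrangle\cap H_i=M_i$. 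A priori the normal closure can meet $H_i$ in a strictly larger subgroup, in which case $\ker(\rho)\cap H_i\supsetneq M_i$ and the commanding condition fails for that choice of $M_i$. This is precisely where the paper has to do extra work: $G$ is hyperbolic relative to $\{H_1,\dots,H_n\}$ by almost malnormality and quasiconvexity \cite{Bowditch12}, Osin's Dehn filling theorem \cite[Theorem 1.1(1)]{Osin07} guarantees $\ker(H_i\to Q)=M_i$ \emph{provided} the filling kernels $M_i$ avoid a fixed finite set $\mathcal{F}$ of non-trivial elements, and residual finiteness of $G$ is then used to shrink the subgroups $\dot P_i$ (your $K_i$) so that every admissible choice of $M_i$ automatically misses $\mathcal{F}$. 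Some ``sufficiently deep filling'' argument of this kind is unavoidable; without it your key claim is unsupported.

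Two secondary issues. The obstacle you flag — that MSQT is cleanly stated only for malnormal rather than almost malnormal collections — is not an actual obstacle, since the cited statements already treat almost malnormal quasiconvex collections; and the bootstrap you sketch (pass to a torsion-free finite-index $G_0\le G$, apply MSQT there, return to $G$ via separability) is itself incomplete, because the commanding condition concerns the original subgroups $H_i\le G$ and a normal finite-index $G'\triangleleft G$ with $H_i\cap G'=M_i$, which is not directly delivered by controlling the subgroups $H_i\cap G_0$ inside $G_0$. (The paper does pass to a torsion-free finite-index subgroup, but of the quotient $\bar G$, not of $G$.) Finally, note that you may not ``shrink'' the given $M_i$: commanding requires realizing exactly the prescribed $M_i$ as $H_i\cap\ker\rho$, and these are already assumed normal in $H_i$, so that parenthetical reduction is unnecessary and, taken literally, invalid.
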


We prove this result in Section \ref{subsec:commanddefn}, where we also extend it to the relatively hyperbolic setting (Theorem \ref{thm:commandrel}).
We conjecture that this extends to the non-hyperbolic setting as well.

\begin{con}\label{con:command1}
Every virtually special cubulated group $G\acts X$ commands every almost malnormal collection of convex subgroups.
\end{con}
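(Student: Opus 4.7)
The plan is to generalize the proof of Theorem \ref{thm:commandqc} by replacing the use of Wise's Malnormal Special Quotient Theorem (which is fundamentally hyperbolic, relying on cubical small cancellation over hyperbolic quotients) with the imitator homomorphism machinery developed in this paper. After passing to a finite-index subgroup, I would assume $G$ acts specially on $X$ and that each convex subgroup $H_i$ stabilises a convex subcomplex $Y_i\subset X$, yielding local isometries $\phi_i:Y_i/H_i\to X/G$. The imitator homomorphisms associated to the $\phi_i$ should provide the controlled finite covers needed for a commanding-type conclusion, in analogy with how they are used in the proof of Theorem \ref{thm:command1}.

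Concretely, I would first produce, for each $H_i$, a prescribed finite quotient $H_i\to\bar{H}_i$ and then build a finite quotient $G\to\bar{G}$ extending these quotients compatibly. The canonical completion of $\phi_i$ is a finite cover of $X/G$ in which the elevation corresponding to $H_i$ is a compact cover of $Y_i/H_i$; this already gives a finite quotient of $G$ in which the image of $H_i$ factors through a chosen finite quotient of $H_i$. To glue these for different indices $i$, I would intersect the covers produced by each $\phi_i$ and then refine using Theorem \ref{thm:command1} to separate conjugates: picking an independent set of convex elements inside each $H_i$ (or inside distinct cosets of the $H_i$), strong commanding should make their images in $\bar{G}$ generate subgroups of prescribed orders with trivial pairwise intersection, enforcing $\bar{H}_i\cap g\bar{H}_j g^{-1}=\{1\}$ whenever the corresponding intersection in $G$ is finite.

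The main obstacle will be the non-hyperbolicity of the subgroups $H_i$ themselves. In the hyperbolic case (Theorem \ref{thm:commandqc}), one can freely pass to deep finite quotients of each $H_i$ using the MSQT, and almost malnormality survives in the quotient. In the general case, $H_i$ can contain $\Z^2$ or other flats, and a crude quotient of $H_i$ can inadvertently identify elements coming from conjugate translates of a different $H_j$, destroying the almost malnormality required for commanding. A promising route is to use the Cubical Flat Torus Theorem to decompose the ``flat part'' of each $H_i$ separately from its convex-hyperbolic-like part, handle the flat part using the $\Z^n$ analogue of strong commanding on convex elements (again via Theorem \ref{thm:command1}), and glue using imitator homomorphisms. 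However, even defining what a ``prescribed quotient of $H_i$'' should mean when $H_i$ itself fails to be commandable in the sense of Definition \ref{defn:commandingelts} (as with $\Z^2$ and the element $(1,1)$) is delicate — so the conjecture may require a compatibility hypothesis between the prescribed quotients of $H_i$ and the convex structure of $H_i\acts Y_i$, which I expect to be the subtlest part of any proof.
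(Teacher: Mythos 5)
You should first note that the statement you were asked to prove is stated in the paper as Conjecture \ref{con:command1}: the paper explicitly falls short of proving it and only establishes the weaker Theorems \ref{thm:nonregc} and \ref{thm:weakcommandqc}, so there is no proof of record to compare against, and your proposal is likewise a plan rather than a proof. The concrete gap in your plan is the normality requirement in Definition \ref{defn:commandingsubgroups}: one must produce a finite-index \emph{normal} subgroup $G'\triangleleft G$ with $P_i\cap G'$ \emph{exactly} equal to an arbitrarily prescribed finite-index normal subgroup $P'_i\triangleleft P_i$ (below a fixed $\dot{P}_i$). The imitator/canonical-completion machinery you invoke does exactly what the paper already does: the retraction $\rho_i$ onto $\hat{P}_i$ lets you pull back $P'_i$ and get $P_i\cap G'=P'_i$ for a finite-index but non-normal $G'$ (Theorem \ref{thm:nonregc}); Theorem \ref{thm:rhoi} improves this by making $\rho_i$ kill all conjugates of $P_j$ for $j\neq i$, using the trivial wall projections of Theorem \ref{thm:trivialwallproj1}. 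What is missing — and what the paper identifies as the main hurdle — is control of $\rho_i$ on the conjugates $gP_ig^{-1}$ of $P_i$ \emph{itself} ($g\notin P_i$), equivalently control of wall projections between different elevations of the same $Y_i$, which Theorem \ref{thm:trivialwallproj1} says nothing about. Without that, intersecting conjugates to force normality destroys the exact equality $P_i\cap G'=P'_i$; your step ``intersect the covers produced by each $\phi_i$ and refine using Theorem \ref{thm:command1}'' does not address this, since prescribing orders of finitely many convex cyclic subgroups inside $H_i$ cannot pin $P_i\cap G'$ to an arbitrary prescribed $P'_i$ — that route at best recovers something like Theorem \ref{thm:weakcommandqc}, where the achievable intersections are confined to a fixed descending chain.

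Your final paragraph also misreads the target. Commanding a collection of subgroups does not require each $H_i$ to command its own elements, nor does it involve prescribing element orders inside $H_i$: the prescribed data is just a finite-index normal subgroup $P'_i\triangleleft P_i$ contained in $\dot{P}_i$. So the $\Z^2$-and-$(1,1)$ issue is not an obstruction of the kind you describe (indeed $\Z^2$ commands linearly independent collections of subgroups by Proposition \ref{prop:abeliancommand}), and almost malnormality of the collection is the hypothesis that rules out the flat pathologies; no extra ``compatibility hypothesis'' between quotients of $H_i$ and the cubulation is suggested by the definition. If you want to attack the conjecture, the place to work is precisely the self-interaction of a single $Y_i$: an analogue of Theorem \ref{thm:trivialwallproj1} (or of \cite[Corollary 5.8]{HaglundWise12}) controlling wall projections between distinct elevations of the same subcomplex without hyperbolicity, which is where almost malnormality of $P_i$ in $G$ — rather than just pairwise finite intersections — would have to be used.
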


We fall short of proving Conjecture \ref{con:command1}, although we do prove some weaker versions (Theorems \ref{thm:nonregc} and \ref{thm:weakcommandqc}). We also prove the following theorem in Section \ref{subsec:control}, which takes us most of the way towards proving Theorem \ref{thm:command1}. We note that the homomorphisms $\rho_i$ are restrictions of imitator homomorphisms.

\begin{thm}\label{thm:rhoi}(Independent imitator homomorphisms)\\
	Let $G\acts X$ be a virtually special cubulated group, and let $P_1,...,P_n<G$ be convex subgroups, such that all conjugates of $P_i$ and $P_j$ have finite intersection if $i\neq j$. Then there is a finite-index normal subgroup $\dot{G}\triangleleft G$, and for each $i$ there is a homomorphism $\rho_i:\dot{G}\to P_i$ such that:
	\begin{enumerate}
		\item $\rho_i$ is the identity on $\dot{P}_i:=P_i\cap\dot{G}$.
		\item $\rho_i(gP_j g^{-1}\cap\dot{G})=\{1\}$ for $g\in G$ and $i\neq j$.
	\end{enumerate}
\end{thm}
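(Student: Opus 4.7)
The plan is to construct the $\rho_i$ as restrictions of imitator homomorphisms attached to local isometries $\phi_i : Y_i \to X_G$ coming from the convex subgroups $P_i$. By passing to a finite-index normal subgroup of $G$, we may assume $G$ acts freely and specially on $X$, so that $X_G := G\backslash X$ is a compact special cube complex. Fix a basepoint $p_0 \in X$ and, for each $i$, let $Z_i \subset X$ denote the combinatorial convex hull of $P_i \cdot p_0$. Since $P_i$ is convex, $Z_i$ is $P_i$-cocompact, so $Y_i := P_i \backslash Z_i$ is a compact special cube complex and the induced map $\phi_i : Y_i \to X_G$ is a local isometry with $\pi_1 Y_i \cong P_i$. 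The machinery of Section \ref{sec:hom} then produces, for each $i$, an imitator homomorphism $\iota_i : H_i \to P_i$ on some finite-index subgroup $H_i \leq G$.

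I would then take $\dot{G}$ to be a finite-index normal subgroup of $G$ contained in $\bigcap_i H_i$ and set $\rho_i := \iota_i|_{\dot{G}}$. Property (1) should follow at once from the characteristic ``retraction'' property of the imitator construction: when the walker's loop at $p_0$ lies inside $\phi_i(Y_i)$, the imitator copies it exactly, forcing $\iota_i|_{P_i \cap H_i}$ to be the identity, and hence $\rho_i|_{\dot{P}_i}$ as well.

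The geometric heart of property (2) is that the finite-intersection hypothesis, together with convexity and cocompactness of the actions $P_i \acts Z_i$, forces $Z_i \cap gZ_j$ to be $(P_i \cap gP_jg^{-1})$-cocompact, and hence compact whenever $i \neq j$. A loop in $X_G$ representing $h \in gP_jg^{-1} \cap \dot{G}$ lifts to a path in $X$ which, after homotopy, concentrates its nontrivial hyperplane crossings inside $gZ_j$; only the crossings of hyperplanes lying in the compact overlap $Z_i \cap gZ_j$ can actually contribute to $\iota_i(h)$, while the crossings of the ``connecting'' portion between $p_0$ and $gZ_j$ come in cancelling pairs. This already restricts $\iota_i(gP_jg^{-1} \cap H_i)$ to a finite subset of $P_i$ for each fixed $g$. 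To convert this into the full vanishing statement of (2), one refines $\dot{G}$ further, using separability of finite subsets in the virtually special group $G$, to a finite-index normal subgroup whose image under each $\iota_i$ misses all the nontrivial elements arising in this way.

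I expect the main obstacle to lie in making this last step uniform in $g$: a priori there are infinitely many $P_i$-$P_j$ double cosets with nontrivial overlap $Z_i \cap gZ_j$, and each could contribute a distinct finite subset of $P_i$. The strategy will be to use cocompactness of $G\acts X$ to reduce the collection of overlap configurations to finitely many $G$-orbits, so that the images $\iota_i(gP_jg^{-1}\cap H_i)$ across all $g$ collectively sit in a single finite subset of $P_i$ up to the conjugation action coming from coset representatives; this finite subset can then be killed in a single finite quotient of $G$ via separability, yielding the desired normal subgroup $\dot{G}$.
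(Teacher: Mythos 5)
Your overall frame (convex cores $Y_i\to X_G$, imitator homomorphisms, retraction property for (1), the conjugation structure $\gamma_g*\gamma_j*\gamma_g^{-1}$ passing to the imitator's path) matches the paper's proof of Lemma \ref{lem:prerhoi}, but the geometric heart of property (2) in your proposal does not work as stated. The imitator attached to $Y_i$ moves whenever the walker crosses an edge of the \emph{compact quotient} (or finite cover) that is parallel to an edge of $Y_i$ there; parallelism is measured downstairs, where hyperplanes wrap around, not in the universal cover. So the fact that $Z_i\cap gZ_j$ is compact (which is true, by convex cocompactness and the finite-intersection hypothesis) gives you no control over the imitator while the walker runs around $gY_j$: the hyperplanes dual to $gY_j$ can very well also be dual to many edges of $Y_i$ downstairs even though the corresponding lifts upstairs miss $Z_i$. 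This is exactly the phenomenon of a nontrivial wall projection $\WProj(gY_j\to Y_i)$ (Definition \ref{defn:wallprojection}, Remark \ref{remk:wallprojimitator}), and the paper's proof has to \emph{arrange} its triviality by first passing to a finite regular cover supplied by Theorem \ref{thm:trivialwallproj1} (whose proof in turn rests on triple coset separability, Proposition \ref{prop:triplecoset}, and the bridge/projection machinery). Only after that cover is in place does the walker's loop in $g\hat Y_j$ force the imitator's middle segment to be null-homotopic, giving $\rho_i(gP_jg^{-1}\cap\dot G)=\{1\}$ for all $g$ at once (regularity of the cover handles the uniformity in $g$ that you flag as the obstacle). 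Nothing in your argument substitutes for this step.

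Your proposed repair is also logically off. First, $\iota_i$ restricted to $gP_jg^{-1}\cap\dot G$ is a homomorphism, so its image is a subgroup of $P_i$; in the torsion-free setting a finite image would already be trivial, so if your finiteness claim were correct there would be nothing left to do -- the fact that you feel a further step is needed signals that the finiteness claim is not actually established. Second, shrinking $\dot G$ to a deeper finite-index normal subgroup can never change the value of $\iota_i(h)$ for an element $h$ that survives the shrinking, and $gP_jg^{-1}\cap\dot G'$ remains infinite (finite index in $gP_jg^{-1}$), so ``killing a finite subset in a finite quotient of $G$'' cannot convert a nontrivial image into a trivial one. Separability enters the true proof, but one level down: inside the proof of Theorem \ref{thm:trivialwallproj1}, to build the cover with trivial wall projections, not as a post hoc correction to the imitator homomorphism. (A smaller point: when $G$ has torsion the imitator homomorphisms live on subgroups of a torsion-free finite-index subgroup, and one must account for the $G$-conjugacy class of $P_j$ splitting into several conjugacy classes there, as in the paper's deduction of Theorem \ref{thm:rhoi} from Lemma \ref{lem:prerhoi}; your sketch elides this.)
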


Other than imitator homomorphisms, the main ingredient in the proof of Theorem \ref{thm:rhoi} is the following theorem.

\begin{thm}(Elevating to trivial wall projections)\\\label{thm:trivialwallproj1}
	Let $Y_1,Y_2\to X$ be local isometries of finite virtually special cube complexes, and let $K_1,K_2<\pi_1 X$ be the corresponding subgroups (well-defined up to conjugacy). Suppose that $K_1$ has trivial intersection with every conjugate of $K_2$. Then there is a finite directly special cover $\hat{X}\to X$ such that all elevations of $Y_1$ and $Y_2$ are embedded, and each elevation of $Y_1$ has trivial wall projection onto each elevation of $Y_2$.
\end{thm}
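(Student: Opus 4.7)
The plan proceeds in three stages: reduce to an embedded setup, enumerate the possible obstructions to trivial wall projection in the universal cover, and then apply separability to kill all obstructions.

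For the first stage, I would begin by passing to a finite directly special cover of $X$ (possible by virtual specialness), and then apply the Haglund--Wise canonical completion construction to the local isometry $Y_1 \sqcup Y_2 \to X$ to obtain a further finite directly special cover in which every elevation of each $Y_i$ embeds as a locally convex subcomplex. The trivial-intersection hypothesis on $K_1, K_2$ persists through both steps since the new subgroups are finite-index in the originals. After relabelling, we may assume $X$ is directly special and $Y_1, Y_2 \subset X$ are embedded locally convex subcomplexes corresponding to convex subgroups $K_1, K_2 < \pi_1 X$.

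For the second stage, I would analyse obstructions in the universal cover $\tilde{X}$. Fix lifts $\tilde{Y}_i \subset \tilde{X}$ with $K_i = \operatorname{Stab}(\tilde{Y}_i)$. A hyperplane of a further cover $\hat{X}$ crossing two distinct elevations (one of $Y_1$ and one of $Y_2$) corresponds to a pair $(\tilde{H}, g)$ with $\tilde{H}$ a hyperplane of $\tilde{X}$ crossing both $\tilde{Y}_1$ and $g\tilde{Y}_2$, modulo the natural $(K_1, K_2)$-biaction; each such pair determines a $(K_1, K_2)$-double coset that we call \emph{bad}. The hypothesis $K_1 \cap g K_2 g^{-1} = \{1\}$ implies that for each fixed double coset, the set of witnessing hyperplanes forms a wall projection of $\tilde{Y}_1$ onto $g\tilde{Y}_2$ stabilised only by the trivial group and hence finite; combined with cocompactness of the $K_i$-actions on $\tilde{Y}_i$, this forces the collection of bad double cosets to be finite.

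For the third stage, separability of convex subgroups and their double cosets follows from virtual specialness, typically via the canonical completion machinery itself. Using this, one finds a finite-index normal subgroup $\hat{K} \triangleleft \pi_1 X$, contained in a directly special finite-index subgroup, which avoids each of the finitely many bad double cosets (aside from the trivial one, already handled by the embedding stage). In the corresponding cover $\hat{X}$, the elevations of $Y_1$ and $Y_2$ are embedded and no hyperplane crosses two distinct elevations, as required.

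The main obstacle is the finiteness of bad double cosets in the second stage. In the hyperbolic setting this would follow routinely from quasiconvexity together with the trivial-intersection hypothesis. In the general virtually special cubulated setting, extracting finiteness requires a careful geometric argument on wall projections in CAT(0) cube complexes: the compactness of each individual wall projection follows from the triviality of $K_1 \cap g K_2 g^{-1}$, but bounding the number of contributing double cosets will need the cocompactness of the $K_i$-actions on $\tilde{Y}_i$ together with control on how far translates $g\tilde{Y}_2$ can be from $\tilde{Y}_1$ while still sharing a crossing hyperplane.
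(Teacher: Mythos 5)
There is a genuine gap, and it lies exactly where you flagged it -- but the problem is worse than a missing finiteness argument: the target condition you aim for is not the right one and is in general unattainable. Achieving ``no hyperplane of $\hat{X}$ crosses an elevation of $Y_1$ and an elevation of $Y_2$'' is strictly stronger than trivial wall projection, and it fails in every finite cover as soon as some hyperplane $\widetilde{H}$ of $\widetilde{X}$ crosses both $\widetilde{Y}_1$ and some translate $g\widetilde{Y}_2$: the images of $\widetilde{H}$, $\widetilde{Y}_1$, $g\widetilde{Y}_2$ in any finite cover still exhibit a hyperplane crossing an elevation of each $Y_i$. This is precisely the non-degenerate case of the theorem (if no such $\widetilde{H}$ exists, all wall projections are $0$-dimensional and there is nothing to prove). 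Trivial wall projection only asks that the subcomplex of $\hat{Y}_2$ spanned by edges dual to such common hyperplanes carry no essential loops. Moreover, your finiteness claim in stage two is false: take $X=R\times S^1_a$ with $R$ a wedge of two circles $p,q$, so $\pi_1X=F_2\times\Z$, and let $Y_1=S^1_p\times[0,1]$, $Y_2=S^1_q\times[0,1]$ with the interval wrapping the $a$-circle; then $K_1=\langle p\rangle$, $K_2=\langle q\rangle$ satisfy the hypothesis, yet the $a$-hyperplane $T\times\{\tfrac12\}\subset T\times\R=\widetilde{X}$ crosses $\widetilde{Y}_1$ and $(w,0)\widetilde{Y}_2$ for every $w\in F_2$, giving infinitely many ``bad'' $(K_1,K_2)$-double cosets $\langle p\rangle w\langle q\rangle$. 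The point is that a common hyperplane has a large stabiliser $G_H$, so the set of bad $g$ is controlled by $(G_H,K_2)$-cosets, not by finitely many $(K_1,K_2)$-cosets, and cocompactness of the $K_i$ cannot repair this. Consequently stage three has no finite list of cosets to separate from, and double coset separability alone does not suffice.

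For comparison, the paper's proof accepts that common crossing hyperplanes persist and instead traps the wall projection: the hypothesis that $K_1$ meets every conjugate of $K_2$ trivially is used via Proposition \ref{prop:finiteprojection} (together with direct specialness and finiteness of $Y_1$) to get a uniform bound $L$ on the diameters of the projections $\Pi(\widetilde{Y}_1')$ to $\widetilde{Y}_2$ over \emph{all} elevations $\widetilde{Y}_1'$. Then, working locally around each vertex $\widetilde{x}\in\widetilde{Y}_2$ with the convex $d_\infty$-ball $W(\widetilde{x})$ of radius $L+1$, one chooses for each boundary hyperplane $H$ an inner hyperplane $H'$ separated from $H$ by $L$ disjoint hyperplanes, so that no elevation of $Y_1$ crossing $H'$ can also cross $H$; separability of the \emph{triple} coset $G_H G_{H'} K_1$ (Proposition \ref{prop:triplecoset}, one of the paper's new results) then yields a finite cover in which this disjointness survives, forcing the component of $\WProj_{\hat{X}}(\hat{Y}_1\to\hat{Y}_2)$ at $\hat{x}$ to lie in the embedded, convex, hence simply connected $W_2(\widetilde{x})$. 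If you want to salvage your outline, the fix is to replace ``eliminate all common hyperplanes'' by ``confine each component of the wall projection to a uniformly bounded, embedded convex window,'' and to expect triple, not merely double, coset separability as the separability input.
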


This partially generalises a theorem of Haglund and Wise \cite[Corollary 5.8]{HaglundWise12} to the non-hyperbolic setting. Trivial wall projection is defined in Definition \ref{defn:wallprojection}; it is a concept due to Haglund and Wise that has a natural connection to imitator covers. Roughly speaking, the utility of Theorem \ref{thm:trivialwallproj1} is that it allows us to control the elevations of $Y_1$ and $Y_2$ independently when passing to further finite covers of $\hat{X}$. This was a key ingredient in the proof of Haglund and Wise's combination theorem for special cube complexes \cite{HaglundWise12}. The main reason Theorem \ref{thm:trivialwallproj1} is only a partial generalisation of \cite[Corollary 5.8]{HaglundWise12} is because it does not say anything about wall projections between different elevations of $Y_1$ (or $Y_2$); we believe this is the main hurdle to proving Conjecture \ref{con:command1}, and also to potentially proving a non-hyperbolic combination theorem for special cube complexes.

The proof of Theorem \ref{thm:trivialwallproj1} uses the separability of triple cosets of convex subgroups in virtually special cubulated groups -- another new result of this paper. The separability of convex subgroups themselves is due to \cite[Corollary 7.9]{HaglundWise08}, while the separability of double cosets of convex subgroups is due to \cite[Theorem A.1]{Oregonreyes20}. We give proofs of all three statements in Section \ref{sec:applysingle}, which all make direct use of imitator homomorphisms.

A key criterion used throughout the paper is the exclusion of inter-osculations between a subcomplex and a hyperplane (Definition \ref{defn:complexosculate}). These subcomplex-hyperplane inter-osculations first appeared in \cite[Remark A.9]{Oregonreyes20}, and they generalise the usual hyperplane-hyperplane inter-osculations, which is one of the pathologies excluded in the definition of special cube complex.

In Section \ref{sec:hierarchical} we construct covers using hierarchies of imitators, and prove the following theorem. This generalises \cite[Theorem 4.25]{HaglundWise12} to the non-hyperbolic setting. 

\begin{thm}(Virtually connected intersections)\\\label{thm:connected1}
	For $i=1,...,n$ let $(Y_i,y_i)\to(X,x)$ be based local isometries of finite virtually special cube complexes. Then there is a finite cover $(\dot{X},\dot{x})\to(X,x)$ such that the based elevations $\dot{Y}_i$ of $Y_i$ are embedded in $\dot{X}$ and have connected intersections $\cap_{i\in E}\dot{Y}_i$ for any $\emptyset\neq E\subset\{1,...,n\}$. Moreover, if the $Y_i$ are embedded in $X$ and do not inter-osculate with hyperplanes of $X$, then we may assume that the covering map $\dot{X}\to X$ is injective on $\cap_{i=1}^n \dot{Y}_i$. 
\end{thm}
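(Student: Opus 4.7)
The plan is to proceed in three stages, using the separability results of Section \ref{sec:applysingle} (all established via imitator homomorphisms) to first embed the based elevations, then eliminate the extra components of every intersection $\cap_{i\in E}\dot Y_i$, and finally exploit the no-inter-osculation hypothesis to upgrade connectedness to injectivity.

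For the first stage, since convex subgroups of a virtually special cubulated group are separable, I would pass to a finite cover $(X',x')\to(X,x)$ in which every based elevation $Y_i'$ is embedded; this is achieved by applying separability to each $K_i=\pi_1 Y_i$ and intersecting the finitely many resulting finite-index subgroups. For the second stage, I would enumerate the extra components of every relevant intersection: for each non-empty $E\subseteq\{1,\ldots,n\}$, the intersection $\cap_{i\in E}Y_i'$ has finitely many components by compactness of $X'$, say a basepoint component $C_0^E$ and extra components $C_1^E,\ldots,C_{m_E}^E$. Each extra component $C_k^E$ is witnessed by a bridge element $g_{E,k}\in\pi_1 X'$ that translates the lift $\widetilde{C_0^E}\subset\widetilde{X'}$ to a different connected component of $\cap_{i\in E}\widetilde Y_i^{(1)}$ in $\widetilde{X'}$; such a $g_{E,k}$ sits outside a ``coset-like'' subset of $\pi_1 X'$ built from the $K_i^{(1)}$ (an intersection of double or triple cosets of convex subgroups). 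Separability of such sets, proved in Section \ref{sec:applysingle}, yields a finite-index normal subgroup separating $g_{E,k}$ from this subset. Intersecting over the finitely many pairs $(E,k)$ produces a single finite-index $\dot K\triangleleft\pi_1 X'$, and the corresponding cover $\dot X$ then embeds every $\dot Y_i$ and makes every $\cap_{i\in E}\dot Y_i$ connected.

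For the moreover clause, the assumption that the $Y_i$ are embedded and non-inter-osculating with hyperplanes means that each $Y_i$ behaves in many ways like a union of hyperplanes; in particular, the imitator homomorphisms of Theorems \ref{thm:rhoi} and \ref{thm:trivialwallproj1} can be applied in a refined form to force the $\pi_1 X$-stabilizer of the basepoint component of $\cap_i\widetilde Y_i$ to be contained in $\pi_1\dot X$, which is precisely the condition that $\dot X\to X$ be injective on $\cap_{i=1}^n\dot Y_i$. The main obstacle I anticipate is the second stage: an extra component of a three-way (or higher) intersection is witnessed not by a single bridge element but by a whole combinatorial configuration of lifts in $\widetilde{X'}$, and translating this into membership in a concrete separable subset demands careful bookkeeping. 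In the hyperbolic setting of \cite[Theorem 4.25]{HaglundWise12} one sidesteps this issue using quasiconvex hulls and the malnormal special quotient theorem; here the substitute must be triple-coset separability combined with the trivial wall projection machinery of Theorem \ref{thm:trivialwallproj1}, which together provide the finite quotients needed to handle all subsets $E$ simultaneously.
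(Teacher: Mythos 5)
There is a genuine gap in your second stage. Making $\cap_{i\in E}\dot Y_i$ connected in a finite cover corresponding to $\dot K\triangleleft\pi_1 X'$ is not reducible to separating finitely many ``bridge elements'' from closed coset-like sets: even after every bad double coset $K_1gK_2\neq K_1K_2$ has been excluded from $\dot K$, extra components of $\dot Y_1\cap\dot Y_2$ can still arise from elements $g=k_1k_2\in K_1K_2\cap\dot K$ with $k_1,k_2\notin\dot K$, since then $\widetilde Y_1\cap g\widetilde Y_2=k_1(\widetilde Y_1\cap\widetilde Y_2)$ maps into $\dot Y_1\cap\dot Y_2$ but need not land in the basepoint component. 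What you actually need is a product-compatibility statement of the form $K_1K_2\cap\dot K=(K_1\cap\dot K)(K_2\cap\dot K)$ (and its analogues for longer products when $|E|\geq3$), and separability of subgroups, double cosets or triple cosets does not deliver this. This is precisely what the retraction property of imitator covers is for: already in the two-subcomplex case (Proposition \ref{prop:twoconintersection}) connectedness is obtained not from double coset separability but from the fact that the imitator homomorphism retracts the cover's fundamental group onto $\pi_1 Y_1$, combined with Subcomplex Entrapment (Lemma \ref{lem:subcomplextrap}). For $|E|\geq3$ you concede the bookkeeping yourself, and neither triple coset separability nor Theorem \ref{thm:trivialwallproj1} fills the hole -- note moreover that Theorems \ref{thm:rhoi} and \ref{thm:trivialwallproj1}, invoked for your ``moreover'' clause, assume trivial (or finite) intersections of conjugates of the relevant subgroups, which fails badly here since the $Y_i$ are allowed to intersect; that part of your argument is asserted rather than proved.

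The paper takes a different route that avoids these issues entirely. After using Corollary \ref{cor:embedPi} to reduce to the case where the $Y_i$ are embedded and do not inter-osculate with hyperplanes, it introduces a hierarchy of imitators (Construction \ref{cons:hierarchy}), indexed by finite sequences $(i_0,\dots,i_m)$ in $\{1,\dots,n\}$, each imitator copying the one obtained by deleting its first index, and all starting at $x$. The cover $\dot X$ is the one corresponding to the stabiliser $G_{\theta_x}$ of this constant starting position; it is finite-sheeted because positions are determined by the finitely many sequences with distinct entries (Lemma \ref{lem:nu}), connectedness of every $\cap_{i\in E}\dot Y_i$ follows from Lemmas \ref{lem:splitdelta}--\ref{lem:sameendpt} together with Subcomplex Entrapment, which keeps imitator $(i_0,\dots,i_m)$ inside $\cap_s Y_{i_s}$, and injectivity on $\cap_{i=1}^n\dot Y_i$ is immediate because a loop in that intersection is copied verbatim by every imitator and hence lies in $G_{\theta_x}$. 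If you wish to salvage a purely separability-based proof, the missing ingredient is exactly the product-decomposition property above for all sub-products of the $K_i$, which is not established in the paper and is not known in the non-hyperbolic setting.
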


It is also worth noting that one can obtain explicit bounds for all of our theorems by tracing through the proofs. For example, in Theorem \ref{thm:command1}, if $G$ is the fundamental group of a finite directly special cube complex $X$, and if the set of convex elements correspond to local isometries $Y_1,...,Y_n\to X$, then the integer $N$ from Definition \ref{defn:commandingelts} can be bounded by an explicit formula in terms of the sizes of $X$ and the $Y_i$. Similarly, the size of the finite group $\bar{G}$ in Definition \ref{defn:commandingelts} can be bounded by an explicit formula in terms of the sizes of $X$ and the $Y_i$ and the integers $r_1,...,r_n$ from Definition \ref{defn:commandingelts}. We do not actually compute any such formulae in this paper however.

\textbf{Acknowledgements:}\,
Thanks to Martin Bridson, Dawid Kielak, Mark Hagen, Ashot Minasyan and the anonymous referee for their comments and corrections. I am also grateful for discussions with Daniel Woodhouse, where he explained how Theorems \ref{thm:commandqc}, \ref{thm:commandrel} and \ref{thm:hypRF} can be deduced from the literature.

\bigskip
\section{Preliminaries}\label{sec:prelim}

See \cite{HaglundWise08} for the definitions of cube complex, hyperplane and non-positively curved cube complex. In this section we recall some more specialised definitions regarding cube complexes and specialness. First some notational conventions.

\begin{nota}
	Let $X$ be a cube complex. We let $X^n$ denote its $n$-skeleton. We refer to the 0-cubes, 1-cubes and 2-cubes of $X$ as \emph{vertices}, \emph{edges} and \emph{squares} respectively.
	By a \emph{path} in $X$ we will always mean an edge path, and a \emph{loop} is an edge path that starts and finishes at the same vertex. We let $\link(x)$ denote the \emph{link} of a vertex $x$; this combinatorial complex is the space of directions at $x$, and is a simplicial complex if $X$ is non-positively curved. In this paper all cube complexes and their covers will be connected unless otherwise stated.
\end{nota}

\subsection{Directly special cube complexes}\label{subsec:directlyspecial}

\begin{defn}(Parallelism)\\
	Let $X$ be a cube complex. Two edges $e_1,e_2\in X^1$ are \emph{elementary parallel} if they appear as opposite edges of some square of $X$. The relation of elementary parallelism generates the equivalence relation of \emph{parallelism}. We write $e_1\parallel e_2$ if edges $e_1$ and $e_2$ are parallel. We define $H(e_1)$ to be the hyperplane dual to an edge $e_1$ -- note that $e_1\parallel e_2$ is equivalent to $H(e_1)=H(e_2)$.
\end{defn}

\begin{defn}(Intersecting and osculating hyperplanes)\\
	Let $X$ be a cube complex. Suppose distinct edges $e_1$ and $e_2$ of $X$ are incident at a vertex $x$.
	\begin{enumerate}
		\item If $e_1$ and $e_2$ form the corner of a square at $x$, then we say that the hyperplanes $H(e_1)$ and $H(e_2)$ \emph{intersect at $(x;e_1,e_2)$}. If in addition $H(e_1)=H(e_2)$, then we say that $H(e_1)$ \emph{self-intersects at $(x;e_1,e_2)$}. Note that a pair of hyperplanes intersect (as subsets of $X$) if and only if they are equal or they intersect at some $(x;e_1,e_2)$.
		\item If $e_1$ and $e_2$ do not form the corner of a square at $x$, then we say that the hyperplanes $H(e_1)$ and $H(e_2)$ \emph{osculate at $(x;e_1,e_2)$}. If in addition $H(e_1)=H(e_2)$, then we say that $H(e_1)$ \emph{self-osculates at $(x;e_1,e_2)$}. Alternatively, if $e_1$ has both its ends incident at $x$, then we say that $H(e_1)$ \emph{self-osculates at $(x;e_1)$}. We say that a pair of hyperplanes \emph{osculate} if they osculate at some $(x;e_1,e_2)$. 
		\item We say that distinct hyperplanes $H_1$ and $H_2$ \emph{inter-osculate} if they both intersect and osculate. 
	\end{enumerate} 
We will sometimes just say that a pair of hyperplanes intersect or osculate at a vertex $x$ if we do not wish to specify edges $e_1$ and $e_2$.
The notation from \cite{HaglundWise08} is slightly different from ours as they work with oriented edges and distinguish between direct and indirect self-osculations.
\end{defn}

\begin{defn}(Two-sided hyperplanes)\\\label{defn:twosided}
	A hyperplane $H$ is \emph{two-sided} if the map $H\to X$ extends to a combinatorial map $H\times[-1,1]\to X$ (where we consider $H$ with its induced cube complex structure).
\end{defn}

\begin{defn}(Directly special and virtually special cube complexes)\\
We say that a non-positively curved cube complex $X$ is \emph{directly special} if every hyperplane is two-sided, no hyperplane self-intersects or self-osculates, and no pair of hyperplanes inter-osculate. A non-positively curved cube complex $X$ is \emph{virtually special} if it has a finite-sheeted directly special cover.
\end{defn}

\begin{remk}
	The definition of directly special cube complex appears in \cite{Huang18}; it is slightly stronger than the original notion of specialness \cite{HaglundWise08} as it excludes all types of hyperplane self-osculation (including the existence of edge loops) and it requires hyperplanes to be two-sided. However, among finite non-positively curved cube complexes the notions of special and directly special are equivalent up to finite covers \cite[Proposition 3.10]{HaglundWise08}. Additionally, the cubical subdivision of a special cube complex is directly special.
\end{remk}

\subsection{Local isometries}

\begin{defn}(Local isometries)\\
	A \emph{local isometry} $\phi:Y\to X$ of non-positively curved cube complexes is a combinatorial map such that each induced map $\link(y)\to\link(\phi(y))$ is an embedding with image a full subcomplex of $\link(\phi(y))$ (a subcomplex $C$ of a simplicial complex $D$ is \emph{full} if any simplex of $D$ whose vertices are in $C$ is in fact entirely contained in $C$). A subcomplex $Y\subset X$ is \emph{locally convex} if the inclusion $Y\xhookrightarrow{}X$ is a local isometry.	
	If $X$ is also CAT(0) then we simply say that $Y$ is \emph{convex} (and this is equivalent to $Y$ being convex in the combinatorial or CAT(0) metrics on $X$).
\end{defn}

\begin{remk}
	Suppose $\phi:Y\to X$ is a local isometry of non-positively curved cube complexes which contain no edges that are loops (such as directly special cube complexes), and suppose $f_1,f_2$ are edges incident at $y\in Y^0$. Then $f_1,f_2$ form the corner of a square at $y$ if and only if $\phi(f_1),\phi(f_2)$ form the corner of a square at $\phi(y)$ (this is true without the no-edge-loop assumption if we work with oriented edges).
\end{remk}
\begin{remk}	
	If $\phi:Y\to X$ is a local isometry and $X$ is directly special, then $Y$ is directly special. Covering maps are local isometries, so in particular any cover of a directly special cube complex is directly special.		
\end{remk}

\begin{nota}\label{nota:link}
	If $X$ is a directly special cube complex and $x\in X^0$, then we write $e\in\link(x)$ if $e$ is an edge incident at $x$. This notation makes sense because only one end of $e$ is incident at $x$ ($H(e)$ doesn't self-osculate), and so $e$ defines a unique vertex of $\link(x)$.
\end{nota}

\subsection{More intersections and osculations}

The notion of two hyperplanes in a cube complex $X$ intersecting or osculating generalises to the notion of a hyperplane and a complex $Y\to X$ intersecting or osculating as follows. This generalisation first appeared in \cite[Remark A.9]{Oregonreyes20}.

\begin{defn}(Intersections and osculations of hyperplanes with complexes)\\\label{defn:complexosculate}
	Let $\phi:Y\to X$ be a local isometry of non-positively curved cube complexes. If $H$ is a hyperplane of $Y$, we denote by $\phi[H]$ the unique hyperplane of $X$ that contains the image of $H$ (equivalently, if $H=H(f)$ then $\phi[H]=H(\phi(f))$). Now let $H$ be a hyperplane in $X$.
	\begin{enumerate}
		\item We say that $H$ and $Y$ \emph{intersect} if $H$ intersects $\phi(Y)$, or equivalently if $H=\phi[H']$ for some hyperplane $H'$ in $Y$.
		\item We say that $H$ and $Y$ \emph{osculate at $(y;e)$} if $y\in Y^0$, $e\in X^1$ is an edge incident to $\phi(y)$, $H=H(e)$, and no edge $f\in Y^1$ incident to $y$ has $\phi(f)=e$. 
		\item We say that $H$ and $Y$ \emph{inter-osculate} if they both intersect and osculate.
	\end{enumerate}
	If $Y\subset X$ is a locally convex subcomplex, then we can talk about $Y$ intersecting/osculating/inter-osculating with a hyperplane of $X$ by applying the above definitions to the inclusion $Y\xhookrightarrow{}X$.
\end{defn}

\begin{figure}[H]
	\centering
	\scalebox{0.8}{
	\begin{tikzpicture}[auto,node distance=2cm,
		thick,every node/.style={circle,draw,font=\small},
		every loop/.style={min distance=2cm},
		hull/.style={draw=none},
		]
		\tikzstyle{label}=[draw=none,font=\huge]
		
		\begin{scope}
			\draw (4,2)--(0,2)--(2,4)--(4,2);
			\node[fill] (y) at (4,2){};
			\node[label] (y) at (4.1,1.4) {$y$};
			\node[red,fill] (hyp) at (3,3){};
			\node[label,red] (H') at (3.5,3.5) {$H'$};
			\node[hull] (Y1) at (4.5,3){};
			\node[label] (Y) at (0,5) {$Y$};
			
			\node[label] (phi) at (5.5,3.5) {$\phi$};
		\end{scope}
		
		\begin{scope}[shift={(7,0)}]
			\draw[fill=gray!20] (2,4)--(2,6)--(8,6)--(8,0)--(4,0)--(4,2)--(6,2)--(6,4)
			--(4,4)--(4,2)--(2,4);
			\draw (4,2)--(0,2)--(2,4)--(4,2);
			\draw (4,2)--(4,4)--(2,6)--(2,4);
			\draw (4,4)--(6,4)--(8,6)--(2,6);
			\draw (6,4)--(6,2)--(8,0)--(8,6);
			\draw (6,2)--(4,2)--(4,0)--(8,0);
			
			\draw[red,ultra thick] (3,3)--(3,5)--(7,5)--(7,1)--(4,1);
			\node[label,red] (H) at (5,5.5) {$H$};
			\draw[ultra thick] (4,0)--(4,2);
			\node[label] (e) at (3.7,1) {$e$};
			\node[fill] (x) at (4,2) {};
			\node[label] (X) at (1,5) {$X$};
			\node[hull] (X1) at (-.5,3){};
		\end{scope}
		
		\draw[draw=black,fill=blue,-triangle 90, ultra thick] (Y1) -- (X1);
		
	\end{tikzpicture}
}
\caption{$H=\phi[H']$, so $H$ and $Y$ intersect. $H$ and $Y$ also osculate at $(y;e)$.}
\end{figure}
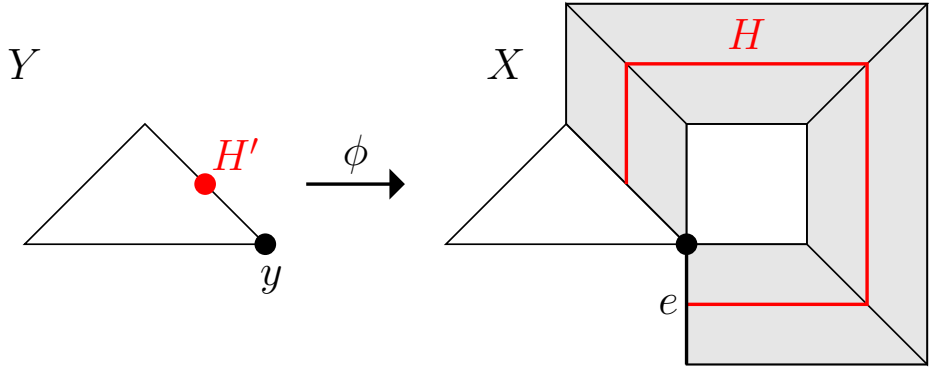

\begin{defn}(Hyperplane carriers)\\
	Let $H$ be a hyperplane in a directly special cube complex $X$. The \emph{carrier of $H$}, denoted $N(H)$, is the smallest subcomplex of $X$ containing $H$. If $X$ is not directly special, one can still define the carrier of $H$ as a certain immersion $N(H)\to X$ (see \cite{WiseRiches}), but this is more delicate and will not concern us.
\end{defn}

\begin{remk}
	Let $H$ be a hyperplane in a directly special cube complex $X$. One can readily verify from the fact that $H$ does not self-intersect or self-osculate that the map $H\times[-1,1]\to X$ from Definition \ref{defn:twosided} is an embedding with image $N(H)$. In particular, the inclusion $H\xhookrightarrow{} N(H)$ is a homotopy equivalence. 
\end{remk}
\begin{remk}\label{remk:CarrierIntersectOsculate}
	In a directly special cube complex $X$, a pair of hyperplanes $H_1$ and $H_2$ intersect (resp. osculate) if and only if $H_1$ and $N(H_2)\to X$ intersect (resp. osculate) -- in fact this remains true in arbitrary cube complexes with the more general definition of hyperplane carrier. Also, the carriers $N(H_1)$ and $N(H_2)$ are disjoint if and only if $H_1$ and $H_2$ neither intersect nor osculate.
\end{remk}

\subsection{Elevations}

\begin{defn}(Elevations)\\\label{defn:elevations}
	Let $\phi:Y\to X$ be a local isometry and $\mu:\hat{X}\to X$ a finite cover. We say that a map $\hat{\phi}:\hat{Y}\to\hat{X}$ is an \emph{elevation} of $\phi$ to $\hat{X}$ if there exists a covering $\nu:\hat{Y}\to Y$ fitting into the commutative diagram
	\begin{equation}\label{elevation}
		\begin{tikzcd}[
			ar symbol/.style = {draw=none,"#1" description,sloped},
			isomorphic/.style = {ar symbol={\cong}},
			equals/.style = {ar symbol={=}},
			subset/.style = {ar symbol={\subset}}
			]
			\hat{Y}\ar{d}{\nu}\ar{r}{\hat{\phi}}&\hat{X}\ar{d}{\mu}\\
			Y\ar{r}{\phi}&X,
		\end{tikzcd}
	\end{equation}
	and such that a path in $\hat{Y}$ closes up as a loop if and only if its projections to $Y$ and $\hat{X}$ both close up as loops. The map $\hat{\phi}$ is necessarily a local isometry. Equivalently, $\hat{Y}$ is a component of the pullback of $\phi$ and $\mu$. If (\ref{elevation}) is a diagram of based spaces, then we call it a \emph{based elevation}.
\end{defn}

\begin{remk}
	Based elevations correspond to intersecting subgroups. Indeed, if (\ref{elevation}) is a diagram of based spaces with respect to basepoints $y\in Y$, $\hat{y}\in\hat{Y}$, $x\in X$ and $\hat{x}\in\hat{X}$, then
	$$(\phi\nu)_*\pi_1(\hat{Y},\hat{y})= \phi_*\pi_1(Y,y)\cap\mu_*\pi_1(\hat{X},\hat{x}).$$
	And choosing a different elevation corresponds to conjugating. Indeed if $\hat{\phi}(\hat{y})\neq \hat{x}$ but instead $\hat{\gamma}$ is a path in $\hat{X}$ from $\hat{x}$ to $\hat{\phi}(\hat{y})$, then
	$$(\phi\nu)_*\pi_1(\hat{Y},\hat{y})=\gamma \phi_*\pi_1(Y,y)\gamma^{-1}\cap\mu_*\pi_1(\hat{X},\hat{x}),$$
	where $\gamma=\mu\hat{\gamma}$.
\end{remk}

\subsection{Cubulated groups and convex subgroups}\label{subsec:cubulated}

\begin{defn}(Cubulated and virtually special groups)\\\label{defn:cubulated}
	A \emph{cubulated group} $G\acts X$ is a finitely generated group $G$ together with a geometric action on a CAT(0) cube complex $X$ by cubical automorphisms. 	
	We say that $G\acts X$ is \emph{virtually special} if there is a finite-index subgroup $\hat{G}<G$ acting freely on $X$ with special quotient $X/\hat{G}$.
\end{defn}

\begin{remk}\label{remk:torsionfree}
In this paper we will usually consider a finite non-positively curved cube complex $X$ with fundamental group $G$, in this case $G\acts\widetilde{X}$ is a cubulated group, where $\widetilde{X}$ is the universal cover of $X$ and $G$ acts on $\widetilde{X}$ by deck transformations. Note that $G$ is torsion-free since any torsion element would act on $\widetilde{X}$ with bounded orbits and fix a point by \cite[II.2.7]{BridsonHaefliger99}.
\end{remk}

\begin{defn}(Convex subgroups and elements)\\\label{defn:convexsubgroups}
Let $G\acts X$ be a cubulated group. A subgroup $K<G$ is \emph{convex} if it stabilises a convex subcomplex $Y\subset X$ with finite quotient $Y/K$ (referred to as a convex-cocompact subgroup by some authors). An element $g\in G$ is \emph{convex} if $\langle g\rangle <G$ is convex.
\end{defn}

\begin{remk}\label{remk:convexsubgp}
If $G=\pi_1(X,x)$ is the fundamental group of a finite non-positively curved cube complex $X$, then $K<G$ being convex is equivalent to $K$ being the image of a homomorphism $\phi_*:\pi_1(Y,y)\xhookrightarrow{}G$, defined by a local isometry $\phi:Y\to X$, with $Y$ finite, and (a homotopy class of) a path $\gamma$ in $X$ from $x$ to $\phi(y)$. We recover the first definition with respect to a based universal cover $(\widetilde{X},\widetilde{x})\to (X,x)$ by letting $\widetilde{\gamma}$ be a lift of $\gamma$ from $\widetilde{x}$ to $\widetilde{y}$, and setting $\widetilde{Y}$ to be the based elevation of $\phi:Y\to X$ with respect to basepoints $y,\phi(y)$ and $\widetilde{y}$ (note that $\widetilde{Y}\to\widetilde{X}$ is an embedding because local isometries of CAT(0) cube complexes are embeddings). The other elevations of $Y$ are stabilised by conjugates of $K$.
\end{remk}

One can eliminate the need for the path $\gamma$ in Remark \ref{remk:convexsubgp} by the following lemma.

\begin{lem}\label{lem:convexisbased}
	Let $X$ be a finite non-positively curved cube complex and let $K<G:=\pi_1(X,x)$ be a convex subgroup. Then there is a based local isometry of finite cube complexes $\phi:(Y,y)\to (X,x)$ with $K=\phi_*\pi_1(Y,y)$.
\end{lem}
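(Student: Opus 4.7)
The plan is to lift to the universal cover $(\tilde{X},\tilde{x})\to(X,x)$ and realise $Y$ as the quotient of a suitably enlarged convex subcomplex. By Remark \ref{remk:convexsubgp}, $K$ stabilises (under the deck action of $G:=\pi_1(X,x)$) a convex subcomplex $\tilde{Y}\subset\tilde{X}$ with $\tilde{Y}/K$ finite. If $\tilde{x}$ already lay in $\tilde{Y}$ we could immediately take $Y=\tilde{Y}/K$ with basepoint the image of $\tilde{x}$; in general we must enlarge $\tilde{Y}$ in a $K$-equivariant convex way so as to include $\tilde{x}$.

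The construction is to let $\tilde{Z}$ be the cubical convex hull (i.e.\ smallest convex subcomplex) of $\tilde{Y}\cup K\tilde{x}$ in $\tilde{X}$. By construction $\tilde{Z}$ is $K$-invariant, convex, and contains $\tilde{x}$. Since $G$ is torsion-free (Remark \ref{remk:torsionfree}), so is $K$, and $K$ acts freely on $\tilde{Z}$. I would then set $Y:=\tilde{Z}/K$ with basepoint $y$ the image of $\tilde{x}$, and define $\phi:Y\to X$ using the covering $\tilde{X}\to X$. Because $\tilde{Z}$ is convex the inclusion $\tilde{Z}\hookrightarrow\tilde{X}$ is a local isometry, and quotienting by the free actions of $K$ on $\tilde{Z}$ and $G$ on $\tilde{X}$ (factoring through $\tilde{X}/K$) yields a local isometry $\phi:Y\to X$ with $\phi(y)=x$. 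Since $\tilde{Z}$ is simply connected, being convex in a CAT(0) cube complex, we obtain $\pi_1(Y,y)\cong K$, with $\phi_*$ realising the inclusion $K<G$ as required.

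The main obstacle I foresee is showing that $Y$ is finite, i.e.\ that $K$ acts cocompactly on $\tilde{Z}$. The plan is to confine $\tilde{Z}$ within a $K$-cocompact neighbourhood of $\tilde{Y}$. Any hyperplane of $\tilde{X}$ that separates a vertex of $\tilde{Z}$ from $\tilde{Y}$ must also separate some point of $K\tilde{x}$ from $\tilde{Y}$; otherwise the half-space containing $\tilde{Y}\cup K\tilde{x}$ but not that vertex would contradict the minimality of the cubical convex hull. Such hyperplanes therefore lie in the $K$-orbit of the finite collection of hyperplanes separating $\tilde{x}$ from $\tilde{Y}$, which together with finite-dimensionality of $\tilde{X}$ forces $\tilde{Z}$ into a bounded combinatorial neighbourhood of $\tilde{Y}$. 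Local finiteness of $\tilde{X}$ and cocompactness of $K$ on $\tilde{Y}$ then transfer to cocompactness on this neighbourhood, and hence on $\tilde{Z}$, completing the construction of the finite based local isometry $\phi:(Y,y)\to(X,x)$.
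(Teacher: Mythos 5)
Your construction is correct and follows the same skeleton as the paper's proof: pass to the universal cover, enlarge the $K$-cocompact convex subcomplex $\widetilde{Y}$ to a $K$-invariant convex subcomplex containing $\widetilde{x}$, and take the quotient by $K$. The difference is localised in the enlargement step: the paper simply cites the cubical thickening of \cite[Section 4]{HaglundWise12} to arrange $\widetilde{x}\in\widetilde{Y}$, whereas you take the hull of $\widetilde{Y}\cup K\widetilde{x}$ and justify cocompactness directly, which makes the argument self-contained at the cost of the extra hyperplane analysis.

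One step of that analysis is under-justified as written. Your halfspace argument correctly shows that every hyperplane separating a vertex $v$ of $\widetilde{Z}$ from $\widetilde{Y}$ also separates some $k\widetilde{x}$ from $\widetilde{Y}$, hence lies in the $K$-orbit of the finitely many hyperplanes separating $\widetilde{x}$ from $\widetilde{Y}$; but since that orbit is infinite, this alone (even with finite dimension) does not yet bound $d(v,\widetilde{Y})$, which equals the number of such hyperplanes. You need two bounds on the family $\mathcal{W}(v)$ of hyperplanes separating $v$ from $\widetilde{Y}$: pairwise-crossing subfamilies have size at most $\dim\widetilde{X}$ (this is where finite-dimensionality enters), and pairwise-disjoint subfamilies have size at most $d(\widetilde{x},\widetilde{Y})$ -- for these the associated halfspaces on the $v$-side are nested, so if the outermost hyperplane separates $k\widetilde{x}$ from $\widetilde{Y}$ then so does every hyperplane in the subfamily, and there are only $d(\widetilde{x},\widetilde{Y})$ hyperplanes separating $k\widetilde{x}$ from $\widetilde{Y}$. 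A Dilworth-type decomposition then gives $d(v,\widetilde{Y})\leq\dim(\widetilde{X})\cdot d(\widetilde{x},\widetilde{Y})$, after which local finiteness of $\widetilde{X}$ and cocompactness of $K\acts\widetilde{Y}$ yield finiteness of $Y=\widetilde{Z}/K$ exactly as you say; the rest of your argument (convexity giving a local isometry, simple connectivity of $\widetilde{Z}$ and freeness of the deck action giving $\phi_*\pi_1(Y,y)=K$) is fine.
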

\begin{proof}
	Let $(\widetilde{X},\widetilde{x})\to(X,x)$ be the universal cover. Then $G$ acts on $\widetilde{X}$ by deck transformations, and there is a convex subcomplex $\widetilde{Y}\subset\widetilde{X}$ stabilised by $K$ with finite quotient $\widetilde{Y}/K$. By replacing $\widetilde{Y}$ with a cubical thickening as in \cite[Section 4]{HaglundWise12}, we can assume that $\widetilde{x}\in\widetilde{Y}$. Then we can put $(Y,y):=(\widetilde{Y},\widetilde{x})/K$, and define $\phi$ as the quotient map to $(X,x)=(\widetilde{X},\widetilde{x})/G$.
\end{proof}

\bigskip
\section{Imitator homomorphisms}\label{sec:hom}

In this section we describe the construction of walker and imitator, and use it to define imitator homomorphisms. 
The construction is essentially equivalent to the canonical completion and retraction of Haglund--Wise \cite{HaglundWise08} -- the imitator homomorphisms we obtain are precisely the homomorphisms induced by the canonical retraction (restricted to a certain component of the canonical completion). But our interpretation does give new insights and facilitates new applications, as we shall see in subsequent sections.

\begin{cons}(Walker and imitator)\\\label{cons:imitator}
	Let $\phi:Y\to X$ be a local isometry of directly special cube complexes. We consider two people wandering around the 1-skeleta of $X$ and $Y$: the \emph{walker} wanders around $X^1$ while the \emph{imitator} wanders around $Y^1$. The walker wanders freely around $X^1$, while the imitator tries to ``copy'' the walker in the following way: if the imitator and walker are at vertices $(y,x)\in Y^0\times X^0$ and the walker traverses the edge $e\in \link(x)$, then the imitator traverses the edge $f\in\link(y)$ with $\phi(f)\parallel e$, if such an edge exists, otherwise they remain at $y$. Note that the edge $f$, if it exists, will be unique because $H(e)$ doesn't self-osculate at $\phi(y)$. If the walker immediately returns along $e$, then the imitator will return along $f$ in the case that $f$ exists, otherwise they remain at $y$ again, so the whole process is reversible.
	
	Iterating the process, if the walker travels along a path $\gamma$, starting at $x$, then the imitator travels along a path that we denote $\delta=\delta(\gamma,y)$, starting at $y$. Since the process is reversible, if the walker immediately backtracks along the path $\gamma$ then the imitator will backtrack along the path $\delta$. See Figure \ref{fig:imitator} for some examples.
\end{cons}

\begin{figure}[H]\label{fig:imitator}
	\centering
	\scalebox{0.8}{
		\begin{tikzpicture}[auto,node distance=2cm,
			thick,every node/.style={circle,draw,font=\small},
			every loop/.style={min distance=2cm},
			hull/.style={draw=none},
			]
			\tikzstyle{label}=[draw=none,font=\huge]
			
			\begin{scope}[shift={(-2,2)}]
				\draw (0,0)--(0,4)--(4,4)--(4,0)--(0,0);
				
				\path (0,4) edge [blue,very thick,postaction={decoration={markings,mark=at position 0.65 with {\arrow[blue,line width=1mm]{triangle 60}}},decorate}] (0,0);
				\path (0,4) edge [blue,very thick,postaction={decoration={markings,mark=at position 0.65 with {\arrow[blue,line width=1mm]{triangle 60}}},decorate}] (4,4);
				\node[label,blue] at (1.5,2) {$\delta(\gamma_2,y)$};
				\node[label,blue] at (1.8,4.8){$\delta(\gamma_1,y)$};
				\node[fill] (y) at (0,4){};
				\node[label] at (-.5,4.5) {$y$};
				\node[fill] (y') at (0,0){};
				\node[label] at (-.5,-.5){$y'$};
				\node[hull] (Y1) at (4.5,2){};
				\node[label] (Y) at (2,7.5) {$Y$};
				
				\node[label] (phi) at (5.5,2.5) {$\phi$};
			\end{scope}
			
			\begin{scope}[shift={(5,0)}]
				\draw[fill=gray!20] (2,8)--(8,8)--(8,0)--(2,0)--(2,2)
				--(6,2)--(6,6)--(2,6)--(2,8);
				\draw (2,8)--(8,8)--(6,6)--(2,6)--(2,8);
				\draw (8,8)--(8,0)--(6,2)--(6,6);
				\draw (8,0)--(2,0)--(2,2)--(6,2);
				\draw (2,6)--(2,2)--(0,4)--(2,6);
				
				\draw[Green,very thick] (2,8)--(8,8)--(8,0);
				\draw[Green,very thick] (2,6)--(2,2)--(0,4)--(2,6);
				\node[fill] at (2,6){};
				\path (0,4) edge [Green,postaction={decoration={markings,mark=at position 0.5 with {\arrow[Green,line width=1mm]{triangle 60}}},decorate}] (2,2);
				\path (8,0) edge [Green,postaction={decoration={markings,mark=at position 0.8 with {\arrow[Green,line width=1mm]{triangle 60}}},decorate}] (8,8);
				
				\node[label] at (1.5,6.5) {$x$};
				\node[label,Green] at (1,2.5) {$\gamma_2$};
				\node[label,Green] at (8.5,5){$\gamma_1$};
				\node[label] (X) at (4,9.5) {$X$};
				\node[hull] (X1) at (-.5,4){};
			\end{scope}
			
			\draw[draw=black,fill=blue,-triangle 90, ultra thick] (Y1) -- (X1);
			
		\end{tikzpicture}
	}
	\caption{Some possible movements of walker and imitator. }
\end{figure}
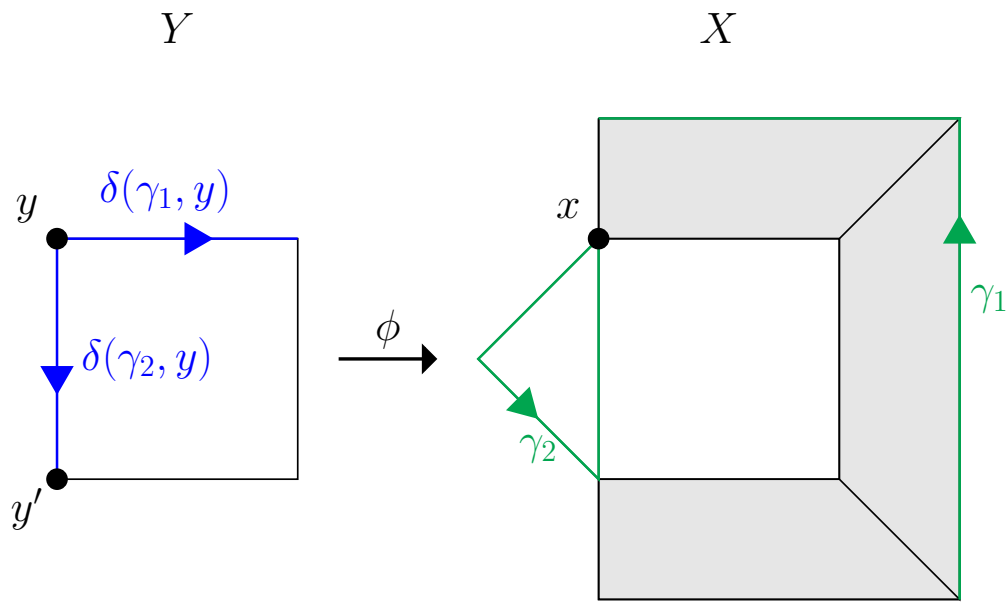

The movements of walker imitator respect homotopies, as we shall see in the following lemma. When we say that two paths are \emph{homotopic} we mean that they have the same endpoints and that they are homotopic relative to these endpoints. We denote the homotopy class of a path $\gamma$ by $[\gamma]$.

\begin{lem}\label{lem:gammasquare}
		If $\gamma$ and $\gamma'$ are homotopic paths in $X$, then for any $y\in Y^0$ the paths $\delta(\gamma,y)$ and $\delta(\gamma',y)$ are homotopic in $Y$.
\end{lem}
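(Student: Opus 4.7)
The plan is to reduce to the two elementary moves that generate path homotopy in a non-positively curved cube complex: insertion or deletion of a backtrack $e\bar{e}$, and a \emph{square exchange} that replaces a subpath $e_1 e_2$ by $e_2' e_1'$ whenever $e_1, e_2, \bar{e_1'}, \bar{e_2'}$ bound a square $S$ of $X$. Since $\delta(\alpha\beta, y) = \delta(\alpha, y)\cdot\delta(\beta, y')$, where $y'$ is the imitator's position after $\alpha$, it suffices to verify that each local move changes $\delta(\cdot, y)$ only up to homotopy, for every imitator position $y$.

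For a backtrack $e\bar{e}$, suppose the walker is at $x$ and the imitator at $y$. Either no $f\in\link(y)$ satisfies $\phi(f)\parallel e$, in which case the imitator stays at $y$ for both steps, or such an $f$ is unique: since $H(e)$ does not self-osculate at $\phi(y)$ there is at most one edge at $\phi(y)$ lying in $H(e)$, and the link map $\link(y)\to\link(\phi(y))$ is injective. By the same uniqueness at the far endpoint of $f$, the imitator traverses $f$ then $\bar{f}$, contributing a homotopically trivial subpath.

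The square exchange is the main case. Write the walker's corner of $S$ at $x$ as $e_1, e_2' \in \link(x)$, with opposite edges $e_1'\parallel e_1$ and $e_2\parallel e_2'$, and let $f_1, f_2'\in\link(y)$ be the edges, if they exist, with $\phi(f_1)\parallel e_1$ and $\phi(f_2')\parallel e_2'$. If both $f_1$ and $f_2'$ exist, then $H(e_1)$ and $H(e_2')$ are distinct hyperplanes that intersect via $S$, so non-inter-osculation at $\phi(y)$ forces $\phi(f_1), \phi(f_2')$ to form a corner there; fullness of $\link(y)\hookrightarrow\link(\phi(y))$ lifts this to a square $T$ of $Y$ bounded by $f_1, f_2'$, and uniqueness of subsequent imitator steps identifies $\delta(e_1 e_2, y)$ and $\delta(e_2' e_1', y)$ with the two sides of $T$, which are homotopic. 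If exactly one exists, say $f_1$ but not $f_2'$, then $\delta(e_2' e_1', y) = f_1$ (after an initial stay), while for $\delta(e_1 e_2, y)$ the imitator first traverses $f_1$ to its far endpoint $y_1$ and I claim then stays. If neither $f_1$ nor $f_2'$ exists, no edge at $y$ projects into the parallelism classes of $e_1$ or $e_2\parallel e_2'$, so the imitator stays throughout both sub-paths and both $\delta$-paths are constant at $y$.

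The hard step is justifying the claim in the middle case. If some $f_2\in\link(y_1)$ had $\phi(f_2)\parallel e_2$, then non-inter-osculation at $\phi(y_1)$ would force $\phi(f_1)^{-1}$ and $\phi(f_2)$ to form a corner, which by fullness of the link map would lift to a square in $Y$ bounded by $f_1^{-1}, f_2$; the edge of that square at $y$ opposite $f_2$ would then be a legal choice for $f_2'$, contradicting its non-existence. So direct specialness is used twice here -- first at $\phi(y_1)$ to manufacture a hypothetical square in $Y$, and then at $y$ to propagate back a forbidden edge -- making this the most delicate step of the reduction.
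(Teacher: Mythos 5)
Your proposal is correct and follows essentially the same route as the paper's proof: reduce to backtracks and single square exchanges, then split into the same four cases according to which of $f_1,f_2'$ exist, using non-inter-osculation plus fullness of the link map both to lift the square when both exist and to derive the contradiction in the mixed case (the paper's Case (2)). The only cosmetic difference is that you invoke fullness explicitly where the paper writes ``local injectivity,'' but the mechanism is identical.
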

\begin{proof}
By definition we are considering the walker starting at a vertex $x$ (the common starting vertex of $\gamma$ and $\gamma'$) and traversing either $\gamma$ or $\gamma'$, and the imitator starting at $y$ and traversing either $\delta(\gamma,y)$ or $\delta(\gamma',y)$.
To prove the lemma it is enough to consider $\gamma$ and $\gamma'$ that differ by a basic homotopy move: so $\gamma$ is obtained from $\gamma'$ either by removing a backtrack along an edge or by pushing a subpath across a square. As discussed in Construction \ref{cons:imitator}, the imitator backtracks whenever the walker backtracks, so the lemma holds if $\gamma$ is obtained from $\gamma'$ by removing a backtrack along an edge. For the rest of the proof it suffices to consider paths $\gamma$ and $\gamma'$ that consist of edges $e_1,e_2$ and $e'_2,e'_1$ respectively, such that $e_1,e_2,e'_1,e'_2$ is the boundary cycle of a square in $X$. Let $x\in X^0$ be incident to $e_1$ and $e'_2$, and let $y\in Y^0$.  Let $H_1:=H(e_1)=H(e'_1)$ and $H_2:=H(e_2)=H(e'_2)$. Note that $H_1$ and $H_2$ intersect at $(x;e_1,e'_2)$, so $H_1\neq H_2$. The proof splits into four cases:
\begin{enumerate}
	\item If no $f\in\link(y)$ has $\phi(f)$ dual to $H_1$ or $H_2$ then $\delta(\gamma,y)=\delta(\gamma',y)$ is the constant path at $y$.
	
	\item\label{item:2} Suppose $f_1\in\link(y)$ has $H(\phi(f_1))=H_1$, but no $f_2\in\link(y)$ has $H(\phi(f_2))=H_2$. Let $y'\in Y^0$ be the vertex at the other end of $f_1$. 
	
	We now argue that no $f_2\in\link(y')$ has $H(\phi(f_2))=H_2$. Indeed if such an $f_2$ did exist then $H_1$ and $H_2$ would intersect at $(\phi(y');\phi(f_1),\phi(f_2))$ (they can't osculate here because they already intersect at $(x;e_1,e'_2)$, and inter-osculating is forbidden), and so $\phi(f_1), \phi(f_2)$ would form the corner of a square at $\phi(y')$; but then local injectivity of $\phi$ implies $f_1,f_2$ would form the corner of a square $S$ at $y'$, and the edge $f'_2\in\link(y)$ opposite $f_2$ in $S$ would have $H(\phi(f'_2))=H_2$, a contradiction.
	
	Now consider the walker traversing $\gamma$: we see that the imitator traverses $f_1$ as the walker traverses $e_1$ and stays at $y'$ as the walker traverses $e_2$. So $\delta(\gamma,y)$ just consists of the edge $f_1$. And if the walker traverses $\gamma'$: the imitator stays at $y$ as the walker traverses $e'_2$ and traverses $f_1$ as the walker traverses $e'_1$. So $\delta(\gamma',y)$ just consists of the edge $f_1$ as well.
	
	\item If no $f_1\in\link(y)$ has $H(\phi(f_1))=H_1$ but some $f_2\in\link(y)$ has $H(\phi(f_2))=H_2$, then we can deduce in a similar manner to the previous case that $\delta(\gamma,y)=\delta(\gamma',y)$ just consists of the edge $f_2$.
	\item Finally, suppose $f_1\in\link(y)$ has $H(\phi(f_1))=H_1$ and $f_2\in\link(y)$ has $H(\phi(f_2))=H_2$. Then $H_1$ and $H_2$ intersect at $(\phi(y);\phi(f_1),\phi(f_2))$, and local injectivity of $\phi$ implies that $f_1,f_2$ form the corner of a square $S$ at $y$. It follows that $\delta(\gamma,y)$ consists of two sides of $S$, starting with $f_1$, whilst $\delta(\gamma',y)$ also consists of two sides of $S$, but starting with $f_2$, and both paths end at the vertex of $S$ opposite $y$. In particular, $\delta(\gamma',y)$ is obtained from $\delta(\gamma,y)$ by homotoping across $S$.\qedhere
\end{enumerate}
\end{proof}

\begin{figure}[H]
	\centering
	\scalebox{0.8}{
\begin{tikzpicture}[auto,node distance=2cm,
	thick,every node/.style={circle,draw,font=\small},
	every loop/.style={min distance=2cm},
	hull/.style={draw=none},
	]
	\tikzstyle{label}=[draw=none,font=\huge]
	
	\begin{scope}[shift={(2,0)}]
		\path (0,0) edge [blue,very thick,postaction={decoration={markings,mark=at position 0.65 with {\arrow[blue,line width=1mm]{triangle 60}}},decorate}] (0,3);
		\node[fill] (y) at (0,0) {};
		\node[fill] (y') at (0,3) {};
		\node[label] (y1) at (0,-0.7) {$y$};
		\node[label] (y'1) at (0,3.7) {$y'$};
		\node[label] (f1) at (-.6,1.5) {$f_1$};
		\node[label,blue,text width=3cm] at (2,1.5) {$\delta(\gamma,y)=\delta(\gamma',y)$};
	\end{scope}
	
	\begin{scope}[shift={(9,0)}]
		\draw[fill=gray!20] (0,0)--(0,3)--(3,3)--(3,0)--(0,0);
		\draw[Green,very thick] (0,0)--(0,3)--(3,3)--(3,0)--(0,0);
		\path (0,3) edge [Green,postaction={decoration={markings,mark=at position 0.25 with {\arrow[Green,line width=1mm]{triangle 60}}},decorate}] (3,3);
		\path (3,0) edge [Green,postaction={decoration={markings,mark=at position 0.25 with {\arrow[Green,line width=1mm]{triangle 60}}},decorate}] (3,3);
		\node[fill] (x) at (0,0) {};
		\node[fill] (end) at (3,3){};
		\node[label,Green] (ga) at (1.3,2.5) {$\gamma$};
		\node[label,Green] (ga') at (2.5,1.3) {$\gamma'$};
		\node[label] (e1) at (-.5,1.5) {$e_1$};
		\node[label] (e2) at (1.5,3.5) {$e_2$};
		\node[label] (e'1) at (3.5,1.5) {$e'_1$};
		\node[label] (e'2) at (1.5,-.5) {$e'_2$};
	\end{scope}
	
\end{tikzpicture}
}
\caption{Case \ref{item:2} of the proof of Lemma \ref{lem:gammasquare}.}
\end{figure}
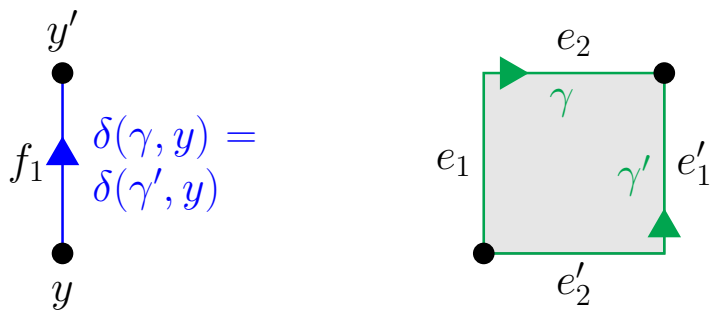

\begin{cons}(Imitator homomorphisms)\\\label{cons:hom}
	Let $\phi:Y\to X$ be a local isometry of directly special cube complexes and pick a basepoint $x\in X^0$. We have a right-action of $G:=\pi_1(X,x)$ on $Y^0$ defined as follows: for $y\in Y^0$ and $[\gamma]\in G$, let $y\cdot[\gamma]$ be the endpoint of the path $\delta(\gamma,y)$ -- this only depends on the homotopy class of $\gamma$ by Lemma \ref{lem:gammasquare}. Let $G_y<G$ be the stabiliser of $y\in Y^0$, which has finite index in $G$ if $Y$ is finite. We can then define a homomorphism $\rho_{x,y}:G_y\to\pi_1(Y,y)$ by $\rho_{x,y}([\gamma])=[\delta(\gamma,y)]$ -- again this is well-defined by Lemma \ref{lem:gammasquare}.
	
	Throughout this paper we will usually work with a based local isometry $\phi:(Y,y)\to(X,x)$, and consider $\pi_1(Y,y)$ as a subgroup of $G=\pi_1(X,x)$. We will call $G_\phi:=G_y<G$ the \emph{imitator subgroup of $\phi$}, and call the corresponding cover of $X$ the \emph{imitator cover of $\phi$}. We will refer to the homomorphism  $\rho_\phi:=\rho_{x,y}:G_\phi\to\pi_1(Y,y)<G$ as the \emph{imitator homomorphism of $\phi$}. For $[\gamma]\in\pi_1(Y,y)$ the $\phi$-images of the edges of $\gamma$ are equal to, hence parallel to, the edges of $\phi\gamma$, so it follows that $\delta(\phi\gamma,y)=\gamma$ (i.e. as the walker travels along $\phi\gamma$, the imitator shadows the walker by travelling along $\gamma$). Consequently, $\rho_\phi$ is a group retraction onto the subgroup $\pi_1(Y,y)<G_y$ (i.e. $\rho_\phi$ restricts to the identity map on $\pi_1(Y,y)$).
	
	An example of a based local isometry $\phi:(Y,y)\to(X,x)$ is in Figure \ref{fig:imitator}; note that $\delta(\gamma_2,y')=\delta(\gamma_2,y)^{-1}$, so $[\gamma_2]\notin G_\phi$ but $[\gamma_2]^2\in G_\phi$ -- in fact $[\gamma_2]^2\in\ker\rho_\phi$.
\end{cons}

\begin{remk}
	In Section \ref{sec:imitatorcovers} we give a description of the vertices and edges of the imitator cover directly in terms of the movements of walker and imitator. This viewpoint establishes the equivalence of our construction with the canonical completion and retraction of Haglund--Wise -- in particular it shows that the imitator homomorphism is induced by a cellular retraction of cube complexes.
\end{remk}

\bigskip
\section{First applications}\label{sec:applysingle}

In this section we apply the construction of imitator homomorphisms and imitator covers to prove various propositions and lemmas, some new and others already known, culminating with Proposition \ref{prop:triplecoset} about separability of triple cosets.

\subsection{Subgroup separability}

\begin{defn}(Separability)\\\label{defn:separability}
	A subgroup $K<G$ is \emph{separable} if it is an intersection of finite-index subgroups in $G$. $G$ is \emph{residually finite} if the trivial subgroup is separable in $G$. The collection of all cosets of finite-index subgroups of $G$ is a basis for the \emph{profinite topology} on $G$. A subset $A\subset G$ is \emph{separable} if it is closed in the profinite topology (note that these two notions of separability are equivalent for subgroups). 
\end{defn}

In \cite{HaglundWise08} it was shown that fundamental groups of finite special cube complexes are virtually subgroups of right angled Coxeter groups, hence they are $\Z$-linear and residually finite. In fact residual finiteness can be deduced rather more directly using imitator subgroups.

\begin{prop}\label{prop:residuallyfinite}
	Fundamental groups of finite virtually special cube complexes are residually finite.
\end{prop}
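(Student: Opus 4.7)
The plan is to reduce to the directly special case and, for each nontrivial $g \in G := \pi_1(X,x)$, produce a finite-index imitator subgroup of $G$ not containing $g$.

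Since residual finiteness is inherited by finite-index overgroups, we may replace $X$ by a finite directly special cover and assume $X$ is directly special. Let $(\widetilde X, \tilde x)\to (X, x)$ be the based universal cover and fix $1 \neq g \in G$; by Remark \ref{remk:torsionfree}, $G$ acts freely on $\widetilde X$, so $g\tilde x \neq \tilde x$. Choose a combinatorial geodesic $\tilde \gamma$ in $\widetilde X$ from $\tilde x$ to $g\tilde x$, and let $\widetilde Y \subset \widetilde X$ be a finite convex subcomplex containing $\tilde \gamma$, obtained by a cubical thickening as in \cite[Section 4]{HaglundWise12} (using that $\widetilde X$ is locally finite of bounded dimension). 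The composition $\phi: (\widetilde Y, \tilde x) \to (X, x)$ of the convex inclusion and the covering projection is a based local isometry of directly special cube complexes, so the imitator construction of Section \ref{sec:hom} furnishes an imitator subgroup $G_\phi < G$ of finite index (since the right $G$-action on $\widetilde Y^0$ has at most $|\widetilde Y^0| < \infty$ orbits).

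It remains to show $g \notin G_\phi$. Set $\gamma := \phi \circ \tilde\gamma$, a loop in $X$ at $x$. I claim that as the walker traverses $\gamma$, the imitator, starting at $\tilde x$, traces out exactly the path $\tilde\gamma$, and therefore ends at $g\tilde x \neq \tilde x$, so that $g \notin G_\phi$. Argue by induction on the edges: suppose the imitator sits at the $i$-th vertex $\tilde y_i$ of $\tilde \gamma$ while the walker is about to cross the $(i+1)$-st edge $e_{i+1}$ of $\gamma$. The next edge $\tilde e_{i+1}$ of $\tilde\gamma$ lies in $\widetilde Y^1$ at $\tilde y_i$ and satisfies $\phi(\tilde e_{i+1}) = e_{i+1}$, so it is a valid imitator move. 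Any alternative edge $\tilde f \in \widetilde Y^1$ at $\tilde y_i$ with $\phi(\tilde f) \parallel e_{i+1}$ would give two distinct edges $\phi(\tilde f), e_{i+1}$ at $\phi(\tilde y_i) \in X^0$ sharing the hyperplane $H(e_{i+1})$; depending on whether they bound a square at $\phi(\tilde y_i)$ this would force either a self-intersection or a self-osculation of $H(e_{i+1})$, both forbidden by direct specialness of $X$. Hence $\tilde e_{i+1}$ is the unique permissible move and the induction closes.

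The main point of care is the appeal to direct specialness in the inductive step — without it, the imitator's choice at $\tilde y_i$ would not be forced — which is why passing to a directly special cover at the outset is essential; beyond that, the argument is a direct unravelling of Construction \ref{cons:hom}.
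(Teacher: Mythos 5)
Your proof is correct and follows essentially the same route as the paper: reduce to a finite directly special cover, restrict the covering projection to a finite convex subcomplex of $\widetilde X$ containing a path from $\tilde x$ to $g\tilde x$, and observe that the imitator shadows this path, so $g$ moves $\tilde x$ and lies outside the finite-index imitator subgroup $G_\phi$. The only cosmetic differences are that the paper takes the (automatically finite) convex hull of the lifted loop rather than appealing to a cubical thickening, and it quotes the shadowing identity $\delta(\phi\tilde\gamma,\tilde x)=\tilde\gamma$ from Construction \ref{cons:hom} instead of re-proving it by induction as you do.
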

\begin{proof}
	Residual finiteness is a commensurability invariant, so it suffices to work with a finite directly special cube complex $X$. Let $x\in X^0$ be a basepoint, and let $1\neq[\gamma]\in G=\pi_1(X,x)$ be an essential loop based at $x$. 	
	Let $(\widetilde{X},\widetilde{x})\to (X,x)$ be the based universal cover of $X$ and let $\widetilde{\gamma}$ be the based lift of $\gamma$. Let $Y_{\widetilde{\gamma}}\subset\widetilde{X}$ be the smallest convex subcomplex of $\widetilde{X}$ containing $\widetilde{\gamma}$. Note that  $Y_{\widetilde{\gamma}}$ is obtained by intersecting all halfspaces in $\widetilde{X}$ containing $\widetilde{\gamma}$, and that a hyperplane of $\widetilde{X}$ intersects $Y_{\widetilde{\gamma}}$ if and only if it intersects $\widetilde{\gamma}$ -- there are only finitely many such hyperplanes, so $Y_{\widetilde{\gamma}}$ is finite. The restriction of the covering to $Y_{\widetilde{\gamma}}$ is a local isometry $\phi:(Y_{\widetilde{\gamma}},\widetilde{x})\to (X,x)$, so we may apply the walker-imitator construction. Since $\gamma$ lifts to $\widetilde{\gamma}$, we have $\delta(\gamma,\widetilde{x})=\widetilde{\gamma}$. But $\widetilde{\gamma}$ has distinct endpoints, so $[\gamma]$ is not contained in the finite-index subgroup $G_\phi$, as required.
\end{proof}

We can deduce subgroup separability using imitator homomorphisms. The analogous result of Haglund--Wise is \cite[Corollary 7.9]{HaglundWise08}.

\begin{prop}\label{prop:subgroupseparability}
	Let $X$ be a finite virtually special cube complex. Then any convex subgroup of $\pi_1X$ is separable.
\end{prop}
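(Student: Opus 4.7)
The plan is to use the imitator homomorphism of $\phi$ to build, for any $g \in G \setminus K$, a finite-index subgroup of $G = \pi_1 X$ that contains $K$ but not $g$. First, I would reduce to the case where $X$ is directly special: pass to a finite directly special cover $\hat X \to X$ with $\hat G := \pi_1 \hat X$ of finite index in $G$. The intersection $K \cap \hat G$ still stabilises the convex subcomplex $\widetilde Y \subset \widetilde X$ associated to $K$, and its quotient covers $\widetilde Y/K$ with finite degree, so $K \cap \hat G$ is convex in $\hat G$. Separability of $K \cap \hat G$ in $\hat G$ promotes to separability in $G$, and $K$ is then a finite union of cosets of $K \cap \hat G$, each of which is closed in the profinite topology; so it suffices to handle the directly special case.

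Using Lemma \ref{lem:convexisbased}, represent $K$ as $\phi_*\pi_1(Y,y)$ for a based local isometry $\phi:(Y,y) \to (X,x)$ of finite directly special cube complexes. Construction \ref{cons:hom} then produces the finite-index imitator subgroup $G_\phi \leq G$ and a retraction $\rho_\phi:G_\phi \to K$. Because $\rho_\phi$ restricts to the identity on $K$ and $K \subseteq G_\phi$, every $h \in G_\phi$ decomposes uniquely as $h = \rho_\phi(h) \cdot (\rho_\phi(h)^{-1}h)$ with $\rho_\phi(h) \in K$ and $\rho_\phi(h)^{-1}h \in \ker\rho_\phi$, so $G_\phi = K \ltimes \ker\rho_\phi$.

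Given $g \in G \setminus K$ I would argue in two cases. If $g \notin G_\phi$, then $G_\phi$ itself is the desired finite-index subgroup containing $K$ but excluding $g$. Otherwise $g \in G_\phi$, in which case $\rho_\phi(g) \in K$ differs from $g$ (else $g \in K$), so $n := \rho_\phi(g)^{-1} g$ is a nontrivial element of $\ker\rho_\phi$. By Proposition \ref{prop:residuallyfinite}, $G$ is residually finite, so there is a finite-index normal subgroup $N \triangleleft G$ avoiding $n$. Set $M := \ker\rho_\phi \cap N$, which is normal in $G_\phi$ and of finite index in $\ker\rho_\phi$ (since $N \cap G_\phi$ has finite index in $G_\phi$). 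Because $K$ normalises $\ker\rho_\phi$ and the normal subgroup $N \cap G_\phi$, it normalises $M$, so $H := K \cdot M$ is a subgroup of $G_\phi$ of index at most $[\ker\rho_\phi:M] < \infty$, hence of finite index in $G$. Clearly $K \subseteq H$. Finally, if $g$ were in $H$, then uniqueness in the semidirect product would force the second factor $n = \rho_\phi(g)^{-1}g$ to lie in $M \subseteq N$, contradicting our choice of $N$; so $g \notin H$, as required.

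The conceptually substantive step is realising that the retraction $\rho_\phi$ converts the problem of separating $K$ inside the non-abelian group $G_\phi$ into a problem of separating a single element $n$ from the identity, which residual finiteness handles immediately. There is no serious obstacle once that semidirect-product picture is in place; the only cautions are the reduction to directly special (which is routine) and the verification that $K \subseteq G_\phi$ (which follows from $\delta(\phi\gamma,y)=\gamma$, as noted in Construction \ref{cons:hom}).
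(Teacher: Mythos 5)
Your proof is correct, and its skeleton --- reduce to a finite directly special cover, represent the convex subgroup by a based local isometry via Lemma \ref{lem:convexisbased}, and exploit the imitator retraction $\rho_\phi:G_\phi\to K$ together with residual finiteness (Proposition \ref{prop:residuallyfinite}) --- is the same as the paper's. The one genuine difference is the final group-theoretic step. The paper invokes Lemma \ref{lem:separablesubgroup}: given a retraction $\rho$ of a residually finite group, it separates $g$ from $K$ by mapping into $\bar G\times\bar G$ via $h\mapsto(q(h),q\rho(h))$ for a suitable finite quotient $q$. You instead use the internal decomposition $G_\phi=K\ltimes\ker\rho_\phi$ and exhibit the explicit finite-index overgroup $H=K\cdot(\ker\rho_\phi\cap N)$, where $N\triangleleft G$ is a finite-index normal subgroup avoiding $n=\rho_\phi(g)^{-1}g$; your verifications (normality of $\ker\rho_\phi\cap N$ in $G_\phi$, the index bound $[G_\phi:H]\le[\ker\rho_\phi:\ker\rho_\phi\cap N]$, and the uniqueness-of-decomposition argument excluding $g$) are all sound, as is your reduction to the directly special case via convexity of $K\cap\hat G$ in $\hat G$, which is equivalent to the paper's use of the based elevation $\hat Y$. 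As for what each approach buys: your semidirect-product argument is the classical ``retracts of residually finite groups are separable'' proof and has the virtue of producing an explicit finite-index subgroup containing $K$; the paper's doubling-quotient formulation is chosen because the identical trick upgrades to double and triple coset separability (Lemmas \ref{lem:separabledcoset} and \ref{lem:separabletcoset}, feeding into Propositions \ref{prop:doublecoset} and \ref{prop:triplecoset}), a setting where no semidirect-product structure is available.
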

\begin{proof}
	By Lemma \ref{lem:convexisbased} we may consider a convex subgroup defined by a based local isometry $\phi:(Y,y)\to (X,x)$. Take a finite directly special cover $(\hat{X},\hat{x})\to(X,x)$ and a based elevation $\hat{\phi}:(\hat{Y},\hat{y})\to(\hat{X},\hat{x})$ of $\phi$. $\pi_1(\hat{Y},\hat{y})$ is a finite union of cosets of $\pi_1(Y,y)$ (viewing both as subgroups of $\pi_1(X,x)$), and separability is equivalent to being closed in the profinite topology (Definition \ref{defn:separability}), so it suffices to prove that $\pi_1(\hat{Y},\hat{y})$ is separable.
	Put $\hat{G}:=\pi_1(\hat{X},\hat{x})$ and let $\rho_{\hat{\phi}}:\hat{G}_{\hat{y}}\to\pi_1(\hat{Y},\hat{y})$ be the imitator homomorphism of $\hat{\phi}$, which is a retraction by Construction \ref{cons:hom}. These groups are residually finite by Proposition \ref{prop:residuallyfinite}, so it follows from Lemma \ref{lem:separablesubgroup} below that $\pi_1(\hat{Y},\hat{y})$ is separable in $\hat{G}_{\hat{y}}$, and hence also in $\pi_1(X,x)$ (finite-index subgroups are always closed in the profinite topology).
\end{proof}

The following lemma is proved for instance in \cite[Lemma 3.9]{HsuWise99}, our proof is slightly different and leads to generalisations with more subgroups (Lemmas \ref{lem:separabledcoset} and \ref{lem:separabletcoset}).

\begin{lem}\label{lem:separablesubgroup}
	Let $\rho:G\twoheadrightarrow K< G$ be a retraction of a residually finite group $G$. Then $K$ is separable in $G$.
\end{lem}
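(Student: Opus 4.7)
The plan is to use the standard characterisation that $K$ is separable in $G$ if and only if for every $g \in G \setminus K$ there is a finite-index subgroup $H \leq G$ with $K \subseteq H$ and $g \notin H$. Fix such a $g$; I will build $H$ by combining the retraction $\rho$ with a single finite quotient of $G$. The key preliminary observation is that, because $\rho$ is a retraction onto $K$, the fixed set of $\rho$ coincides with $K$: certainly $\rho|_K = \mathrm{id}_K$, and conversely if $\rho(h) = h$ then $h = \rho(h) \in \mathrm{im}(\rho) = K$. Consequently $\rho(g)g^{-1}$ is a non-trivial element of $G$.

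Using residual finiteness of $G$, I pick a finite-index normal subgroup $N \triangleleft G$ with $\rho(g)g^{-1} \notin N$, and let $\pi\colon G \to G/N$ be the quotient homomorphism. I then define a homomorphism
\[ \psi\colon G \to G/N \times G/N, \qquad \psi(h) = \bigl(\pi(h),\,\pi(\rho(h))\bigr), \]
and set $H := \psi^{-1}(\Delta)$, where $\Delta \leq G/N \times G/N$ is the diagonal subgroup. Morally, $\psi$ packages the ``retraction information'' and a fixed finite quotient into a single homomorphism to a finite group.

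To finish, I check three properties. First, $H$ is a finite-index subgroup of $G$ because $\Delta$ is a subgroup of $G/N \times G/N$ of index $|G/N|$ and $\psi$ is a homomorphism. Second, $K \subseteq H$: for $k \in K$ we have $\rho(k) = k$, so $\psi(k) = (\pi(k), \pi(k)) \in \Delta$. Third, $g \notin H$: if $\psi(g) \in \Delta$ then $\pi(\rho(g)) = \pi(g)$, forcing $\rho(g)g^{-1} \in N$ and contradicting the choice of $N$.

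I do not anticipate a genuine obstacle here: the only slight trick is simultaneously exploiting the retraction $\rho$ and a single finite quotient via the diagonal map. The real interest is that this same technique should adapt smoothly to prove the advertised generalisations for double and triple cosets (Lemmas \ref{lem:separabledcoset} and \ref{lem:separabletcoset}) by replacing $\psi$ with a suitable product of coordinate maps involving several retractions, one for each subgroup in the coset expression.
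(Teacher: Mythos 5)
Your proof is correct and is essentially the same argument as the paper's: the paper also uses residual finiteness to separate $g$ from $\rho(g)$ in a finite quotient $q$ and then considers the map $h\mapsto(q(h),q\rho(h))$ into $\bar{G}\times\bar{G}$, showing the image of $g$ lies outside the image of $K$. Your formulation via the preimage of the diagonal subgroup is just a repackaging of that same step, and your closing remark about the double and triple coset versions matches how the paper proceeds.
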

\begin{proof}
	Let $g\in G-K$. The goal is to find a finite quotient of $G$ that separates $g$ from $K$. We know that $\rho(g)\in K$, so $g\neq\rho(g)$, and by residual finiteness of $G$ there exists a finite quotient $q:G\to\bar{G}$ with
	\begin{equation}\label{qequation1}
		q(g)\neq q\rho(g).
	\end{equation} 	
	Now consider another finite quotient of $G$, defined by
	\begin{align*}
		t:G&\to\bar{G}\times\bar{G}\\
		h&\mapsto(q(h),q\rho(h)).
	\end{align*}
	We claim that $t(g)\notin t(K)$, so suppose for contradiction that $t(g)=t(k)$ with $k\in K$. Then
	\begin{equation}\label{texpand1}
		(q(g),q\rho(g))=(q(k),q\rho(k)).
	\end{equation}
	But $\rho(k)=k$, so (\ref{texpand1}) implies that $q(g)=q\rho(g)$, contradicting (\ref{qequation1}).
\end{proof}
\bigskip

\subsection{Entrapment}

The following will be a key lemma used throughout the paper.

\begin{lem}(Subcomplex Entrapment)\\\label{lem:subcomplextrap}
	Let $\phi:Y\to X$ be a local isometry of directly special cube complexes, and let $Z\subset X$ be a locally convex subcomplex that does not inter-osculate with hyperplanes of $X$. If we consider an imitator in $Y$ and a walker in $X$ starting at positions  $(y,x)\in\phi^{-1}(Z)\times Z$, then the imitator stays inside $\phi^{-1}(Z)$ as long as the walker stays inside $Z$. In particular, if $Y$ is a locally convex subcomplex of $X$, then the imitator stays inside $Y\cap Z$.	
\end{lem}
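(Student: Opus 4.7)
The plan is to argue by induction on the number of edges in the walker's path inside $Z$, reducing the entire statement to a one-step analysis. Suppose the walker sits at $x\in Z^0$ and traverses an edge $e\in Z^1$ incident to $x$, while the imitator sits at $y\in\phi^{-1}(Z)^0$ (so in particular $\phi(y)\in Z$). I need to show that the imitator's new position — and the edge it traverses, if any — lies in $\phi^{-1}(Z)$. If no $f\in\link(y)$ satisfies $\phi(f)\parallel e$, the imitator stays at $y$ and there is nothing to check. Otherwise Construction \ref{cons:imitator} furnishes a unique such edge $f$, and it then suffices to show $\phi(f)\in Z^1$: from that, both $f$ and its far endpoint automatically lie in $\phi^{-1}(Z)$.

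For that one-step check, I would set $H:=H(e)=H(\phi(f))$. The hyperplane $H$ intersects the subcomplex $Z$ in the sense of Definition \ref{defn:complexosculate}, because the edge $e\in Z^1$ is dual to $H$. At the same time, $\phi(y)\in Z^0$ and $\phi(f)\in X^1$ is incident at $\phi(y)$ with $H(\phi(f))=H$; so if $\phi(f)$ were not in $Z^1$, Definition \ref{defn:complexosculate} would say that $H$ osculates with $Z$ at $(\phi(y);\phi(f))$. Combined with the intersection already observed, this would be an inter-osculation of $H$ with $Z$, contradicting the standing hypothesis on $Z$. Hence $\phi(f)\in Z^1$, and the inductive step closes.

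The concluding "in particular" clause is an immediate specialisation: when $Y\subset X$ is a locally convex subcomplex, the inclusion $\iota\colon Y\hookrightarrow X$ is a local isometry and $\iota^{-1}(Z)=Y\cap Z$, so the main statement applies verbatim and the imitator remains in $Y\cap Z$.

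I do not foresee a genuine obstacle. The conceptual point that must be kept in mind throughout the induction is that one cannot assume $\phi(y)=x$: as soon as the walker moves while the imitator stands still, $\phi$ of the imitator's vertex and the walker's vertex cease to agree, so in later steps the imitator's edge $\phi(f)$ need not coincide with $e$ — it is merely \emph{parallel} to $e$ in $X$. The no-inter-osculation hypothesis on $Z$ is exactly what is required to prevent this parallel-but-different edge $\phi(f)$ from exiting $Z$, and is therefore the essential input of the proof.
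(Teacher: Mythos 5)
Your proposal is correct and follows essentially the same route as the paper: reduce to the walker crossing a single edge $e$ of $Z$, and observe that if the imitator's edge $f$ had $\phi(f)\notin Z$, then $H(e)=H(\phi(f))$ would both intersect $Z$ (via $e$) and osculate with $Z$ at $(\phi(y);\phi(f))$, contradicting the no-inter-osculation hypothesis. Your closing remark that $\phi(y)$ need not equal $x$ correctly identifies why the hypothesis is needed, and the ``in particular'' clause is handled as in the paper.
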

\begin{proof}
	It suffices to consider the walker traversing a single edge $e\in\link(x)$ inside $Z$. If the imitator stays at $y$ then we are done, otherwise they traverse an edge $f$ with $\phi(f)\parallel e$. 
	Suppose for contradiction that $f$ is not in $\phi^{-1}(Z)$. Then $H(\phi(f))=H(e)$ osculates with $Z$ at $(\phi(y);\phi(f))$. But $H(e)$ and $Z$ also intersect because $e$ is in $Z$, hence they inter-osculate, contrary to the assumption of the lemma.
\end{proof}
\begin{figure}[H]
	\centering
	\scalebox{0.8}{
\begin{tikzpicture}[auto,node distance=2cm,
	thick,every node/.style={circle,draw,font=\small},
	every loop/.style={min distance=2cm},
	hull/.style={draw=none},
	]
	\tikzstyle{label}=[draw=none,font=\huge]
	
	\begin{scope}
		\draw[rounded corners=10pt] (0,0) rectangle (2,6);
		\draw[rounded corners=10pt,Green,ultra thick] (0,0) rectangle (2,2);
		\draw[very thick] (1,2)--(1,3);
		\node[fill] at (1,2){};
		\node[fill] at (1,3){};
		\node[label] at (.5,2.5) {$f$};
		\node[label,Green] at (1,-.5) {$\phi^{-1}(Z)$};
		\node[hull] (Y1) at (2.5,3){};
		\node[label] (Y) at (1,7) {$Y$};
		\node[label] at (1,1.5) {$y$};
		
		\node[label] (phi) at (3.5,3.5) {$\phi$};
	\end{scope}
	
	\begin{scope}[shift={(5,0)}]
		\draw[rounded corners=10pt] (0,0) rectangle (8,6);
		\draw[very thick] (2,0)--(2,6);
		\draw[rounded corners=10pt,Green,ultra thick] (0,0) rectangle (8,2);
		\draw (1,2)--(1,3);
		\node[fill] at (1,2){};
		\node[fill] at (1,3){};
		
		\draw[very thick] (5.5,1)--(6.5,1);
		\node[fill] at (5.5,1){};
		\node[fill] at (6.5,1){};
		\node[label] at (6.5,.5) {$e$};
		
		\draw[rounded corners=10pt,red,ultra thick] (0,2.5)--(4,2.5)--(4,5)
		--(6,5)--(6,0);
		\node[label,red] at (6.8,4) {$H(e)$};
		
		\node[label] (X) at (4,7) {$X$};
		\node[hull] (X1) at (-.5,3){};
		\node[label,Green] at (4,-.5) {$Z$};
	\end{scope}
	
	\draw[draw=black,fill=blue,-triangle 90, ultra thick] (Y1) -- (X1);
	
\end{tikzpicture}
}
\caption{Proof of Lemma \ref{lem:subcomplextrap}.}
\end{figure}

The following special case of Subcomplex Entrapment allows us to trap imitators inside hyperplane carriers.

\begin{lem}(Hyperplane Entrapment)\\\label{lem:hyptrap}
Let $\phi:Y\to X$ be a local isometry of directly special cube complexes. Let $H$ be a hyperplane in $Y$, and suppose that the imitator and walker start at positions $(y,x)\in N(H)\times N(\phi[H])$. If the walker stays inside $N(\phi[H])$ then the imitator stays inside $N(H)$. 
\end{lem}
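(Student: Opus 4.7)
The plan is to apply Subcomplex Entrapment (Lemma \ref{lem:subcomplextrap}) with $Z := N(\phi[H]) \subset X$, and then upgrade its conclusion to $N(H)$ via a connectedness argument.

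First I would verify the hypotheses of Lemma \ref{lem:subcomplextrap} for this $Z$. Local convexity of hyperplane carriers in directly special cube complexes is a standard consequence of the embedding $N(\phi[H]) \cong \phi[H] \times [-1,1]$. The no-inter-osculation condition follows from Remark \ref{remk:CarrierIntersectOsculate}: any hyperplane $H' \neq \phi[H]$ that inter-osculated with $N(\phi[H])$ would both intersect and osculate with $\phi[H]$ itself, contradicting direct specialness of $X$; while $\phi[H]$ cannot osculate with its own carrier because every edge of $X$ dual to $\phi[H]$ already lies in $N(\phi[H])$. The initial conditions hold too: $x \in N(\phi[H])$ by assumption, and $y \in \phi^{-1}(N(\phi[H]))$ because $y \in N(H)$ and $\phi(N(H)) \subset N(\phi[H])$.

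Lemma \ref{lem:subcomplextrap} then guarantees that the imitator remains inside
\[
\phi^{-1}(N(\phi[H])) \;=\; \bigcup_{H' : \,\phi[H']=\phi[H]} N(H'),
\]
where the union ranges over hyperplanes of $Y$. The crux of the proof is to observe that the carriers appearing in this union are \emph{pairwise disjoint}. Suppose for contradiction that $N(H_1)\cap N(H_2)\neq\emptyset$ for distinct hyperplanes $H_1,H_2$ of $Y$ with $\phi[H_1]=\phi[H_2]$. Then the intersection contains a vertex $y_0$, and there are edges $f_1,f_2\in\link(y_0)$ with $H(f_i)=H_i$. Local injectivity of $\phi$ on $\link(y_0)$ combined with $H_1\neq H_2$ forces $\phi(f_1)\neq \phi(f_2)$; but these two distinct edges at $\phi(y_0)$ are both dual to $\phi[H_1]=\phi[H_2]$, producing a self-osculation of $\phi[H]$ that contradicts direct specialness of $X$.

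Since the imitator's trajectory is a connected edge path in $Y$ beginning at $y\in N(H)$, and $N(H)$ is connected (being $H\times[-1,1]$) and is a component of the disjoint union $\bigsqcup_{H'}N(H')$, the trajectory never leaves $N(H)$. The only non-routine ingredient is the disjointness step, and even that reduces quickly to the no-self-osculation axiom once one writes down what it would mean for the carriers to meet; I don't foresee any genuine obstacles.
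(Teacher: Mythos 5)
Your proof is correct and takes essentially the same approach as the paper: apply Subcomplex Entrapment with $Z=N(\phi[H])$ and observe that $\phi^{-1}(N(\phi[H]))$ is the disjoint union of the carriers $N(H')$ over hyperplanes $H'$ of $Y$ with $\phi[H']=\phi[H]$, disjointness coming from the fact that a common vertex would yield two distinct edges at its image both dual to $\phi[H]$, contradicting direct specialness. The only nitpick is that those two edges could just as well witness a self-intersection rather than a self-osculation of $\phi[H]$, but direct specialness excludes both, so nothing breaks.
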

\begin{proof}
	We apply Subcomplex Entrapment with $Z=N(\phi[H])$. This ensures that the imitator stays inside $\phi^{-1}(N(\phi[H]))$ as long as the walker stays inside $N(\phi[H])$. But we know that
	$$\phi^{-1}(N(\phi[H]))=\bigcup_{\phi[H']=\phi[H]}N(H'),$$
	and this is a disjoint union because any vertex in an intersection of hyperplane carriers would map to a point of self-intersection or self-osculation of $\phi[H]$. Hence the imitator remains in $N(H)$.
\end{proof}

As an application of Hyperplane Entrapment we have the following proposition about imitator covers. 

\begin{prop}\label{prop:completionhyps}
	Let $\phi:(Y,y)\to (X,x)$ be a local isometry of directly special cube complexes and let $(\dot{X},\dot{x})\to(X,x)$ be the associated imitator cover. We know from Construction \ref{cons:hom} that $\pi_1(Y,y)<G_\phi=\pi_1(\dot{X},\dot{x})$, so there is a lift $j:(Y,y)\to(\dot{X},\dot{x})$ of $\phi$. Then we have the following:
	\begin{enumerate}
		\item\label{item:embedding} $j:(Y,y)\to(\dot{X},\dot{x})$ is an embedding.
		\item\label{item:nointosc} No hyperplane in $\dot{X}$ inter-osculates with $Y$ (with respect to $j$).
	\end{enumerate}
\end{prop}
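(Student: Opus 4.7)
The plan uses the walker-imitator reversibility from Construction \ref{cons:imitator}, Hyperplane Entrapment (Lemma \ref{lem:hyptrap}), and a combinatorial description of $\dot{X}$ in which vertices above $x'\in X^0$ are parametrised by reachable imitator positions $y'\in Y^0$, with $j(y')$ corresponding to the vertex labelled $(\phi(y'),y')$ (this is the content of Section \ref{sec:imitatorcovers}, and can be developed directly from Construction \ref{cons:hom} via the orbit-stabiliser bijection between cosets $G_\phi\backslash G$ and the $G$-orbit of $y$ in $Y^0$).

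For part (\ref{item:embedding}), vertex injectivity of $j$ is immediate from $j(y')=(\phi(y'),y')$, since distinct $y'$ give distinct second coordinates. Since $j$ is a lift of $\phi$ along the covering $\mu:\dot{X}\to X$, it is itself a local isometry; vertex injectivity then upgrades to a global embedding, because a local isometry injective on vertices is injective on every cube (two distinct cubes with the same image would share a vertex by vertex injectivity, and then local injectivity on the link at that vertex gives a contradiction).

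For part (\ref{item:nointosc}), suppose for contradiction that a hyperplane $H$ of $\dot{X}$ inter-osculates with $Y$ at $(y_0;e)$, and set $e_X=\mu(e)$, $H_X=\mu[H]$. The osculation condition says no edge $f$ at $y_0$ in $Y$ satisfies $\phi(f)=e_X$, and this is strengthened by direct specialness of $X$: if some $f$ at $y_0$ had $\phi(f)\parallel e_X$ but $\phi(f)\neq e_X$, then $H_X$ would self-intersect or self-osculate at $\phi(y_0)$. Thus no edge at $y_0$ has $\phi(f)\parallel e_X$, meaning the imitator stays at $y_0$ whenever the walker traverses $e_X$ from $\phi(y_0)$; equivalently $e$ joins $j(y_0)=(\phi(y_0),y_0)$ to the vertex $(x_0',y_0)\notin j(Y)$, where $x_0'$ is the other endpoint of $e_X$. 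From the intersection hypothesis, pick a hyperplane $H'$ of $Y$ with $j[H']=H$, and an edge $f_0$ of $Y$ dual to $H'$.

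To derive the contradiction via Hyperplane Entrapment, note that by direct specialness $N(H)\cong H\times[-1,1]$ and $H$ is connected, so I can choose a path $\alpha$ along one side of $N(H)$ from an endpoint $j(y^*)$ of $j(f_0)$ to the endpoint $j(y_0)$ of $e$ (carefully picking which side of $N(H)$ to traverse and which endpoint $y^*$ of $f_0$ to start from). The projection $\alpha_X=\mu\alpha$ lies in $N(H_X)$, and reading $\alpha$ as a walker-imitator trajectory in the combinatorial description shows the walker traverses $\alpha_X$ from $\phi(y^*)$ to $\phi(y_0)$ while the imitator travels from $y^*\in N(H')$ to $y_0$. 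Lemma \ref{lem:hyptrap} then confines the imitator to $N(H')$, giving $y_0\in N(H')$. Hence there is an edge $f^*$ at $y_0$ dual to $H'$, so $\phi(f^*)$ is an edge at $\phi(y_0)$ dual to $H_X$; by direct specialness $\phi(f^*)=e_X$, so $j(f^*)=e$, contradicting osculation. The main obstacle is arranging $\alpha$ so that its endpoint in $\dot{X}$ is precisely $j(y_0)$ rather than the other endpoint of $e$ (which lies outside $j(Y)$), requiring care in the choice of both the side of $N(H)$ and the endpoint $y^*$ of $f_0$.
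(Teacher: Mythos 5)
Your proposal is correct and takes essentially the same approach as the paper: part (\ref{item:embedding}) rests on the shadowing identity $\delta(\phi\gamma,y)=\gamma$, and part (\ref{item:nointosc}) combines Hyperplane Entrapment with the fact that hyperplanes of $X$ neither self-intersect nor self-osculate, exactly as in the paper's proof -- the only difference is that you do the bookkeeping in the explicit $(\text{walker},\text{imitator})$-vertex model of the cover, whereas the paper conjugates by basepoint paths and uses an arbitrary path in $N(\dot{H})$ from $j(y_1)$, $y_1\in N(H)$, to the osculation vertex. The ``main obstacle'' you flag is not really one: entrapment only needs the walker to stay in $N(\phi[H'])$, and the imitator coordinate of either endpoint of $e$ is $y_0$, so no care over sides of $N(H)$ or endpoints of $f_0$ is required.
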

\begin{proof}$ $\par
	\begin{enumerate}
		\item As $j$ is a local isometry, it's enough to show injectivity of $j$ on $Y^0$.
		Suppose $y_1,y_2\in Y^0$ with $j(y_1)=j(y_2)$.
		Let $\gamma_1,\gamma_2$ be paths in $Y$ from $y$ to $y_1,y_2$ respectively. We know that $j\gamma_1 * (j\gamma_2)^{-1}$ is a loop in $(\dot{X},\dot{x})$, so $\phi\gamma_1 * (\phi\gamma_2)^{-1}$ is a loop in $(X,x)$ whose homotopy class lies in $G_\phi$.
		This means that the imitator traverses a loop in $Y$ based at $y$ as the walker traverses $\phi\gamma_1 * (\phi\gamma_2)^{-1}$. But $\delta(\phi\gamma_1,y)=\gamma_1$ and $\delta(\phi\gamma_2,y)=\gamma_2$ (see Construction \ref{cons:hom}), so the imitator must in fact traverse the loop $\gamma_1 * \gamma_2^{-1}$ -- in particular $\gamma_1,\gamma_2$ have a common endpoint $y_1=y_2$.
		\item Let $H$ be a hyperplane in $Y$, and let $y_1\in N(H)$ be a vertex. Suppose for contradiction that $\dot{H}:=j[H]$ osculates with $Y$ at some pair $(y_2; \dot{e}_2)\in Y^0\times \dot{X}^1$, so $\dot{e}_2$ is incident to $j(y_2)$. Suppose $\dot{e}_2$ projects to $e_2\in X^1$. Let $\dot{\gamma}$ be a path in $N(\dot{H})$ from $j(y_1)$ to $j(y_2)$. Then $\dot{\gamma}$ projects to a path $\gamma$ in $N(\phi[H])\subset X$ from $x_1:=\phi(y_1)$ to $x_2:=\phi(y_2)$.
		Let $\gamma_1,\gamma_2$ be paths in $Y$ from $y$ to $y_1,y_2$ respectively. The loop $j\gamma_1 * \dot{\gamma} * (j\gamma_2)^{-1}$ in $\dot{X}$ projects to the loop $\phi\gamma_1 * \gamma * (\phi\gamma_2)^{-1}$ in $X$, so the latter must have homotopy class in the imitator subgroup $G_\phi$. So the imitator traverses a loop in $Y$ based at $y$ as the walker traverses $\phi\gamma_1 * \gamma * (\phi\gamma_2)^{-1}$. The imitator must shadow the walker for the $\phi\gamma_1$ and $(\phi\gamma_2)^{-1}$ parts of the walkers journey, so the imitator's route takes the form $\gamma_1 * \delta * \gamma_2^{-1}$, where $\delta$ is a path from $y_1$ to $y_2$. As $\gamma$ is inside $N(\phi[H])$, Hyperplane Entrapment implies that $\delta$ is inside $N(H)$, so $y_2\in N(H)$, and there is an edge $f\in\link(y_2)$ dual to $H$.
		Since $\dot{H}$ osculates with $Y$ at $(y_2;\dot{e}_2)$, we know that $j(f)\neq\dot{e}_2$, so projecting to $X$ we get distinct edges $\phi(f),e_2\in\link(x_2)$ that are both dual to $\phi[H]$. Therefore $\phi[H]$ either self-intersects or self-osculates at $x$, a contradiction.\qedhere		
	\end{enumerate}
\end{proof}

We get the following corollary. This will be used throughout the paper to get our hands on subcomplexes that do not inter-osculate with hyperplanes, which then allows us to use Subcomplex Entrapment.

\begin{cor}\label{cor:embedPi}
	Let $Y_1,...,Y_n\to X$ be local isometries of finite virtually special cube complexes. Then there is a finite directly special regular cover $\hat{X}\to X$ such that all elevations of the $Y_i$ to $\hat{X}$ are embedded and do not inter-osculate with hyperplanes of $\hat{X}$.
\end{cor}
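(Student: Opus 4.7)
The plan is to combine Proposition \ref{prop:completionhyps} (which handles a single based local isometry) with a normal-core argument that controls all elevations uniformly.

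First I would pass to a finite directly special cover $X_0\to X$ (which exists because $X$ is virtually special). Since $[\pi_1(X):\pi_1(X_0)]<\infty$, each $Y_i\to X$ has only finitely many elevations $Y_i^{(j)}\to X_0$, and each $Y_i^{(j)}$ is finite and directly special (local isometries into directly special cube complexes have directly special source). Choose a basepoint in each $Y_i^{(j)}$ and apply Proposition \ref{prop:completionhyps} to obtain a finite imitator cover $\dot X_{i,j}\to X_0$ in which the based lift of $Y_i^{(j)}$ embeds and does not inter-osculate with any hyperplane. Let $X_1\to X_0$ be a common finite cover of all the $\dot X_{i,j}$ (e.g.\ $\pi_1(X_1)=\bigcap_{i,j}\pi_1(\dot X_{i,j})$), and define $\hat X\to X$ to be the finite regular cover corresponding to the normal core of $\pi_1(X_1)$ in $\pi_1(X)$. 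Since $\hat X$ covers $X_0$, it is directly special.

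To verify the conclusion I would prove two pullback preservation lemmas. (i) The preimage of an embedded locally convex subcomplex under any cover of directly special cube complexes decomposes into embedded locally convex components --- immediate from the fact that covers are local isometries. (ii) If a locally convex embedding $Y\hookrightarrow X_0$ does not inter-osculate with any hyperplane of $X_0$, then no elevation $Y'$ to any cover $p\colon X_1\to X_0$ inter-osculates with any hyperplane of $X_1$. For (ii), suppose a hyperplane $H$ of $X_1$ inter-osculates with $Y'$ at $(y',e)$, and write $\bar H:=p[H]$, $y:=p(y')$, $\bar e:=p(e)$; the intersection descends to give $\bar H\cap Y\neq\emptyset$, and if $\bar H$ did not osculate with $Y$ at $y$ then some edge $f\in Y$ at $y$ would satisfy $H(f)=\bar H=H(\bar e)$. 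Direct specialness of $X_0$ allows at most one edge at $y$ dual to $\bar H$, so $f=\bar e$; but then the unique lift of $\bar e$ starting at $y'$ lies in $Y'$ and equals $e$, contradicting osculation at $(y';e)$.

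Combining the two lemmas, the based elevation of each $Y_i^{(j)}$ pulled through $X_1$ up to $\hat X$ embeds and does not inter-osculate with hyperplanes of $\hat X$. Since $\pi_1(\hat X)$ is normal in $\pi_1(X)$ it is also normal in $\pi_1(X_0)$, so $\hat X\to X_0$ is regular and its deck group acts transitively on the elevations of each $Y_i^{(j)}$. Because both properties of interest are invariant under cubical automorphisms, every elevation of every $Y_i^{(j)}$ --- equivalently, every elevation of every $Y_i$ --- to $\hat X$ satisfies the conclusion. The main technical subtlety I anticipate is preservation lemma (ii): parallelism of edges is not generally preserved under covers, and the argument succeeds only because of the directly special constraint that at most one edge at any vertex is dual to a given hyperplane.
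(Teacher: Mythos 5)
Your proposal is correct and follows essentially the same route as the paper: reduce to a directly special cover, apply Proposition \ref{prop:completionhyps} to each elevation, pass to a finite regular cover factoring through all the resulting imitator covers, and use deck-group transitivity on elevations to spread the property from one elevation to all. Your preservation lemma (ii) is precisely the step the paper asserts without proof (``the property of an elevation being embedded and not inter-osculating with hyperplanes passes to further covers''), and your argument for it is valid.
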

\begin{proof}
	Firstly, we may assume that $X$ is already directly special, as otherwise we can pass to a finite directly special cover of $X$ and replace $Y_1,...,Y_n$ with all their elevations to this cover. Next, apply Proposition \ref{prop:completionhyps} to each $Y_i$ in turn to obtain finite covers $X_i\to X$, each containing an elevation of $Y_i$ which is embedded and does not inter-osculate with hyperplanes. The property of an elevation being embedded and not inter-osculating with hyperplanes passes to further covers. The desired cover $\hat{X}\to X$ is any finite regular cover of $X$ that factors through all the covers $X_i\to X$. The regularity of the cover means that one elevation of $Y_i$ will have the desired property if and only if all elevations of $Y_i$ do, so we are done.
\end{proof}

The following proposition is a simple example of the power of Subcomplex Entrapment. This proposition will be generalised to more than two subcomplexes in Section \ref{sec:hierarchical}.

\begin{prop}\label{prop:twoconintersection}
	Let $Y_1$ and $Y_2$ be locally convex subcomplexes of a finite directly special cube complex $X$, and suppose that $Y_2$ does not inter-osculate with any hyperplanes of $X$.
	Suppose $x\in Y_1\cap Y_2$, and let $(\dot{X},\dot{x})\to(X,x)$ be the imitator cover of the inclusion $\phi:(Y_1,x)\xhookrightarrow{} (X,x)$. Then the based elevations of $Y_1$ and $Y_2$ to $(\dot{X},\dot{x})$ have connected intersection.
\end{prop}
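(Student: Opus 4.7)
The plan is to show that any vertex $v$ in the intersection of the two based elevations can be joined to $\dot{x}$ by a path that lies in that intersection, using the walker-imitator construction with $Y_2$ as the entrapment subcomplex. Let $\pi:(\dot{X},\dot{x})\to(X,x)$ denote the covering and $j:(Y_1,x)\to(\dot{X},\dot{x})$ the lift from Proposition \ref{prop:completionhyps}, which is an embedding; write $\dot{Y}_1:=j(Y_1)$ and let $\dot{Y}_2$ be the based elevation of $Y_2$, embedded in $\dot{X}$ since $Y_2\hookrightarrow X$ is.

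First, pick vertex $v\in\dot{Y}_1\cap\dot{Y}_2$ and choose paths $\dot{\alpha}$ in $\dot{Y}_1$ and $\dot{\beta}$ in $\dot{Y}_2$, each from $\dot{x}$ to $v$. Their projections are paths $\alpha\subset Y_1$ and $\beta\subset Y_2$ from $x$ to $\pi(v)$. Since $\dot{\alpha}*\dot{\beta}^{-1}$ is a loop in $\dot{X}$ at $\dot{x}$, its projection satisfies $[\alpha*\beta^{-1}]\in G_\phi=\pi_1(\dot{X},\dot{x})$.

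Next, send the walker along $\beta$ in $X$ and start the imitator at $x\in Y_1$, producing a path $\delta=\delta(\beta,x)$ in $Y_1$. Since $\beta\subset Y_2$, $x\in Y_1\cap Y_2$, and $Y_2$ does not inter-osculate with hyperplanes of $X$, Subcomplex Entrapment (Lemma \ref{lem:subcomplextrap}) forces $\delta\subset Y_1\cap Y_2$. Let $y\in Y_1\cap Y_2$ be the endpoint of $\delta$, so $\phi(y)=\pi(v)$ in $X$. I claim $j(y)=v$. Indeed, $\delta$ and $\alpha$ are both paths in $Y_1$ from $x$ to $\phi(y)=\pi(v)$, so $\delta*\alpha^{-1}$ is a loop in $Y_1$ at $x$; hence $[\delta*\alpha^{-1}]\in\pi_1(Y_1,x)\subset G_\phi$ (using that $\rho_\phi$ retracts onto $\pi_1(Y_1,x)$). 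Combined with $[\alpha*\beta^{-1}]\in G_\phi$, this gives $[\delta*\beta^{-1}]\in G_\phi$, so the lifts of $\delta$ and $\beta$ starting at $\dot{x}$ end at the same vertex of $\dot{X}$; that vertex is $v$, and the lift of $\delta$ is precisely $j(\delta)$.

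Finally, $j(\delta)$ lies in $\dot{Y}_1=j(Y_1)$ by construction, and since it projects into $Y_2$ and starts at $\dot{x}\in\dot{Y}_2$, connectedness of the components of $\pi^{-1}(Y_2)$ forces $j(\delta)\subset\dot{Y}_2$. Thus $j(\delta)$ is a path from $\dot{x}$ to $v$ in $\dot{Y}_1\cap\dot{Y}_2$, proving connectedness. The whole argument is quite short; the only genuinely delicate point is identifying the endpoint $j(y)$ with $v$, which is handled by the cocycle-style manipulation of homotopy classes modulo $G_\phi$ in the middle paragraph. All other steps are direct applications of the imitator-cover machinery already established.
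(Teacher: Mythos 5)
There is a genuine gap at the step ``Let $y\in Y_1\cap Y_2$ be the endpoint of $\delta$, so $\phi(y)=\pi(v)$ in $X$.'' Nothing established up to that point implies that the imitator finishes at the walker's terminal vertex. Subcomplex Entrapment (Lemma \ref{lem:subcomplextrap}) only tells you that $\delta$ lies in $Y_1\cap Y_2$; in fact it confines the imitator to the component of $Y_1\cap Y_2$ containing $x$, so if $\pi(v)$ lay in a different component of $Y_1\cap Y_2$ -- and ruling this out is precisely part of what the proposition asserts -- your claim would be false. For a general path $\beta$ in $Y_2$ from $x$ to a vertex of $Y_1\cap Y_2$, the endpoint of $\delta(\beta,x)$ need not be the endpoint of $\beta$. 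Since your very next sentence forms the loop $\delta*\alpha^{-1}$, which is only a loop if $\delta$ and $\alpha$ share their terminal vertex, the one point you yourself flag as delicate rests on an unproved assertion.

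The assertion is true, but proving it is the missing heart of the argument, and it needs the hypothesis $[\alpha*\beta^{-1}]\in G_\phi$ that you only exploit afterwards: run the walker along $\alpha*\beta^{-1}$ with the imitator starting at $x$; since $\alpha\subset Y_1$ the imitator shadows the walker along $\alpha$ (Construction \ref{cons:hom}) and arrives at $\pi(v)$; since $[\alpha*\beta^{-1}]\in G_\phi$ the imitator's whole journey closes up at $x$, so $\delta(\beta^{-1},\pi(v))$ ends at $x$; and by reversibility of the walker--imitator process (Construction \ref{cons:imitator}) one has $\delta(\beta,x)=\delta(\beta^{-1},\pi(v))^{-1}$, which therefore ends at $\pi(v)$. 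With this inserted, the remainder of your argument (the manipulation giving $[\delta*\beta^{-1}]\in G_\phi$, identifying the lift of $\delta$ with $j(\delta)$, and placing $j(\delta)$ in $\dot{Y}_2$ via components of $\pi^{-1}(Y_2)$) is correct -- and the repaired proof is essentially the paper's, which sidesteps the issue by sending the walker around the concatenation $\gamma_1*\gamma_2^{-1}$ from the outset, so that shadowing along the $Y_1$-leg together with the closing-up condition pins down both endpoints of the imitator's $Y_2$-leg automatically.
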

\begin{proof}
	Denote the based elevations of $Y_1$ and $Y_2$ by $\dot{Y}_1$ and $\dot{Y}_2$.
	Our task is to show that the only component of $\dot{Y}_1\cap\dot{Y}_2$ is the one containing $\dot{x}$.
	Suppose $\dot{x}'\in\dot{Y}_1\cap\dot{Y}_2$ and suppose $\dot{\gamma}_1,\dot{\gamma}_2$ are paths from $\dot{x}$ to $\dot{x}'$ in $\dot{Y}_1,\dot{Y}_2$ respectively. Say $\dot{\gamma}_1,\dot{\gamma}_2$ descend to paths $\gamma_1,\gamma_2$ in $X$, so these will lie in $Y_1,Y_2$ respectively. Then $[\gamma_1 * \gamma_2^{-1}]$ lies in the imitator subgroup of $\phi$, so the imitator traverses a loop in $Y_1$ based at $x$ as the walker traverses $\gamma_1 * \gamma_2^{-1}$. The imitator must shadow the walker for the $\gamma_1$ part of the walker's journey, so the imitator traverses a loop of the form $\gamma_1 * \gamma'_2$. As no hyperplane inter-osculates with $Y_2$, we may apply Subcomplex Entrapment (Lemma \ref{lem:subcomplextrap}) to deduce that $\gamma'_2$ lies in $Y_1\cap Y_2$. But $\gamma_1 * \gamma'_2$ is a loop that lies entirely in $Y_1$, so $\delta(\gamma_1 * \gamma'_2,x)=\gamma_1 * \gamma'_2$ and $[\gamma_1 * \gamma'_2]$ lies in the imitator subgroup of $\phi$. Hence $\gamma_1 * \gamma'_2$ lifts to a loop $\dot{\gamma}_1 * \dot{\gamma}'_2$ in $\dot{Y}_1$, with $\dot{\gamma}'_2$ a path in $\dot{Y}_1\cap\dot{Y}_2$ from $\dot{x}'$ to $\dot{x}$.
\end{proof}

\subsection{Double coset separability}

Having proven that convex subgroups are separable in Proposition \ref{prop:subgroupseparability}, we now turn our attention to double cosets of convex subgroups. This Proposition was proven by Oreg\'on-Reyes in \cite[Theorem A.1]{Oregonreyes20}; the proof below is similar but recast into the language of imitator homomorphisms.

\begin{prop}\label{prop:doublecoset}
	Let $X$ be a finite virtually special cube complex, and let $K_1,K_2<G:=\pi_1X$ be convex subgroups. Then for any $g\in G$ the double coset $K_1 gK_2$ is separable in $G$.
\end{prop}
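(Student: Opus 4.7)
The plan is to adapt the proof of Proposition~\ref{prop:subgroupseparability} using two imitator homomorphisms, and to invoke Lemma~\ref{lem:separabledcoset}, the double-coset analogue of Lemma~\ref{lem:separablesubgroup}. First I would reduce to the case $g=1$: writing $K_1gK_2=K_1(gK_2g^{-1})g$ and using that right-multiplication by $g$ is a homeomorphism in the profinite topology, it suffices to show separability of $K_1K_2$ for any two convex subgroups (the conjugate $gK_2g^{-1}$ is again convex, as its stabilised convex subcomplex is merely $g$-translated).

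Next, by Lemma~\ref{lem:convexisbased} I would realise $K_1$ and $K_2$ as the images of based local isometries $\phi_i\colon(Y_i,y_i)\to(X,x)$ of finite cube complexes sharing the basepoint $x$. Take a finite directly special cover $(\hat X,\hat x)\to(X,x)$ and pass successively to the imitator cover of a based elevation of $\phi_1$ and then of $\phi_2$. This produces a finite based cover $(\hat X',\hat x')\to(X,x)$ on which the based elevations of both $\phi_1$ and $\phi_2$ have full imitator subgroup, so Construction~\ref{cons:hom} yields retractions $\rho_i\colon\hat G'\twoheadrightarrow\hat K_i'$, where $\hat G':=\pi_1(\hat X',\hat x')$ and each $\hat K_i'$ is a finite-index convex subgroup of $K_i$. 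Since each $K_i$ is a finite union of cosets of $\hat K_i'$, the set $K_1K_2$ is a finite union of sets of the form $\hat K_1'c\hat K_2'$ with $c\in G$, each being a left-translate of a product $(c^{-1}\hat K_1'c)\hat K_2'$ of two convex subgroups; repeating the construction with $\phi_1$ conjugated via $c$ produces retractions onto both of these factors in a suitable residually finite cover.

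It remains to show that such a product of two convex subgroups is separable in the residually finite ambient group, which is exactly the content of Lemma~\ref{lem:separabledcoset}. (The residual finiteness of $\hat G'$ is Proposition~\ref{prop:residuallyfinite}.) The main obstacle is Lemma~\ref{lem:separabledcoset} itself: it generalises Lemma~\ref{lem:separablesubgroup} by considering an auxiliary homomorphism of the form $h\mapsto(q(h),q\rho_1(h),q\rho_2(h))$ into a power of a finite quotient and showing that, for any $g$ not in the product, an appropriate finite quotient makes the image of $g$ miss the image of the product. Because $\hat K_1'\hat K_2'$ is not a subgroup, one cannot reduce the separation to a single equality as in Lemma~\ref{lem:separablesubgroup}; instead one must simultaneously rule out all hypothetical factorisations $g=k_1k_2$ through both retraction coordinates.
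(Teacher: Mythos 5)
There is a genuine gap at the heart of your argument. The paper's Lemma \ref{lem:separabledcoset} is not the statement ``a product of two convex subgroups (or two retracts) of a residually finite group is separable''; it requires a \emph{single} retraction $\rho\colon G\twoheadrightarrow K_1$ together with separability of $K_2$ \emph{and} the compatibility condition $\rho(K_2)<K_2$. That condition is what makes the auxiliary quotient $h\mapsto(q(h),q\rho(h))$ work: for a hypothetical factorisation $g=k_1k_2$ one computes $q\rho(k_1k_2)=q(k_1)q\rho(k_2)$ and uses $\rho(k_2)\in K_2$ to place $q(k_1k_2)$ in $(q\rho(k_1k_2))q(K_2)$, contradicting the choice of $q$. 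Your proposal never produces this condition. In the paper it is exactly here that the cubical geometry enters: one first applies Corollary \ref{cor:embedPi} so that the based elevations $\hat Y_1,\hat Y_2$ are embedded and do not inter-osculate with hyperplanes, and then Subcomplex Entrapment (Lemma \ref{lem:subcomplextrap}) shows that when the walker runs around a loop in $\hat Y_2$ the imitator for $\hat Y_1\hookrightarrow\hat X$ stays in $\hat Y_1\cap\hat Y_2$, giving $\rho_\phi(\dot K_2)<\dot K_2$. You acknowledge the separability lemma as ``the main obstacle'' but only sketch a two-retraction variant $h\mapsto(q(h),q\rho_1(h),q\rho_2(h))$; without a hypothesis like $\rho_1(K_2)\subset K_2$ the terms $q\rho_1(k_1k_2)=q(k_1\rho_1(k_2))$ and $q\rho_2(k_1k_2)$ are uncontrolled, so this variant does not rule out factorisations $g=k_1k_2$, and no proof of it is given.

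There is a second problem with the step that is supposed to supply the two retractions. Passing to the imitator cover of (an elevation of) $\phi_1$ gives $\rho_1$ defined on the imitator subgroup $G_{\phi_1}$; after passing further to the imitator cover of an elevation of $\phi_2$, you obtain a smaller group $\hat G'$, and $\rho_1$ restricted to $\hat G'$ is merely a homomorphism into $\hat K_1$ -- its image need not lie in $\hat G'$, so it is no longer a retraction of $\hat G'$, and the claim that both elevations have ``full imitator subgroup'' in the final cover is not established (the imitator for a lifted map uses the finer parallelism classes of the cover, so its subgroup does not simply become everything). The paper avoids all of this by using only one imitator homomorphism (onto $\hat K_1$), handling $K_2$ through its separability as a convex subgroup (Proposition \ref{prop:subgroupseparability}) plus the entrapment condition above, and then transferring separability from the finite-index subgroup $\hat G_\phi$ back to $G$. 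Your opening reductions (to $g=1$, to based local isometries via Lemma \ref{lem:convexisbased}, and to a finite union of translates of cosets of finite-index subgroups) do match the paper, but the core of the proof is missing.
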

\begin{proof}
	 As separability is equivalent to being closed in the profinite topology, left and right translates of separable subsets are still separable. So it is enough to prove that $g^{-1}K_1 gK_2$ is separable. But conjugates of convex subgroups are still convex, so we may assume from the start that $g=1$.
		
	 Choose a basepoint $x\in X$ and write $G=\pi_1(X,x)$. By Lemma \ref{lem:convexisbased} there are based local isometries of finite cube complexes $(Y_i,y_i)\to (X,x)$ inducing the subgroups $K_i$. By Corollary \ref{cor:embedPi}, there is a finite directly special cover $(\hat{X},\hat{x})\to(X,x)$ such that the based elevations $(\hat{Y}_i,\hat{y}_i)\xhookrightarrow{}(\hat{X},\hat{x})$ of $(Y_i,y_i)\to (X,x)$ are embedded and do not inter-osculate with hyperplanes of $\hat{X}$. We will consider the $\hat{Y}_i$ as subcomplexes of $\hat{X}$ with $\hat{y}_i=\hat{x}$. If $\hat{G}<G$ is the subgroup corresponding to $(\hat{X},\hat{x})\to(X,x)$ then $\hat{K}_i:=K_i\cap\hat{G}$ is the subgroup corresponding to $\hat{Y}_i\subset\hat{X}$. And the double coset $K_1 K_2$ is a finite union of translates of double cosets $g_1\hat{K}_1 \hat{K}_2 g_2$ for $g_i\in K_i$, so it is enough to prove that $\hat{K}_1\hat{K}_2$ is separable in $\hat{G}$ (a finite union of closed sets is closed).
		
	 We now consider the imitator homomorphism $\rho_\phi:\hat{G}_{\phi}\to\hat{K}_1<\hat{G}_{\phi}$ of the inclusion $\phi:(\hat{Y}_1,\hat{x})\xhookrightarrow{}(\hat{X},\hat{x})$. Let $\dot{K}_2:=\hat{K}_2\cap\hat{G}_{\phi}$. 
	 If the walker traverses a loop $\gamma$ in $\hat{Y}_2$ based at $\hat{x}$, then Subcomplex Entrapment implies that the imitator stays in $\hat{Y}_1\cap\hat{Y}_2$. It follows that $\rho_\phi(\dot{K}_2)<\dot{K}_2$. We can then apply Lemma \ref{lem:separabledcoset} below to deduce that $\hat{K}_1\dot{K}_2$ is separable in $\hat{G}_{\phi}$. Since $\hat{G}_{\phi}$ has finite index in $\hat{G}$, we obtain separability of $\hat{K}_1\hat{K}_2$ in $\hat{G}$, as required.
\end{proof}

\begin{lem}\label{lem:separabledcoset}
	Let $\rho:G\twoheadrightarrow K_1< G$ be a retraction of a group $G$, and let $K_2<G$ be a separable subgroup with $\rho(K_2)< K_2$. Then the double coset $K_1 K_2$ is separable in $G$.
\end{lem}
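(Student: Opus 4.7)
The plan is to mimic the proof of Lemma \ref{lem:separablesubgroup}, combining a finite quotient of $G$ with the retraction $\rho$ via a diagonal-type map of the form $h\mapsto(q(h),q\rho(h))$, and to exploit the hypothesis $\rho(K_2)<K_2$ to reduce the problem from separating $g$ from the double coset $K_1 K_2$ to separating a single element from $K_2$.

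The first step I would establish is the equivalence
$$g\in K_1 K_2\iff \rho(g)^{-1}g\in K_2.$$
For the forward direction, writing $g=k_1 k_2$ with $k_i\in K_i$, and using $\rho(k_1)=k_1$ together with $\rho(K_2)<K_2$, one obtains $\rho(g)=k_1\rho(k_2)$ and hence $\rho(g)^{-1}g=\rho(k_2)^{-1}k_2\in K_2$. The reverse direction is immediate by writing $g=\rho(g)\cdot(\rho(g)^{-1}g)$, with $\rho(g)\in K_1$.

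Now given $g\notin K_1 K_2$, the equivalence above yields $\rho(g)^{-1}g\notin K_2$, so by separability of $K_2$ there exists a finite quotient $q:G\to\bar{G}$ with $q(\rho(g)^{-1}g)\notin q(K_2)$. I would then define the homomorphism
$$t:G\to\bar{G}\times\bar{G},\qquad h\mapsto(q(h),q\rho(h)),$$
which factors through a finite quotient of $G$, and verify that $t(g)\notin t(K_1 K_2)$. Indeed, if one had $t(g)=t(k_1 k_2)$ with $k_i\in K_i$, then using $q\rho(k_1 k_2)=q(k_1)\,q\rho(k_2)$ together with $q(k_1 k_2)=q(k_1)q(k_2)$ one computes $q(\rho(g)^{-1}g)=q(\rho(k_2)^{-1}k_2)\in q(K_2)$, contradicting the choice of $q$. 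This places $g$ inside $t^{-1}(\bar{G}\times\bar{G}\setminus t(K_1 K_2))$, which is open in the profinite topology and disjoint from $K_1 K_2$, giving the desired separability.

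There is no real obstacle to this argument. The only subtlety is spotting the reduction $g\in K_1 K_2\iff\rho(g)^{-1}g\in K_2$, which is exactly where the hypothesis $\rho(K_2)<K_2$ enters; once that is in hand, the two-coordinate trick is the same one used in Lemma \ref{lem:separablesubgroup} and the remaining verification is a single line of algebra.
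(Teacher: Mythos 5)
Your proof is correct and follows essentially the same route as the paper: both reduce $g\notin K_1K_2$ to $\rho(g)^{-1}g\notin K_2$, apply separability of $K_2$ to get a finite quotient $q$, and then use the map $h\mapsto(q(h),q\rho(h))$ with the identities $\rho(k_1)=k_1$ and $\rho(k_2)\in K_2$ to derive the contradiction. The only cosmetic difference is that you prove the equivalence $g\in K_1K_2\iff\rho(g)^{-1}g\in K_2$ in both directions, whereas the paper only uses the direction it needs.
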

\begin{proof}
	Let $g\in G-K_1 K_2$. The goal is to find a finite quotient of $G$ that separates $g$ from $K_1 K_2$. We know that $\rho(g)\in K_1$, so $g\notin\rho(g)K_2$, and by separability of $K_2$ there exists a finite quotient $q:G\to\bar{G}$ with
	\begin{equation}\label{qequation}
q(g)\notin (q\rho(g))q(K_2).
	\end{equation} 	
	Now consider another finite quotient of $G$, defined by
	\begin{align*}
		t:G&\to\bar{G}\times\bar{G}\\
		h&\mapsto(q(h),q\rho(h)).
	\end{align*}
We claim that $t(g)\notin t(K_1 K_2)$, so suppose for contradiction that $t(g)=t(k_1k_2)$ with $k_i\in K_i$. Then
\begin{equation}\label{texpand}
(q(g),q\rho(g))=(q(k_1k_2),q\rho(k_1k_2)).
\end{equation}
But $\rho(k_1)=k_1$ and $\rho(k_2)\in K_2$, so $q(k_1k_2)\in (q\rho(k_1k_2))q(K_2)$, contradicting (\ref{qequation}) and (\ref{texpand}).
\end{proof}

The following proposition is a good example of how double coset separability can be used.

\begin{prop}
	For $i=1,2$, suppose we have a commutative diagram of local isometries of virtually special cube complexes
	\begin{equation}
		\begin{tikzcd}[
			ar symbol/.style = {draw=none,"#1" description,sloped},
			isomorphic/.style = {ar symbol={\cong}},
			equals/.style = {ar symbol={=}},
			subset/.style = {ar symbol={\subset}}
			]
			(\widetilde{Y}_i,\widetilde{y}_i)\ar{d}\ar[hook,r]&(\widetilde{X},\widetilde{x}_i)\ar{d}{\mu_i}\\
			(Y_i,y_i)\ar{r}&(X,x_i),
		\end{tikzcd}
	\end{equation}
	where the vertical maps are universal covering maps, $\mu_1=\mu_2$, and $Y_i$ and $X$ are finite. Let $\gamma$ be a path in $X$ from $x_1$ to $x_2$, and suppose that it lifts to a path $\widetilde{\gamma}$ in $\widetilde{X}$ from $\widetilde{x}_1$ to $\widetilde{x}_2$. If $\widetilde{Y}_1$ and $\widetilde{Y}_2$ are disjoint in $\widetilde{X}$, then there exists a finite cover $\phi:\hat{X}\to X$ and based elevations
	\begin{equation}
		\begin{tikzcd}[
			ar symbol/.style = {draw=none,"#1" description,sloped},
			isomorphic/.style = {ar symbol={\cong}},
			equals/.style = {ar symbol={=}},
			subset/.style = {ar symbol={\subset}}
			]
			(\hat{Y}_i,\hat{y}_i)\ar{d}\ar[hook,r]&(\hat{X},\hat{x}_i)\ar{d}\\
			(Y_i,y_i)\ar{r}&(X,x_i),
		\end{tikzcd}
	\end{equation}
	such that $\gamma$ lifts to a path $\hat{\gamma}$ from $\hat{x}_1$ to $\hat{x}_2$ and so that $\hat{Y}_1$ and $\hat{Y}_2$ are disjoint in $\hat{X}$.
\end{prop}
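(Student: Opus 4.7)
The plan is to reduce the proposition to Proposition \ref{prop:doublecoset} on separability of double cosets of convex subgroups, after first passing to a cover in which the based elevations are embedded.

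First, apply Corollary \ref{cor:embedPi} to the local isometries $Y_1, Y_2\to X$ to obtain a finite directly special regular cover $\hat{X}_0\to X$ in which all elevations of $Y_1$ and $Y_2$ are embedded; let $H_0<G:=\pi_1(X,x_1)$ be the corresponding finite-index subgroup. Embeddedness of elevations persists under further covers (the preimage of an embedded locally convex subcomplex under a covering map is a disjoint union of embedded components), so any cover $\hat{X}\to X$ factoring through $\hat{X}_0$ automatically has embedded elevations of $Y_1$ and $Y_2$.

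Next, set up the double coset translation. Let $K_1, K_2'<G$ be the convex subgroups stabilising $\widetilde{Y}_1, \widetilde{Y}_2\subset\widetilde{X}$ respectively (so $K_2'$ is a conjugate of $\pi_1(Y_2,y_2)$ under the identification of $\pi_1(X,x_2)$ with $\pi_1(X,x_1)$ via $\gamma$). For any finite-index subgroup $H<H_0$, let $(\hat{X},\hat{x}_1)\to(X,x_1)$ be the corresponding cover, let $\hat{x}_2$ be the endpoint of the lift of $\gamma$ at $\hat{x}_1$, and let $\pi:\widetilde{X}\to\hat{X}$ be the covering. Then the based elevations $\hat{Y}_i$ coincide with the (embedded) images $\pi(\widetilde{Y}_i)$, so
\[
\hat{Y}_1\cap\hat{Y}_2=\emptyset \iff h\widetilde{Y}_1\cap\widetilde{Y}_2=\emptyset \text{ for all } h\in H \iff H\cap A=\emptyset,
\]
where $A:=\{g\in G: g\widetilde{Y}_1\cap\widetilde{Y}_2\neq\emptyset\}$. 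Since $G$ acts freely on $\widetilde{X}^0$ (by Remark \ref{remk:torsionfree}) and each $\widetilde{Y}_i$ has finitely many $K_i$-orbits of vertices (as $Y_i$ is finite), a direct check using free action shows that $A$ is a finite union of double cosets $K_2' g_{j,l} K_1$; the hypothesis $\widetilde{Y}_1\cap\widetilde{Y}_2=\emptyset$ gives $1\notin A$.

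Finally, apply Proposition \ref{prop:doublecoset}: each double coset $K_2' g_{j,l} K_1$ is separable in $G$ and does not contain $1$, so there exists a finite-index subgroup $H_{j,l}<G$ disjoint from it. Setting $H:=H_0\cap\bigcap_{j,l}H_{j,l}$ yields a finite-index subgroup with $H<H_0$ (so the elevations in the corresponding cover $\hat{X}$ remain embedded) and $H\cap A=\emptyset$ (so the based elevations $\hat{Y}_1, \hat{Y}_2$ are disjoint). The only step requiring real thought is the translation from the geometric disjointness statement into a finite union of double cosets; the rest is a direct application of the previously established separability machinery.
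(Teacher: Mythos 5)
Your proposal is correct and takes essentially the same route as the paper: both proofs reduce disjointness of the based elevations to double coset separability (Proposition \ref{prop:doublecoset}) for the convex stabilisers of $\widetilde{Y}_1$ and $\widetilde{Y}_2$, use cocompactness together with freeness/properness of the deck action to show that the set of elements $g$ with $g\widetilde{Y}_1\cap\widetilde{Y}_2\neq\emptyset$ involves only finitely many double cosets, and guarantee embeddedness by factoring through a cover where all elevations embed. The only difference is bookkeeping: you separate the identity from each of the finitely many bad double cosets, whereas the paper translates each bad element to one of finitely many bounded representatives $g'$ and separates those from the single double coset $K_1K_2$ (which is why the paper needs $\hat{G}$ normal and you do not) -- these are the same argument.
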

\begin{proof}
	Let $K_i<G:=\pi_1 X$ be subgroups corresponding to $Y_i\to X$ such that $K_i$ stabilises $\widetilde{Y}_i$. Suppose the $K_i$-translates of the $R$-ball centred at $\widetilde{x}_i$ cover $\widetilde{Y}_i$. We will define the finite cover $\hat{X}\to X$ by picking a finite-index normal subgroup $\hat{G}\triangleleft G$ and taking the quotient $(\hat{X},\hat{x}_i):=(\widetilde{X},\widetilde{x}_i)/\hat{G}$. 
	There are finite covers of $X$ where the elevations of $Y_i$ embed, so the based elevations $\hat{Y}_i$ will embed in $\hat{X}$ if $\hat{X}\to X$ factors through these covers (i.e. if $\hat{G}$ is small enough). Our task is to pick $\hat{G}$ such that $\hat{Y}_1$ and $\hat{Y}_2$ are disjoint in $\hat{X}$, or equivalently such that no $g\in \hat{G}$ satisfies
	\begin{equation}\label{badg}
		\widetilde{Y}_1\cap g\widetilde{Y}_2\neq\emptyset.
	\end{equation}
	If $g\in G$ satisfies (\ref{badg}) then there exists $k_2\in K_2$ with $d(\widetilde{Y}_1,gk_2\widetilde{x}_2)\leq R$ and also $k_1\in K_1$ with $d(\widetilde{x}_1,k_1gk_2\widetilde{x}_2)\leq 2R$. Writing $g':=k_1gk_2$ we know that $g'$ also satisfies (\ref{badg}). However, no element of the double coset $K_1 K_2$ satisfies (\ref{badg}); so $g'\notin K_1 K_2$, and by double coset separability we can choose $\hat{G}$ so that
	\begin{equation}\label{separateg'}
		g'\notin\hat{G}K_1K_2,
	\end{equation} 
	and hence $g\notin\hat{G}$. There are only finitely many $g'\in G$ with $d(\widetilde{x}_1,g'\widetilde{x}_2)\leq 2R$, so we can satisfy (\ref{separateg'}) for all possible $g'$ that arise in this way, thus completing the proof.
\end{proof}

\subsection{Triple coset separability}

We can push the arguments of Proposition \ref{prop:doublecoset} slightly further to also obtain separability of triple cosets. This result is new, although multicoset separability for virtually special hyperbolic groups follows from \cite{Minasyan06}.

\begin{prop}\label{prop:triplecoset}
	Let $X$ be a finite virtually special cube complex, and let $K_1,K_2,K_3<G:=\pi_1X$ be convex subgroups. Then for any $g_1,g_2\in G$ the triple coset $K_1g_1K_2 g_2K_3$ is separable in $G$.
\end{prop}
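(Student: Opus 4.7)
The plan is to mirror the proof of Proposition \ref{prop:doublecoset}, but applying the imitator homomorphism of the \emph{middle} subcomplex $Y_2$ so that the resulting retraction lands in $K_2$ while simultaneously preserving both $K_1$ and $K_3$. First I would reduce to the case $g_1=g_2=1$ via the identity
\[
K_1 g_1 K_2 g_2 K_3 \;=\; g_1\bigl(g_1^{-1}K_1 g_1\bigr)\,K_2\,\bigl(g_2 K_3 g_2^{-1}\bigr)\,g_2,
\]
which exhibits $K_1 g_1 K_2 g_2 K_3$ as a translate of $K_1' K_2 K_3'$ with $K_1',K_3'$ still convex. By Lemma \ref{lem:convexisbased}, represent each $K_i$ by a based local isometry of finite cube complexes $(Y_i,y_i)\to(X,x)$ with common basepoint $x$, and use Corollary \ref{cor:embedPi} to pass to a finite directly special regular cover $(\hat X,\hat x)\to(X,x)$ in which every based elevation of each $Y_i$ is embedded and does not inter-osculate with any hyperplane. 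Setting $\hat G=\pi_1(\hat X,\hat x)$ and $\hat K_i=\pi_1(\hat Y_i,\hat x)=K_i\cap\hat G$, an iteration of the coset bookkeeping in the proof of Proposition \ref{prop:doublecoset} writes $K_1 K_2 K_3$ as a finite union of $G$-translates of sets of the form $\hat L_1\hat K_2\hat K_3$, where each $\hat L_1$ is $\hat G$ intersected with a $G$-conjugate of $K_1$ and is therefore the fundamental group of another (possibly differently based) elevation of $Y_1$ to $\hat X$. Since the argument below applies uniformly to any such $\hat L_1$, it suffices to prove that $\hat K_1\hat K_2\hat K_3$ is separable in $\hat G$.

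The geometric heart is the imitator retraction associated to the middle subcomplex. Let $\phi:(\hat Y_2,\hat x)\hookrightarrow(\hat X,\hat x)$ and consider $\rho_\phi:\hat G_\phi\to\hat K_2$ from Construction \ref{cons:hom}; note that $\hat K_2\subset\hat G_\phi$, because a loop in $\hat Y_2$ based at $\hat x$ is shadowed by the imitator exactly. Put $\dot K_i:=\hat K_i\cap\hat G_\phi$ for $i=1,3$. I claim $\rho_\phi(\dot K_i)\subset\dot K_i$: applying Subcomplex Entrapment (Lemma \ref{lem:subcomplextrap}) with $Y=\hat Y_2$, $X=\hat X$ and $Z=\hat Y_i$ (which does not inter-osculate with hyperplanes of $\hat X$), a walker traversing a loop in $\hat Y_i$ based at $\hat x$ is shadowed by an imitator that remains in $\hat Y_2\cap\hat Y_i$, so the resulting imitator loop represents an element of $\hat K_2\cap\hat K_i\subset\dot K_i$.

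The remaining ingredient is the following analogue of Lemma \ref{lem:separabledcoset}: if $\rho:G\twoheadrightarrow K_2<G$ is a retraction, $K_1,K_3<G$ satisfy $\rho(K_i)\subset K_i$ for $i=1,3$, and every double coset $K_1 h K_3$ is separable in $G$, then $K_1 K_2 K_3$ is separable. The proof mimics Lemma \ref{lem:separabledcoset}: given $g\notin K_1 K_2 K_3$, one has $\rho(g)\in K_2$ and hence $g\notin K_1\rho(g)K_3$; double-coset separability yields a finite quotient $q:G\to\bar G$ with $q(g)\notin q(K_1)\,q\rho(g)\,q(K_3)$, and then $t(h):=(q(h),q\rho(h))$ separates $g$ from $K_1 K_2 K_3$, since the assumption $t(g)=t(k_1 k_2 k_3)$ combined with $\rho(k_2)=k_2$ forces
\[
q(g)\;=\;q\bigl(k_1\rho(k_1)^{-1}\bigr)\cdot q\rho(g)\cdot q\bigl(\rho(k_3)^{-1}k_3\bigr)\;\in\;q(K_1)\,q\rho(g)\,q(K_3),
\]
a contradiction. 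Applying this abstract lemma inside the finite-index subgroup $\hat G_\phi$ (which is itself the fundamental group of a finite virtually special cube complex, so Proposition \ref{prop:doublecoset} supplies the required double-coset separability there) shows that $\dot K_1\hat K_2\dot K_3$ is separable in $\hat G_\phi$, hence in $\hat G$; a final decomposition $\hat K_i=\sqcup\dot K_i g$ upgrades this to separability of $\hat K_1\hat K_2\hat K_3$ in $\hat G$. The main --- though mild --- obstacle I anticipate is the coset bookkeeping in the first paragraph, namely tracking the finitely many $G$-conjugates of $\hat K_1$ that arise when one substitutes a left-coset decomposition of $K_2$ into the triple product; the geometric input is just Entrapment applied on both sides of the middle retraction.
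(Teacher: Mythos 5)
Your strategy is the paper's own: reduce to $g_1=g_2=1$, pass to a regular cover via Lemma \ref{lem:convexisbased} and Corollary \ref{cor:embedPi}, retract onto the \emph{middle} subgroup with the imitator homomorphism of $\hat{Y}_2$, use Subcomplex Entrapment to see that the retraction preserves the outer subgroups, and conclude with the three-factor retraction lemma (your abstract lemma is verbatim Lemma \ref{lem:separabletcoset}) together with Proposition \ref{prop:doublecoset} applied inside $\hat{G}_\phi$. The gap is exactly the point you dismiss as mild bookkeeping. Unlike the double-coset case, where $K_1K_2=\bigcup g_1\hat{K}_1\hat{K}_2g_2$ involves no conjugation, the decomposition of $K_1K_2K_3$ unavoidably leaves a representative $b\in K_2$ trapped in the middle, so the pieces have the form (up to translation) $\hat{L}_1\hat{K}_2\hat{K}_3$ with $\hat{L}_1=b^{-1}\hat{K}_1b\cap\hat{G}$, $b\in K_2$ --- and conjugating back by $b$ merely converts this into $\hat{K}_1\hat{K}_2(b\hat{K}_3b^{-1})$, so a conjugated outer factor cannot be normalised away. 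Your claim that ``the argument below applies uniformly to any such $\hat{L}_1$'' is not justified by the argument you give: your entrapment step for $\rho_\phi(\dot{K}_i)\subset\dot{K}_i$ genuinely uses that the walker's loop lies in a subcomplex through $\hat{x}$ and that the imitator also starts at $\hat{x}$, whereas $\hat{L}_1$ corresponds to an elevation $Y_1'$ of $Y_1$ based at a point $x'\neq\hat{x}$, and its elements are represented by loops $\eta*\gamma'*\eta^{-1}$ with $\gamma'$ in $Y_1'$; Lemma \ref{lem:subcomplextrap} gives nothing until you know the imitator sits in $\hat{Y}_2\cap Y_1'$ at the moment the walker starts $\gamma'$. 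In particular, proving separability of $\hat{K}_1\hat{K}_2\hat{K}_3$ alone does not dispose of the remaining translates.

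The repair is precisely the extra paragraph in the paper's proof, and it is available to you only because your conjugators come from a coset decomposition of $K_2$: since $b\in K_2$, the basepoint $x'$ of the conjugated elevation is the endpoint of a path $\hat{\delta}_2$ in $\hat{Y}_2$ starting at $\hat{x}$ (regularity of $\hat{X}\to X$ ensures this elevation is still embedded and does not inter-osculate with hyperplanes). Taking the walker's loop in the form $\hat{\delta}_2*\gamma'*\hat{\delta}_2^{-1}$, the imitator shadows $\hat{\delta}_2$ exactly inside $\hat{Y}_2$ and arrives at $x'\in\hat{Y}_2\cap Y_1'$, and only then does Entrapment confine it to $\hat{Y}_2\cap Y_1'$ during $\gamma'$; this is what yields $\rho_\phi(\hat{L}_1\cap\hat{G}_\phi)\subset\hat{L}_1$, the analogue of the paper's statement $\rho_\phi(\dot{K}'_3)<\dot{K}'_3$ (the paper conjugates the third factor where you conjugate the first; the two choices are symmetric). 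With this geometric step inserted, and with the final coset decompositions taken as left cosets of $\dot{K}_1$ in $\hat{K}_1$ and right cosets of $\dot{K}_3$ in $\hat{K}_3$ so that no new conjugates appear, your argument coincides with the paper's proof of Proposition \ref{prop:triplecoset}.
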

\begin{proof}
Conjugates of convex subgroups are convex, and left and right translates of separable subsets are separable, so we may assume that $g_1=g_2=1$. 

Choose a basepoint $x\in X$ and write $G=\pi_1(X,x)$. By Lemma \ref{lem:convexisbased} there are based local isometries of finite cube complexes $(Y_i,y_i)\to (X,x)$ inducing the subgroups $K_i$. By Corollary \ref{cor:embedPi}, there is a finite directly special regular cover $(\hat{X},\hat{x})\to(X,x)$ such that the based elevations $(\hat{Y}_i,\hat{y}_i)\xhookrightarrow{}(\hat{X},\hat{x})$ of $(Y_i,y_i)\to (X,x)$ are embedded and do not inter-osculate with hyperplanes of $\hat{X}$. We will consider the $\hat{Y}_i$ as subcomplexes of $\hat{X}$ with $\hat{y}_i=\hat{x}$. If $\hat{G}\triangleleft G$ is the subgroup corresponding to $(\hat{X},\hat{x})\to(X,x)$ then $\hat{K}_i:=K_i\cap\hat{G}$ is the subgroup corresponding to $\hat{Y}_i\subset\hat{X}$. The triple coset $K_1 K_2 K_3$ is a finite union of triple cosets $g_1\hat{K}_1 \hat{K}_2 g_2\hat{K}_3 g_3$ with $g_i\in K_i$, so it is enough to prove that the triple cosets
$$\hat{K}_1 \hat{K}_2 g_2\hat{K}_3 g_2^{-1}$$
are separable for $g_2\in K_2$ (again using the fact that left and right translates of separable subsets are separable).

The subgroup $K'_3:=g_2\hat{K}_3 g_2^{-1}$ corresponds to the based elevation $(Y'_3,x')\xhookrightarrow{}(\hat{X},x')$, where $x'$ is the endpoint of the path $\hat{\gamma}_2$ in $\hat{Y}_2$ based at $\hat{x}$ which represents the element $g_2$ (see Remark \ref{remk:convexsubgp}). Since $\hat{X}\to X$ is regular, we know that $Y'_3$ is embedded in $\hat{X}$ and does not inter-osculate with any hyperplanes.

We now consider the imitator homomorphism $\rho_\phi:\hat{G}_{\phi}\to\hat{K}_2<\hat{G}_{\phi}$ of the inclusion $\phi:(\hat{Y}_2,\hat{x})\xhookrightarrow{}(\hat{X},\hat{x})$. Let $\dot{K}_1:=\hat{K}_1\cap\hat{G}_{\phi}$ and $\dot{K}'_3:=K'_3\cap\hat{G}_{\phi}$.
If the walker traverses a loop $\gamma$ in $\hat{Y}_1$ based at $\hat{x}$, then Subcomplex Entrapment implies that the imitator stays in $\hat{Y}_1\cap\hat{Y}_2$. It follows that $\rho_\phi(\dot{K}_1)<\dot{K}_1$. If the walker traverses a loop of the form $\hat{\delta}_2 * \gamma' * \hat{\delta}_2^{-1}$ that lies in the imitator subgroup $\hat{G}_{\phi}$, where $\gamma'$ is a loop in $Y'_3$ based at $x'$, then the imitator traverses a loop of the form $\hat{\delta}_2 * \gamma'' * \hat{\delta}_2^{-1}$, where $\gamma''$ lies in $\hat{Y}_2\cap Y'_3$ by Subcomplex Entrapment. It follows that $\rho_\phi(\dot{K}'_3)<\dot{K}'_3$. We can then apply Lemma \ref{lem:separabletcoset} below to deduce that $\dot{K}_1\hat{K}_2\dot{K}'_3$ is separable in $\hat{G}_{\phi}$ (note that $\dot{K}_1$-$\dot{K}'_3$ double cosets are separable by Proposition \ref{prop:doublecoset}). Since $\hat{G}_\phi$ has finite index in $\hat{G}$, it follows that the triple coset $\hat{K}_1 \hat{K}_2 g_2\hat{K}_3 g_2^{-1}$ is separable.
\end{proof}

\begin{lem}\label{lem:separabletcoset}
	Let $\rho:G\twoheadrightarrow K_2< G$ be a retraction of a group $G$, and let $K_1,K_3<G$ be subgroups whose double cosets are separable and satisfy $\rho(K_i)< K_i$. Then the triple coset $K_1 K_2 K_3$ is separable in $G$.
\end{lem}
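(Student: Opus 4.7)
The plan is to mimic the template of Lemmas \ref{lem:separablesubgroup} and \ref{lem:separabledcoset}: given $g\in G-K_1K_2K_3$, produce a finite quotient $q:G\to\bar G$ that separates $g$ from $K_1\rho(g)K_3$ using double coset separability, and then combine $q$ with $q\circ\rho$ into a single map $t:G\to\bar G\times\bar G$ that separates $g$ from all of $K_1K_2K_3$.

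The key observation is that although $\rho(g)$ is just an element of $K_2$ rather than a triple $k_1k_2k_3$, the assumption $g\notin K_1K_2K_3$ implies the stronger statement $g\notin K_1\rho(g)K_3$. Indeed, if $g=k_1\rho(g)k_3$ with $k_i\in K_i$, then since $\rho(g)\in K_2$ we would obtain $g\in K_1K_2K_3$. By the hypothesised separability of $K_1$-$K_3$ double cosets (and the fact that left/right translates of separable sets are separable), we can therefore choose a finite quotient $q:G\to\bar G$ with
\[
q(g)\notin q(K_1)\cdot q\rho(g)\cdot q(K_3).
\]

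Now define $t(h):=(q(h),q\rho(h))$ and suppose for contradiction that $t(g)=t(k_1k_2k_3)$ with $k_i\in K_i$. Writing out both coordinates, the first yields $q(g)=q(k_1)q(k_2)q(k_3)$, and the second, using $\rho(k_2)=k_2$, yields $q\rho(g)=q(\rho(k_1))q(k_2)q(\rho(k_3))$. Solving the second for $q(k_2)$ and substituting into the first gives
\[
q(g)=\bigl(q(k_1)q(\rho(k_1))^{-1}\bigr)\cdot q\rho(g)\cdot \bigl(q(\rho(k_3))^{-1}q(k_3)\bigr).
\]
The hypothesis $\rho(K_i)<K_i$ guarantees $\rho(k_1)\in K_1$ and $\rho(k_3)\in K_3$, so the bracketed factors lie in $q(K_1)$ and $q(K_3)$ respectively, contradicting our choice of $q$.

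There is no serious obstacle here; the mild subtlety is the initial reduction from $g\notin K_1K_2K_3$ to $g\notin K_1\rho(g)K_3$, which is what lets us appeal to double coset separability rather than a stronger triple coset separability of some auxiliary groups. Once that reduction is in place, the rest is the same ``diagonal retraction'' trick already used in the two preceding lemmas, with $\rho(K_1)<K_1$ and $\rho(K_3)<K_3$ playing the role played by $\rho(K_2)<K_2$ in Lemma \ref{lem:separabledcoset}.
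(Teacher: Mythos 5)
Your proof is correct and is essentially the same as the paper's: the same reduction from $g\notin K_1K_2K_3$ to $g\notin K_1\rho(g)K_3$, the same diagonal map $t=(q,q\rho)$, and the same algebraic rearrangement using $\rho(k_2)=k_2$ and $\rho(k_1)\in K_1$, $\rho(k_3)\in K_3$ to reach the contradiction.
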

\begin{proof}
	Let $g\in G- K_1 K_2 K_3$. The goal is to find a finite quotient of $G$ that separates $g$ from $K_1 K_2 K_3$. We know that $\rho(g)\in K_2$, so $g\notin K_1\rho(g)K_3$, and by separability of $K_1\rho(g)K_3$ there exists a finite quotient $q:G\to\bar{G}$ with
	\begin{equation}\label{qequationt}
		q(g)\notin q(K_1)\rho(g)q(K_3).
	\end{equation} 	
	Now consider another finite quotient of $G$, defined by
	\begin{align*}
		t:G&\to\bar{G}\times\bar{G}\\
		h&\mapsto(q(h),q\rho(h)).
	\end{align*}
	We claim that $t(g)\notin t(K_1 K_2 K_3)$, so suppose for contradiction that $t(g)=t(k_1k_2k_3)$ with $k_i\in K_i$. Then
	\begin{equation}\label{texpandt}
		(q(g),q\rho(g))=(q(k_1k_2k_3),q\rho(k_1k_2k_3)).
	\end{equation}
	But $\rho(k_2)=k_2$ and $\rho(k_i)\in K_i$ for $i=1,3$, so 
	\begin{align*}
q(k_1k_2k_3)&=q(k_1)(q\rho(k_1))^{-1} (q\rho(k_1))q(k_2)(q\rho(k_3)) (q\rho(k_3))^{-1}q(k_3)\\
&\in q(K_1) (q\rho(k_1k_2k_3))q(K_3),
	\end{align*}
 contradicting (\ref{qequationt}) and (\ref{texpandt}).
\end{proof}

\bigskip
\section{Trivial wall projections}

In this section we prove Theorem \ref{thm:trivialwallproj1} about elevations of subcomplexes with trivial wall projections. First we must recall the notion of projection maps in CAT(0) cube complexes.

\subsection{Projection maps}

\begin{defn}\label{defn:projection}(Projection maps)\\
	Let $X$ be a CAT(0) cube complex and let $Y\subset X$ be a convex subcomplex. The \emph{projection to $Y$} is the combinatorial map $\Pi:X\to Y$ that sends each vertex $x\in X^0$ to the unique closest vertex in $Y$ with respect to the combinatorial metric -- this is well-defined by \cite[Lemma 13.8]{HaglundWise08}.
\end{defn}

These projection maps can be used to construct ``bridges'' between convex subcomplexes as follows. This theorem appears in lecture notes of Hagen as \cite[Theorem 1.22]{Hagen19} (see also \cite{ChatterjiFernosIozzi16}). There are analogues of this theorem in the more general contexts of normal binary spaces \cite[Theorems 2.5 and 2.6]{Vandevel83} and gated sets \cite{DressScharlau87}, and a similar phenomenon also occurs for general CAT(0) spaces \cite[II.2.12(2)]{BridsonHaefliger99}.

\begin{thm}(Bridge Theorem)\\\label{thm:bridge}
Let $X$ be a CAT(0) cube complex and let $Y_1,Y_2\subset X$ be convex subcomplexes. Let $\Pi_i:X\to Y_i$ be the projection to $Y_i$.  Then:
\begin{enumerate}
	\item\label{item:bridgecross} A hyperplane $H$ crosses $\Pi_1(Y_2)$ if and only if $H$ crosses $Y_1$ and $Y_2$. A hyperplane $H$ separates $Y_1$, $Y_2$ if and only if $H$ separates $\Pi_1(Y_2)$, $\Pi_2(Y_1)$.
	\item\label{item:bridgeproduct} There is a convex product subcomplex $A\times B\subset X$ and vertices $a_1,a_2\in A$ such that $a_1\times B=\Pi_1(Y_2)$ and $a_2\times B=\Pi_2(Y_1)$.
\end{enumerate}
\end{thm}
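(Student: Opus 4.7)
The plan is to first establish standard hyperplane-level properties of the projection map $\Pi_i$, deduce part (1) essentially by inspection, and then construct the bridge of part (2) using a crossing lemma between the hyperplanes separating $Y_1, Y_2$ and those crossing both.

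I would begin by proving two basic facts about $\Pi_i : X \to Y_i$: (a) for any vertex $x \in X$, the hyperplanes separating $x$ from $\Pi_i(x)$ are exactly those separating $x$ from $Y_i$; and (b) consequently, if a hyperplane $H$ crosses $Y_i$ then $x$ and $\Pi_i(x)$ lie on the same side of $H$. Both follow from the defining property that $\Pi_i(x)$ is the unique closest vertex of $Y_i$, since in a CAT(0) cube complex the combinatorial distance between two vertices equals the number of hyperplanes separating them. With these in hand, part (1) follows quickly. If $H$ crosses $\Pi_1(Y_2)$ then $H$ crosses $Y_1$ since $\Pi_1(Y_2) \subseteq Y_1$, and if $H$ did not cross $Y_2$ then (b) would force all of $\Pi_1(Y_2)$ onto one side of $H$; conversely, two vertices of $Y_2$ on opposite sides of $H$ project via (b) to opposite sides of $\Pi_1(Y_2)$. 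The separation statement is analogous, using that any hyperplane separating $Y_1, Y_2$ cannot cross either subcomplex, so its behaviour on the subcomplexes and on their mutual projections is the same.

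For part (2), set $B := \Pi_1(Y_2)$. First I would check that $\Pi_2|_B : B \to \Pi_2(Y_1)$ is a cubical isomorphism with inverse $\Pi_1|_{\Pi_2(Y_1)}$, using that by part (1) both subcomplexes are crossed by the same family of hyperplanes and are determined by the same halfspace assignments. Picking $b_0 \in B$ and setting $b_0' := \Pi_2(b_0) \in \Pi_2(Y_1)$, I would define $A$ as the convex hull of $\{b_0, b_0'\}$ in $X$, so that by (a) the hyperplanes crossing $A$ are exactly those separating $Y_1$ from $Y_2$. The bridge is then the convex hull of $B \cup \Pi_2(Y_1)$ in $X$, with $a_1 := b_0$ and $a_2 := b_0'$.

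The main obstacle will be showing that this convex hull genuinely splits as a product $A \times B$. The crux is a crossing lemma: every hyperplane $H$ crossing both $Y_1$ and $Y_2$ must cross every hyperplane $H'$ separating $Y_1$ from $Y_2$. This is because the $Y_1$-side of $H'$ contains vertices of $Y_1$ on both sides of $H$, and similarly on the $Y_2$-side, so all four quadrants of the arrangement $\{H, H'\}$ are nonempty and the two hyperplanes must cross. Given this, the hyperplane pocset of the convex hull splits as a disjoint union of the two families $\mathcal{H}(A)$ and $\mathcal{H}(B)$ with every cross-pair actually crossing in $X$, which by a Sageev-style reconstruction forces the subcomplex to realise the product $A \times B$; the embeddings $B \hookrightarrow X$ at $a_1$ and $a_2$ then land exactly on $\Pi_1(Y_2)$ and $\Pi_2(Y_1)$ respectively, as desired.
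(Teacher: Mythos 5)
The paper does not actually prove Theorem \ref{thm:bridge}: it is quoted from the literature (Hagen's lecture notes \cite[Theorem 1.22]{Hagen19}, see also \cite{ChatterjiFernosIozzi16}), so there is no in-paper argument to compare yours against. Your proposal is, in essence, the standard proof of the bridge lemma, and I find no genuine gap in it. The facts you label (a) and (b) are the right engine: (a) is the defining property of the gate map, and part \ref{item:bridgecross} follows as you say; note that the converse of the separation statement, which you justify only with the phrase ``its behaviour on the subcomplexes and on their mutual projections is the same,'' does need a short argument, but it is exactly an application of (a) twice (a hyperplane separating $b\in\Pi_1(Y_2)$ from $\Pi_2(b)$ separates $b$ from $Y_2$, and writing $b=\Pi_1(y)$ it then separates $y$ from $Y_1$, hence separates $Y_1$ from $Y_2$). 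Your crossing lemma and its four-quadrant proof are correct, and the identification of the hyperplanes crossing the hull of $\Pi_1(Y_2)\cup\Pi_2(Y_1)$ as the disjoint union of the separating family and the family crossing both $Y_1,Y_2$ goes through. The two places where you appeal to unproved standard facts are (i) that the restricted gate maps $\Pi_2|_{\Pi_1(Y_2)}$ and $\Pi_1|_{\Pi_2(Y_1)}$ are mutually inverse cubical isomorphisms, and (ii) the ``Sageev-style reconstruction'' that a convex subcomplex whose hyperplanes split into two families, each member of one crossing each member of the other, is a product; both are standard and both can be derived from your fact (a) together with the observation that, for $b\in\Pi_1(Y_2)$, the hyperplanes separating $b$ from $\Pi_2(b)$ are precisely those separating $Y_1$ from $Y_2$ (so this set is independent of $b$). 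With those details written out, including the check that the fibres of the product through $a_1$ and $a_2$ are exactly $\Pi_1(Y_2)$ and $\Pi_2(Y_1)$ (which you can do with the same separation criterion for membership in a convex hull), your argument is a complete and correct proof, essentially the one found in the cited sources.
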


\begin{prop}\label{prop:finiteprojection}
	Let $G\acts X$ be a cubulated group with free $G$-action, and let $K_1,K_2<G$ be convex subgroups that act cocompactly on convex subcomplexes $Y_1,Y_2\subset X$ respectively. Let $\Pi_i:X\to Y_i$ be the projection to $Y_i$. Then:
	\begin{enumerate}
		\item\label{item:finiteproj} The projection $\Pi_1(Y_2)$ is finite if and only if $K_1\cap K_2=\{1\}$.
		\item\label{item:embeddedproj} If $X/G$ is directly special and $Y_2/K_2$ embeds in $X/G$, then $K_1\cap K_2=\{1\}$ implies that $\Pi_1(Y_2)$ embeds in $Y_1/K_1$.
	\end{enumerate}
\end{prop}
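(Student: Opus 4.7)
The plan is to use the Bridge Theorem (Theorem \ref{thm:bridge}) together with the facts that $G$ is torsion-free and acts freely on $X$. The Bridge Theorem provides a canonical bijection $\psi:\Pi_1(Y_2)\to\Pi_2(Y_1)$ sending $(a_1,b)\mapsto(a_2,b)$ in the product $A\times B$, with all pairs $(v,\psi(v))$ at the fixed distance $d(a_1,a_2)$ in $X$. For $(\Rightarrow)$ of (\ref{item:finiteproj}), projections commute with isometries preserving the target, so any $g\in K_1\cap K_2$ satisfies $g\Pi_1(Y_2)=\Pi_1(gY_2)=\Pi_1(Y_2)$. If $\Pi_1(Y_2)$ is finite then $K_1\cap K_2$ acts freely on a finite set, so it is finite, hence trivial.

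For $(\Leftarrow)$ I prove the contrapositive. Set $H_i:=\operatorname{Stab}_G(Y_i)$, which contains $K_i$ with finite index. If $v_1,v_2,\ldots\in\Pi_1(Y_2)$ are infinitely many distinct vertices, then the pairs $(v_n,\psi(v_n))$ have fixed distance, so by cocompactness and local finiteness of the $G$-action on $X$ we may (after a subsequence) write $g_n v_1=v_n$ and $g_n\psi(v_1)=\psi(v_n)$ for some $g_n\in G$. Only finitely many $G$-translates of $Y_1$ pass through $v_1$ (since $Y_1$ has finitely many $K_1$-orbits of vertices and $G$ acts freely), and similarly for $Y_2$ through $\psi(v_1)$; a further pigeonhole forces $g_n\in H_1\cap H_2$ for all $n$, and distinct $v_n$ give distinct $g_n$. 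Thus $H_1\cap H_2$ is infinite, and since the natural map $(H_1\cap H_2)/(K_1\cap K_2)\hookrightarrow(H_1/K_1)\times(H_2/K_2)$ has finite target, $K_1\cap K_2$ is also infinite.

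For (\ref{item:embeddedproj}), suppose $kv=w$ with $v,w\in\Pi_1(Y_2)$ and $k\in K_1$; the goal is $k=1$. The key claim is $k\psi(v)=\psi(w)$, proved by induction along the bridge geodesic. Write the geodesics as $v=v^{(0)},\ldots,v^{(L)}=\psi(v)$ and $w=w^{(0)},\ldots,w^{(L)}=\psi(w)$, with their $i$-th edges $e^{(i)}$ and $(e')^{(i)}$ both dual to the hyperplane $H^{(i)}$ of $X$, which separates $Y_1$ from $Y_2$ by (\ref{item:bridgecross}). If $kv^{(i-1)}=w^{(i-1)}$ then $ke^{(i)}$ and $(e')^{(i)}$ are edges at $w^{(i-1)}$ whose projections to $X/G$ are dual to the same image hyperplane. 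Direct specialness of $X/G$ forbids hyperplane self-osculation and edge loops, so at most one such edge exists in $X/G$ and it has distinct endpoints; since the $G$-action is free, $X\to X/G$ is a covering, and the unique-lift property at $w^{(i-1)}$ forces $ke^{(i)}=(e')^{(i)}$ and hence $kv^{(i)}=w^{(i)}$. Iterating gives $k\psi(v)=\psi(w)$. Then $\psi(v),\psi(w)\in Y_2$ lie in the same $G$-orbit via $k$, so the embedding $Y_2/K_2\hookrightarrow X/G$ gives $k\in K_2$, and hence $k\in K_1\cap K_2=\{1\}$. Vertex injectivity of $\Pi_1(Y_2)\to Y_1/K_1$ extends to the whole cube complex map since $\Pi_1(Y_2)\subset X$ is convex, so its cells are determined by their vertex sets. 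The main obstacle is this edge-by-edge induction: a naive attempt to deduce $kY_2=Y_2$ outright would fail because $k\in K_1$ need not preserve $Y_2$, and it is precisely the interplay of the bridge geometry with direct specialness that recovers enough $Y_2$-preservation at the single point $\psi(v)$ to apply the embedding hypothesis.
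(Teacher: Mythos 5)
Your proof is correct and follows essentially the same route as the paper: part (2) is exactly the paper's argument (bridge product structure, the fact that hyperplanes in $X/G$ neither self-intersect nor self-osculate and have no edge loops, then unique lifting), and part (1) likewise rests on the Bridge Theorem plus a cocompactness/local-finiteness pigeonhole, with your detour through the setwise stabilizers $H_i$ and $[H_i:K_i]<\infty$ a harmless repackaging of the paper's direct production of an element of $K_1\cap K_2$ from elements of $K_1$ and $K_2$. One cosmetic repair: after the second pigeonhole the recurring pair of translates need not be $(Y_1,Y_2)$ itself, so instead of asserting $g_n\in H_1\cap H_2$ for all $n$ you should take the ratios $g_ng_m^{-1}$ for $n\neq m$ in the subsequence, which are distinct nontrivial elements of $H_1\cap H_2$; the rest of your argument goes through unchanged.
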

\begin{proof}
	Suppose $\Pi_1(Y_2)$ is finite. $K_1\cap K_2$ stabilises $Y_1$ and $Y_2$, so it must also stabilise $\Pi_1(Y_2)$. Since $K_1\cap K_2$ acts freely on $\Pi_1(Y_2)$ we deduce that $K_1\cap K_2$ is finite. But $G$ is torsion free (because it acts freely on a CAT(0) space), so $K_1\cap K_2=\{1\}$.
	
	Conversely, suppose $\Pi_1(Y_2)$ is infinite. Let $x\in\Pi_1(Y_2)$, and take an infinite sequence of distinct elements $g_n\in K_1$ with $g_nx\in\Pi_1(Y_2)$. It follows from Theorem \ref{thm:bridge} that $\Pi_1(Y_2)$ and $\Pi_2(Y_1)$ lie within bounded neighbourhoods of each other, and so by the cocompactness of the $K_2$-action on $Y_2$ we can pick elements $h_n\in K_2$ such that $d(g_nx,h_nx)$ is uniformly bounded. Therefore the points $h_n^{-1}g_nx$ lie in a bounded neighbourhood of $x$, and by local finiteness of $X$ there exist $n\neq m$ with $h_n^{-1}g_nx=h_m^{-1}g_mx$. Thus $1\neq g_n g_m^{-1}=h_n h_m^{-1}\in K_1\cap K_2$. This completes the proof of \ref{item:finiteproj}.
	
	We now prove \ref{item:embeddedproj}, so suppose that $X/G$ is directly special,
	$Y_2/K_2$ embeds in $X/G$ and $\Pi_1(Y_2)$ doesn't embed in $Y_1/K_1$. Hence there exist vertices $x_1,y_1\in \Pi_1(Y_2)$ and $g\in K_1$ with $gx_1=y_1$. By Theorem \ref{thm:bridge}\ref{item:bridgeproduct}, there is a convex product subcomplex $A\times B\subset X$ and vertices $a_1,a_2\in A$ such that $a_1\times B=\Pi_1(Y_2)$ and $a_2\times B=\Pi_2(Y_1)$.
	 Suppose $x_1=(a_1,b)$ and $y_1=(a_1,c)$. Let $\gamma$ be a combinatorial geodesic in $A$ from $a_1$ to $a_2$. Then $\gamma\times\{b\}$ and $\gamma\times\{c\}$ are combinatorial geodesics in $X$ starting at $x_1$ and $y_1$ respectively, which cross the same sequence of hyperplanes. Their images in $X/G$ are paths with the same starting point that again cross the same sequence of hyperplanes. Since hyperplanes in $X/G$ do not self-intersect or self-osculate, we deduce that $\gamma\times\{b\}$ and $\gamma\times\{c\}$ project to the same path in $X/G$. Then $g(\gamma\times\{b\})=\gamma\times\{c\}$, and in particular $g(a_2,b)=(a_2,c)$. But $(a_2,b),(a_2,c)\in Y_2$ are vertices that map to the same vertex in $X/G$, hence $g\in K_2$ and $K_1\cap K_2\neq\{1\}$.
\end{proof}

\begin{remk}
	In Proposition \ref{prop:finiteprojection} \ref{item:embeddedproj} we didn't use the full strength of direct specialness of $X/G$, we only used the fact that hyperplanes don't self-intersect or self-osculate (sometimes known as weak specialness).
\end{remk}

\subsection{Wall projections}

We now recall the notion of wall projection, due to Haglund--Wise \cite[Definition 3.14]{HaglundWise12}, which will play a key role later in limiting the movement of imitators.

\begin{defn}(Wall projection)\\\label{defn:wallprojection}
	Let $X$ be a cube complex with subcomplexes $Y_1$ and $Y_2$. We define $\WProj_X(Y_1\to Y_2)$, the \emph{wall projection of $Y_1$ onto $Y_2$}, to equal the union of $Y_2^0$ together with all cubes of $Y_2$ whose edges are parallel to edges of $Y_1$. We say the wall projection is \emph{trivial} if any closed loop of $\WProj_X(Y_1\to Y_2)$ is homotopically trivial inside $X$.
\end{defn}

\begin{remk}\label{remk:wallprojNPC}
	If $X$ is non-positively curved and $Y_2$ is locally convex then $\WProj_X(Y_1\to Y_2)$ is also locally convex. So in this setting the wall projection $\WProj_X(Y_1\to Y_2)$ is trivial if and only if its components are simply connected.
\end{remk}

\begin{remk}\label{remk:wallprojimitator}
	Consider the walker-imitator construction for the inclusion $Y_2\xhookrightarrow{}X$. It follows immediately from the definitions that if the walker stays in $Y_1$ then the imitator stays in $\WProj_X(Y_1\to Y_2)$.
\end{remk}

\begin{remk}\label{remk:wallprojlift}
	If $\hat{Y}_1$ and $\hat{Y}_2$ are elevations of $Y_1$ and $Y_2$ to a finite cover $\mu:\hat{X}\to X$, then $\WProj_{\hat{X}}(\hat{Y}_1\to\hat{Y}_2)$ is contained inside the union of the elevations of $\WProj_X(Y_1\to Y_2)$ to $\hat{Y}_2$. This is because edges $e_i\in\hat{Y}_i^1$ being parallel implies that $\mu(e_i)\in Y_i^1$ are parallel. In particular, $\WProj_{\hat{X}}(\hat{Y}_1\to\hat{Y}_2)$ is trivial if $\WProj_X(Y_1\to Y_2)$ is trivial. 
\end{remk}

\subsection{Proof of Theorem \ref{thm:trivialwallproj1}}\label{subsec:thmtrivialwallproj1}

We now come to the main result of the section. This generalises \cite[Proposition 5.1]{HaglundWise12}, which is the equivalent statement for graphs. It should also be compared to \cite[Corollary 5.8]{HaglundWise12} (which was a key step in the proof of Haglund and Wise's combination theorem for special cube complexes); our result is less general in that it says nothing about wall projections of elevations of $Y_1$ onto other elevations of $Y_1$, but more general in that it does not assume hyperbolicity of $\pi_1 X$. 

\theoremstyle{plain}
\newtheorem*{thm:trivialwallproj1}{Theorem \ref{thm:trivialwallproj1}}
\begin{thm:trivialwallproj1}(Elevating to trivial wall projections)\\
	Let $Y_1,Y_2\to X$ be local isometries of finite virtually special cube complexes, and let $K_1,K_2<G:=\pi_1 X$ be the corresponding subgroups (well-defined up to conjugacy). Suppose that $K_1$ has trivial intersection with every conjugate of $K_2$. Then there is a finite directly special cover $\hat{X}\to X$ such that all elevations of $Y_1$ and $Y_2$ are embedded, and each elevation of $Y_1$ has trivial wall projection onto each elevation of $Y_2$.	
\end{thm:trivialwallproj1}

Before giving a precise proof, we sketch the main ideas involved, and compare them with the ideas from the Haglund--Wise result.
The main obstruction to $\WProj_X(Y_1\to Y_2)$ being trivial is having hyperplanes that go from $Y_1$ to $Y_2$ via many different routes, so we must peel away most of these hyperplanes when passing to any pair of elevations of $Y_1$ and $Y_2$ in the finite cover $\hat{X}$. The idea for our proof is to consider certain small simply connected subcomplexes $W_2\subset Y_2$, and certain pairs of hyperplanes $H,H'$ that cross both $W_2$ and $Y_1$, such that any loop $\gamma$ that passes through $W_2, H',Y_1,H$ in turn is essential (see Figure \ref{fig:compareproofs}). We then apply triple coset separability to the subgroups of $\pi_1 X$ corresponding to $H',Y_1,H$ to obtain a finite cover $\hat{X}$ of $X$ where such loops $\gamma$ lift to paths with distinct endpoints. This implies that no pair of elevations of $H$ and $H'$ to $\hat{X}$ cross a pair of elevations of $W_2$ and $Y_1$. We do this for all suitable $W_2,H,H'$, and the finite cover required by the theorem is any finite cover that factors through all of the finite covers we obtain.

The approach taken by Haglund--Wise in the proof of \cite[Corollary 5.8]{HaglundWise12} still involves peeling away hyperplanes, but does this in quite a different way, which relies on the hyperbolicity of $\pi_1 X$. Very roughly, if a hyperplane $H$ crosses $Y_1$ and $Y_2$, they use hyperbolicity of $\pi_1 X$ to obtain a locally convex subcomplex $Z$ that is a thickening of $Y_1, Y_2$ and $H$ (see Figure \ref{fig:compareproofs}). Similarly, for any other hyperplane $H'$ crossing $Y_1$ and $Y_2$ they obtain a thickening $Z'$. Then they pass to a finite cover $\hat{X}$ of $X$ where the elevations of $Z$ and $Z'$ (based at an elevation of $Y_1$) have connected intersection. If $H$ and $H'$ take different routes from $Y_1$ to $Y_2$, then they deduce that the elevations of $H$ and $H'$ (that are contained in the elevations of $Z$ and $Z'$) cross different elevations of $Y_2$.

\begin{figure}[H]
	\centering
	\scalebox{0.8}{
\begin{tikzpicture}[auto,node distance=2cm,
	thick,every node/.style={circle,draw,font=\small},
	every loop/.style={min distance=2cm},
	hull/.style={draw=none},
	]
	\tikzstyle{label}=[draw=none,font=\huge]
	
	\begin{scope}[scale=.6]	
		\draw[rounded corners=20pt] (0, 0) rectangle (8, -4) {};
		\draw[rounded corners=20pt] (0, -8) rectangle (8, -12) {};
		\node[label] (Y2) at (-1,-2) {$Y_2$};	
		\node[label] (Y2) at (-1,-10) {$Y_1$};
		\coordinate (Ht) at (2,2);
		\coordinate (Hb) at (2,-14);
		\coordinate (H't) at (6,2);
		\coordinate (H'b) at (6,-14);
		
		\draw[rounded corners=10pt, Green] (1,-1.4) rectangle (7,-2.6) {};
		\node[label,Green] (W) at (4,-.8) {$W_2$};
		
		\draw[red,ultra thick] (Ht)--(Hb);
		\draw[red,ultra thick] (H't)--(H'b);
		\node[label,red] (H) at (1,-6) {$H$};
		\node[label,red] (H') at (7,-6) {$H'$};
		
		\draw[blue, very thick] (2.13,-2) -- (5.9,-2) -- (5.9,-10) --  (2.13,-10) -- (2.13,-2);
		\path (5.9,-10) edge [blue,postaction={decoration={markings,mark=at position 0.65 with {\arrow[blue,line width=1mm]{triangle 60}}},decorate}] (2.13,-10);
		\node[label,blue] (Ga) at (4,-11) {$\gamma$};
	\end{scope}
	
	\begin{scope}[shift={(9,0)},scale=.6]	
		\draw[rounded corners=20pt] (0, 0) rectangle (8, -4) {};
		\draw[rounded corners=20pt] (0, -8) rectangle (8, -12) {};
		\draw[rounded corners=10pt,Green] (1,2)--(1,1)[rounded corners=20pt]--(-1,1)--(-1,-5)[rounded corners=10pt]--(1,-5)--(1,-7)[rounded corners=20pt]--(-1,-7)
		--(-1,-13)[rounded corners=10pt]--(1,-13)--(1,-14);
		\draw[rounded corners=10pt,Green] (3,2)--(3,1)[rounded corners=20pt]--(9,1)--(9,-5)[rounded corners=10pt]--(3,-5)--(3,-7)[rounded corners=20pt]--(9,-7)
		--(9,-13)[rounded corners=10pt]--(3,-13)--(3,-14);
		\node[label,Green] (Z) at (9.7,-1) {$Z$};
		\node[label] (Y2) at (-2,-2) {$Y_2$};	
		\node[label] (Y2) at (-2,-10) {$Y_1$};
		\coordinate (Ht) at (2,2);
		\coordinate (Hb) at (2,-14);
		\coordinate (H't) at (6,2);
		\coordinate (H'b) at (6,-14);
		
		\draw[red,ultra thick] (Ht)--(Hb);
		\draw[red,ultra thick] (H't)--(H'b);
		\node[label,red] (H) at (0,-6) {$H$};
		\node[label,red] (H') at (7,-6) {$H'$};

	\end{scope}
	
\end{tikzpicture}
}
\caption{Idea for the proof of Theorem \ref{thm:trivialwallproj1}, left, idea for the proof of Haglund--Wise \cite[Corollary 5.8]{HaglundWise12}, right.}\label{fig:compareproofs}
\end{figure}

\begin{proof}[Proof of Theorem \ref{thm:trivialwallproj1}]
	We may assume that $Y_1$ and $Y_2$ are already embedded in $X$, as otherwise we can pass to a finite cover $X'\to X$ in which all their elevations are embedded, and then apply the proposition to each pair of elevations $Y'_1$ and $Y'_2$ of $Y_1$ and $Y_2$ in $X'$. We can then take $\hat{X}\to X$ to be a finite cover factoring through all these covers, and by Remark \ref{remk:wallprojlift} each elevation of $Y_1$ to $\hat{X}$ will have trivial wall projection onto each elevation of $Y_2$. Similarly, we may assume that $X$ is already directly special.
	
	Now consider the universal cover $\widetilde{X}\to X$, and let $\widetilde{Y}_2$ be an elevation of $Y_2$. Let $\Pi:\widetilde{X}\to\widetilde{Y}_2$ be the projection to $\widetilde{Y}_2$.	
	Proposition \ref{prop:finiteprojection}\ref{item:embeddedproj} (with Remark \ref{remk:convexsubgp}) implies that the elevations $\widetilde{Y}_1$ of $Y_1$  have projections $\Pi(\widetilde{Y}_1)$ whose diameters are bounded by some constant $L$ (with respect to the combinatorial metric say).
	 	
	Consider the $d_\infty$-metric on $\widetilde{X}^0$, which corresponds to the graph  obtained from the 1-skeleton $\widetilde{X}^1$ by adding an edge between two vertices whenever they belong to a common cube \cite[p1542]{Genevois21}. Equivalently, $d_\infty(\wt{x}_1,\wt{x}_2)$ is the maximal number of pairwise disjoint hyperplanes separating $\wt{x}_1$ and $\wt{x}_2$ \cite[Proposition 2.4]{Genevois21}.
	Pick a vertex $x\in Y_2$ and take a lift $\widetilde{x}\in\widetilde{Y}_2$.
	Let $W(\widetilde{x})$ be the induced subcomplex of the ball of radius $L+1$ in the $d_\infty$-metric about $\widetilde{x}$. 
	It is clear from the definition of $d_\infty$ that $W(\widetilde{x})$ is a finite subcomplex, and it follows from \cite[Corollary 3.5]{HruskaWise09} that $W(\wt{x})$ is convex.
	Let $\calB(\widetilde{x})$ be the finite collection of hyperplanes dual to edges $e$ that have exactly one endpoint in $W(\widetilde{x})$.\\
	
	\begin{claim}
		Every $H\in\calB(\widetilde{x})$ is separated from $\widetilde{x}$ by a collection of $L+1$ pairwise disjoint hyperplanes.
	\end{claim}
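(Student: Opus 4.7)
The plan is to pick an edge dual to $H$ straddling the boundary of $W(\widetilde{x})$ and then exploit the characterization of the $d_\infty$-metric as counting pairwise disjoint separating hyperplanes.

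Concretely, let $e$ be an edge dual to $H$ with exactly one endpoint $u$ in $W(\widetilde{x})$, and let $v$ be the other endpoint. Since $W(\widetilde{x})$ is the induced subcomplex on the $d_\infty$-ball of radius $L+1$, we have $d_\infty(\widetilde{x}, u) \leq L+1$ and $d_\infty(\widetilde{x}, v) \geq L+2$. But $u,v$ both lie in the cube $e$, so $d_\infty(u,v) = 1$; the triangle inequality then pins these distances down to exactly $L+1$ and $L+2$ respectively.

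Next I would invoke the characterization of $d_\infty$ to extract $L+2$ pairwise disjoint hyperplanes $H_1, \dots, H_{L+2}$, each separating $\widetilde{x}$ from $v$. The crux of the proof — the step I expect to be the main obstacle — is to show that $H$ itself must appear in this list. Suppose for contradiction that $H_i \neq H$ for all $i$. Then no $H_i$ is dual to the edge $e$, so $u$ and $v$ lie on the same side of each $H_i$; consequently each $H_i$ also separates $\widetilde{x}$ from $u$, producing $L+2$ pairwise disjoint hyperplanes separating $\widetilde{x}$ from $u$ and contradicting $d_\infty(\widetilde{x}, u) = L+1$.

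Given that $H = H_j$ for some $j$, the remaining $L+1$ hyperplanes $\{H_i\}_{i\neq j}$ are pairwise disjoint and each disjoint from $H$. To finish, I would note that each such $H_i$ separates $\widetilde{x}$ from both $u$ and $v$ (by the same edge-crossing observation as above). In the CAT(0) cube complex $\widetilde{X}$, any hyperplane disjoint from $H$ but distinct from $H$ is actually disjoint from the carrier $N(H) \cong H \times [-1,1]$, since any nonempty intersection would have to be a hyperplane of $N(H)$, which is either $H$ itself or crosses $H$. Hence $N(H)$ — and so $H$ — lies on the same side of $H_i$ as $u,v$, opposite to $\widetilde{x}$, and the $L+1$ hyperplanes $\{H_i\}_{i\neq j}$ thus form the required pairwise disjoint separating collection.
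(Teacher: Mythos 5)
Your proof is correct and follows essentially the same route as the paper's: take the edge $e$ dual to $H$ straddling $\partial W(\widetilde{x})$, extract $L+2$ pairwise disjoint hyperplanes separating $\widetilde{x}$ from the outer endpoint via the $d_\infty$-characterisation, and show $H$ must be among them since otherwise all of them would separate $\widetilde{x}$ from the inner endpoint, contradicting $d_\infty(\widetilde{x},u)\leq L+1$. Your final paragraph merely makes explicit (via disjointness from the carrier $N(H)$) a step the paper leaves implicit, namely that the remaining $L+1$ hyperplanes separate $\widetilde{x}$ from the hyperplane $H$ itself and not just from the endpoints of $e$.
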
\\
	
	\begin{claimproof}
		Let $H\in\calB(\widetilde{x})$ be dual to an edge $e$. Let $e$ have endpoints $\wt{y}_1$ and $\wt{y}_2$, with $\wt{y}_2\in W(\wt{x})$.
		We have $d_\infty(\wt{x},\wt{y}_1)\geq L+2$, so $\wt{x}$ is separated from $\wt{y}_1$ by a collection $\mathcal{F}$ of $L+2$ pairwise disjoint hyperplanes.
		The hyperplane in $\mathcal{F}$ nearest $\wt{y}_1$ must be $H$, otherwise $\wt{y}_2$ would also be separated from $\wt{x}$ by $\mathcal{F}$, contradicting $d_\infty(\wt{x},\wt{y}_2)\leq L+1$.
		Hence $H$ is separated from $\wt{x}$ by the $L+1$ pairwise disjoint hyperplanes in $\mathcal{F} - \{H\}$, as required.
\end{claimproof}\\	
	
	Now let $W_2(\wt{x}):=W(\wt{x})\cap\wt{Y}_2$ and let $\calB_2(\wt{x})$ consist of those hyperplanes in $\calB(\wt{x})$ that intersect $\wt{Y}_2$.
	For each $H\in\calB_2(\wt{x})$ pick a hyperplane $H'$ separating $H$ from $\wt{x}$ and such that $H$ and $H'$ are separated by a collection of $L$ pairwise disjoint hyperplanes (such $H'$ exist by the claim).
	Let $\Omega(\wt{x},H')$ be the collection of all elevations $\wt{Y}_1$ of $Y_1$ that intersect $H'$.
	For each $\wt{Y}_1\in\Omega(\wt{x},H')$ we know that $\Pi(\wt{Y}_1)$ has diameter at most $L$, so it follows from the construction of $H'$ that $\Pi(\tilde{Y}_1)$ is disjoint from $H$.
	Furthermore, we deduce from Theorem \ref{thm:bridge}\ref{item:bridgecross} that $\wt{Y}_1$ is also disjoint from $H$.
		
	The key step will be to use triple coset separability to prove the following claim.\\
	
	\begin{claim}
There exists a finite cover $\hat{X}\to X$ such that, for all $H\in\calB_2(\widetilde{x})$ and $\widetilde{Y}_1\in\Omega(\widetilde{x},H')$, their images in $\hat{X}$ are disjoint.
	\end{claim}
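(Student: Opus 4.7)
The plan is to rephrase the required disjointness in $\hat{X}$ as the avoidance of a finite family of triple cosets in $G$, then invoke triple coset separability (Proposition~\ref{prop:triplecoset}).

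For each $H\in\calB_2(\widetilde{x})$ with its chosen associate $H'$, let $K_H,K_{H'}<G$ denote the stabilisers of the carriers $N(H)$ and $N(H')$; these are convex subgroups because hyperplane carriers are convex subcomplexes of $\widetilde{X}$ whose images in $X$ are finite. Since $Y_1$ is finite and $N(H')$ has finite image in $X$, there are only finitely many $K_{H'}$-orbits in $\Omega(\widetilde{x},H')$; choose representatives $\widetilde{Y}_1^{(H,1)},\ldots,\widetilde{Y}_1^{(H,n_H)}$ with convex stabilisers $K_\alpha$. A parallel cocompactness argument, using that $K_H$ acts cocompactly on $N(H)$ and that $Y_1$ is finite, shows that the set
\[
S_{H,\alpha} := \{g\in G : H\cap g\widetilde{Y}_1^{(H,\alpha)}\neq\emptyset\}
\]
is a finite union of $K_H$-$K_\alpha$ double cosets $\bigcup_j K_H g_j^{(H,\alpha)} K_\alpha$.

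For a finite-index normal subgroup $\hat{G}\triangleleft G$ with $\hat{X}=\widetilde{X}/\hat{G}$, the image of $H$ is disjoint in $\hat{X}$ from the image of every $\widetilde{Y}_1=k\widetilde{Y}_1^{(H,\alpha)}\in\Omega(\widetilde{x},H')$ (with $k\in K_{H'}$) precisely when $\hat{G}$ is disjoint from
\[
\bigcup_{H,\alpha,j} K_H g_j^{(H,\alpha)} K_\alpha K_{H'},
\]
a finite union of triple cosets. None of these triple cosets contains $1$: if $1=k_H g_j^{(H,\alpha)} k_\alpha k_{H'}$, then since $k_\alpha$ stabilises $\widetilde{Y}_1^{(H,\alpha)}$ and $k_H$ stabilises $H$, one deduces that $k_{H'}^{-1}\widetilde{Y}_1^{(H,\alpha)}$ is an elevation lying in $\Omega(\widetilde{x},H')$ which meets $H$, contradicting the already-established fact that elevations in $\Omega(\widetilde{x},H')$ are disjoint from $H$. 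Hence each triple coset is separable by Proposition~\ref{prop:triplecoset} and is contained in the complement of some finite-index subgroup of $G$.

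Intersecting the finitely many resulting finite-index subgroups, passing to the normal core, and further refining via Corollary~\ref{cor:embedPi} so that $\hat{X}$ is directly special and all elevations of $Y_1,Y_2$ are embedded, yields the required finite cover $\hat{X}\to X$. The main technical obstacle will be verifying the two finiteness statements above---finiteness of $\Omega(\widetilde{x},H')/K_{H'}$ and of each $S_{H,\alpha}$'s decomposition into double cosets---which will follow from the cocompact actions of $K_{H'}$ on $N(H')$ and of $K_H$ on $N(H)$, together with the finiteness of $Y_1$ inside the finite complex $X$.
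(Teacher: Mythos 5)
Your proposal is correct and is essentially the paper's own argument in mildly repackaged form: both reduce the claim to triple coset separability (Proposition \ref{prop:triplecoset}) applied to the stabilisers of $H$, of $H'$ and of an elevation of $Y_1$, both obtain the needed finiteness from cocompactness of these stabilisers together with properness of the $G$-action, and both use the previously established fact that elevations in $\Omega(\widetilde{x},H')$ are disjoint from $H$ to exclude the relevant element from the relevant coset. The only difference is bookkeeping: you express the entire bad set as a finite union of triple cosets $K_H g_j^{(H,\alpha)} K_\alpha K_{H'}$ and separate the identity from this closed set, whereas the paper separates finitely many bounded representatives $g'=hgh'k$ of bad elements from the single triple coset $G_H G_{H'} K_1$; the two finiteness statements you defer (finitely many $K_{H'}$-orbits in $\Omega(\widetilde{x},H')$, and the double coset decomposition of $S_{H,\alpha}$) are exactly the $R$-ball cocompactness arguments the paper carries out explicitly, so they pose no obstacle.
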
\\

\begin{claimproof}
The property of the claim passes to further covers, so we can prove the claim by working with one hyperplane $H$ from the finite collection $\calB_2(\widetilde{x})$ at a time.
For $H\in\calB_2(\widetilde{x})$ let $G_{H'}$ be the stabiliser of $H'$.
The action of $G_{H'}$ on $H'$ is cocompact, so $\Omega(\wt{x},H')$ is a finite union of $G_{H'}$-orbits of the subcomplexes $\wt{Y}_1$.
It suffices to prove the claim for one such orbit $G_{H'}\cdot\wt{Y}_1$.
 
The finite cover $\hat{X}\to X$ will be defined by a finite-index normal subgroup $\hat{G}\triangleleft G$, so our task is to pick this $\hat{G}$ such that there do not exist $g\in \hat{G}$ and $h'\in G_{H'}$ with
\begin{equation}\label{badgh'}
	H\cap gh'\widetilde{Y}_1\neq\emptyset.
\end{equation}
Assume that $K_1$ is the $G$-stabiliser of our chosen elevation $\widetilde{Y}_1$, and write $G_H$ for the stabiliser of $H$. 
Working with the CAT(0) metric, suppose that any $R$-ball in $\widetilde{Y}_1$ (resp. $H$) has $K_1$-translates (resp. $G_H$-translates) that cover $\widetilde{Y}_1$ (resp. $H$). 
Pick a vertex $\wt{y}\in\wt{Y}_1$.
If $g\in G$ and $h'\in G_{H'}$ satisfy (\ref{badgh'}), then there exists $k\in K_1$ such that $d(H,gh'k\widetilde{y})\leq R$. Then there also exists $h\in G_H$ such that $d(\widetilde{x},hgh'k\widetilde{y})\leq 2R+d(\widetilde{x},H)$. Writing $g':=hgh'k$, we know that $g'$ also satisfies (\ref{badgh'}). 

All $G_{H'} K_1$-translates of $\wt{Y}_1$ are again in $\Omega(\wt{x},H')$, so are disjoint from $H$, hence no element of the triple coset $G_H G_{H'} K_1$ satisfies (\ref{badgh'}).
By Proposition \ref{prop:triplecoset}, $G_H G_{H'} K_1$ is separable in $G$, so there is a finite-index subgroup $\hat{G}<G$ with
\begin{equation}\label{separateg'2}
g'\hat{G}\cap G_H G_{H'} K_1=\emptyset.
\end{equation}
Replacing $\hat{G}$ by its normal core, we may assume that $\hat{G}$ is normal in $G$.
We have
$$g'\in G_H g G_{H'} K_1,$$
so we deduce that $g\notin\hat{G}$. There are only finitely many $g'\in G$ with $d(\widetilde{x},g'\widetilde{y})\leq 2R+d(\widetilde{x},H)$, so we can satisfy (\ref{separateg'2}) for all possible $g'$ that arise in this way by intersecting the corresponding subgroups $\hat{G}$.
This gives us a finite-index normal subgroup $\hat{G}\triangleleft G$ such that no $g\in \hat{G}$ and $h'\in G_{H'}$ satisfy (\ref{badgh'}), as required.
\end{claimproof}\\	 
 
 At this point we have coverings $\widetilde{X}\to\hat{X}\to X$; let $\hat{Y}_2$ be the image of $\widetilde{Y}_2$ in $\hat{X}$ and let $\hat{x}$ be the image of $\widetilde{x}$. Just as we assumed that $Y_1$ and $Y_2$ are already embedded in $X$, we may assume that the map $W_2(\widetilde{x})\to X$ is an embedding, hence the map $W_2(\widetilde{x})\to\hat{X}$ is also an embedding. Considering $W_2(\widetilde{x})$ as a subcomplex of $\hat{X}$, we know that $\hat{x}\in W_2(\widetilde{x})\subset \hat{Y}_2$. Let $\hat{Y}_1$ be an elevation of $Y_1$ to $\hat{X}$. We now make a third claim.\\
 
 \begin{claim}
 	The component of $\WProj_{\hat{X}}(\hat{Y}_1\to\hat{Y}_2)$ containing $\hat{x}$ lies inside $W_2(\widetilde{x})$.
 \end{claim}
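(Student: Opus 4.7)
The plan is to argue by contradiction: suppose there is a vertex $\hat{z}$ of the wall projection component containing $\hat{x}$ with $\hat{z}\notin W_2(\widetilde{x})$, where $W_2(\widetilde{x})$ is viewed as an embedded subcomplex of $\hat{X}$. I would take an edge-path $\hat{\gamma}$ in $\WProj_{\hat{X}}(\hat{Y}_1\to\hat{Y}_2)$ from $\hat{x}$ to $\hat{z}$ and lift it to a path $\widetilde{\gamma}$ in $\widetilde{Y}_2$ starting at $\widetilde{x}$. Because $W_2(\widetilde{x})$ embeds in $\hat{X}$, the endpoint $\widetilde{z}$ of $\widetilde{\gamma}$ cannot lie in $W_2(\widetilde{x})$, so $\widetilde{z}\notin W(\widetilde{x})$ and $\widetilde{\gamma}$ must exit $W(\widetilde{x})$. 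Let $\widetilde{e}_H$ be the first exit edge of $\widetilde{\gamma}$, let $H\in\calB_2(\widetilde{x})$ be the hyperplane dual to $\widetilde{e}_H$, and let $H'$ be the hyperplane associated with $H$ earlier in the proof.

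The crucial geometric observation is that $\widetilde{\gamma}$ must cross $H'$ at some earlier edge $\widetilde{e}_{H'}$. Indeed, the endpoint of $\widetilde{e}_H$ lying outside $W(\widetilde{x})$ is on the far side of $H$ from $\widetilde{x}$, which lies in the halfspace of $H'$ not containing $\widetilde{x}$ (because $H'$ separates $H$ from $\widetilde{x}$). Since $\widetilde{e}_H$ is dual only to $H$, its two endpoints lie on the same side of $H'$, so the endpoint inside $W(\widetilde{x})$ also lies in the halfspace of $H'$ not containing $\widetilde{x}$. The portion of $\widetilde{\gamma}$ from $\widetilde{x}$ to this endpoint must therefore cross $H'$.

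Next, I would use $\widetilde{e}_{H'}$ to produce a lift of $\hat{Y}_1$ inside $\Omega(\widetilde{x},H')$. The image $\hat{e}_{H'}$ of $\widetilde{e}_{H'}$ lies in $\hat{\gamma}\subset\WProj_{\hat{X}}(\hat{Y}_1\to\hat{Y}_2)$, so the hyperplane $\hat{H}'$ of $\hat{X}$ dual to $\hat{e}_{H'}$ intersects $\hat{Y}_1$. Since $\hat{H}'$ meets $\hat{Y}_1$ in $\hat{X}$, after translating by a suitable element of $\hat{G}$ I obtain a lift $\widetilde{Y}_1^{(2)}$ of $\hat{Y}_1$ in $\widetilde{X}$ that intersects $H'$ itself, so $\widetilde{Y}_1^{(2)}\in\Omega(\widetilde{x},H')$.

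The contradiction now comes directly from the second claim. Since $\hat{e}_H$ lies in $\hat{\gamma}\subset\WProj_{\hat{X}}(\hat{Y}_1\to\hat{Y}_2)$ and is dual to $\hat{H}$, the hyperplane $\hat{H}$ intersects $\hat{Y}_1$. But the second claim applied to $\widetilde{Y}_1^{(2)}\in\Omega(\widetilde{x},H')$ and $H$ says that their images in $\hat{X}$ -- namely $\hat{Y}_1$ and $\hat{H}$ -- are disjoint, a contradiction. I expect the subtlest step of this argument to be the halfspace bookkeeping that forces $\widetilde{\gamma}$ to cross $H'$ before $\widetilde{e}_H$; once this is in hand, the second claim does the rest, via a $\hat{G}$-translation to land the relevant lift in $\Omega(\widetilde{x},H')$.
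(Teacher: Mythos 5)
Your proof is correct and follows essentially the same route as the paper's: assume a wall-projection path from $\hat{x}$ exits $W_2(\widetilde{x})$, lift it into $\widetilde{Y}_2$, take the exit hyperplane $H\in\calB_2(\widetilde{x})$, show the lifted path crosses the associated $H'$ at an earlier edge, produce an elevation of $\hat{Y}_1$ lying in $\Omega(\widetilde{x},H')$, and contradict the second claim. The only point you assert without justification is that the outer endpoint of $\widetilde{e}_H$ lies on the far side of $H$ from $\widetilde{x}$ -- this is true (it follows from the $d_\infty$-ball definition of $W(\widetilde{x})$, since crossing back towards $\widetilde{x}$ cannot increase $d_\infty$), but you can bypass it entirely by noting that $\widetilde{e}_H$ meets $H$ and is not dual to $H'$, so the whole edge, hence both endpoints, lies in the halfspace of $H'$ not containing $\widetilde{x}$.
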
\\

\begin{claimproof}
Suppose not. Then there exists a path $\hat{\gamma}$ in $\hat{Y}_2$ based at $\hat{x}$ that leaves $W_2(\widetilde{x})$ and is such that every edge of $\hat{\gamma}$ is parallel to some edge of $\hat{Y}_1$. 
Say that the edges of $\hat{\gamma}$ are dual to hyperplanes $\hat{H}_1,\hat{H}_2,...,\hat{H}_n$ respectively.
So all $\hat{H}_i$ intersect both $\hat{Y}_1$ and $\hat{Y}_2$.
Lift $\hat{\gamma}$ to a path $\gamma$ in $\widetilde{X}$ based at $\widetilde{x}$, and suppose that its edges are dual to hyperplanes $H_1,H_2,...,H_n$ respectively.
Since $\hat{\gamma}$ leaves $W_2(\wt{x})$, we must have $H_i\in\calB(\wt{x})$ for some $i$, and $H'_i=H_j$ for some $j<i$.
Choose an elevation $\wt{Y}_1$ of $\hat{Y}_1$ that intersects $H_j$.
Then $\wt{Y}_1\in\Omega(\wt{x},H_j)$, but the images of $\wt{Y}_1$ and $H_i$ in $\hat{X}$ are $\hat{Y}_1$ and $\hat{H}_i$, which intersect. Thus we contradict the previous claim.
\end{claimproof}\\

$W_2(\widetilde{x})$ is a convex subcomplex of $\widetilde{X}$, hence simply connected, so any loop in $\WProj_{\hat{X}}(\hat{Y}_1\to\hat{Y}_2)$ based at $\hat{x}$ is homotopically trivial inside $\hat{X}$. As $\hat{X}\to X$ is a regular cover, the same is true for any elevations $\hat{Y}_1$ and $\hat{Y}_2$ of $Y_1$ and $Y_2$, and any lift $\hat{x}$ of $x$ contained inside $\hat{Y}_2$. Running the entire argument for each vertex $x\in Y_2$ gives us a finite collection of finite covers of $X$, and any finite cover $\hat{X}\to X$ factoring through all of them will satisfy the conclusions of the theorem.
\end{proof}

\bigskip
\section{Commanding elements and subgroups}\label{sec:commanding}

In this section we step away from cubical geometry to consider the group theoretic notions of commanding elements and subgroups.
These definitions are motivated by existing theorems in the literature, most of which are designed as tools for building finite covers of graphs of groups. In particular, the notion of commanding elements is inspired by that of omnipotence (as discussed in the introduction), and we will see that Wise's Malnormal Special Quotient Theorem gives rise to many examples of commanding subgroups. We also give a general proposition about how commanding subgroups can be used to deduce profinite and separability properties for graphs of groups.
We close the section with a technical lemma, needed later in the paper, about commanding elements and passing to finite-index subgroups.

\subsection{Definitions and examples}\label{subsec:commanddefn}

\begin{defn}(Commanding group elements)\\
	A group $G$ \emph{commands} a set of elements $\{g_1,...,g_n\}\subset G$ if there exists an integer $N>0$ such that for any integers $r_1,...,r_n>0$ there exists a homomorphism to a finite group $G\to\bar{G},g\mapsto\bar{g}$ such that the order of $\bar{g}_i$ is $Nr_i$. If this can always be done with $\langle\bar{g}_i\rangle\cap\langle\bar{g}_j\rangle=\{1\}$ for all $i\neq j$ then we say that $G$ \emph{strongly commands} $\{g_1,...,g_n\}$.
\end{defn}

There is a natural generalisation to commanding subgroups, as follows.

\begin{defn}(Commanding subgroups)\\\label{defn:commandingsubgroups}
	A group $G$ \emph{commands} a collection of subgroups $(P_1,...,P_n)$ if there exist finite-index subgroups $\dot{P}_i<P_i$ such that, for any choice of finite-index subgroups $P'_i<\dot{P}_i$ with $P'_i\triangleleft P_i$, there exists a finite-index normal subgroup $G'\triangleleft G$ such that $P_i\cap G'=P'_i$ for $1\leq i\leq n$.
	If this can always be done with $P_i G'\cap P_j G'=G'$ for any $i\neq j$ then we say that $G$ \emph{strongly commands} $(P_1,...,P_n)$.
	Note that the collection of subgroups $(P_1,...,P_n)$ can contain duplicates; but if $G$ strongly commands them then any duplicates must be trivial subgroups; or if $G$ is residually finite and commands $(P_1,...,P_n)$ then any duplicates must be finite subgroups.
\end{defn}

\begin{remk}
	$G$ (strongly) commands a set of infinite order elements $\{g_1,...,g_n\}$ if and only if it (strongly) commands the subgroups they generate $(\langle g_1\rangle,...,\langle g_n\rangle)$.
\end{remk}

\begin{remk}\label{remk:commandconjugates}
	$G$ commands $(P_1,...,P_n)$ implies that $G$ commands $(P_1,...,gP_i g^{-1},...,P_n)$ for any $g\in G$ and $1\leq i\leq n$.
\end{remk}

The following is an easy example of commanding subgroups.

\begin{prop}\label{prop:abeliancommand}
	Let $A$ be a finitely generated free abelian group. Then $A$ commands a collection of subgroups $(A_1,...,A_n)$ if and only if they are linearly independent (i.e. $\sum_i a_i=0$ for $a_i\in A_i$ implies $a_i=0$ for all $i$). Moreover, in this case $A$ will strongly command them.
\end{prop}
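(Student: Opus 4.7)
For the only-if direction, my plan is a short contradiction. Assume $A$ commands $(A_1,\ldots,A_n)$ with witnesses $\dot A_i$, and suppose towards a contradiction that $\sum_i a_i = 0$ is a nontrivial relation with $a_i \in A_i$. Scaling by $M := \prod_i [A_i:\dot A_i]$ pushes each $Ma_i$ into $\dot A_i$ while preserving the relation, and some $Ma_{j_0}$ is still nonzero because $A$ is torsion-free. Using residual finiteness of $A_{j_0}$, I pick a finite-index subgroup $P'_{j_0} < \dot A_{j_0}$ avoiding $Ma_{j_0}$ and set $P'_i := \dot A_i$ for $i \neq j_0$. The commanding property then produces a finite-index $A' \triangleleft A$ with $A_i \cap A' = P'_i$ for all $i$. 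Now $Ma_{j_0} = -\sum_{i \neq j_0} Ma_i \in \sum_{i \neq j_0} \dot A_i \subset A'$, while $Ma_{j_0} \in A_{j_0}$, so $Ma_{j_0} \in A' \cap A_{j_0} = P'_{j_0}$, a contradiction.

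For the converse direction, I will prove strong commanding with $\dot A_i := A_i$. First I set up some linear-algebraic scaffolding: let $B := \sum_i A_i$, which by linear independence is the internal direct sum $\bigoplus_i A_i \subset A$, and let $\tilde B := A \cap (B \otimes_\Z \Q)$ be its pure closure. Since $A/\tilde B$ is finitely generated and torsion-free, it is free, so $\tilde B$ admits a complement: fix a decomposition $A = \tilde B \oplus C$. Note that $B$ is finite-index in $\tilde B$, because $\tilde B/B$ is a finitely generated torsion abelian group. Given any finite-index subgroups $P'_i < A_i$, I define
\begin{equation*}
A' := P'_1 + \cdots + P'_n + C = \Big(\bigoplus_i P'_i\Big) \oplus C.
\end{equation*}
This $A'$ is automatically normal in the abelian group $A$ and has finite index in $A$, since $\bigoplus_i P'_i$ is finite-index in $B$ which is finite-index in $\tilde B$. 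The identity $A' \cap A_j = P'_j$ follows from $A = \tilde B \oplus C$ together with the consequence of linear independence that $A_j \cap \sum_{k \neq j} A_k = 0$.

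Finally, I verify the strong commanding condition $A_i A' \cap A_j A' = A'$ for $i \neq j$. Take $x \in A_i A' \cap A_j A'$ and write $x = a_i + b_i + c_1 = a_j + b_j + c_2$ with $a_i \in A_i$, $a_j \in A_j$, $b_i \in \sum_{k \neq i} P'_k$, $b_j \in \sum_{k \neq j} P'_k$, and $c_1, c_2 \in C$. Projecting along $A = \tilde B \oplus C$ yields $c_1 = c_2$, so $a_i + b_i = a_j + b_j$ in $B$; reading off the $A_i$- and $A_j$-components of this equality in $B = \bigoplus_k A_k$ then shows $a_i \in P'_i$ and $a_j \in P'_j$, whence $x \in A'$. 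The one genuinely non-obvious point, and what I expect to be the main step to get right, is the use of the pure closure $\tilde B$ in place of $B$ itself: because $B$ need not be a direct summand of $A$, purifying first is essential in order to obtain a complement $C$ that does not interfere with the $A' \cap A_j$ computation.
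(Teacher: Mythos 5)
Your proof is correct and takes essentially the same approach as the paper: the only-if direction scales a dependence relation into the $\dot A_i$ and separates one term by a finite-index subgroup exactly as in the paper, and the if direction builds $A'$ as the (direct) sum of the chosen finite-index subgroups together with a complementary subgroup $C$ meeting their span trivially, verifying $A_i\cap A'=P'_i$ and $(A_i+A')\cap(A_j+A')=A'$ componentwise via linear independence. The only cosmetic difference is that you fix $C$ once and for all as a complement of the pure closure of $\sum_i A_i$, whereas the paper chooses $C$ to intersect $B=\sum_i A'_i$ trivially and surject onto the free part of $A/B$; both choices serve the identical purpose.
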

\begin{proof}
	Suppose $(A_1,...,A_n)$ are linearly independent. We will show that $A$ strongly commands $(A_i)$ with $\dot{A}_i=A_i$. Let $A'_i<A_i$ be finite-index subgroups. Put $B:=A'_1+...+A'_n$ and let $A/B=\mathbb{Z}^k\oplus F$ with $F$ finite. Let $C<A$ be a subgroup that intersects $B$ trivially and surjects onto the $\mathbb{Z}^k$ factor of $A/B$. Then $A':=B+C$ has finite index in $A$, and $A_i\cap A'=A'_i$ by the linear independence of $(A_i)$. Moreover, for $j\neq k$ we have
	\begin{align*}
		(A'+A_j)\cap(A'+A_k)&=(B+A_j)\cap(B+A_k)+C\\
		&=B+C\\
		&=A',
	\end{align*}
by linear independence of $(A_i)$.

Conversely, suppose $(A_i)$ are not linearly independent. Then there exist $a_i\in A_i$ not all zero with $\sum_i a_i=0$. Let $\dot{A}_i< A_i$ be finite-index subgroups. We may assume that $a_i\in\dot{A}_i$ by multiplying the $a_i$ by a large integer (remember that $A$ is torsion-free). Let $a_j\neq 0$, and set $A'_i=\dot{A}_i$ for $i\neq j$ and $A'_j<\dot{A}_j$ some finite-index subgroup with $a_j\notin A'_j$. Then there is no finite-index subgroup $A'<A$ with $A_i\cap A'=A'_i$, because $A'$ would contain the $a_i$ for $i\neq j$, and so would also contain $a_j=-\sum_{i\neq j}a_i$ which is in $A_j-A'_j$.
\end{proof}

The following deep theorem is a consequence of Wise's Malnormal Special Quotient Theorem. This was explained to the author by Daniel Woodhouse.

\begin{defn}(Almost malnormal)\\
	A collection of subgroups $\{P_1,...,P_n\}$ in a group $G$ is \emph{almost malnormal} if $gP_ig^{-1}\cap P_j $ is finite whenever $g\notin P_i$ or $i\neq j$.
\end{defn}

\theoremstyle{plain}
\newtheorem*{thm:commandqc}{Theorem \ref{thm:commandqc}}
\begin{thm:commandqc}
	Every virtually special hyperbolic group commands every almost malnormal collection of quasiconvex subgroups.
\end{thm:commandqc}
\begin{proof}
	Let $G$ be a virtually special hyperbolic group, and let $\{P_1,...,P_n\}$ be an almost malnormal collection of quasiconvex subgroups. 
	Then by Wise's Malnormal Special Quotient Theorem \cite[Theorem 12.3]{WiseQCH}, there are finite-index subgroups $\dot{P}_i<P_i$ such that, for any choice of finite-index subgroups $P'_i<\dot{P}_i$ with $P'_i\triangleleft P_i$, the quotient
	$$\bar{G}:=G/\llangle P'_1,...,P'_n\rrangle$$ 
	is virtually special and hyperbolic.
	By Remark \ref{remk:torsionfree}, $\bar{G}$ contains a finite-index torsion-free subgroup $\bar{G}'\triangleleft \bar{G}$. Let $G'\triangleleft G$ be the preimage of $\bar{G}'$ in $G$. Each subgroup $P_i$ will have finite image in $\bar{G}$, so this image will intersect $\bar{G}'$ trivially, thus $P_i\cap G'=\ker(P_i\to\bar{G})$.
	$G$ is hyperbolic relative to $\{P_1,...,P_n\}$ by \cite[Theorem 7.11]{Bowditch12}, so we may apply \cite[Theorem 1.1(1)]{Osin07}. This gives us a finite set of non-trivial elements $\mathcal{F}\subset G$ (which only depends on the $P_i$), such that $\ker(P_i\to\bar{G})=P'_i$ for each $i$ if $P'_i\cap\mathcal{F}=\emptyset$ for each $i$. So we are done if we replace the $\dot{P}_i$ by smaller subgroups that miss the finite set $\mathcal{F}$, which is possible because $G$ is residually finite.
\end{proof}

We obtain an even more general theorem if we instead use Einstein's relatively hyperbolic version of the Malnormal Special Quotient Theorem \cite[Theorem 2]{Einstein19} in the above proof (again this was pointed out by Woodhouse).

\begin{thm}\label{thm:commandrel}
	Let $G$ be a virtually special group that is hyperbolic relative to subgroups $\{P_1,...,P_n\}$. Then $G$ commands $(P_1,...,P_n)$.
\end{thm}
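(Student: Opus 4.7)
The plan is to mimic the proof of Theorem \ref{thm:commandqc} line-for-line, with Einstein's relatively hyperbolic Malnormal Special Quotient Theorem \cite[Theorem 2]{Einstein19} playing the role of Wise's theorem. Since the hypothesis ``$G$ is hyperbolic relative to $\{P_1,\ldots,P_n\}$'' is now built into the statement, we can even skip the invocation of Bowditch \cite[Theorem 7.11]{Bowditch12} which was needed in the hyperbolic case to establish relative hyperbolicity from quasiconvex almost malnormality.

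First I would apply Einstein's theorem to obtain finite-index subgroups $\dot{P}_i<P_i$ such that, for any choice of finite-index subgroups $P'_i\triangleleft P_i$ contained in $\dot{P}_i$, the quotient
$$\bar{G}:=G/\llangle P'_1,\ldots,P'_n\rrangle$$
is virtually special. By Remark \ref{remk:torsionfree}, $\bar{G}$ then has a finite-index torsion-free normal subgroup $\bar{G}'\triangleleft\bar{G}$. Let $G'\triangleleft G$ be the preimage of $\bar{G}'$ in $G$. Each $P_i$ has finite image in $\bar{G}$ (since the image is a quotient of the finite group $P_i/P'_i$), so this image intersects the torsion-free subgroup $\bar{G}'$ trivially, and therefore $P_i\cap G'=\ker(P_i\to\bar{G})$.

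The remaining task is to guarantee that this kernel is exactly $P'_i$ rather than the a priori larger subgroup $P_i\cap\llangle P'_1,\ldots,P'_n\rrangle$. Here I would appeal to \cite[Theorem 1.1(1)]{Osin07}, which, using only the relative hyperbolicity of $G$ with respect to $\{P_1,\ldots,P_n\}$ (already assumed in the statement), produces a finite set $\mathcal{F}\subset G\setminus\{1\}$ depending only on the peripheral collection such that, whenever each $P'_i$ avoids $\mathcal{F}$, the natural map $P_i/P'_i\to\bar{G}$ is injective. To satisfy this, I would shrink each $\dot{P}_i$ from the outset to a finite-index subgroup missing $\mathcal{F}$, which is possible since $G$ is residually finite by Proposition \ref{prop:residuallyfinite}. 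This forces $P_i\cap G'=P'_i$, verifying that $G$ commands $(P_1,\ldots,P_n)$.

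The only real obstacle is encapsulated in the black box of Einstein's theorem; granted that input, every other step transfers verbatim from the hyperbolic setting, because the subsequent arguments rely solely on virtual specialness of the quotient and on the relative hyperbolicity of $G$ itself rather than of $\bar{G}$.
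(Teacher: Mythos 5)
Your proposal is correct and is essentially the paper's own argument: the paper proves Theorem \ref{thm:commandrel} by exactly this substitution of Einstein's relatively hyperbolic Malnormal Special Quotient Theorem into the proof of Theorem \ref{thm:commandqc}, with the Osin filling step and the shrinking of the $\dot{P}_i$ via residual finiteness carried over unchanged (and, as you note, the Bowditch step becomes unnecessary since relative hyperbolicity is hypothesised).
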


\begin{remk}\label{remk:relhypisconvex}
In Theorem \ref{thm:commandrel}, if $G$ admits a cubulation $G\acts X$, then the subgroups $P_i$ are necessarily convex \cite[Remark 1.3]{GrovesManning20}. Also, the assumption that $G\acts X$ is virtually special can be reduced to certain assumptions about how the subgroups $P_i$ interact with $X$ \cite{Oregonreyes20,GrovesManning20}.
\end{remk}

The following conjecture is a natural extension of Theorem \ref{thm:commandrel}. We prove some weaker versions of this in Section \ref{sec:buildingcommanding} (Theorems \ref{thm:nonregc} and \ref{thm:weakcommandqc}).

\theoremstyle{plain}
\newtheorem*{con:command1}{Conjecture \ref{con:command1}}
\begin{con:command1}
	Every virtually special cubulated group $G\acts X$ commands every almost malnormal collection of convex subgroups.
\end{con:command1}

If all hyperbolic groups are virtually torsion-free (or equivalently if all hyperbolic groups are residually finite \cite[Theorem 5.1]{KapovichWise00}) then we can extend Theorem \ref{thm:commandqc} to all hyperbolic groups (again this was pointed out by Woodhouse). 

\begin{thm}\label{thm:hypRF}
	If all hyperbolic groups are virtually torsion-free, then every hyperbolic group commands every almost malnormal collection of quasiconvex subgroups.
\end{thm}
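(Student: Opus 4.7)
The plan is to follow the proof of Theorem \ref{thm:commandqc} verbatim, replacing Wise's Malnormal Special Quotient Theorem with Osin's group-theoretic Dehn filling theorem \cite{Osin07}, which applies to arbitrary hyperbolic (and more generally relatively hyperbolic) groups. The only role played by virtual specialness in that proof was to ensure that the Dehn filling quotient is virtually torsion-free, and this is now supplied directly by the standing hypothesis.

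Explicitly, let $G$ be a hyperbolic group and let $\{P_1,\ldots,P_n\}$ be an almost malnormal collection of quasiconvex subgroups. By \cite[Theorem 7.11]{Bowditch12}, $G$ is hyperbolic relative to $\{P_1,\ldots,P_n\}$, so \cite[Theorem 1.1]{Osin07} produces a finite set $\mathcal{F}\subset G\setminus\{1\}$ such that, for every collection of finite-index normal subgroups $P'_i\triangleleft P_i$ with $P'_i\cap\mathcal{F}=\emptyset$, the quotient $\bar{G}:=G/\llangle P'_1,\ldots,P'_n\rrangle$ is hyperbolic and satisfies $\ker(P_i\to\bar{G})=P'_i$. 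The standing hypothesis is equivalent to residual finiteness of all hyperbolic groups by \cite[Theorem 5.1]{KapovichWise00}, so in particular $G$ is residually finite; we may therefore choose finite-index subgroups $\dot{P}_i<P_i$ avoiding $\mathcal{F}$. For any $P'_i<\dot{P}_i$ with $P'_i\triangleleft P_i$, the hypothesis applied to the hyperbolic group $\bar{G}$ gives a finite-index torsion-free normal subgroup $\bar{G}'\triangleleft\bar{G}$, whose preimage $G'\triangleleft G$ then satisfies $P_i\cap G'=\ker(P_i\to\bar{G})=P'_i$, since the image of $P_i$ in $\bar{G}$ is finite and hence meets $\bar{G}'$ trivially. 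Thus $G$ commands $(P_1,\ldots,P_n)$ with the subgroups $\dot{P}_i$ chosen above.

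The main point to check is that a single finite set $\mathcal{F}$ simultaneously guarantees hyperbolicity of the filling and the peripheral intersection equality $\ker(P_i\to\bar{G})=P'_i$; both conclusions appear together in \cite[Theorem 1.1]{Osin07}, so this is not an obstacle. Beyond this, the argument is a straightforward diagram chase that mirrors the proof of Theorem \ref{thm:commandqc}, with Osin's Dehn filling theorem taking the role of the Malnormal Special Quotient Theorem and the hypothesis on virtual torsion-freeness taking the role of virtual specialness.
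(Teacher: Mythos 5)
Your proposal is correct and is essentially the paper's own argument: the paper proves this theorem by rerunning the proof of Theorem \ref{thm:commandqc} with Osin's Dehn filling theorem \cite[Theorem 1.1(1)]{Osin07} in place of the Malnormal Special Quotient Theorem, using the standing hypothesis (equivalently, residual finiteness of hyperbolic groups) to supply virtual torsion-freeness of the hyperbolic quotient $\bar{G}$. Your write-up just makes explicit the steps (Bowditch's relative hyperbolicity, avoiding $\mathcal{F}$ via residual finiteness, pulling back a torsion-free finite-index subgroup) that the paper leaves implicit by reference to the earlier proof.
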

\begin{proof}
	Run the same argument as in the proof of Theorem \ref{thm:commandqc}, but using Osin's peripheral filling theorem \cite[Theorem 1.1(1)]{Osin07} instead of Wise's Malnormal Special Quotient Theorem. This only gives us that the quotient $\bar{G}$ is hyperbolic, but by assumption this implies that $\bar{G}$ is virtually torsion-free, which is all that's needed to complete the argument.
\end{proof}

\subsection{Application to graphs of groups}\label{subsec:GoG}

Commanding subgroups is a very useful property when trying to build finite covers of graphs of groups (or graphs of spaces), which in turn can be used to prove separability or commensurability properties for groups. This idea has occurred several times before in the literature, even though the terminology of commanding is new to this paper.
We already discussed in the introduction how commanding elements is related to omnipotence, and Wise actually defined omnipotence in order to characterise when graphs of free groups with cyclic edge groups are subgroup separable \cite{Wise00}.
In a far more general setting, Wise used his Malnormal Special Quotient Theorem in order to prove his Quasi-convex Hierarchy Theorem about graphs of hyperbolic virtually special groups \cite{WiseQCH}, albeit this didn't go via any analogue of Theorem \ref{thm:commandqc}.
In addition, Wise effectively proved that finite-volume hyperbolic 3-manifold groups command their cusp subgroups \cite[Corollary 16.15]{WiseQCH} (note that Wise forgot to specify the dimension), and this was subsequently used by Behrstock and Neumann to prove a quasi-isometric rigidity result for certain non-geometric 3-manifold groups \cite{BehrstockNeumann12} - in this case the graph of spaces comes from the geometric decomposition of the 3-manifold.
Also, the author used Theorem \ref{thm:commandqc} in his proof of Agol's theorem on hyperbolic cubulated groups \cite{Shepherd19a}, and he used Theorem \ref{thm:commandrel} in joint work with Woodhouse to prove a quasi-isometric rigidity result for graphs of virtually free groups \cite{ShepherdWoodhouse20}.

The above papers always used commanding (or some version of it) in conjunction with extra properties about the specific groups involved.
In Proposition \ref{prop:GoG} below we give a very general statement about how commanding subgroups can be used to deduce profinite and separability properties for graphs of groups. First we fix some notation for graphs of groups -- see \cite{SerreTrees, ScottWall79, Bass93} for complete definitions and further background. 

\begin{nota}
	Let $G$ split as a graph of groups over a graph $\Gamma$. We will refer to the vertex and edge groups as $G_v$ and $G_e$, with $v\in V\Gamma$ a vertex and $e\in E\Gamma$ an oriented edge. We will also consider the vertex and edge groups as subgroups of $G$ (which is well-defined up to conjugation). For $e\in E\Gamma$ let $\bar{e}$ denote the same edge with reversed orientation and let $\tau(e)$ denote the terminal vertex of $e$. We make the identification $G_e=G_{\bar{e}}$. We denote the edge morphisms by $\zeta_e:G_e\to G_{\tau(e)}$. Given a vertex group $G_v$, the collection $(\zeta_e(G_e)\mid \tau(e)=v)$ will be referred to as its \emph{collection of incident edge groups}.
\end{nota}

\begin{prop}\label{prop:GoG}
Let $G$ split as a graph of groups over a finite graph $\Gamma$, and suppose that each vertex group commands its collection of incident edge groups. Then we have the following:
\begin{enumerate}
	\item\label{item:equaltopologies} For any vertex group $G_v<G$, the profinite topology on $G_v$ coincides with the subspace topology on $G_v$ induced by the profinite topology on $G$.
	\item\label{item:separabletoseparable} If every incident edge group is separable in its corresponding vertex group, then the vertex groups are separable in $G$.
	\item\label{item:resfinite} If every vertex group is residually finite and every incident edge group is separable in its corresponding vertex group, then $G$ is residually finite.
\end{enumerate}
\end{prop}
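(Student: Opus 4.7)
The plan is to construct, for each finite quotient of $G$ needed, a graph-of-groups quotient $\bar{\mathcal{G}}$ over the same underlying graph $\Gamma$ whose vertex groups are all finite; its fundamental group $\bar{G}$ is then the fundamental group of a finite graph of finite groups, hence virtually free and thus residually finite. The commanding hypothesis at each vertex is exactly what assembles compatible finite quotients across the whole graph. Concretely, for each edge $e$ with endpoints $u = \tau(e)$ and $v = \tau(\bar e)$, I first pick a finite-index normal subgroup $N_e \triangleleft G_e$ contained in $\zeta_e^{-1}(\dot P^u) \cap \zeta_{\bar e}^{-1}(\dot P^v)$, where the $\dot P$'s are provided by commanding at the respective vertices; this is possible since intersections of finitely many finite-index subgroups have finite-index normal cores. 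Defining $(P')^u := \zeta_e(N_e)$ and $(P')^v := \zeta_{\bar e}(N_e)$ gives finite-index normal subgroups of the incident edge groups at both endpoints, and commanding at each vertex $w$ then yields a finite-index $N_w \triangleleft G_w$ with $P_i^w \cap N_w = (P')_i^w$ for every incident edge group. The compatibility $\zeta_e(N_e) = P^u \cap N_u$ makes each induced morphism $\bar G_e := G_e / N_e \hookrightarrow \bar G_v := G_v / N_v$ injective, so standard Bass--Serre theory produces a surjection $\pi : G \twoheadrightarrow \bar G$ with $\bar G_v \hookrightarrow \bar G$ and $\ker \pi \cap G_v = N_v$.

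For (1), given a finite-index $H < G_{v_0}$, replace $H$ by a finite-index normal $H' \triangleleft G_{v_0}$ with $H' \subset H$. Carry out the construction with each $(P')_i^{v_0}$ additionally chosen inside $H'$; the $N_{v_0}$ obtained from commanding can then be intersected with $H'$ without breaking edge-compatibility, giving $N_{v_0} \subset H'$. Because $\bar G_{v_0}$ is finite and $\bar G$ is residually finite, some finite-index $\bar G' < \bar G$ meets $\bar G_{v_0}$ trivially, so $G' := \pi^{-1}(\bar G')$ is finite-index in $G$ with $G' \cap G_{v_0} = N_{v_0} \subset H$, which establishes the claimed agreement of topologies.

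For (2), given $g \in G \setminus G_{v_0}$, write $g$ in reduced form $g = g_0 e_1 g_1 \cdots e_k g_k$ with $k \geq 1$. At each pinch index $i$ (i.e.\ $e_{i+1} = \bar e_i$) the reduced-form property gives $g_i \notin \zeta_{\bar e_i}(G_{e_i})$, so separability of this edge group in $G_{\tau(e_i)}$ provides a finite-index normal $M_i \triangleleft G_{\tau(e_i)}$ with $g_i \notin \zeta_{\bar e_i}(G_{e_i}) M_i$. Running the above construction with the additional requirement $N_{\tau(e_i)} \subset M_i$ at every pinch (imposed by the same intersection trick as in (1)) ensures $\bar g_i \notin \zeta_{\bar e_i}(\bar G_{e_i})$ at every pinch, so $\bar g$ remains in reduced form of length $k \geq 1$ inside $\bar G$ and lies outside the finite subgroup $\bar G_{v_0}$. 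Residual finiteness of $\bar G$ then yields a finite quotient separating $\bar g$ from $\bar G_{v_0}$, which pulls back to $G$. Part (3) follows from (1) and (2): a nontrivial $g \in G$ either lies in some $G_v$, in which case residual finiteness of $G_v$ together with (1) gives a finite-index subgroup of $G$ missing $g$; or it lies outside every vertex group, in which case (2) separates $g$ from some $G_v$ and hence from the identity.

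The main technical obstacle I anticipate is bookkeeping in the construction of $\bar{\mathcal{G}}$: simultaneously enforcing $(P')_i^v \subset \dot P_i^v$, compatibility along every edge of $\Gamma$, and the extra containments $N_{v_0} \subset H'$ in (1) or $N_{\tau(e_i)} \subset M_i$ in (2). The key point is that refining $N_v$ by intersecting with a target subgroup preserves the edge-compatibility $P_i^v \cap N_v = (P')_i^v$ exactly when $(P')_i^v$ already sits inside that target subgroup, so the order of operations must be: first choose the $N_e$ small enough that the induced $(P')_i^v$ lie inside the relevant target subgroups, then apply commanding, then refine $N_v$ by intersection. Once $\bar G$ is in hand, reading off length in reduced form and invoking residual finiteness of $\bar G$ are straightforward.
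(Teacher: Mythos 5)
Your argument is correct, and it reaches the three statements by a route that differs from the paper's in one essential respect. Both proofs extract the same thing from the commanding hypothesis: finite-index normal subgroups $N_e\triangleleft G_e$ with $N_e=N_{\bar e}$, and finite-index normal subgroups $N_v\triangleleft G_v$ satisfying the compatibility $\zeta_e(G_e)\cap N_{\tau(e)}=\zeta_e(N_e)$, together with the refinement trick (choose the $N_e$ small enough first, apply commanding, then intersect $N_v$ with a prescribed target) to force $N_{v_0}\subset H'$ in (1) and $N_{\tau(e_i)}\subset M_i$ at the backtracks in (2); your no-backtracking condition $g_i\notin\zeta(G_{e_i})M_i$ is exactly the paper's condition at pinches. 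Where you diverge is in how this data is used: the paper assembles the $N_v,N_e$ into a finite \emph{cover} of the graph of groups (equivalently of the graph of spaces), obtaining a finite-index subgroup $G'<G$ directly and then arguing that the lifted loop projects to a non-backtracking, embedded path in the covering graph; you instead assemble them into a \emph{quotient} graph of groups with finite vertex and edge groups, get a surjection $\pi:G\twoheadrightarrow\bar G$ onto the fundamental group of a finite graph of finite groups with $\ker\pi\cap G_v=N_v$, and then separate inside $\bar G$ using Britton's lemma (the image word stays reduced of length $k\geq1$) together with residual finiteness of $\bar G$ and separability of the finite subgroup $\bar G_{v_0}$. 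Your route buys a cleaner endgame for (2) -- it replaces the paper's somewhat informal step ``if we construct $(G',\Gamma')$ appropriately we can ensure that $\delta'$ is embedded'' with normal forms in $\bar G$ -- at the cost of invoking the extra (standard) fact that fundamental groups of finite graphs of finite groups are virtually free, hence residually finite, and that finite subgroups of residually finite groups are separable; the paper's cover-based route avoids any external input beyond covering theory of graphs of groups. Two small points worth spelling out if you write this up: the injectivity of the induced edge morphisms $\bar G_e\to\bar G_v$ is exactly the inclusion $\zeta_e(G_e)\cap N_{\tau(e)}\subset\zeta_e(N_e)$ (the other inclusion gives well-definedness), and in (3) the case ``$g$ lies in some $G_v$'' should be taken up to conjugacy (harmless, since the separating subgroups can be taken normal).
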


\begin{remk}
Before giving the proof, we remark that the properties appearing in Proposition \ref{prop:GoG} have been well studied in contexts other than commanding.
Tavgen' and Zalesski\u{\i} considered properties \ref{item:equaltopologies} and \ref{item:separabletoseparable} when proving conjugacy separability for certain amalgams of polycyclic groups \cite{TavgenZalesskii95}.
Hempel used similar ideas to prove that fundamental groups of Haken 3-manifolds are residually finite \cite{Hempel84}.
In \cite{RibesZalesskii10}, a graph of groups is called \emph{efficient} precisely if the conclusions hold for properties \ref{item:equaltopologies}--\ref{item:resfinite}, and this is needed if one wants to build the profinite completion of a graph of groups by piecing together the profinite completions of its vertex groups.
\end{remk}

\begin{proof}[Proof of Proposition \ref{prop:GoG}]
The vertex groups command their incident edge groups, so by definition there are finite-index subgroups $\dot{G}_e\triangleleft G_e$ for $e\in E\Gamma$ such that, for any choice of finite-index subgroups $G'_e<\dot{G}_e$ with $G'_e\triangleleft G_e$, there exist finite-index normal subgroups $G'_v\triangleleft G_v$ for $v\in V\Gamma$ with 
\begin{equation}\label{zetaeq}
\zeta_e(G_e)\cap G'_{\tau(e)}=\zeta_e(G'_e) 
\end{equation} 
for $e\in E\Gamma$.
Moreover, given finite-index normal subgroups $\hat{G}_v\triangleleft G_v$, if we choose the $G'_e$ such that $\zeta_e(G'_e)<\hat{G}_{\tau(e)}$, then we may assume that $G'_v<\hat{G}_v$ for all $v\in V\Gamma$ (by replacing the $G'_v$ with $G'_v\cap \hat{G}_v$ if necessary). Furthermore, if $G'_e=G'_{\bar{e}}$ for all $e\in E\Gamma$, then we can construct a finite cover of the graph of groups, with copies of the $G'_v$ and $G'_e$ as vertex and edge groups, and with edge morphisms obtained by composing the existing edge morphisms $\zeta_e$ with inner automorphisms of the original vertex groups $G_v$. This gives a valid cover of graphs of groups precisely because of equation (\ref{zetaeq}). 
We are now ready to prove \ref{item:equaltopologies} and \ref{item:separabletoseparable}.

\begin{enumerate}
	\item Fix a vertex group $G_v$.
	The cosets of finite-index normal subgroups of $G_v$ form a basis for the profinite topology on $G_v$, whereas the subspace topology on $G_v$ has basis consisting of cosets of intersections of $G_v$ with finite-index subgroups of $G$.
	It is clear from this description that open sets in the subspace topology are open in the profinite topology. To prove the converse we must show that any finite-index normal subgroup $\hat{G}_v\triangleleft G_v$ is open in the subspace topology.
	Indeed, given such $\hat{G}_v$, take a finite-index subgroup $G'<G$ corresponding to a finite cover of graphs of groups as above, such that $G'_v<\hat{G}_v$. Observe that $G'\cap G_v=G'_v$ is closed in the subspace topology on $G_v$. But $G'_v$ has finite index in $G_v$, so it is also open in the subspace topology. Hence $\hat{G}_v$ is open in the subspace topology too.  	
	\item Now suppose the vertex groups are residually finite. Fix a vertex group $G_v$ and an element $g\in G-G_v$. We must find a finite-index subgroup $G'<G$ with $g\notin G'G_v$ (as then $G'g$ is an open neighbourhood of $g$ distinct from $G_v$). 
	Now let $(X,\Gamma)$ be a graph of spaces corresponding to $(G,\Gamma)$ (see e.g. \cite{ScottWall79}), and let $\gamma$ be a loop in $X$ corresponding to $g$. We may assume that the basepoint is in the vertex space $X_v$. Suppose that $\gamma$ projects to an edge loop $\delta=(e_1,...,e_n)$ in $\Gamma$ with vertex sequence $v=v_0,v_1,...,v_n=v$.	
	After homotoping, we may assume that $\gamma$ is a concatenation of standard paths that go across the edge spaces $X_{e_i}$, together with based loops in the vertex spaces corresponding to elements $g_i\in G_{v_i}$. We may also assume that $\gamma$ is \emph{reduced} in the sense that $g_i\notin \zeta_{e_i}(G_{e_i})$ whenever $e_i=\bar{e}_{i+1}$. 	
	
	 By separability of the incident edge groups, there exist finite-index normal subgroups $\hat{G}_v\triangleleft G_v$ such that
	\begin{equation}\label{nobacktrack}
g_i\notin \zeta_{e_i}(G_{e_i})\hat{G}_{v_i},
	\end{equation}  
whenever $e_i=\bar{e}_{i+1}$. Now construct a finite cover $(G',\Gamma')$ of the graph of groups $(G,\Gamma)$ as described above, with $G'_v<\hat{G}_v$ for all $v\in V\Gamma$, and let $X'\to X$ be the corresponding finite cover of graphs of spaces. 
Let $\gamma$ lift to a based path $\gamma'$ in $X'$, and suppose this projects to a path $\delta'$ in $\Gamma'$.
It follows from (\ref{nobacktrack}) that $\delta'$ never backtracks, and if we construct $(G',\Gamma')$ appropriately we can ensure that $\delta'$ is embedded. The subgroup $G'G_v$ corresponds to based paths in $X'$ that start and finish at the same vertex space, so we conclude that $g\notin G'G_v$ as required (note that $n\geq1$ since $g\notin G_v$).
\item For any vertex group $G_v$, \ref{item:separabletoseparable} implies that $G_v$ is closed in the profinite topology on $G$. Then it follows from \ref{item:equaltopologies} and the residual finiteness of $G_v$ that the trivial subgroup is also closed in $G$, hence $G$ is residually finite.
\end{enumerate}
\end{proof}

\subsection{Passing to finite-index subgroups}

When we prove Theorem \ref{thm:command1} in Section \ref{sec:buildingcommanding} we will first exhibit a certain finite-index subgroup $\dot{G}\triangleleft G$ with a certain commanding-like property, and we will need the following lemma to transfer back to the whole of $G$.

\begin{lem}\label{lem:commandfromsubgp}
	Let $G$ be a finitely generated group with independent elements $\{g_1,...,g_n\}$. Suppose that $\dot{G}\triangleleft G$ is a finite-index normal subgroup and suppose that $M_i$ is the order of $g_i$ in the quotient $G/\dot{G}$.
	The $G$-conjugacy class of $h_i:=g_i^{M_i}\in\dot{G}$ splits into a finite number of $\dot{G}$-conjugacy classes, and we let $\{h_{ik}\}\subset\dot{G}$ be a set of representatives (with one $h_{ik}$ equal to $h_i$). Suppose for each $i$ there is a subcollection $\{h_{ik}\mid k\in K_i\}$, which includes $h_i$, and suppose there are integers $\dot{N}_{ik}>0$ for $k\in K_i$ with the following property: For any integers $\dot{r}_i>0$ there exists a finite quotient $\dot{G}\to\dot{G}/\dot{G}'$, where $h_{ik}$ has order $\dot{N}_{ik}\dot{r}_i$ for $i=1,...,n$ and $k\in K_i$, and $h_{ik}\in\dot{G}'$ otherwise. Then $G$ commands $\{g_i\}$.
	
	If in addition the finite quotients $\dot{G}\to\dot{G}/\dot{G}'$ described above can be chosen such that the images of the subgroups $\langle h_i\rangle$ have pairwise trivial intersections, then $G$ strongly commands $\{g_i\}$.
\end{lem}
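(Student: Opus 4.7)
The plan is to take $G'$ to be the normal core of $\dot{G}'$ in $G$, namely $G':=\bigcap_{g\in G}g\dot{G}'g^{-1}$. Since $\dot{G}'\triangleleft\dot{G}$ and $\dot{G}\triangleleft G$, every conjugate $g\dot{G}'g^{-1}$ is a finite-index normal subgroup of $\dot{G}$, and since $G/\dot{G}$ is finite the intersection $G'$ is a finite-index subgroup of $\dot{G}$ which is moreover normal in $G$. The finite quotient $G\to G/G'$ will serve as the witness.

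First I would compute the order of $g_i$ in $G/G'$. Since $g_i^m\in\dot{G}$ precisely when $M_i\mid m$, this order equals $M_i s_i$ where $s_i>0$ is minimal with $h_i^{s_i}\in G'$. Unpacking the definition of $G'$, and using that every $G$-conjugate $g^{-1}h_ig$ is $\dot{G}$-conjugate to some representative $h_{ik}$ together with the normality of $\dot{G}'$ in $\dot{G}$, the condition $h_i^{s_i}\in G'$ translates into $h_{ik}^{s_i}\in\dot{G}'$ for every $k$. By hypothesis this is equivalent to $\dot{N}_{ik}\dot{r}_i\mid s_i$ for every $k\in K_i$ (the remaining $h_{ik}$ already lie in $\dot{G}'$), and so $s_i=\dot{r}_i L_i$ with $L_i:=\LCM_{k\in K_i}(\dot{N}_{ik})$. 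Hence the order of $g_i$ in $G/G'$ equals $M_iL_i\dot{r}_i$. Setting $N:=\LCM_i(M_iL_i)$ and, for prescribed $r_i$, invoking the hypothesis with $\dot{r}_i:=Nr_i/(M_iL_i)$, we obtain a quotient in which $\bar{g}_i$ has order exactly $Nr_i$; this establishes that $G$ commands $\{g_i\}$.

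For strong commanding I would run the same construction while additionally controlling $\langle\bar{g}_i\rangle\cap\langle\bar{g}_j\rangle$ for $i\neq j$. Intersecting with $\dot{G}/G'$ produces the subgroup $\langle\bar{h}_i\rangle\cap\langle\bar{h}_j\rangle$, and projecting further to $\dot{G}/\dot{G}'$ the trivial-intersection part of the hypothesis makes this image trivial, so this component of the intersection lies in the kernel of $\langle\bar{h}_i\rangle\subset\dot{G}/G'\twoheadrightarrow\dot{G}/\dot{G}'$. The plan is to kill this kernel, and simultaneously to force the projection of any putative equality $\bar{g}_i^a=\bar{g}_j^b$ to the fixed finite group $G/\dot{G}$ to be trivial, by enlarging $N$ to absorb (i) the index ratio between the orders of $\bar{h}_i$ in $\dot{G}/G'$ and $\dot{G}/\dot{G}'$, and (ii) the orders of the intersections $\langle\bar{g}_i\rangle\cap\langle\bar{g}_j\rangle$ inside the fixed finite quotient $G/\dot{G}$. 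Once these are absorbed, any non-trivial element of $\langle\bar{g}_i\rangle\cap\langle\bar{g}_j\rangle$ in $G/G'$ would pull back to a non-trivial element of $\langle\bar{h}_i\rangle\cap\langle\bar{h}_j\rangle$ in $\dot{G}/\dot{G}'$, contradicting the additional hypothesis.

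The principal obstacle is this last manoeuvre: the order of $\bar{h}_i$ in $\dot{G}/G'$ can be strictly larger than in $\dot{G}/\dot{G}'$ because of contributions from the $G$-conjugates $h_{ik}$ indexed by $k$ other than the distinguished one, so the trivial-intersection hypothesis at the level of $\dot{G}/\dot{G}'$ does not lift to $\dot{G}/G'$ for free. Resolving this demands careful bookkeeping between $M_i$, $L_i$, $\dot{N}_{ik}$ and the fixed intersection data in $G/\dot{G}$, but no fresh input beyond the stated hypotheses is required.
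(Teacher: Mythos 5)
Your first half is correct and follows the paper's own route: you take the normal core $G'=\bigcap_{g\in G}g\dot{G}'g^{-1}$, observe that $g\dot{G}'g^{-1}$ depends only on the coset $g\dot{G}$ (so $G'$ has finite index), and compute the order of $\bar{g}_i$ in $G/G'$ as $M_iL_i\dot{r}_i$ with $L_i=\LCM\{\dot{N}_{ik}\mid k\in K_i\}$, exactly as in the paper's first claim; whether one sets $N=\LCM_i(M_iL_i)$ or, as the paper does, $N=\prod_i M_i\prod_{k\in K_i}\dot{N}_{ik}$ is immaterial for plain commanding.

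The strong commanding part, however, is a genuine gap: you correctly identify the difficulty (the order of $\bar{h}_i$ in $\dot{G}/G'$ is $L_i\dot{r}_i$ while $h_i$ has order only $\dot{N}_{i1}\dot{r}_i$ in $\dot{G}/\dot{G}'$), but the fix you sketch cannot work as stated. The discrepancy is the fixed ratio $L_i/\dot{N}_{i1}$, which does not depend on $N$ or on the $\dot{r}_i$ at all, so ``enlarging $N$ to absorb the index ratio'' changes nothing; and the intersections $\langle\bar{g}_i\rangle\cap\langle\bar{g}_j\rangle$ inside $G/\dot{G}$ are not the relevant data either. What the paper actually does is an arithmetic reduction to a prime: if $\langle\bar{g}_i\rangle\cap\langle\bar{g}_j\rangle$ is non-trivial in $G/G'$ it contains an element $\bar{g}$ of prime order $p$; since $p$ divides both $Nr_i$ and $Nr_j$ and since $M_iM_j\mid N$, one gets (say) $p\mid Nr_i/M_i$, so the unique order-$p$ subgroup of the cyclic group $\langle\bar{g}_i\rangle$ is $\langle\bar{g}\rangle=\langle\bar{h}_i^{Nr_i/pM_i}\rangle$, forcing $\langle\bar{g}\rangle\le\langle\bar{h}_i\rangle\cap\langle\bar{h}_j\rangle$ inside $\dot{G}/G'$, whence a contradiction with the pairwise-trivial-intersection hypothesis after passing to $\dot{G}/\dot{G}'$. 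Note also that this argument uses the divisibility $M_iM_j\mid N$ for $i\neq j$, which the paper's product choice of $N$ guarantees but your choice $N=\LCM_i(M_iL_i)$ need not (e.g.\ $M_1=M_2=2$, $L_1=L_2=1$ gives $N=2$); so to complete the strong case you would have to both enlarge $N$ to ensure this divisibility and supply the prime-order argument, neither of which appears in your proposal.
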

\begin{proof}
	Put
	$$N:=\prod_{i=1}^n M_i\prod_{k\in K_i} \dot{N}_{ik}.$$
	Let $r_i>0$ be integers for $i=1,...,n$. In order to show that $G$ commands $\{g_i\}$, our task is to find a finite quotient of $G$ where $g_i$ has order $Nr_i$. Put
	$$\dot{r}_i:=\frac{Nr_i}{M_i \LCM\{\dot{N}_{ik}\mid k\in K_i\}},$$
	where $\LCM$ denotes the lowest common multiple of a set of integers. By the hypothesis of the lemma, we can now take a finite quotient $\dot{G}\to\dot{G}/\dot{G}'$, where $h_{ik}$ has order $\dot{N}_{ik}\dot{r}_i$ for $i=1,...,n$ and $k\in K_i$, and $h_{ik}\in\dot{G}'$ otherwise. Our finite quotient of $G$ will be defined using the following subgroup:
	$$G':=\bigcap_{g\in G} g\dot{G}' g^{-1}$$
	This is a finite-index normal subgroup of $G$ because $\dot{G}'<G$ has finite index and $G$ is finitely generated.
	Denote the quotient map $G\to G/G'$ by $g\mapsto\bar{g}$.\\
	
	\begin{claim}
		$\bar{g}_i$ has order $Nr_i$.
	\end{claim}\\
	\begin{claimproof}
		For a power $g_i^l$ to be in $G'$ we would certainly need $g_i^l\in \dot{G}$, so we'd need $M_i$ to divide $l$. Thus it remains to show that $\bar{h}_i=\bar{g}_i^{M_i}$ has order $Nr_i/M_i$.
		
		For each $g\in G$, we have that $g^{-1}h_i g$ is $\dot{G}$-conjugate to some $h_{ik}$. If $k\in K_i$ then $h_{ik}$ has order $\dot{N}_{ik}\dot{r}_i$ in $\dot{G}/\dot{G}'$, so $l=\dot{N}_{ik}\dot{r}_i$ is the least positive integer with $h_i^l\in g\dot{G}' g^{-1}$. If $k\notin K_i$ then $h_{ik}\in\dot{G}'$ and $h_i\in g\dot{G}' g^{-1}$.
		We conclude that $\bar{h}_i$ has order
		$$\LCM\{\dot{N}_{ik}\dot{r}_i\mid k\in K_i\}=\LCM\{\dot{N}_{ik}\mid k\in K_i\}\dot{r}_i=Nr_i/M_i.$$
	\end{claimproof}\\

For the second part of the lemma, suppose that the subgroups $\langle h_i\rangle$ have pairwise trivial intersections in the quotient $\dot{G}\to\dot{G}/\dot{G}'$. It remains to prove the following claim.\\

\begin{claim}
	The subgroups $\langle \bar{g}_i\rangle$ have pairwise trivial intersections.
\end{claim}\\

\begin{claimproof}
	Suppose for contradiction there is $i\neq j$ with $\langle \bar{g}_i\rangle\cap\langle \bar{g}_j\rangle$ non-trivial, say it contains an element $\bar{g}$ of order $p$, with $p$ prime. Then $p$ divides $|\langle \bar{g}_i\rangle|=Nr_i$ and $|\langle \bar{g}_j\rangle|=Nr_j$. Since $M_iM_j$ divides $N$, we deduce that $p$ divides at least one of $Nr_i/M_i$ and $Nr_j/M_j$, say it divides $Nr_i/M_i$. Then 
	$$\langle \bar{h}_i^{Nr_i/pM_i}\rangle=\langle \bar{g}_i^{Nr_i/p}\rangle=\langle\bar{g}\rangle,$$
	which implies that
	$$\{1\}\neq\langle\bar{g}\rangle< \langle \bar{g}_i\rangle\cap\langle \bar{g}_j\rangle\cap\dot{G}/G'.$$
	But $\langle \bar{g}_i\rangle\cap \dot{G}/G'=\langle \bar{h}_i\rangle$ and $\langle \bar{g}_j\rangle\cap \dot{G}/G'=\langle \bar{h}_j\rangle$, hence $\langle \bar{h}_i\rangle\cap\langle \bar{h}_j\rangle\neq\{1\}$. So $\langle h_i\rangle$ and $\langle h_j\rangle$ also have non-trivial intersection in $\dot{G}/\dot{G}'$, contrary to our assumption.
\end{claimproof}\\
\end{proof}

\bigskip
\section{Achieving command}\label{sec:buildingcommanding}

In this section we prove Theorem \ref{thm:command1}, which states that a virtually special cubulated group strongly commands any independent set of convex elements. We also prove Theorems \ref{thm:nonregc} and \ref{thm:weakcommandqc}, which are related to, but weaker than, the notion of commanding subgroups. Finally, we investigate when Theorem \ref{thm:command1} can be applied to right-angled Artin groups.

\subsection{Non-normal commanding}

The following theorem is a non-normal version of commanding subgroups since it does not require the finite-index subgroup $G'$ to be normal in $G$. Interpreted in terms of covers, this provides a way of controlling based elevations.

\begin{thm}(Non-normal commanding)\\\label{thm:nonregc}
	Let $X$ be a finite virtually special cube complex, and let $P_1,...,P_n<G:=\pi_1 X$ be convex subgroups with pairwise trivial intersections. Then there exist finite-index subgroups $\dot{P}_i<P_i$ such that for any further finite-index subgroups $P'_i<\dot{P}_i$ there exists a finite-index $G'<G$ with $P_i\cap G'=P'_i$.
\end{thm}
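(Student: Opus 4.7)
The plan is to use imitator homomorphisms, exploiting the pairwise trivial intersection hypothesis through Subcomplex Entrapment (Lemma \ref{lem:subcomplextrap}). The central geometric observation is that if $Y_i, Y_j \subset X$ are embedded locally convex subcomplexes representing $P_i, P_j$ with $P_i \cap P_j = \{1\}$, then every component $C$ of $Y_i \cap Y_j$ is simply connected: since an intersection of locally convex subcomplexes is locally convex, $\pi_1 C$ injects into $\pi_1 X$ and sits inside $P_i \cap P_j = \{1\}$.

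First I would use Lemma \ref{lem:convexisbased} to realise each $P_i$ as $(\phi_i)_*\pi_1(Y_i, y_i)$ for a based local isometry $\phi_i: (Y_i, y_i) \to (X, x)$, and then invoke Corollary \ref{cor:embedPi} to pass to a finite directly special cover $\hat{X} \to X$ in which every elevation of every $Y_i$ is embedded and does not inter-osculate with any hyperplane. Choosing a basepoint $\hat{x}$ above $x$ and based elevations yields embedded locally convex subcomplexes $\hat{Y}_i \subset \hat{X}$ with common basepoint $\hat{x}$ and $\pi_1(\hat{Y}_i,\hat{x}) = \hat{P}_i := P_i \cap \pi_1(\hat{X},\hat{x})$; the pairwise trivial intersection property persists, $\hat{P}_i \cap \hat{P}_j = \{1\}$ for $i \neq j$. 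For each $i$, let $\hat{G}_i < \pi_1(\hat{X},\hat{x})$ be the imitator subgroup of the inclusion $\hat{Y}_i \hookrightarrow \hat{X}$, with imitator retraction $\rho_i: \hat{G}_i \twoheadrightarrow \hat{P}_i$.

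The key step will be to verify that $\rho_i$ annihilates $\hat{P}_j \cap \hat{G}_i$ whenever $i \neq j$. Given $g \in \hat{P}_j \cap \hat{G}_i$ represented by a loop $\gamma$ in $\hat{Y}_j$ based at $\hat{x}$, Subcomplex Entrapment applied with $Z = \hat{Y}_j$ (which does not inter-osculate with hyperplanes) forces the imitator in $\hat{Y}_i$ to remain inside $\hat{Y}_i \cap \hat{Y}_j$, and by the path-connectedness of the imitator's trajectory, inside the component $C$ of this intersection containing $\hat{x}$. By the geometric observation above, $C$ is simply connected, so the imitator traces a null-homotopic loop in $\hat{Y}_i$ and $\rho_i(g) = 1$.

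Finally, set $G_0 := \bigcap_i \hat{G}_i$ (finite-index in $G$), take $\dot{P}_i := P_i \cap G_0$, and given any finite-index $P'_i < \dot{P}_i$ define
$$G' := G_0 \cap \bigcap_i \rho_i^{-1}(P'_i).$$
Since each $\rho_i$ surjects onto $\hat{P}_i$ with $[\hat{P}_i : P'_i] < \infty$, the subgroup $G'$ has finite index in $G$. For $g \in \dot{P}_i$ the retraction property gives $\rho_i(g) = g$, so $g \in \rho_i^{-1}(P'_i)$ iff $g \in P'_i$, while for $k \neq i$ the key step applied to $g \in \hat{P}_i \cap \hat{G}_k$ yields $\rho_k(g) = 1 \in P'_k$ automatically. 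Hence $P_i \cap G' = P'_i$. The main obstacle is really the key step; once one has the simple-connectedness of components of $\hat{Y}_i \cap \hat{Y}_j$ in hand, the remaining construction slots into place.
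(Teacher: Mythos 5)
Your proposal is correct and follows essentially the same route as the paper's proof: pass to a cover via Corollary \ref{cor:embedPi}, use the imitator retractions $\rho_i$, kill the cross-terms $\rho_i(\hat{P}_j\cap\hat{G}_i)$ via Subcomplex Entrapment together with $\hat{P}_i\cap\hat{P}_j=\{1\}$, and take $G'=\bigcap_i\rho_i^{-1}(P'_i)$. Your packaging of the cross-term step through simple connectedness of the based component of $\hat{Y}_i\cap\hat{Y}_j$ is just a reformulation of the paper's observation that the imitator's loop represents an element of $\hat{P}_i\cap\hat{P}_j=\{1\}$ (note only the based component is needed, since for other components one would have to control conjugates of the $P_i$).
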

\begin{proof}
	Pick a basepoint $x\in X$ and write $G=\pi_1(X,x)$. By Lemma \ref{lem:convexisbased} there exist local isometries of finite cube complexes $\phi_i:(Y_i,y_i)\to(X,x)$ corresponding to the subgroups $P_i$. By Corollary \ref{cor:embedPi}, there is a finite directly special cover $(\hat{X},\hat{x})\to(X,x)$ such that the based elevations $\hat{\phi}_i:(\hat{Y}_i,\hat{y}_i)\to(\hat{X},\hat{x})$ of the $\phi_i$ are embedded and do not inter-osculate with hyperplanes of $\hat{X}$.
	We will consider $\hat{Y}_i$ as a subcomplex of $\hat{X}$ with $\hat{y}_i=\hat{x}$.
	 If $\hat{G}<G$ is the subgroup corresponding to $(\hat{X},\hat{x})\to(X,x)$ then $\hat{P}_i:=P_i\cap\hat{G}$ is the subgroup corresponding to $(\hat{Y}_i,\hat{x})\to(X,x)$.
	 
	 Let $\hat{G}_i$ be the imitator subgroup corresponding to $(\hat{Y}_i,\hat{x})\to(\hat{X},\hat{x})$, and let $\rho_i:\hat{G}_i\to\pi_1(\hat{Y}_i,\hat{x})=\hat{P}_i$ be the corresponding imitator homomorphism.
	 If the walker traverses a loop $\gamma$ in $\hat{Y}_i$ based at $\hat{x}$, then Subcomplex Entrapment implies that the $\hat{Y}_j$-imitator stays in $\hat{Y}_i\cap\hat{Y}_j$ for $i\neq j$, and so
	\begin{equation}\label{rhojhatKi}
		\rho_j(\hat{P}_i)\subset\hat{P}_i\cap\hat{P}_j=\{1\}.
	\end{equation}
    Now set
    $$\dot{P}_i:=\hat{P}_i\cap\bigcap_{j=1}^n \hat{G}_j.$$
	Given finite-index subgroups $P'_i<\dot{P}_i$, we can define a finite-index subgroup $G'<\dot{G}$ by
	$$G':=\bigcap_{i=1}^n \rho_i^{-1}(P'_i).$$
	We know that $P_i\cap\rho_i^{-1}(P'_i)=P'_i$ because $\rho_i$ is a retraction onto $\hat{P}_i$ (Construction \ref{cons:hom}) and $P'_i<\hat{P}_i$.
	And by (\ref{rhojhatKi}) we have $P_i\cap\rho_j^{-1}(P'_j)=\hat{P}_i$ for $i\neq j$.
	It follows that	$P_i\cap G'=P'_i$, as required.
\end{proof}

\subsection{Controlling imitator homomorphisms}\label{subsec:control}

We now turn to proving Theorem \ref{thm:rhoi}, which is a crucial step towards proving Theorem \ref{thm:command1}.
As Theorem \ref{thm:command1} involves a normal subgroup of $G$, we will need greater control over the imitator homomorphisms than we had in Theorem \ref{thm:nonregc}, and so we need to make use of the trivial wall projections from Theorem \ref{thm:trivialwallproj1}.

\begin{lem}\label{lem:prerhoi}
	Let $X$ be a finite virtually special cube complex, and let $P_1,...,P_n<G:=\pi_1 X$ be convex subgroups.
	Then there is a finite-index subgroup $\dot{G}< G$, and for each $i$ there is a homomorphism $\rho_i:\dot{G}\to P_i$ such that:
	\begin{enumerate}
		\item\label{item:hasid} $\rho_i$ is the identity on $\dot{P}_i:=P_i\cap\dot{G}$.
		\item\label{item:lKj=1} For each pair of indices $(i,j)$, if $P_i$ has trivial intersection with every conjugate of $P_j$ then $\rho_i(gP_j g^{-1}\cap\dot{G})=\{1\}$ for all $g\in G$.
	\end{enumerate}
\end{lem}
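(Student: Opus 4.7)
The plan is to follow the structure of Theorem \ref{thm:nonregc}, strengthening the entrapment argument used there with the trivial wall projection property supplied by Theorem \ref{thm:trivialwallproj1}. First, via Lemma \ref{lem:convexisbased} I would realise each $P_i$ as $\phi_{i*}\pi_1(Y_i,y_i)$ for a based local isometry $\phi_i:(Y_i,y_i)\to(X,x)$ of finite cube complexes, and by Corollary \ref{cor:embedPi} pass to a finite directly special regular cover $(\hat{X},\hat{x})\to(X,x)$ in which the based elevations $\hat{Y}_i$ are embedded and do not inter-osculate with any hyperplane of $\hat{X}$. Writing $\hat{P}_i:=P_i\cap\pi_1(\hat{X},\hat{x})$, the local isometry $\hat{Y}_i\hookrightarrow\hat{X}$ identifies $\hat{P}_i$ with $\pi_1(\hat{Y}_i,\hat{x})$.

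Next, for every ordered pair $(i,j)$ satisfying the trivial conjugate intersection hypothesis I would invoke Theorem \ref{thm:trivialwallproj1} and take a common further finite regular cover (still called $\hat{X}$) in which every elevation of $Y_j$ has trivial wall projection onto every elevation of $Y_i$; by Remark \ref{remk:wallprojlift} this property persists in any further finite cover. Let $\rho_i:\hat{G}_i\to\hat{P}_i$ be the imitator homomorphism of $(\hat{Y}_i,\hat{x})\hookrightarrow(\hat{X},\hat{x})$ from Construction \ref{cons:hom}, and put $\dot{G}:=\bigcap_i\hat{G}_i$, which has finite index in $G$. Property (1) is then immediate, since $\rho_i$ is a retraction onto $\hat{P}_i\supset\dot{P}_i=P_i\cap\dot{G}$.

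Property (2) is where the real work lies. Fix $(i,j)$ satisfying the hypothesis and $h\in gP_jg^{-1}\cap\dot{G}$; let $\hat{Y}'_j$ be the elevation of $Y_j$ in $\hat{X}$ corresponding to the conjugate $gP_jg^{-1}$. Any loop representative $\hat{\beta}$ of $h$ at $\hat{x}$ decomposes as $\hat{\gamma}*\hat{\delta}*\hat{\sigma}$ with $\hat{\delta}$ contained in $\hat{Y}'_j$, and the imitator correspondingly traces a loop $\alpha_1*\alpha_2*\alpha_3$ in $\hat{Y}_i$. By Remark \ref{remk:wallprojimitator} the middle segment $\alpha_2$ lies in a single component of $W:=\WProj_{\hat{X}}(\hat{Y}'_j\to\hat{Y}_i)$, which is simply connected by the trivial wall projection property arranged above.

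The main obstacle will be combining these local statements to show that the full loop $\alpha_1*\alpha_2*\alpha_3$ is null-homotopic in $\hat{Y}_i$. I plan to lift to the universal cover $\tilde{X}$ and verify that the imitator's vertex at every stage coincides with the combinatorial nearest-point projection $\Pi_{\tilde{Y}_i}$ of the walker's vertex (using convexity of $\tilde{Y}_i$ and direct specialness of $\hat{X}$, so that every hyperplane crossing $\tilde{Y}_i$ supplies an edge in $\tilde{Y}_i$ at each vertex of its carrier). Null-homotopy of the imitator's loop then reduces to showing $\Pi_{\tilde{Y}_i}(h\tilde{x})=\tilde{x}$; I would deduce this by combining the fact that $h$ stabilises the lift $\tilde{Y}'_j$ of $\hat{Y}'_j$ with the Bridge Theorem (Theorem \ref{thm:bridge}) to control $\Pi_{\tilde{Y}_i}$ on $\tilde{Y}'_j$, and then using triviality of the wall projection in $\hat{X}$ to force $\Pi_{\tilde{Y}_i}(h\tilde{x})$ to equal $\tilde{x}$. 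Local convexity of $\hat{Y}_i\hookrightarrow\hat{X}$ finally upgrades null-homotopy in $\hat{X}$ to null-homotopy in $\hat{Y}_i$, yielding $\rho_i(h)=1$.
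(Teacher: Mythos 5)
Your setup is exactly the paper's: realise the $P_i$ via Lemma \ref{lem:convexisbased}, pass to a regular cover via Corollary \ref{cor:embedPi} and Theorem \ref{thm:trivialwallproj1}, take $\dot{G}=\bigcap_i\hat{G}_i$ and the imitator homomorphisms, get (1) from the retraction property, and observe that the imitator's response to the middle segment lies in a simply connected component of $\WProj_{\hat{X}}(\hat{Y}'_j\to\hat{Y}_i)$. The gap is in the step you yourself flag as the main obstacle, and the strategy you propose for it does not work: it is simply false that the imitator's vertex coincides with the combinatorial nearest-point projection $\Pi_{\tilde{Y}_i}$ of the walker's vertex. The imitator reacts to hyperplanes of $\hat{X}$, i.e.\ to deck-orbits of hyperplanes of $\tilde{X}$: it moves whenever the hyperplane of $\hat{X}$ dual to the walker's edge is dual to an edge of $\hat{Y}_i$ at the imitator's current vertex, which upstairs means some $\pi_1(\hat{X})$-translate of the hyperplane crossed by the lifted walker meets $\tilde{Y}_i$ — not the crossed hyperplane itself, which is what governs $\Pi_{\tilde{Y}_i}$. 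A minimal counterexample: let $\hat{X}$ be the graph with two vertices $u,v$ and edges $a_1,a_2,b$ joining them, and $\hat{Y}_i=a_1$ (this is embedded, locally convex, with no inter-osculation). If the walker traverses $b$ from $u$ to $v$ and then $a_1$ from $v$ to $u$, the imitator moves from $u$ to $v$, while in the universal cover the second hyperplane crossed is a different lift of $H(a_1)$ that misses $\tilde{Y}_i$, so $\Pi_{\tilde{Y}_i}$ of the walker never leaves $\tilde{u}$. (Indeed, if your claim were true, the conclusion of (2) would follow with no wall-projection input at all, which cannot be right.) The subsequent steps resting on this identification, including the appeal to Theorem \ref{thm:bridge} to force $\Pi_{\tilde{Y}_i}(h\tilde{x})=\tilde{x}$, are therefore unfounded, and you have not actually closed the argument — the conclusion of (2) is left as a plan.

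What the paper does instead is more elementary and stays downstairs. Since $\hat{X}\to X$ is regular, the elevation carrying $gP_jg^{-1}$ is $g\hat{Y}_j$, and one represents $h\in gP_jg^{-1}\cap\dot{G}$ by a loop of the special form $\hat{\gamma}_g * \hat{\gamma}_j * \hat{\gamma}_g^{-1}$, where $\hat{\gamma}_j$ is a loop in $g\hat{Y}_j$ and the return path is the \emph{exact reverse} of the outgoing path — not an arbitrary $\hat{\sigma}$ as in your decomposition. Reversibility of the walker--imitator process then forces the imitator's path to decompose as $\hat{\delta}_g * \hat{\delta}_j * \hat{\delta}_g^{-1}$: the imitator's response to the walker traversing $\hat{\gamma}_g^{-1}$ is injective on starting positions, and both the endpoint of $\hat{\delta}_g$ and the endpoint of $\hat{\delta}_j$ are sent to $\hat{x}$ (the latter because $h\in\hat{G}_i$), so $\hat{\delta}_j$ is a closed loop and the third imitator segment retraces $\hat{\delta}_g$. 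Triviality of $\WProj_{\hat{X}}(g\hat{Y}_j\to\hat{Y}_i)$ (Remark \ref{remk:wallprojimitator} plus Remark \ref{remk:wallprojNPC}) then makes $\hat{\delta}_j$ null-homotopic, hence the whole imitator loop is null-homotopic and $\rho_i(h)=1$. If you adopt this choice of representative and the reversibility argument in place of the projection claim, the rest of your write-up goes through.
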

\begin{proof}
	Pick a basepoint $x\in X$ and write $G=\pi_1(X,x)$. By Lemma \ref{lem:convexisbased} there exist local isometries of finite cube complexes $\phi_i:(Y_i,y_i)\to(X,x)$ corresponding to the subgroups $P_i$. By Corollary \ref{cor:embedPi}, there is a finite regular directly special cover $(\hat{X},\hat{x})\to(X,x)$ such that the based elevations $(\hat{Y}_i,\hat{y}_i)\to(\hat{X},\hat{x})$ of the $\phi_i$ are embedded and do not inter-osculate with hyperplanes of $\hat{X}$.
	We will consider each $\hat{Y}_i$ as a subcomplex of $\hat{X}$ with $\hat{y}_i=\hat{x}$.
	Furthermore, by Theorem \ref{thm:trivialwallproj1} we can assume that, for each pair $(i,j)$, if $P_i$ has trivial intersection with every conjugate of $P_j$ then each elevation of $Y_i$ (not just the based elevation) has trivial wall projection onto each elevation of $Y_j$.
	If $\hat{G}\triangleleft G$ is the subgroup corresponding to $(\hat{X},\hat{x})\to(X,x)$ then $\hat{P}_i:=P_i\cap\hat{G}$ is the subgroup corresponding to $(\hat{Y}_i,\hat{x})\to(X,x)$. 
	
	Let $\hat{G}_i$ be the imitator subgroup corresponding to $(\hat{Y}_i,\hat{x})\to(\hat{X},\hat{x})$, and let $\rho_i:\hat{G}_i\to\pi_1(\hat{Y}_i,\hat{x})=\hat{P}_i$ be the corresponding imitator homomorphism. Set
	$$\dot{G}:=\bigcap_{i=1}^n \hat{G}_i.$$
	In particular, $\rho_i$ restricts to a homomorphism $\rho_i:\dot{G}\to\hat{P}_i<P_i$.
	It remains to verify properties \ref{item:hasid} and \ref{item:lKj=1} from the lemma.
	
	\begin{enumerate}
		\item We know from Construction \ref{cons:hom} that $\rho_i$ is a retraction onto $\hat{P}_i$, hence it restricts to the identity on $\dot{P}_i<\hat{P}_i$.
		
		\item Take a pair of indices $(i,j)$ such that $P_i$ has trivial intersection with every conjugate of $P_j$.		
		The cover $\hat{X}\to X$ is regular, so $G$ acts on $\hat{X}$ by deck transformations. Hence the elevations of $Y_j\to X$ to $\hat{X}$ are precisely the subcomplexes $g\hat{Y}_j$ with $g\in G$.	
		An element of $gP_j g^{-1}\cap\dot{G}<\hat{G}=\pi_1(\hat{X},\hat{x})$ corresponds to a loop that decomposes as a concatenation $\hat{\gamma}=\hat{\gamma}_g * \hat{\gamma}_j * \hat{\gamma}_g^{-1}$, where $\hat{\gamma}_g$ is a path in $\hat{X}$ from $\hat{x}$ to $g\hat{x}$ that projects to a loop in $X$ representing the element $g$, and $\hat{\gamma}_j$ is a loop in $g\hat{Y}_j$.
		If the walker traverses the loop $\hat{\gamma}$, then the imitator in $\hat{Y}_i$ traverses a loop $\hat{\delta}$ in $\hat{Y}_i$ based at $\hat{x}$, which represents the element $\rho_i([\hat{\gamma}])$. Moreover, the decomposition of $\hat{\gamma}$ will induce a decomposition $\hat{\delta}=\hat{\delta}_g * \hat{\delta}_j * \hat{\delta}_g^{-1}$.
		By construction of $\hat{X}$, we know that $\WProj_{\hat{X}}(g\hat{Y}_j\to\hat{Y}_i)$ is trivial; so, as the walker traverses the loop $\hat{\gamma}_j$ in $g\hat{Y}_j$, the imitator must traverse a null-homotopic loop in $\hat{Y}_i$, which will be the loop $\hat{\delta}_j$. It follows that the whole loop $\hat{\delta}$ is null-homotopic and that $\rho_i(gP_j g^{-1}\cap\dot{G})=\{1\}$.\qedhere	
	\end{enumerate}
\end{proof}

We now upgrade Lemma \ref{lem:prerhoi} to cubulated groups (which might have torsion).

\theoremstyle{plain}
\newtheorem*{thm:rhoi}{Theorem \ref{thm:rhoi}}
\begin{thm:rhoi}(Independent imitator homomorphisms)\\
	Let $G\acts X$ be a virtually special cubulated group, and let $P_1,...,P_n<G$ be convex subgroups, such that all conjugates of $P_i$ and $P_j$ have finite intersection if $i\neq j$. Then there is a finite-index normal subgroup $\dot{G}\triangleleft G$, and for each $i$ there is a homomorphism $\rho_i:\dot{G}\to P_i$ such that:
	\begin{enumerate}
		\item\label{item:hasid2} $\rho_i$ is the identity on $\dot{P}_i:=P_i\cap\dot{G}$.
		\item\label{item:lKj=12} $\rho_i(gP_j g^{-1}\cap\dot{G})=\{1\}$ for $g\in G$ and $i\neq j$.
	\end{enumerate}
\end{thm:rhoi}
\begin{proof}
	Let $\hat{G}\triangleleft G$ be a finite-index torsion-free subgroup. Then $X/\hat{G}$ is a finite virtually special cube complex. Write $\hat{P}_i:=P_i\cap\hat{G}$. 
	Note that the $G$-conjugacy class of $\hat{P}_i$ may split into several $\hat{G}$-conjugacy classes.
	Apply Lemma \ref{lem:prerhoi} to $\hat{G}$ and a collection of subgroups $\hat{\calP}$ consisting of the $\hat{P}_i$ together with a (finite) collection of $G$-conjugates of the $\hat{P}_i$ that represent all possible $\hat{G}$-conjugacy classes.
	We obtain a finite-index subgroup $\dot{G}<\hat{G}$ and homomorphisms from $\dot{G}$ to each subgroup in $\hat{\calP}$. We are only interested in the homomorphisms to the subgroups $\hat{P}_i$ however, which we denote $\rho_i:\dot{G}\to \hat{P}_i$.
	
	Property \ref{item:hasid2}, that $\rho_i$ is the identity on $\dot{P}_i:=P_i\cap\dot{G}$, follows immediately from Lemma \ref{lem:prerhoi}\ref{item:hasid}.
	We now verify property \ref{item:lKj=12}, so let $g\in G$ and $i\neq j$.
	Write $gP_j g^{-1}\cap\hat{G}= \hat{g}\hat{P}\hat{g}^{-1}$ for some $\hat{g}\in\hat{G}$ and $\hat{P}\in\hat{\calP}$.
	Then $gP_j g^{-1}\cap\dot{G}=\hat{g}\hat{P}\hat{g}^{-1}\cap\dot{G}$.
	All $G$-conjugates of $P_i$ and $P_j$ have finite intersection, so all $\hat{G}$-conjugates of $\hat{P}_i$ and $\hat{P}$ have trivial intersection (remember that $\hat{G}$ is torsion-free).
	It then follows from Lemma \ref{lem:prerhoi}\ref{item:lKj=1} that $\rho_i(gP_j g^{-1}\cap\dot{G})=\{1\}$, as required. Finally, we may assume that $\dot{G}$ is normal in $G$ by passing to the normal core (properties \ref{item:hasid2} and \ref{item:lKj=12} will clearly still hold).
\end{proof}

\subsection{Towards commanding subgroups}

Theorem \ref{thm:rhoi} can be used to deduce the following theorem, which is a weak form of a virtually special cubulated group $G\acts X$ commanding an almost malnormal collection of convex subgroups $(P_i)$. Like the commanding property, it allows us to construct a finite-index normal subgroup $G'\triangleleft G$ and control the intersections $P_i\cap G'$ independently; but, unlike commanding, $P_i\cap G'$ must belong to a fixed sequence $(P_{il})$ of subgroups of $P_i$.
The author believes upgrading this theorem to the full strength of commanding is a difficult task, but there are intermediate upgrades that would already be very useful; for instance if each sequence $(P_{il})$ only depended on $P_i$ as an abstract group, then the theorem would provide a powerful tool for constructing finite covers of graphs of groups for which each vertex group satisfies the assumptions of the theorem with respect to its incident edge groups (see Proposition \ref{prop:GoG}).

\begin{thm}\label{thm:weakcommandqc}
	Let $G\acts X$ be a virtually special cubulated group, and let $P_1,...,P_n<G$ be convex subgroups such that all conjugates of $P_i$ and $P_j$ have finite intersection if $i\neq j$. Then for each $i$ there is a descending sequence of finite-index subgroups $P_i>P_{i1}>P_{i2}>\cdots$ with trivial intersection, such that for any map $\sigma:\{1,...,n\}\to\mathbb{N}$ there exists a finite-index normal subgroup $G'\triangleleft G$ with $P_i\cap G'=P_{i\sigma(i)}$.
\end{thm}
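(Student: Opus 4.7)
The plan is to apply Theorem \ref{thm:rhoi} to the given $P_1,\dots,P_n$, producing a finite-index normal subgroup $\dot G\triangleleft G$ together with retractions $\rho_i:\dot G\to P_i$ that are the identity on $\dot P_i:=P_i\cap\dot G$ and trivial on every $gP_jg^{-1}\cap\dot G$ with $i\neq j$. Each $P_i$ is residually finite (apply Proposition \ref{prop:residuallyfinite} to a finite-index torsion-free subgroup of $P_i$), so I fix for each $i$ a descending sequence $M_{i1}>M_{i2}>\cdots$ of finite-index \emph{normal} subgroups of $P_i$ with $\bigcap_l M_{il}=\{1\}$. Fix also a transversal $T\subset G$ for $G/\dot G$ containing $1$, and for each $g_0\in T$ consider the homomorphism $\psi^i_{g_0}:\dot G\to P_i$, $\psi^i_{g_0}(h):=\rho_i(g_0^{-1}hg_0)$, which is well-defined because $\dot G$ is normal in $G$.

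Define the candidate sequence by $P_{il}:=P_i\cap\bigcap_{g_0\in T}(\psi^i_{g_0})^{-1}(M_{il})$. Using the $g_0=1$ factor and $\rho_i|_{\dot P_i}=\mathrm{id}$, one sees $P_{il}\subset\dot P_i\cap M_{il}$, so $(P_{il})_l$ is a descending sequence of finite-index subgroups of $P_i$ with trivial intersection; normality of $M_{il}$ in $P_i$ guarantees that $(\psi^i_{g_0})^{-1}(M_{il})$ depends only on the coset $g_0\dot G$. Given $\sigma:\{1,\dots,n\}\to\mathbb N$, consider
\[
\phi_\sigma:\dot G\longrightarrow \prod_{i}\prod_{g_0\in T}P_i/M_{i\sigma(i)},\qquad \phi_\sigma(h):=\bigl(\psi^i_{g_0}(h)\bmod M_{i\sigma(i)}\bigr)_{i,g_0},
\]
and set $G':=\ker\phi_\sigma$. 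For $k\in G$, writing $kg_0=g_0'h_0$ with $g_0'\in T$ and $h_0\in\dot G$ gives $\rho_i((kg_0)^{-1}h(kg_0))=\rho_i(h_0)^{-1}\psi^i_{g_0'}(h)\rho_i(h_0)$, and since $\rho_i(h_0)\in P_i$ and $M_{i\sigma(i)}\triangleleft P_i$, vanishing modulo $M_{i\sigma(i)}$ is preserved under this twisting; hence $G'\triangleleft G$ is finite-index.

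The equality $P_i\cap G'=P_{i\sigma(i)}$ then follows by direct computation: for $h\in P_i\cap\dot G$, the $(j,g_0)$-coordinates of $\phi_\sigma(h)$ with $j\neq i$ vanish automatically by Theorem \ref{thm:rhoi}(2) (applied to the conjugate $g_0^{-1}P_ig_0$), while the $(i,g_0)$-coordinates vanish precisely when $h\in P_{i\sigma(i)}$. The main obstacle is forcing $G'$ to be normal in all of $G$ rather than merely in $\dot G$: a single imitator homomorphism $\rho_i$ is far from $G$-equivariant, and a naive normal-core construction on $\bigcap_i\rho_i^{-1}(P_{i\sigma(i)})$ would shrink the intersection with $P_i$ below $P_{i\sigma(i)}$. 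Building the full product over the transversal $T$ into $\phi_\sigma$, combined with $P_i$-normality of the $M_{il}$, is exactly what gives enough equivariance to make the kernel $G$-normal without sacrificing the prescribed intersections.
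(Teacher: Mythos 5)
Your proposal is correct and follows essentially the same route as the paper: apply Theorem \ref{thm:rhoi}, choose finite-index chains with trivial intersection that are normal in $P_i$, define the sequence $P_{il}$ only \emph{after} intersecting $P_i$ with the conjugation-invariant subgroup built from $\rho_i^{-1}$ of the chain (so the normal-core shrinking you worry about is absorbed into the definition of $P_{il}$), and take $G'$ to be the corresponding intersection for the chosen $\sigma$, with Theorem \ref{thm:rhoi}(2) killing the cross terms $j\neq i$. Your kernel of the product homomorphism $\phi_\sigma$ over a transversal of $G/\dot G$ coincides with the paper's intersection $\bigcap_i\bigcap_{g\in G} g\rho_i^{-1}(\dot P_{i\sigma(i)})g^{-1}$ (using normality of the $M_{il}$ in $P_i$), so the difference is only one of packaging, with the paper getting $G$-normality for free from the all-conjugates intersection rather than via your transversal-twisting computation.
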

\begin{proof}
	Apply Theorem \ref{thm:rhoi}. For each $i$, choose a descending sequence of finite-index subgroups $\dot{P}_i>\dot{P}_{i1}>\dot{P}_{i2}>...$ with trivial intersection, that are all normal in $P_i$ (such a sequence exists by residual finiteness of $P_i$). Set
	$$G_{il}:=\bigcap_{g\in G}g\rho_i^{-1}(\dot{P}_{il})g^{-1}\triangleleft G.$$
	$G_{il}$ has finite index in $G$ as it is the normal core of a finite-index subgroup of $G$.
	Now define
	$$P_{il}:=\dot{P}_i\cap G_{il}.$$
	$P_{il}$ has finite index in $P_i$, and the sequence $(P_{il})_{l\in\N}$ is clearly descending.
	Theorem \ref{thm:rhoi}\ref{item:hasid2} implies that $P_{il}<\dot{P}_{il}$, so $\cap_l P_{il}=\{1\}$.	
	Given $\sigma:\{1,...,n\}\to\mathbb{N}$, set
	$$G':=\bigcap_{i=1}^n G_{i\sigma(i)}\triangleleft G.$$
	This is a finite-index normal subgroup of $G$ because the $G_{i\sigma(i)}$ are.
	Finally, by Theorem \ref{thm:rhoi}\ref{item:lKj=12} we have
	\begin{align*}
		P_i\cap G'&=\dot{P}_i\cap G'\\
		&=\dot{P}_i\cap\bigcap_{j=1}^n\bigcap_{g\in G}g\rho_j^{-1}(\dot{P}_{j\sigma(j)})g^{-1}\\
		&=\dot{P}_i\cap \bigcap_{g\in G}g\rho_i^{-1}(\dot{P}_{il})g^{-1}\\
		&=P_{i\sigma(i)}.\qedhere
	\end{align*}
\end{proof}

\subsection{Proof of Theorem \ref{thm:command1}}\label{subsec:thmcommand1}

Recall that a collection of group elements $\{g_1,...,g_n\}$ is \emph{independent} if the elements $g_i$ have infinite order and no non-zero power of $g_i$ is conjugate to a non-zero power of $g_j$ for $i\neq j$.

\theoremstyle{plain}
\newtheorem*{thm:command1}{Theorem \ref{thm:command1}}
\begin{thm:command1}
	Every virtually special cubulated group $G\acts X$ strongly commands every independent set of convex elements.
\end{thm:command1}
\begin{proof}
Let $\{g_1,...,g_n\}$ be an independent set of convex elements.
Apply Theorem \ref{thm:rhoi} to $G$ and the subgroups $P_i:=\langle g_i\rangle$ to obtain a finite-index normal subgroup $\dot{G}\triangleleft G$ and homomorphisms $\rho_i:\dot{G}\to P_i$. Suppose $g_i$ has order $M_i$ in $G/\dot{G}$ and put $h_i:=g_i^{M_i}$. The $G$-conjugacy class of $h_i$ may split into several $\dot{G}$-conjugacy classes, let $\{h_i=h_{i1},...,h_{in_i}\}$ be a set of representatives. We would like to apply Lemma \ref{lem:commandfromsubgp} to deduce that $G$ strongly commands $\{g_i\}$, so it remains to show that the elements $\{h_{ik}\}\subset\dot{G}$ satisfy the hypothesis from this lemma. Namely we must define subcollections $\{h_{ik}\mid k\in K_i\}$ and integers $\dot{N}_{ik}>0$ for $k\in K_i$ such that for any integers $\dot{r}_i>0$ there exists a finite quotient $\dot{G}\to\dot{G}/\dot{G}'$ where $h_{ik}$ has order $\dot{N}_{ik}\dot{r}_i$ for $k\in K_i$, and $h_{ik}\in\dot{G}'$ otherwise. And we need the images of the subgroups $\langle h_i\rangle$ in the quotient $\dot{G}/\dot{G}'$ to have pairwise trivial intersections.

We start by considering the product of the homomorphisms $\rho_i$.
$$\rho:=(\rho_1,...,\rho_n):\dot{G}\longrightarrow P_1\times\cdots\times P_n \cong \Z^n.$$

Properties \ref{item:hasid2} and \ref{item:lKj=12} from Theorem \ref{thm:rhoi} become
\begin{enumerate}
	\item\label{item:rhoihi} $\rho(h_i)=(0,...,0,M_i,0,...,0)$,
	\item\label{item:rhoihjl} $\rho(h_{ik})=(0,...,0,A_{ik},0,...,0)$,
\end{enumerate}
for some integers $A_{ik}$, where $M_i$ and $A_{ik}$ appear in the $i$-th coordinate above.
For each $i$ define the set
$$K_i:=\{k\mid A_{ik}\neq0,\,1\leq k\leq n_i\}.$$
We know that $A_{i1}=M_i$, so $1\in K_i$. Now define
$$B_i:=\prod_{k\in K_i} A_{ik},$$
and for $k\in K_i$ define
$$\dot{N}_{ik}=|B_i/A_{ik}|.$$
Given integers $\dot{r}_i>0$, we define the homomorphism $\rho'$ as the following composition
$$\rho': \dot{G} \overset{\rho}{\longrightarrow} \Z^n \longrightarrow \frac{\Z}{B_1 \dot{r}_1 \Z} \times \cdots\times \frac{\Z}{B_n \dot{r}_n \Z}.$$

It follows from the definitions of $B_i$ and $\dot{N}_{ik}$ that $\rho'(h_{ik})$ has order $\dot{N}_{ik}\dot{r}_i$ for $k\in K_i$, and $\rho'(h_{ik})$ is the identity for $k\notin K_i$.
It is also clear that $\rho'(\langle h_i\rangle)\cap \rho'(\langle h_j\rangle)$ is trivial for $i\neq j$.
Thus the required subgroup $\dot{G}' \triangleleft \dot{G}$ is the kernel of $\rho'$.
\end{proof}

\subsection{Right-angled Artin groups}

Right-angled Artin groups (or RAAGs) are a good source of virtually special cubulated groups that are well understood but not hyperbolic, so they are the perfect setting to apply Theorem \ref{thm:command1}. In this subsection we investigate when elements of RAAGs are independent or convex, and in Theorem \ref{thm:RAAG} we deduce that RAAGs command random collections of elements.

Given a finite simplicial graph $\Gamma$, the \emph{right-angled Artin group} (or \emph{RAAG}) $G_\Gamma$ has generators corresponding to the vertex set $V$ of $\Gamma$ and relations $[v_1,v_2]$ whenever $v_1,v_2\in V$ are adjacent in $\Gamma$. There are very explicit solutions to the word and conjugacy problems in $G_\Gamma$, which we now describe. These solutions are implicit in the work of Servatius \cite{Servatius89} and Hermiller--Meier \cite{HermillerMeier95}.

Write $V^\pm$ for the set of generators and their inverses. An element $g\in G_\Gamma$ can be represented by a word $w$ on the letters $V^\pm$, and we say that $w$ is \emph{reduced} if its length $|w|$ is minimal among all words representing $g$. A solution to the word problem is given by the following proposition.

\begin{prop}\label{prop:wordsol}
If $w,w'$ are words representing $g\in G_\Gamma$, with $w'$ reduced, then we can transform $w$ into $w'$ via a sequence of the following two moves:
\begin{enumerate}[label={(M\arabic*)}]
	\item\label{item:cancel} Remove a subword $v^{-1}v$ with $v\in V^\pm$.
	\item\label{item:commute} Replace a subword $v_1v_2$ with $v_2v_1$ where $v_1,v_2\in V^\pm$ correspond to adjacent vertices.
\end{enumerate}
\end{prop}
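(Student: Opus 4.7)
The plan is to translate the word problem into a geometric statement about edge paths in the Salvetti complex $S_\Gamma$: the non-positively curved cube complex with one vertex, one edge for each generator $v \in V$, and a $k$-torus glued in for each $k$-clique of $\Gamma$. Since $\pi_1 S_\Gamma \cong G_\Gamma$, a word $w$ in $V^\pm$ corresponds to a based edge path $\widetilde{\gamma}_w$ in the universal cover $\widetilde{S}_\Gamma$ starting at a fixed basepoint $\widetilde{x}_0$, and $w$ represents $g$ iff $\widetilde{\gamma}_w$ ends at $g\widetilde{x}_0$. Under this translation, move (M1) corresponds to removing an edge backtrack, move (M2) to pushing a sub-path across a square of $\widetilde{S}_\Gamma$, and a word is reduced iff the corresponding path is a combinatorial geodesic in the CAT(0) cube complex $\widetilde{S}_\Gamma$. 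The squares of $\widetilde{S}_\Gamma$ are precisely those labelled by pairs of commuting generators, so M2 moves correspond exactly to square-pushes.

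With this dictionary in place, I would prove the proposition in two stages. First, if $w$ is not reduced, then $\widetilde{\gamma}_w$ crosses some hyperplane $H$ at least twice, along edges $e_i$ and $e_j$ with $i<j$ that correspond to occurrences $v$ and $v^{-1}$ in $w$. Choosing $i,j$ so that $j-i$ is minimal, every hyperplane crossed by the sub-path strictly between $e_i$ and $e_j$ must be transverse to $H$ (otherwise minimality is violated). By the standard disc-diagram lemma in CAT(0) cube complexes, one can then commute $e_i$ and $e_j$ past each intermediate crossing using squares of $\widetilde{S}_\Gamma$; each such square is labelled by a pair of generators adjacent to $v$ in $\Gamma$, so each commutation is an (M2) move. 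Once $v$ and $v^{-1}$ are adjacent, (M1) cancels them, producing a shorter word representing $g$. Iterating reduces $w$ to a reduced word $w''$ of length $|w'|$.

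Second, I would show that any two reduced words $w_1,w_2$ representing $g$ are related by a sequence of (M2) moves alone. Both $\widetilde{\gamma}_{w_1}$ and $\widetilde{\gamma}_{w_2}$ are combinatorial geodesics with the same endpoints in $\widetilde{S}_\Gamma$, hence cross the same collection of hyperplanes, each exactly once. A minimal disc diagram between them then contains no cancellations, and so consists entirely of squares; progressively flipping these squares (in any shelling order) transforms $\widetilde{\gamma}_{w_1}$ into $\widetilde{\gamma}_{w_2}$, and each square flip is an (M2) move by the labelling correspondence above. Combining the two stages: apply the first to reduce $w$ to a reduced $w''$ with $|w''|=|w'|$, then apply the second to transform $w''$ into $w'$ by (M2) moves alone.

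The main obstacle is the commutation step in the first stage, namely verifying that the two repeated crossings of $H$ can always be made adjacent using only (M2) moves. This relies on carefully exploiting that minimal disc diagrams in CAT(0) cube complexes have no pairs of dual hyperplanes crossing twice, so that every intermediate hyperplane between $e_i$ and $e_j$ is forced to be transverse to $H$; this transversality is exactly what guarantees that a square exists in $\widetilde{S}_\Gamma$ enabling the commutation, and correspondingly that the relevant generators commute in $\Gamma$.
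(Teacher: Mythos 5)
Your proposal is correct, but it follows a genuinely different route from the paper: the paper gives no proof of Proposition \ref{prop:wordsol} at all, remarking only that the solutions to the word and conjugacy problems are implicit in the combinatorial work of Servatius and Hermiller--Meier, whereas you give a self-contained geometric argument in the universal cover $X_\Gamma$ of the Salvetti complex. Your dictionary (words correspond to edge paths, (M1) to removing a backtrack, (M2) to pushing across a square, reduced words to combinatorial geodesics) is accurate, and your two stages --- shuffle together and cancel a doubly-crossed hyperplane, then relate two geodesics by square moves --- constitute the standard cubical proof of Servatius' shuffling lemma; this fits well with the paper's own use of $S_\Gamma$ and $X_\Gamma$ in the same subsection, at the cost of invoking CAT(0) cube-complex machinery (a combinatorial path is geodesic if and only if it crosses each hyperplane at most once, convexity of halfspaces and hyperplane carriers), which the cited references avoid by purely combinatorial rewriting arguments. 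Two steps need slightly more care than your phrasing suggests, though both are standard and do not affect the validity of the approach. First, transversality of an intermediate hyperplane $H'$ to $H$ is not a direct consequence of the minimality of $j-i$: minimality gives that the subpath $\sigma$ between $e_i$ and $e_j$ crosses $H'$ exactly once and never crosses $H$, and one then argues that $H'$ separates the endpoints of $\sigma$, both of which lie in the convex subcomplex $N(H)$ intersected with the halfspace containing $\sigma$, so $H'$ must cross $H$ (equivalently, all four quadrant intersections of the two pairs of halfspaces are nonempty). Second, in the stage relating two geodesics, flipping squares of a minimal disc diagram ``in any shelling order'' is too loose, since flips in the interior of the diagram change neither boundary word; one should instead peel corner squares lying along one of the two boundary paths, or argue by induction on length by using (M2) moves to bring the edge of $\gamma_{w_1}$ dual to the first hyperplane crossed by $\gamma_{w_2}$ to the front (the required commutations again coming from the quadrant argument), after which the initial edges agree and one proceeds on the shorter geodesics.
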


In particular, it follows that $v_1,v_2\in V^\pm$ commute in $G_\Gamma$ if and only if their corresponding vertices are equal or adjacent. It also follows that any two reduced words representing $g$ differ by \ref{item:commute} moves.

We now turn to the conjugacy problem. We say that a word $w$ representing $g\in G_\Gamma$ is \emph{cyclically reduced} if it has minimal length among words representing conjugates of $g$.

\begin{prop}\label{prop:conjsol}
	If $w,w'$ are words representing conjugate elements in $G_\Gamma$, with $w'$ cyclically reduced, then we can transform $w$ into $w'$ using the moves \ref{item:cancel}, \ref{item:commute} and the following cyclic permutation move:
\begin{enumerate}[label={(M3)}]
	\item\label{item:permute} Replace the word $vw$ with $wv$, where $v\in V^\pm$.
\end{enumerate}
Moreover, $w$ is cyclically reduced if and only if it cannot be transformed via \ref{item:commute} moves to a word of the form $vuv^{-1}$ with $v\in V^\pm$.
\end{prop}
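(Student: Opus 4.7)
The plan is to prove the `moreover' characterisation first and then derive the main statement from it by iteration. For the easy direction of the `moreover' clause, suppose $w$ can be transformed via (M2) moves to a word $vuv^{-1}$ with $v\in V^\pm$. Then an (M3) move rotates this to $uv^{-1}v$, which by (M1) reduces to $u$; since $u$ has length $|w|-2$ and represents a conjugate of the element represented by $w$, we conclude that $w$ is not cyclically reduced.

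For the hard direction, assume $w$ is not cyclically reduced. Using Proposition \ref{prop:wordsol} I may first apply (M1) and (M2) moves to assume that $w$ itself is a reduced word, representing some $g\in G_\Gamma$. Pick $h\in G_\Gamma$ of minimal word length for which $hgh^{-1}$ admits a reduced representative strictly shorter than $w$, and let $v\in V^\pm$ be the first letter of a reduced expression for $h$. Minimality of $|h|$ forces the letter $v$ to cancel, after suitable commutations, against a letter of $w$: concretely, I want to show there is an occurrence of $v^{-1}$ inside $w$ that can be commuted to the right-hand end while $v$ itself can be commuted to the left-hand end, producing the form $vuv^{-1}$. The cleanest way to carry this out is via the Hermiller--Meier `pile' (equivalently, trace-monoid) representation of words, in which (M2) moves correspond precisely to equivalence of piles: the existence of a shortening conjugator $h$ forces the pile of $w$ to contain a matching top-bottom block pair, and rearranging the pile to put this pair at the extremities yields the desired (M2) sequence on $w$.

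With the `moreover' clause in hand, the main statement follows by iteration. As long as the current word is not cyclically reduced, apply (M2) moves to put it in the form $vuv^{-1}$, then (M3) followed by (M1) to shorten it by $2$. The process terminates at a cyclically reduced word $w''$ representing a conjugate of $g$, necessarily satisfying $|w''|=|w'|$. It then remains to transform $w''$ into $w'$ using only (M2) and (M3), which is the classical fact that any two cyclically reduced words representing conjugate elements in a RAAG differ only by commutations and cyclic permutations. I would prove this by induction on length: use (M3) to rotate $w''$ so that a chosen letter sits at the front, then use (M2) to move the matching letter of $w'$ to the front, and peel off matching pairs of letters one at a time.

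The main obstacle will be the hard direction of the `moreover' clause, i.e.\ showing that non-cyclic-reducibility of a reduced word is always witnessed by a single (M2) rewriting into the shape $vuv^{-1}$, rather than by some more intricate conjugation. In the pile/trace framework this reduces to a clean statement about cyclic cancellation in partially commutative monoids, so the remaining work is essentially combinatorial bookkeeping rather than any fresh geometric input.
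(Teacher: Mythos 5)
Note first that the paper does not prove Proposition \ref{prop:conjsol} at all: it is stated as being implicit in Servatius \cite{Servatius89} and Hermiller--Meier \cite{HermillerMeier95}, so you are attempting to supply an argument the paper delegates to the literature. Your high-level plan (establish the ``moreover'' cyclic-reduction criterion, shorten greedily, then invoke the statement that conjugate cyclically reduced words differ by \ref{item:commute} and \ref{item:permute} moves) is the standard route, but as written it has genuine gaps at exactly the two points that constitute the content of the cited results. For the hard direction of the ``moreover'' clause, your argument is essentially a restatement of the claim: saying that minimality of the conjugator $h$ ``forces'' a letter $v$ to cancel, or that in the pile picture the shortening conjugator ``forces a matching top--bottom block pair'', is the assertion to be proved, not a proof. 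Moreover the reduction to this picture is subtler than you indicate: writing $h=vh'$, the single-letter conjugation that does the shortening is applied to $h'gh'^{-1}$, not to $w$, so to transfer information back to $w$ you already need to know that equal-length conjugates have shuffle-rotation-related reduced words --- i.e.\ the conjugacy criterion you are postponing to the end. The two statements are intertwined and are usually proved together (or via a normal form such as the pyramidal/heap decomposition), so the linear order of your outline risks circularity.

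The second gap is the final step. The claim that any two cyclically reduced words representing conjugate elements of $G_\Gamma$ differ only by \ref{item:commute} and \ref{item:permute} moves is Servatius' conjugacy criterion; it is the heart of the proposition and cannot be waved through as ``the classical fact'' inside a proof that is supposed to be self-contained. Your proposed induction --- rotate $w''$ so a chosen letter is in front, shuffle the matching letter of $w'$ to its front, and ``peel off matching pairs'' --- does not work as stated: there is no argument that a common initial letter can be arranged at all, and even when it can, deleting it from both words does not leave words representing conjugate elements (the conjugating element does not simply disappear), so the induction hypothesis does not apply to the truncations. A correct argument must control the conjugator, e.g.\ by showing that conjugating a cyclically reduced word by a single letter either lengthens it or acts as a cyclic permutation up to shuffles, and then inducting on a minimal-length conjugator while ruling out intermediate lengthening. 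Either carry out that analysis (it is a couple of pages of careful combinatorics on heaps/traces), or do what the paper does and cite \cite{Servatius89} (see also Hermiller--Meier \cite{HermillerMeier95}) for both the cyclic-reduction criterion and the conjugacy criterion.
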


In particular, any two cyclically reduced words representing conjugate elements differ by \ref{item:commute} and \ref{item:permute} moves. And it follows from the moreover part of Proposition \ref{prop:conjsol} that any power of a cyclically reduced word is cyclically reduced.

We are now ready to describe an efficient algorithm to check independence of elements.

\begin{prop}\label{prop:indepsol}
	There is an explicit algorithm that determines whether a collection of elements in $G_\Gamma$ is independent.
\end{prop}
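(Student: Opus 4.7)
The plan is to combine three decision procedures: the word problem via Proposition \ref{prop:wordsol}, the conjugacy problem via Proposition \ref{prop:conjsol}, and a procedure for extracting primitive roots. First, for each $g_i$, I would apply moves (M1), (M2) and (M3) to obtain a cyclically reduced word $w_i$ representing a conjugate of $g_i$. Since the Salvetti complex of $G_\Gamma$ is a non-positively curved cube complex with CAT(0) universal cover, $G_\Gamma$ is torsion-free by \cite[II.2.7]{BridsonHaefliger99}, so $g_i$ has infinite order if and only if $w_i \neq 1$, which is decidable by the word problem.

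The substantive task is deciding, for each pair $i \neq j$, whether there exist non-zero integers $a, b$ with $w_i^a$ conjugate to $w_j^b$. The first key observation is that since $w_i$ is cyclically reduced, so is every power $w_i^a$, and by Proposition \ref{prop:conjsol} conjugate cyclically reduced words have equal length. Hence any such conjugacy forces $|a|\cdot|w_i| = |b|\cdot|w_j|$. To handle the resulting family of candidate pairs uniformly, I would compute for each $i$ a cyclically reduced \emph{primitive root} $r_i$ and positive integer $m_i$ with $w_i = r_i^{m_i}$ (as group elements) and $r_i$ not equal to $v^k$ for any $v \in G_\Gamma$ and $k \geq 2$. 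A short length argument (if $r_i$ has reduced form $u r' u^{-1}$ with $r'$ cyclically reduced, then $r_i^{m_i} = u {r'}^{m_i} u^{-1}$ has reduced length $2|u| + m_i |r'|$ and cyclic length $m_i |r'|$, which forces $|u| = 0$ since $w_i$ is cyclically reduced) shows that $r_i$ is itself cyclically reduced of length dividing $|w_i|$; so $r_i$ can be found by enumerating the finitely many cyclically reduced words of length dividing $|w_i|$ and testing, via the word problem, which of them satisfies $v^{|w_i|/|v|} = w_i$.

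I would then claim that some non-zero powers of $g_i$ and $g_j$ are conjugate if and only if $r_i$ is conjugate to $r_j$ or to $r_j^{-1}$. For the ``if'' direction, a conjugacy $r_i = h r_j^\epsilon h^{-1}$ with $\epsilon = \pm 1$ yields $g_i^{m_j} = r_i^{m_i m_j} = h r_j^{\epsilon m_i m_j} h^{-1}$, a conjugate of $g_j^{\epsilon m_i}$. For the ``only if'' direction, a conjugacy $w_i^a = g w_j^b g^{-1}$ rewrites as $r_i^{a m_i} = (g r_j g^{-1})^{b m_j}$; then the unique-roots property of $G_\Gamma$ --- namely, $u^p = t^q$ with $u,t$ primitive and $p,q \geq 1$ forces $u = t$ --- applied to appropriate signs of $r_i$ and $g r_j g^{-1}$ (both primitive, the latter because being a proper power is conjugation-invariant) forces $r_i$ to be conjugate to $r_j^{\pm 1}$. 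The conjugacy test on these primitive roots is then decidable via Proposition \ref{prop:conjsol} by enumerating the finitely many cyclically reduced words reachable from $r_i$ by (M2) and (M3) moves and checking whether any equals $r_j$ or $r_j^{-1}$.

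The main obstacle will be establishing the unique-roots property of $G_\Gamma$ using only the tools developed in the paper. While this property is a known consequence of Servatius' centralizer theorem for RAAGs, a self-contained proof would require a combinatorial analysis of how (M2) and (M3) moves can interact with the periodic structure of a power $r^p$ --- showing, for instance, that any (M2) move applied within $r^p$ can be replicated simultaneously in each of the $p$ copies of $r$ --- which I expect to be the most delicate step. Alternatively, one could simply cite the standard literature and bypass this analysis.
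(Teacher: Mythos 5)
Your algorithm is correct, but it takes a genuinely different route from the paper. The paper never extracts primitive roots: it observes that conjugate cyclically reduced words have equal length, so any conjugacy $w_1^m\sim w_2^{\pm n}$ forces $m|w_1|=n|w_2|$, and then uses only the weaker root-uniqueness statement ($g^d=h^d$ implies $g=h$) to divide out common factors of the exponents; this pins the problem down to a \emph{single} conjugacy check, namely whether $w_1^{m_0}$ can be carried to $w_2^{\pm n_0}$ by (M2)--(M3) moves, where $(m_0,n_0)$ is the minimal (coprime) solution of $m|w_1|=n|w_2|$. Your version instead preprocesses each element by computing a cyclically reduced primitive root $r_i$ and reduces independence to the cleaner, more canonical criterion ``$r_i$ conjugate to $r_j^{\pm1}$'', which tests shorter words and in effect determines all pairs of conjugate powers at once; the price is that your ``only if'' direction needs the strictly stronger unique-primitive-root property ($u^p=t^q$ with $u,t$ not proper powers and $p,q\geq 1$ forces $u=t$), which does not follow from Propositions \ref{prop:wordsol} and \ref{prop:conjsol} alone and, as you note, would have to be cited (e.g.\ via Servatius' Centralizer Theorem) or proved by a separate combinatorial argument, whereas the paper only invokes the weaker statement. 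Two smaller points to tighten if you pursue your route: the claim that $r_i^{m_i}=u r'^{m_i}u^{-1}$ has reduced length exactly $2|u|+m_i|r'|$ (needed so that primitive roots of cyclically reduced elements are themselves cyclically reduced, hence found by your enumeration) is true but not immediate from the propositions and deserves a proof; and when selecting $r_i$ from the words $v$ with $v^{|w_i|/|v|}=w_i$ you should take one of minimal length (or otherwise verify primitivity), since the test alone does not guarantee primitivity.
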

\begin{proof}
	It suffices to determine whether a pair of non-trivial elements $\{g_1,g_2\}$ is independent. Let $w_1,w_2$ be cyclically reduced words representing conjugates of $g_1,g_2$. Suppose that $\{g_1,g_2\}$ is not independent. Then there exist non-zero powers $w_1^m,w_2^n$ that represent conjugate elements. We know that $w_1^m,w_2^n$ are also cyclically reduced, hence they differ by \ref{item:commute} and \ref{item:permute} moves -- in particular $m|w_1|=|w_1^m|=|w_2^n|=n|w_2|$. Furthermore, if $m,n$ have a common factor, say $m=Md,n=Nd$, then we deduce that $w_1^M,w_2^N$ also represent conjugate elements (we always have $g^d=h^d$ implies $g=h$ for elements of $G_\Gamma$ by Proposition \ref{prop:wordsol}).
	Thus we obtain the following algorithm:
	\begin{enumerate}
		\item Find cyclically reduced words $w_1,w_2$ representing conjugates of $g_1,g_2$ using using \ref{item:cancel}--\ref{item:permute} moves.
		\item Take non-zero integers $m,n$ with $m|w_1|=n|w_2|$.
		\item Check whether $w_1^m$ can be transformed into either $w_2^n$ or $w_2^{-n}$ via \ref{item:commute}--\ref{item:permute} moves. $\{g_1,g_2\}$ is independent if and only if the answer is no.\qedhere
	\end{enumerate} 
\end{proof}

Next we turn to convexity of elements in RAAGs. $G_\Gamma$ is the fundamental group of a finite special cube complex $S_\Gamma$, called the Salvetti complex -- see \cite{HaglundWise08}. Let $X_\Gamma$ be the universal cover of $S_\Gamma$, so $X_\Gamma$ is a CAT(0) cube complex with a free cocompact $G_\Gamma$ action. 
For $g\in G_\Gamma$, let $\Gamma(g)$ consist of those $v\in V$ such that $v$ or $v^{-1}$ appears in some (equivalently any) cyclically reduced word representing an element conjugate to $g$. The following Proposition was proved by Fioravanti \cite[Lemma 3.10(2)]{Fioravanti21}.

\begin{prop}\label{prop:RAAGconvex}
	Let $1\neq g\in G_\Gamma$. Then $g$ is convex with respect to $X_\Gamma$ if and only if the full subgraph spanned by $\Gamma(g)$ does not split as a non-trivial join.
\end{prop}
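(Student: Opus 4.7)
My plan is to reduce to the case $\Gamma(g) = V$ and then relate combinatorial convexity of $\langle g \rangle$ to the structure of the centralizer $C_{G_\Gamma}(g)$, invoking Servatius's centralizer theorem for RAAGs. The reduction is straightforward: any cyclically reduced representative of $g$ involves only letters from $\Gamma(g)$, and the Salvetti embedding $S_{\Gamma(g)} \hookrightarrow S_\Gamma$ is a local isometry whose lift $X_{\Gamma(g)} \hookrightarrow X_\Gamma$ is a convex embedding. Any combinatorial axis of $g$ lies inside $X_{\Gamma(g)}$, and $\langle g \rangle$-invariant convex subcomplexes of $X_\Gamma$ intersect $X_{\Gamma(g)}$ in $\langle g \rangle$-invariant convex subcomplexes with the same cocompactness behaviour, so we may replace $\Gamma$ by $\Gamma(g)$ and assume $\Gamma(g) = V$.

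For the ``only if'' direction I argue the contrapositive: if $\Gamma = A * B$ is a non-trivial join, then $G_\Gamma = G_A \times G_B$ and $X_\Gamma = X_A \times X_B$ as cube complexes, and a cyclically reduced representative of $g$ splits as $g = g_A g_B$ with $g_A \in G_A$ and $g_B \in G_B$ both non-trivial (since $\Gamma(g) = V$ involves both sides). These commute and generate a $\mathbb{Z}^2 \leq G_\Gamma$ inside which $\langle g \rangle$ sits diagonally with infinite index. Any $\langle g \rangle$-invariant convex subcomplex of $X_\Gamma$ containing the axis of $g$ must contain the entire product flat $\mathrm{hull}(\alpha_{g_A}) \times \mathrm{hull}(\alpha_{g_B})$, because the hyperplanes of $X_A \times X_B$ are products and a product half-space contains the diagonal axis iff each factor half-space contains the projected factor axis. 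The group $\mathbb{Z}^2$ acts cocompactly on this flat, while its cyclic diagonal subgroup $\langle g \rangle$ does not, so $g$ fails to be convex.

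For the ``if'' direction, fix a cyclically reduced representative $w$ of a conjugate of $g$ and form its combinatorial axis $\alpha$ in $X_\Gamma$ by concatenating lifts of $w$; because powers of a cyclically reduced word remain cyclically reduced, $\alpha$ is a bi-infinite combinatorial geodesic on which $\langle g \rangle$ acts by translation. Let $Y$ be the combinatorial convex hull of the union of all combinatorial axes of $g$. Then $Y$ is a $C_{G_\Gamma}(g)$-invariant convex subcomplex, and a standard Min set argument for CAT(0) cube complexes shows that $C_{G_\Gamma}(g)$ acts cocompactly on $Y$. Consequently $Y / \langle g \rangle$ is compact if and only if $[C_{G_\Gamma}(g) : \langle g \rangle] < \infty$, i.e.\ if and only if $C_{G_\Gamma}(g)$ is virtually cyclic.

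The hard step is to show that the non-join hypothesis forces $C_{G_\Gamma}(g)$ to be virtually cyclic, and this is where I would invoke Servatius's centralizer theorem for RAAGs: for a cyclically reduced element $g$ with $\Gamma(g) = V$, every element of $C_{G_\Gamma}(g)$ can be chosen cyclically reduced and its support decomposes along a join factorisation of $V$ compatible with $g$. Consequently, if $C_{G_\Gamma}(g)$ contains an element independent of $\langle g \rangle$, one extracts a non-trivial join decomposition $\Gamma = A * B$ with $g$ splitting as a product of commuting non-trivial factors in $G_A$ and $G_B$. Under the non-join hypothesis this is impossible, so $C_{G_\Gamma}(g)$ is virtually cyclic and the previous paragraph yields that $g$ is convex, completing the proof.
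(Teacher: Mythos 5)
The paper does not actually prove this proposition; it is quoted from Fioravanti \cite[Lemma 3.10(2)]{Fioravanti21}, so your argument can only be judged on its own terms. Your reduction to $\Gamma(g)=V$ and your ``only if'' direction are essentially sound (modulo small fixable imprecisions: an invariant convex cocompact subcomplex need not meet $X_{\Gamma(g)}$ or contain a prescribed axis, but one can use the gate projection onto $X_{\Gamma(g)}$, respectively the hull of a single $\langle g\rangle$-orbit, whose factor projections are unbounded, to run the same argument).

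The genuine gap is in the ``if'' direction, at the sentence ``a standard Min set argument for CAT(0) cube complexes shows that $C_{G_\Gamma}(g)$ acts cocompactly on $Y$'', where $Y$ is the combinatorial convex hull of the union of all combinatorial axes of $g$. The standard argument (conjugate $g$ by cocompactness, use properness to force coincidences, and land back in $C_{G_\Gamma}(g)$) shows that $C_{G_\Gamma}(g)$ acts cocompactly on the bounded-displacement set $\{x : d(x,gx)\le C\}$, hence on the union of axes itself; it says nothing about the \emph{convex hull} of that set. In a cube complex the hull of a cocompact invariant set can be unboundedly larger -- the paper's own motivating example, the element $(1,1)$ of $\Z^2$ acting on the square-tiled plane, is exactly a case where the hull of an axis is the whole plane -- so cocompactness does not pass to $Y$ for free. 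Controlling this hull is precisely the content of the proposition, and it is exactly where the non-join hypothesis must do geometric work (one has to use non-adjacent vertices in $\Gamma(g)$ to produce non-transverse hyperplanes crossed by the axis which cut the hull down to a bounded neighbourhood). In your write-up the non-join hypothesis enters only group-theoretically, through Servatius' Centralizer Theorem, to conclude that $C_{G_\Gamma}(g)$ is virtually cyclic; that alone cannot bound $Y$, since the asserted cocompactness of $C_{G_\Gamma}(g)$ on $Y$ is, in the non-join case, essentially the statement being proved. For example, for $\Gamma$ the path $a\!-\!b\!-\!c\!-\!d$ and $g=abcd$, Servatius gives $C_{G_\Gamma}(g)=\langle g\rangle$, and your claim reduces verbatim to the assertion that $\langle g\rangle$ acts cocompactly on the hull of its axis, which is the proposition itself and still requires the hyperplane-combinatorics argument (as in Fioravanti's proof). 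So the centralizer framework is a reasonable outline, but the key step is assumed rather than proved.
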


Armed with the previous two propositions, we can now show that RAAGs command collections of elements produced by randomly picking long words on $V^\pm$.

\begin{thm}(RAAGs command random elements)\\\label{thm:RAAG}
Fix $n$, and let $w_1,...,w_n$ be words on $V^\pm$ chosen uniformly at random from among all freely reduced words of length at most $L$, and suppose they represent elements $g_1,...,g_n\in G_\Gamma$. Then $G_\Gamma$ commands $\{g_1,...,g_n\}$ with probability converging to 1 as $L\to\infty$ -- unless $\Gamma$ is a complete graph with fewer than $n$ vertices.
\end{thm}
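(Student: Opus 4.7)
The plan is to decompose $G_\Gamma$ along the maximal join decomposition of $\Gamma$ and then reduce to Theorem \ref{thm:command1} (with Proposition \ref{prop:abeliancommand} handling a purely abelian subcase). Write $\Gamma = \Gamma_1 * \cdots * \Gamma_p * \Lambda$, where each $\Gamma_j$ has $|V(\Gamma_j)| \geq 2$ and does not split as a non-trivial join, while $\Lambda$ is the clique of vertices adjacent to every other vertex; correspondingly $G_\Gamma \cong G_{\Gamma_1} \times \cdots \times G_{\Gamma_p} \times \Z^c$ with $c = |V(\Lambda)|$. The excluded hypothesis ``$\Gamma$ is complete on fewer than $n$ vertices'' is exactly the case $p = 0$ and $c < n$, so the remainder of the argument splits into the cases $p \geq 1$ and $(p = 0,\, c \geq n)$.

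In the main case $p \geq 1$, each $g_i$ projects to $g_i^{(j)} \in G_{\Gamma_j}$ represented by the subword $w_i^{(j)}$ of $w_i$ on letters in $V(\Gamma_j)^{\pm}$. First I would show that, with probability tending to $1$ as $L \to \infty$, each $w_i^{(j)}$ is long and its cyclic reduction visits every letter of $V(\Gamma_j)$, so that $\Gamma_j(g_i^{(j)}) = V(\Gamma_j)$; since $\Gamma_j$ is not a non-trivial join, Proposition \ref{prop:RAAGconvex} then yields convexity of each $g_i^{(j)}$. Next I would show that $\{g_1^{(j)}, \ldots, g_n^{(j)}\}$ is independent in $G_{\Gamma_j}$ with probability $\to 1$. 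By Proposition \ref{prop:conjsol}, this amounts to verifying that no non-zero powers of the cyclically reduced representatives are related by moves (M2) and (M3); since $\Gamma_j$ is non-complete (being a non-join with $|V(\Gamma_j)| \geq 2$), some pair of generators fails to commute, so the conjugacy class of a fixed cyclically reduced word of length $\ell$ contains exponentially fewer freely reduced words than the ambient count, and a union bound over the finitely many admissible exponent pairs $(m,m')$ forced by $m|w_i^{(j)}| = m'|w_{i'}^{(j)}|$ and over index pairs $(i,i')$ completes the step. With convexity and independence in hand, Theorem \ref{thm:command1} yields a commanding constant $N_j$ for each $G_{\Gamma_j}$; setting $N := \LCM(N_1, \ldots, N_p)$ and combining the corresponding finite quotients of the $G_{\Gamma_j}$ (along with the trivial quotient of the $\Z^c$ factor) produces a finite quotient of $G_\Gamma$ in which each $g_i$ has order $Nr_i$, via the identity $\LCM(N_1 r_i, \ldots, N_p r_i) = r_i \LCM(N_1, \ldots, N_p)$.

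In the remaining case $p = 0$ and $c \geq n$, one has $G_\Gamma \cong \Z^c$ and each $g_i$ is determined by its vector of signed letter counts. I would argue that $n$ random such vectors are $\Z$-linearly independent with probability tending to $1$: as non-backtracking walks on $V^\pm$ of growing length, their signed count vectors behave like centred random variables with variance growing in $L$, so the probability of landing on any fixed rank-$(n-1)$ sublattice of $\Z^c$ vanishes, and a union bound over primitive integer relations of coefficient size $O(L)$ finishes the argument. Proposition \ref{prop:abeliancommand} then commands $\{g_1, \ldots, g_n\}$.

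The main obstacle will be the independence step in each non-abelian factor: one must rigorously justify that the projection of a uniform random freely reduced word on $V^\pm$ to the subalphabet $V(\Gamma_j)^\pm$ retains the ``genericity'' required for the counting estimate, and one must precisely enumerate how commutation moves between distinct letters can coincidentally align powers of two independent random walks. The convexity step is a standard random-walk visitation argument, and the final assembly via Theorem \ref{thm:command1} and Proposition \ref{prop:abeliancommand} is formal once independence is established.
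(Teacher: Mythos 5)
Your overall architecture is sound and close to the paper's: decompose along the maximal join decomposition, get convexity in non-abelian factors from Proposition \ref{prop:RAAGconvex}, feed independence and convexity into Theorem \ref{thm:command1}, and use Proposition \ref{prop:abeliancommand} in the purely abelian case. Your $\LCM$ assembly over all join factors is logically fine (and in fact unnecessary: since a finite quotient of a single factor pulls back along the projection to a finite quotient of $G_\Gamma$, it suffices to command through one non-abelian factor, which also means you only need genericity of the projections in one factor rather than all of them).

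The genuine gap is the step you yourself flag: establishing, with probability tending to $1$, that the projections $g_i^{(j)}$ are independent (and that their cyclic reductions use every letter of $V(\Gamma_j)$). Your proposed route -- a counting estimate comparing the size of a conjugacy class of a power with the ambient count of words, applied to the projection of a uniform freely reduced word -- is not carried out, and it is genuinely delicate: the projection of a uniform reduced word to a subalphabet is not uniform among reduced words, cancellation can delete letters from the cyclic reduction, and one must control all primitive exponent pairs simultaneously. The paper avoids all of this with a single elementary device you are missing: the homomorphisms $\pi_v:G_\Gamma\to\Z$ recording signed letter counts. The two conditions ``$\pi_v(g_i)\neq 0$ for every $v$ and $i$'' and ``for each pair $v,v'$ the ratios $\pi_v(g_i)/\pi_{v'}(g_i)$, $1\le i\le n$, are pairwise distinct'' are trivially generic for long random words, and they deterministically do both jobs at once: the first forces every letter of $V_1$ to survive in any cyclically reduced representative of a conjugate, giving $\Gamma_1(\mu(g_i))=V_1$ and hence convexity via Proposition \ref{prop:RAAGconvex}; the second gives independence of the $\mu(g_i)$ because the ratios $\pi_v/\pi_{v'}$ are invariant under conjugation and under taking powers, so conjugate nonzero powers would force equal ratios. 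Replacing your counting heuristic with this criterion closes the gap; without it, the independence step remains unproven.
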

\begin{proof}
	If $\Gamma$ is a complete graph with at least $n$ vertices then $G_\Gamma$ is a free abelian group of rank at least $n$. In this case the theorem follows from Proposition \ref{prop:abeliancommand}. 
	
	Now suppose that $\Gamma$ is not complete. For $v\in V$ let $\pi_v:G_\Gamma\to\mathbb{Z}$ be the homomorphism that maps $v\mapsto1$ and $v'\mapsto0$ for $v\neq v'\in V$.
	Note that $\pi_v(g_i)$ is the number of occurrences of $v$ in the word $w_i$ subtract the number of occurrences of $v^{-1}$.	
	 We will show that $G_\Gamma$ commands $\{g_1,...,g_n\}$ provided the following two conditions are satisfied. These conditions are clearly satisfied for long random words $w_1,...,w_n$, i.e. with probability converging to 1 as $L\to\infty$. 
	\begin{enumerate}
		\item\label{item:pinonzero} $\pi_v(g_i)\neq0$ for all $v\in V$ and $1\leq i\leq n$.
		\item\label{item:distinctratios} For each pair $\{v,v'\}\subset V$, the ratios $(\pi_v(g_i)/\pi_{v'}(g_i)\mid 1\leq i\leq n)$ are distinct.
	\end{enumerate}

Take a maximal partition $V=\sqcup_{k=1}^q V_k$ such that any two vertices in distinct $V_k$ are joined by an edge. If the induced subgraph of $V_k$ is $\Gamma_k$, then we get a product decomposition
\begin{equation}\label{GGaprod}
G_\Gamma=\prod_{k=1}^q G_{\Gamma_k}.
\end{equation}
Since $\Gamma$ is not complete, $|V_k|>1$ for some $k$, so suppose $|V_1|>1$. For $v\in V_1$ we can define a homomorphism $\pi_v:G_{\Gamma_1}\to\mathbb{Z}$ as we did for $G_\Gamma$. Let $\mu:G_\Gamma\to G_{\Gamma_1}$ be the projection map onto the factor $G_{\Gamma_1}$ with respect to the product decomposition (\ref{GGaprod}). Note that $\pi_v=\pi_v\mu:G_\Gamma\to\mathbb{Z}$, hence conditions \ref{item:pinonzero} and \ref{item:distinctratios} apply to $\mu(g_1),...,\mu(g_n)$ and vertices in $V_1$.

Condition \ref{item:pinonzero} implies that $\Gamma_1(\mu(g_i))=V_1$ for all $i$, so Proposition \ref{prop:RAAGconvex} and the maximality of the partition $\sqcup_k V_k$ implies that the elements $\mu(g_i)$ are convex with respect to $X_{\Gamma_1}$. The ratios $\pi_v(g_i)/\pi_{v'}(g_i)$ are preserved by taking conjugates and powers, so the collection $\{\mu(g_1),...,\mu(g_n)\}$ is independent by condition \ref{item:distinctratios}. Thus $G_{\Gamma_1}$ commands $\{\mu(g_1),...,\mu(g_n)\}$ by Theorem \ref{thm:command1}. By composing $\mu$ with suitable homomorphisms from $G_{\Gamma_1}$ to finite groups, it follows that $G_\Gamma$ commands $\{g_1,...,g_n\}$.
\end{proof}

\bigskip
\section{Virtually connected intersections}\label{sec:hierarchical}

The aim of this section is to prove Theorem \ref{thm:connected1}. In order to do this we will generalise the walker-imitator construction from Section \ref{sec:hom} to a hierarchy of imitators.

\subsection{A hierarchy of imitators}\label{subsec:hierarchies}

\begin{cons}(A hierarchy of imitators)\\\label{cons:hierarchy}
	Let $Y_1,...,Y_n\to X$ be local isometries of finite directly special cube complexes.
	We consider the walker wandering around the 1-skeleton of $X$ while a set of imitators wander around the 1-skeleta of the $Y_i$.
	We index the imitators by finite sequences $(i_0,...,i_m)$ of integers in $\{1,...,n\}$, and let $\Sigma$ be the set of all such sequences. Imitator $(i_0,...,i_m)$ wanders around the 1-skeleton of $Y_{i_0}$. 	 
	Each imitator either tries to copy the walker or tries to copy another imitator.
	Imitator $(i_0)$ tries to copy the walker, with exactly the same movement rule as in Construction \ref{cons:imitator}.
	Imitator $(i_0,...,i_m)$ tries to copy imitator $(i_1,...,i_m)$ ($m>0$) in essentially the same way:
	we project the movements of imitator $(i_1,...,i_m)$ to $X$ and imitator $(i_0,...,i_m)$ tries to copy these movements precisely as in Construction \ref{cons:imitator}. 
	More explicitly, if imitators $(i_0,...,i_m)$ and $(i_1,...,i_m)$ are at vertices $(y_0,y_1)\in Y^0_{i_0}\times Y^0_{i_1}$ and imitator $(i_1,...,i_m)$ traverses the edge $f_1\in\link(y_1)$, then imitator $(i_0,...,i_m)$ traverses the edge $f_0\in\link(y_0)$ with $\phi_{i_0}(f_0)\parallel\phi_{i_1}(f_1)$, if such an edge exists, otherwise they remain at $y_0$.	
	Each movement of the walker causes all of the imitators to move (or not move) by working up the hierarchy.
	
	Let
	$$\Theta=\{\theta:\Sigma\to\sqcup_i Y^0_i\mid\theta(i_0,...,i_m)\in Y_{i_0}\}$$
	denote the set of possible positions of the imitators.
	If the imitators start at position $\theta\in\Theta$ and the walker
	traverses a path $\gamma$, then imitator $(i_0,...,i_m)$ traverses a path that we denote by $\delta_{(i_0,...,i_m)}(\gamma,\theta)$. As in Construction \ref{cons:imitator}, the process is reversible, so if the walker immediately backtracks along $\gamma$ then imitator $(i_0,...,i_m)$ will backtrack along the path $\delta_{(i_0,...,i_m)}(\gamma,\theta)$. We denote the terminal vertex of $\delta_{(i_0,...,i_m)}(\gamma,\theta)$ by $t_{(i_0,...,i_m)}(\gamma,\theta)$.
\end{cons}

\begin{lem}\label{lem:hgammasquare}
	If $\gamma$ and $\gamma'$ are homotopic paths in $X$, then for any $\theta\in\Theta$ and $(i_0,...,i_m)\in\Sigma$ the paths $\delta_{(i_0,...,i_m)}(\gamma,\theta)$ and $\delta_{(i_0,...,i_m)}(\gamma',\theta)$ are homotopic in $Y_{i_0}$.
\end{lem}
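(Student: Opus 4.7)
The plan is to prove this by induction on $m$, with Lemma \ref{lem:gammasquare} supplying both the base case and the inductive tool.

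For the base case $m=0$, imitator $(i_0)$ responds to the walker by the rule of Construction \ref{cons:imitator} for $\phi_{i_0}:Y_{i_0}\to X$ started at $y_0:=\theta(i_0)$, so by definition $\delta_{(i_0)}(\gamma,\theta)=\delta(\gamma,y_0)$ and $\delta_{(i_0)}(\gamma',\theta)=\delta(\gamma',y_0)$, and Lemma \ref{lem:gammasquare} gives the required homotopy in $Y_{i_0}$ directly.

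For the inductive step with $(i_0,i_1,\ldots,i_m)$, the key observation I would isolate is that imitator $(i_0,\ldots,i_m)$ reacts to imitator $(i_1,\ldots,i_m)$ by exactly the same rule that a Construction \ref{cons:imitator} imitator for $\phi_{i_0}:Y_{i_0}\to X$ obeys when responding to a walker in $X$: when the parent imitator traverses $f_1$, the child traverses $f_0\in\link(y_0)$ with $\phi_{i_0}(f_0)\parallel\phi_{i_1}(f_1)$, which is literally the Construction \ref{cons:imitator} rule applied to the walker-edge $\phi_{i_1}(f_1)\in X^1$. Writing
\[\sigma:=\phi_{i_1}\bigl(\delta_{(i_1,\ldots,i_m)}(\gamma,\theta)\bigr),\qquad \sigma':=\phi_{i_1}\bigl(\delta_{(i_1,\ldots,i_m)}(\gamma',\theta)\bigr)\]
for the paths traced by the parent imitator pushed down to $X$, this identifies $\delta_{(i_0,\ldots,i_m)}(\gamma,\theta)=\delta(\sigma,y_0)$ and $\delta_{(i_0,\ldots,i_m)}(\gamma',\theta)=\delta(\sigma',y_0)$, where $y_0:=\theta(i_0,\ldots,i_m)$.

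To close the induction, I would apply the inductive hypothesis to the sequence $(i_1,\ldots,i_m)$, obtaining a homotopy between $\delta_{(i_1,\ldots,i_m)}(\gamma,\theta)$ and $\delta_{(i_1,\ldots,i_m)}(\gamma',\theta)$ in $Y_{i_1}$; pushing this homotopy forward via $\phi_{i_1}$ yields $\sigma\simeq\sigma'$ in $X$; and a second application of Lemma \ref{lem:gammasquare}, now for the local isometry $\phi_{i_0}$ with starting vertex $y_0$, promotes this to the required homotopy between $\delta(\sigma,y_0)$ and $\delta(\sigma',y_0)$ in $Y_{i_0}$. I do not anticipate any real obstacle; the only delicate point is the recursive bookkeeping needed to identify the movement rule of imitator $(i_0,\ldots,i_m)$ with a plain walker-imitator interaction along $\sigma$, so that Lemma \ref{lem:gammasquare} can be fed in cleanly at each level of the hierarchy.
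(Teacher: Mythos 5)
Your proposal is correct and follows essentially the same route as the paper: induction on $m$, with Lemma \ref{lem:gammasquare} handling the base case and then being applied again to the $\phi_{i_1}$-image of the parent imitator's path to close the inductive step. Your explicit identification $\delta_{(i_0,\ldots,i_m)}(\gamma,\theta)=\delta(\sigma,y_0)$ just makes precise what the paper states in words, so there is nothing to add.
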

\begin{proof}
We induct on $m$. If $m=0$ then imitator $(i_0)$ tries to copy the walker exactly as in Construction \ref{cons:imitator}, so the result is immediate from Lemma \ref{lem:gammasquare}. For $m>0$, we assume by induction that $\delta_{(i_1,...,i_m)}(\gamma,\theta)$ is homotopic to 
$\delta_{(i_1,...,i_m)}(\gamma',\theta)$. These paths are the movements of imitator $(i_1,...,i_m)$ if the walker traverses $\gamma$ or $\gamma'$. But imitator $(i_0,...,i_m)$ tries to copy the projection to $X$ of the movements of imitator $(i_1,...,i_m)$ in the same way as Construction \ref{cons:imitator}, so applying Lemma \ref{lem:gammasquare} again tells us that the homotopy class of the movements of imitator $(i_1,...,i_m)$ determines the homotopy class of the movements of imitator $(i_0,...,i_m)$. The result follows.
\end{proof}

\begin{cons}(Hierarchical imitator subgroups and homomorphisms)\\\label{cons:hsubgroup}
	Retaining the setup of Construction \ref{cons:hierarchy}, pick a basepoint $x\in X^0$. We have a right-action of $G:=\pi_1(X,x)$ on $\Theta$ defined
	by $(\theta\cdot[\gamma])(i_0,...,i_m):=t_{(i_0,...,i_m)}(\gamma,\theta)$ for $\theta\in\Theta$, $[\gamma]\in G$ and $(i_0,...,i_m)\in\Sigma$ -- this only depends on the homotopy class of $\gamma$ by Lemma \ref{lem:hgammasquare}. Let $G_\theta<G$ be the stabiliser of $\theta\in\Theta$. In general $G_\theta$ might not have finite index in $G$, even if the $Y_i$ are finite (unlike in Construction \ref{cons:hom}). 
	
	We can then define a homomorphism $$\rho_{x,\theta,(i_0,...,i_m)}:G_\theta\to\pi_1(Y_{i_0},\theta(i_0,...,i_m))$$
	by $\rho_{x,\theta,(i_0,...,i_m)}([\gamma]):=[\delta_{(i_0,...,i_m)}(\gamma,\theta)]$ -- again this is well-defined by Lemma \ref{lem:hgammasquare}. Unlike in Construction \ref{cons:hom}, $\rho_{x,\theta,(i_0,...,i_m)}$ might not be a retraction onto $\pi_1(Y_{i_0},\theta(i_0,...,i_m))$, indeed $\pi_1(Y_{i_0},\theta(i_0,...,i_m))$ might not even be a subgroup of $G_\theta$. (Note: The proof of Theorem \ref{thm:connected1} only uses the subgroup $G_\theta$, the definition of $\rho_{x,\theta,(i_0,...,i_m)}$ is just to maintain the parallel with Construction \ref{cons:hom}).
\end{cons}

\subsection{Proof of Theorem \ref{thm:connected1}}

We recall Theorem \ref{thm:connected1}. This generalises a theorem of Haglund and Wise \cite[Theorem 4.25]{HaglundWise12} to the non-hyperbolic setting.

\theoremstyle{plain}
\newtheorem*{thm:connected1}{Theorem \ref{thm:connected1}}
\begin{thm:connected1}(Virtually connected intersections)\\
		For $i=1,...,n$ let $(Y_i,y_i)\to(X,x)$ be based local isometries of finite virtually special cube complexes. Then there is a finite cover $(\dot{X},\dot{x})\to(X,x)$ such that the based elevations $\dot{Y}_i$ of $Y_i$ are embedded in $\dot{X}$ and have connected intersections $\cap_{i\in E}\dot{Y}_i$ for any $\emptyset\neq E\subset\{1,...,n\}$. Moreover, if the $Y_i$ are embedded in $X$ and do not inter-osculate with hyperplanes of $X$, then we may assume that the covering map $\dot{X}\to X$ is injective on $\cap_{i=1}^n \dot{Y}_i$.  
\end{thm:connected1}

Using Corollary \ref{cor:embedPi}, we reduce to the case where the $Y_i$ are embedded in $X$ and do not inter-osculate with hyperplanes. We will consider the $Y_i$ as subcomplexes of $X$ with $y_i=x$.
We now apply the hierarchy of imitators from Section \ref{subsec:hierarchies}. Our arguments will be restricted to the case where all imitators start at the vertex $x$, so let $\theta_x\in \Theta$ denote the constant function at $x$. We will use the abbreviations
\begin{align*}
\delta_{(i_0,...,i_m)}(\gamma)&:=\delta_{(i_0,...,i_m)}(\gamma,\theta_x),\\
t_{(i_0,...,i_m)}(\gamma)&:=t_{(i_0,...,i_m)}(\gamma,\theta_x).
\end{align*}
Let $(\dot{X},\dot{x})\to(X,x)$ be the cover corresponding to the subgroup $G_{\theta_x}<G:=\pi_1(X,x)$. We will spend the rest of this section proving that this cover satisfies the properties of Theorem \ref{thm:connected1} (including finiteness).

\begin{lem}\label{lem:splitdelta}
	$\delta_{(i_0,...,i_m)}(\gamma)=\delta_{(i_0,...,i_p)}(\delta_{(i_{p+1},...,i_m)}(\gamma))$ for $0\leq p<m$.
\end{lem}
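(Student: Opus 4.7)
The plan is to proceed by induction on the length of the sequence $(i_0,\ldots,i_m)$, using the recursive definition of the hierarchy. Throughout, I adopt the standing convention of Construction \ref{cons:hierarchy} that a path in some $Y_j$ is implicitly identified with its $\phi_j$-projection to $X$ whenever it is fed into some $\delta_{(\cdots)}$.

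The first step is to isolate the base case $p=0$, namely
\[
\delta_{(i_0,\ldots,i_m)}(\gamma)=\delta_{(i_0)}(\delta_{(i_1,\ldots,i_m)}(\gamma)).
\]
This is essentially the definition. Indeed, by Construction \ref{cons:hierarchy} imitator $(i_0,\ldots,i_m)$ copies imitator $(i_1,\ldots,i_m)$ by the rule: if $(i_1,\ldots,i_m)$ traverses $f_1$, then $(i_0,\ldots,i_m)$ traverses the unique $f_0$ with $\phi_{i_0}(f_0)\parallel\phi_{i_1}(f_1)$, if such exists. But this is exactly the movement rule for the single-index imitator $(i_0)$ watching a walker traverse the edge $\phi_{i_1}(f_1)\in X^1$. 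Applying this edge-by-edge along $\delta_{(i_1,\ldots,i_m)}(\gamma)$ gives the identity above.

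Next, I induct on the length $m+1$ of the sequence. The case $m=1$ has only $p=0$ and is handled by the base case. For $m\geq 2$ and $0\leq p < m$, the case $p=0$ is again the base case, so assume $p\geq 1$. Starting from the base case for $(i_0,\ldots,i_m)$ and then applying the inductive hypothesis to the shorter sequence $(i_1,\ldots,i_m)$ with splitting index $p-1$ (valid since $0\leq p-1<m-1$), we get
\[
\delta_{(i_0,\ldots,i_m)}(\gamma)=\delta_{(i_0)}\bigl(\delta_{(i_1,\ldots,i_m)}(\gamma)\bigr)=\delta_{(i_0)}\bigl(\delta_{(i_1,\ldots,i_p)}(\delta_{(i_{p+1},\ldots,i_m)}(\gamma))\bigr).
\]
Finally, re-applying the base case identity, this time to the sequence $(i_0,\ldots,i_p)$ with input path $\eta:=\delta_{(i_{p+1},\ldots,i_m)}(\gamma)$, gives
\[
\delta_{(i_0)}\bigl(\delta_{(i_1,\ldots,i_p)}(\eta)\bigr)=\delta_{(i_0,\ldots,i_p)}(\eta)=\delta_{(i_0,\ldots,i_p)}\bigl(\delta_{(i_{p+1},\ldots,i_m)}(\gamma)\bigr),
\]
which is the desired equality.

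There is no real obstacle: the lemma is essentially an associativity/bookkeeping statement about the recursive definition. The only point that requires mild care is the projection convention described above, which is precisely what is baked into Construction \ref{cons:hierarchy} when it says that $(i_0,\ldots,i_m)$ copies the \emph{projection to $X$} of $(i_1,\ldots,i_m)$'s movements. One could alternatively prove the lemma by induction on the number of edges of $\gamma$, reducing at each step to the single-edge movement rule, but the sequence-length induction is cleaner and makes the recursive structure transparent.
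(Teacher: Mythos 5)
Your proof is correct and takes essentially the same route as the paper: your base case $p=0$ is exactly the paper's one-line observation that an imitator's path depends only on its starting point and on the (projected) path of whoever it copies, not on the imitator's name. The paper applies this observation directly, leaving the iteration up the hierarchy implicit, whereas you package that same iteration as an explicit induction on the length of the sequence.
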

\begin{proof}
	This equality follows straight from Construction \ref{cons:hierarchy} given the observation that the path taken by any given imitator does not depend on the name of the imitator but only depends on their starting point and on the path taken by the walker/imitator whom they try to copy.
\end{proof}

\begin{lem}\label{lem:addj}
	If $\gamma$ is a path in $Y_j$ based at $x$ then $\delta_{(i_0,...,i_m,j)}(\gamma)=\delta_{(i_0,...,i_m)}(\gamma)$ for all $(i_0,...,i_m)\in\Sigma$.
\end{lem}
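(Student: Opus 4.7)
The plan is to peel off the final index $j$ via Lemma \ref{lem:splitdelta} and reduce to the base case of a single-index imitator. Specifically, view $(i_0,\ldots,i_m,j)$ as a sequence of length $m+2$ and split at $p=m$; Lemma \ref{lem:splitdelta} then gives
\[\delta_{(i_0,\ldots,i_m,j)}(\gamma)=\delta_{(i_0,\ldots,i_m)}\bigl(\delta_{(j)}(\gamma)\bigr).\]
So the lemma reduces to proving $\delta_{(j)}(\gamma)=\gamma$.

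For this I would use that in the proof of Theorem \ref{thm:connected1} we have reduced to the case where the $Y_i$ are embedded subcomplexes of $X$ with $y_i=x$, so each $\phi_j$ is literally an inclusion, and $X$ is directly special so its hyperplanes do not self-osculate. Applying the movement rule of Construction \ref{cons:imitator} edge by edge to $\gamma$, if the walker traverses an edge $e$ of $\gamma\subset Y_j$ from a vertex $y\in Y_j$, then the unique edge $f\in\link(y)$ with $\phi_j(f)\parallel e$ is $e$ itself (uniqueness because $H(e)$ cannot self-osculate at $\phi_j(y)$ in $X$), so imitator $(j)$ traverses $e$ as well. Since imitator $(j)$ starts at $y_j=x$, which is the starting vertex of $\gamma$, iterating this observation gives $\delta_{(j)}(\gamma)=\gamma$. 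This is the same shadowing phenomenon recorded near the end of Construction \ref{cons:hom} (there stated for loops, but the argument works verbatim for any path in $Y_j$ starting at $y_j$).

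I do not expect any genuine obstacle: once Lemma \ref{lem:splitdelta} is in place, the lemma becomes the essentially tautological statement that an imitator sitting directly on top of a subcomplex shadows the walker exactly as long as the walker stays inside that subcomplex.
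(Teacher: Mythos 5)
Your proposal is correct and follows essentially the same route as the paper: the paper's proof likewise observes that $\delta_{(j)}(\gamma)=\gamma$ because the imitator $(j)$ shadows the walker while the walker stays in $Y_j$, and then applies Lemma \ref{lem:splitdelta} to split off the last index. Your extra justification of the shadowing step (uniqueness of $f$ via no self-osculation) is a harmless expansion of what the paper states in one line.
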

\begin{proof}
	As $\gamma$ is in $Y_j$, if the walker traverses $\gamma$ then imitator $(j)$ also traverses $\gamma$, so $\delta_{(j)}(\gamma)=\gamma$. Applying Lemma \ref{lem:splitdelta} for $(i_0,...,i_m)\in\Sigma$ then gives us
	\begin{align*}
\delta_{(i_0,...,i_m,j)}(\gamma)&=\delta_{(i_0,...,i_m)}(\delta_{(j)}(\gamma))\\
&=\delta_{(i_0,...,i_m)}(\gamma).\qedhere
	\end{align*}
\end{proof}

\begin{lem}\label{lem:inintersection}
	$\delta_{(i_0,...,i_m)}(\gamma)$ lies in $\cap_{s=0}^m Y_{i_s}$.
\end{lem}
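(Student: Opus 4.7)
The plan is to prove this by induction on $m$, using Lemma \ref{lem:splitdelta} at the top of the hierarchy to reduce to a single-imitator problem, then applying Subcomplex Entrapment (Lemma \ref{lem:subcomplextrap}) to each of the subcomplexes $Y_{i_s}$ in turn. Crucially, at this point in the proof of Theorem \ref{thm:connected1} we are already in the reduced situation where every $Y_i$ is embedded in $X$ and does not inter-osculate with any hyperplane of $X$, so the hypotheses of Subcomplex Entrapment will be available.

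For the base case $m=0$, imitator $(i_0)$ wanders only in $Y_{i_0}$ by Construction \ref{cons:hierarchy}, so $\delta_{(i_0)}(\gamma) \subset Y_{i_0}$ trivially. For the inductive step, assume the lemma for length-$m$ sequences and consider $(i_0,i_1,\ldots,i_m) \in \Sigma$. Applying Lemma \ref{lem:splitdelta} with $p=0$ gives
$$\delta_{(i_0,i_1,\ldots,i_m)}(\gamma) \;=\; \delta_{(i_0)}\bigl(\delta_{(i_1,\ldots,i_m)}(\gamma)\bigr),$$
so we may view the path in question as the trajectory of a single imitator in $Y_{i_0}$ (starting at $x$) shadowing a walker who traverses $\delta_{(i_1,\ldots,i_m)}(\gamma)$. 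This trajectory lies in $Y_{i_0}$ by construction, and by the inductive hypothesis the walker's path lies in $\bigcap_{s=1}^m Y_{i_s}$.

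Now fix any $s \in \{1,\ldots,m\}$. The subcomplex $Y_{i_s}$ is locally convex and does not inter-osculate with hyperplanes of $X$, so Subcomplex Entrapment applies to the local isometry $Y_{i_0} \hookrightarrow X$ with $Z = Y_{i_s}$: both the imitator and the walker start at $x \in Y_{i_0} \cap Y_{i_s}$, and the walker's path lies in $Y_{i_s}$, so the ``in particular'' clause of Lemma \ref{lem:subcomplextrap} forces the imitator to remain inside $Y_{i_0} \cap Y_{i_s}$. Intersecting this containment over all $s \in \{1,\ldots,m\}$ together with the already-established containment in $Y_{i_0}$ gives
$$\delta_{(i_0,i_1,\ldots,i_m)}(\gamma) \;\subset\; \bigcap_{s=0}^m Y_{i_s},$$
completing the induction. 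There is no real obstacle here beyond correctly organising the induction; the no-inter-osculation hypothesis, already installed at the start of the proof of Theorem \ref{thm:connected1}, does the work.
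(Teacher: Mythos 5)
Your proof is correct and follows essentially the same route as the paper: induction on $m$, with the inductive step viewing imitator $(i_0,\ldots,i_m)$ as shadowing the path $\delta_{(i_1,\ldots,i_m)}(\gamma)\subset\cap_{s=1}^m Y_{i_s}$ and applying Subcomplex Entrapment (Lemma \ref{lem:subcomplextrap}) with $Z=Y_{i_s}$ under the no-inter-osculation reduction. Your explicit invocation of Lemma \ref{lem:splitdelta} with $p=0$ is just a formalisation of what the paper states directly (that imitator $(i_0,\ldots,i_m)$ copies imitator $(i_1,\ldots,i_m)$), so there is no substantive difference.
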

\begin{proof}
	We prove this by induction on $m$. It is immediate for $m=0$, so suppose $m>0$.
	Imitator $(i_0,...,i_m)$ tries to copy imitator $(i_1,...,i_m)$, and they traverse the path $\delta_{(i_1,...,i_m)}(\gamma)$, which lies in $\cap_{s=1}^m Y_{i_s}$ by induction. Both imitators start at $x\in\cap_{i=1}^n Y_i$, and the subcomplexes $Y_i$ do not inter-osculate with hyperplanes, so it follows from Subcomplex Entrapment (Lemma \ref{lem:subcomplextrap}) that imitator $(i_0,...,i_m)$ stays inside $\cap_{s=0}^m Y_{i_s}$.
\end{proof}

For $(i_0,...,i_m)\in\Sigma$ let $\nu(i_0,...,i_m)$ be obtained by keeping the rightmost occurrence of each number $i\in\{1,...,n\}$ and deleting all other $i_s$ (e.g. $\nu(1,2,2,4,1,4)=(2,1,4)$). This defines a map
$$\nu:\Sigma\to\Sigma_d:=\{(i_0,...,i_m)\in\Sigma\mid i_0,...,i_m\text{ distinct}\}.$$

\begin{lem}\label{lem:nu}
$\delta_{\nu(i_0,...,i_m)}(\gamma)=\delta_{(i_0,...,i_m)}(\gamma)$, so in particular $t_{\nu(i_0,...,i_m)}(\gamma)=t_{(i_0,...,i_m)}(\gamma)$.
\end{lem}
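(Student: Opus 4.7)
The plan is to derive the equality through a sequence of one-step deletions: whenever $(i_0,\dots,i_m)$ contains a repeated entry $i_s=i_t$ with $s<t$, I would show that deleting the entry at position $s$ does not change $\delta_{(i_0,\dots,i_m)}(\gamma)$. Iterating this procedure (always targeting a non-rightmost duplicate) strictly decreases the length, never touches the rightmost occurrence of any value, and therefore terminates at the sequence $\nu(i_0,\dots,i_m)$, yielding the lemma.

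For the single deletion step, I would split via Lemma \ref{lem:splitdelta}: writing $\alpha := \delta_{(i_{s+1},\dots,i_m)}(\gamma)$, we have $\delta_{(i_0,\dots,i_m)}(\gamma) = \delta_{(i_0,\dots,i_s)}(\alpha)$. By Lemma \ref{lem:inintersection}, $\alpha$ lies in $\bigcap_{r=s+1}^{m} Y_{i_r}$, which in particular contains $Y_{i_t} = Y_{i_s}$, and $\alpha$ starts at $x$ since every imitator begins at $\theta_x$. Lemma \ref{lem:addj} (applied with $j=i_s$) then strips the final $i_s$ off the imitator label, giving $\delta_{(i_0,\dots,i_s)}(\alpha) = \delta_{(i_0,\dots,i_{s-1})}(\alpha)$ when $s\geq 1$ (for the edge case $s=0$ one instead invokes $\delta_{(i_0)}(\alpha)=\alpha$, which is exactly the opening observation in the proof of Lemma \ref{lem:addj}). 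A final application of Lemma \ref{lem:splitdelta} in reverse recombines the right-hand side into $\delta_{(i_0,\dots,i_{s-1},i_{s+1},\dots,i_m)}(\gamma)$, completing the deletion step.

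I do not anticipate any genuine obstacle: the three previous lemmas do all the work. The one point worth flagging is the asymmetry between deleting the \emph{left} versus the right occurrence of a duplicate. It is precisely the choice of deleting the left occurrence that forces $\alpha$ to lie inside $Y_{i_s}$ (via Lemma \ref{lem:inintersection}, using the surviving rightward copy $i_t$), and it is this containment that fires Lemma \ref{lem:addj}. If one tried to delete the rightmost occurrence instead, $\alpha$ would have no reason to lie in $Y_{i_s}$ and the argument would collapse.
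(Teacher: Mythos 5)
Your proof is correct and is essentially the paper's own argument: the same reduction to one-step deletions of a non-rightmost duplicate, using Lemma \ref{lem:inintersection} to place $\delta_{(i_{s+1},\dots,i_m)}(\gamma)$ inside $Y_{i_s}$, then Lemmas \ref{lem:splitdelta} and \ref{lem:addj} to absorb the deleted index. The only differences are cosmetic: you run the chain of equalities in the opposite direction and explicitly flag the $s=0$ edge case, which the paper handles implicitly.
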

\begin{proof}
	Suppose $0\leq p<q\leq m$ with $i_p=i_q$. It suffices to show that
	\begin{equation*}\label{removep}
\delta_{(i_0,...,i_{p-1},i_{p+1},...,i_m)}(\gamma)=\delta_{(i_0,...,i_m)}(\gamma),
	\end{equation*}
	since $\nu(i_0,...,i_m)$ is obtained by a sequence of such deletions of indices.
	Lemma \ref{lem:inintersection} tells us that $\delta_{(i_{p+1},...,i_m)}(\gamma)$ lies inside $Y_{i_p}=Y_{i_q}$, so we get the following chain of equalities
	\begin{align*}
\delta_{(i_0,...,i_{p-1},i_{p+1},...,i_m)}(\gamma)&=\delta_{(i_0,...,i_{p-1})}(\delta_{(i_{p+1},...,i_m)}(\gamma))&\text{by Lemma \ref{lem:splitdelta},}\\
&=\delta_{(i_0,...,i_p)}(\delta_{(i_{p+1},...,i_m)}(\gamma))&\text{by Lemma \ref{lem:addj},}\\
&=\delta_{(i_0,...,i_m)}(\gamma)&\text{by Lemma \ref{lem:splitdelta}.} \tag*{\qedhere}
	\end{align*}
\end{proof}

\begin{lem}
$(\dot{X},\dot{x})\to(X,x)$ is a finite cover.
\end{lem}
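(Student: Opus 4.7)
The plan is to observe that $[G:G_{\theta_x}]$ equals the size of the $G$-orbit of $\theta_x$ in $\Theta$, and to show that this orbit is finite. An element in the orbit has the form $\theta_x\cdot[\gamma]$, which is the function $\Sigma\to\sqcup_i Y_i^0$ sending $(i_0,\ldots,i_m)\mapsto t_{(i_0,\ldots,i_m)}(\gamma)$.

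A priori $\Sigma$ is infinite, so this codomain is infinite. The key point is that Lemma \ref{lem:nu} tells us that
\[
t_{(i_0,\ldots,i_m)}(\gamma)=t_{\nu(i_0,\ldots,i_m)}(\gamma),
\]
so each $\theta_x\cdot[\gamma]$ is determined by its restriction to the subset $\Sigma_d\subset\Sigma$ of sequences of pairwise distinct indices. Since $\{1,\ldots,n\}$ is a finite set, $\Sigma_d$ is finite; and the target $\sqcup_i Y_i^0$ is finite because each $Y_i$ is a finite cube complex. Thus the orbit of $\theta_x$ injects into the finite set of functions $\Sigma_d\to\sqcup_i Y_i^0$ (respecting the requirement $\theta(i_0,\ldots,i_m)\in Y_{i_0}^0$), so it is finite.

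Therefore $G_{\theta_x}$ has finite index in $G$, so the corresponding cover $(\dot{X},\dot{x})\to(X,x)$ is finite. There is no real obstacle here beyond packaging Lemma \ref{lem:nu} with the finiteness of $\Sigma_d$ and of the $Y_i^0$; the preparatory lemmas of this subsection are precisely designed to carry out this reduction from the infinite index set $\Sigma$ to the finite index set $\Sigma_d$.
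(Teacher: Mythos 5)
Your proposal is correct and follows the paper's proof essentially verbatim: both reduce finiteness of the index $[G:G_{\theta_x}]$ to finiteness of the orbit $\theta_x\cdot G$, and then use Lemma \ref{lem:nu} to see that each $\theta_x\cdot[\gamma]$ is determined by its restriction to the finite set $\Sigma_d$, with finitely many possible values since the $Y_i$ are finite. No gaps.
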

\begin{proof}
	This is equivalent to $G_{\theta_x}$ having finite index in $G$, which in turn is equivalent to the orbit $\theta_x\cdot G$ being finite. For $[\gamma]\in G$, Lemma \ref{lem:nu} implies that
	$$\theta_x\cdot[\gamma]:(i_0,...,i_m)\mapsto t_{(i_0,...,i_m)}(\gamma)=t_{\nu(i_0,...,i_m)}(\gamma).$$
	So $\theta_x\cdot[\gamma]$ is determined by its restriction to $\Sigma_d$. But $\Sigma_d$ and the $Y_i$ are finite, thus there are only finitely many possibilities for $\theta_x\cdot[\gamma]$.
\end{proof}

\begin{lem}\label{lem:sameendpt}
	Let $\gamma_1,\gamma_2$ be paths in $X$ based at $x$ with the same terminal vertex that lift to paths $\dot{\gamma}_1,\dot{\gamma}_2$ in $\dot{X}$ based at $\dot{x}$. Then $\dot{\gamma}_1,\dot{\gamma}_2$ have the same terminal vertex if and only if $t_{(i_0,...,i_m)}(\gamma_1)=t_{(i_0,...,i_m)}(\gamma_2)$ for all $(i_0,...,i_m)\in\Sigma$.
\end{lem}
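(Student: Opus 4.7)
The plan is to reduce the lemma to an unpacking of definitions via the $G$-action on $\Theta$ from Construction \ref{cons:hsubgroup}. Because $(\dot{X},\dot{x})\to(X,x)$ is the based cover corresponding to the subgroup $G_{\theta_x}<G$, standard covering-space theory tells us that the lifts $\dot{\gamma}_1,\dot{\gamma}_2$ of the paths $\gamma_1,\gamma_2$ (both starting at $\dot{x}$, both projecting to paths with the same terminal vertex in $X$) end at the same vertex of $\dot{X}$ if and only if the loop $\gamma_1*\gamma_2^{-1}$ represents an element of $G_{\theta_x}$, i.e.\ $\theta_x\cdot[\gamma_1*\gamma_2^{-1}]=\theta_x$.

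Next I would manipulate this condition using the right-action property of $\cdot$ and the reversibility of the walker--imitator process. Since the action satisfies $\theta\cdot([\alpha][\beta])=(\theta\cdot[\alpha])\cdot[\beta]$ (this is immediate from Construction \ref{cons:hierarchy}: concatenating the walker's paths concatenates each imitator's paths, without resetting their positions), we obtain
\[
\theta_x\cdot[\gamma_1*\gamma_2^{-1}]=(\theta_x\cdot[\gamma_1])\cdot[\gamma_2]^{-1}.
\]
By reversibility (as explicitly noted at the end of Construction \ref{cons:hierarchy}), the element $[\gamma_2]^{-1}$ acts on $\Theta$ as the inverse of $[\gamma_2]$, so the above equals $\theta_x$ precisely when $\theta_x\cdot[\gamma_1]=\theta_x\cdot[\gamma_2]$.

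Finally I would unfold the definition of the action: for every $(i_0,\ldots,i_m)\in\Sigma$, $(\theta_x\cdot[\gamma])(i_0,\ldots,i_m)=t_{(i_0,\ldots,i_m)}(\gamma)$, so the equality $\theta_x\cdot[\gamma_1]=\theta_x\cdot[\gamma_2]$ of elements of $\Theta$ is exactly the statement that $t_{(i_0,\ldots,i_m)}(\gamma_1)=t_{(i_0,\ldots,i_m)}(\gamma_2)$ for all $(i_0,\ldots,i_m)\in\Sigma$. Chaining the three equivalences proves the lemma.

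I do not foresee a real obstacle; the proof is essentially a definition chase. The only point requiring care is verifying that the right-action identity $\theta\cdot([\alpha][\beta])=(\theta\cdot[\alpha])\cdot[\beta]$ holds, which one should state explicitly (or note was implicitly checked in Construction \ref{cons:hsubgroup}) before invoking it, and keeping track of the fact that reversibility applies to each imitator in the hierarchy, not merely to the top-level one.
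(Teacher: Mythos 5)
Your proposal is correct and follows essentially the same route as the paper: both translate "same terminal vertex of the lifts" into $[\gamma_1*\gamma_2^{-1}]\in G_{\theta_x}$ via covering-space theory, then into $\theta_x\cdot[\gamma_1]=\theta_x\cdot[\gamma_2]$ using the right-action, and finally unfold the definition of the action to get the condition on the $t_{(i_0,\ldots,i_m)}$. Your extra care in justifying the right-action identity and reversibility is a reasonable elaboration of a step the paper leaves implicit.
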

\begin{proof}
	Observe that $\dot{\gamma}_1,\dot{\gamma}_2$ have the same endpoint if and only if $\gamma_1 * \gamma_2^{-1}$ lifts to a loop in $\dot{X}$ based at $\dot{x}$, which is in turn equivalent to $[\gamma_1 * \gamma_2^{-1}]\in G_{\theta_x}$. That is equivalent to $\theta_x\cdot[\gamma_1]=\theta_x\cdot[\gamma_2]$, which is equivalent to $t_{(i_0,...,i_m)}(\gamma_1)=t_{(i_0,...,i_m)}(\gamma_2)$ for all $(i_0,...,i_m)\in\Sigma$.
\end{proof}

We complete the proof of Theorem \ref{thm:connected1} with the following two lemmas.

\begin{lem}\label{lem:connected}
	Let $\dot{Y}_i$ be the based elevation of $Y_i$ to $(\dot{X},\dot{x})$ and let $\emptyset\neq E\subset\{1,...,n\}$. Then $\cap_{i\in E}\dot{Y}_i$ is connected. 
\end{lem}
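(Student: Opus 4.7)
The plan is to start with an arbitrary vertex $\dot{x}' \in \bigcap_{i \in E} \dot{Y}_i$ and construct, via a hierarchical imitator argument, a path in $\bigcap_{i \in E} \dot{Y}_i$ from $\dot{x}$ to $\dot{x}'$. For each $i \in E$, since $\dot{x}' \in \dot{Y}_i$, pick a path $\dot{\gamma}_i$ in $\dot{Y}_i$ from $\dot{x}$ to $\dot{x}'$ and let $\gamma_i$ be its projection to $Y_i \subset X$, a path from $x$ to the common projection $x'$ of $\dot{x}'$. Applying Lemma~\ref{lem:sameendpt} to each pair $(\gamma_i, \gamma_j)$ gives the key equalities
\begin{equation}\label{eq:hierkey}
t_\sigma(\gamma_i) = t_\sigma(\gamma_j) \qquad \text{for all } \sigma \in \Sigma \text{ and } i,j \in E.
\end{equation}

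Fix an ordering $E = \{j_1, \dots, j_k\}$, set $\tilde{\gamma}_1 := \gamma_{j_1}$, and inductively define $\tilde{\gamma}_{l+1} := \delta_{(j_{l+1})}(\tilde{\gamma}_l)$ for $1 \leq l < k$. I will prove by induction that $\tilde{\gamma}_l$ is a path from $x$ to $x'$ lying in $\bigcap_{s=1}^l Y_{j_s}$ whose lift based at $\dot{x}$ terminates at $\dot{x}'$. The containment $\tilde{\gamma}_{l+1} \subset \bigcap_{s=1}^{l+1} Y_{j_s}$ follows by applying Subcomplex Entrapment (Lemma~\ref{lem:subcomplextrap}) to the inclusion $Y_{j_{l+1}} \hookrightarrow X$ together with each subcomplex $Z = Y_{j_s}$ for $s \leq l$; the hypothesis that $Y_{j_s}$ does not inter-osculate with hyperplanes of $X$ is exactly the reduction made at the outset via Corollary~\ref{cor:embedPi}. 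For the endpoint condition, Lemma~\ref{lem:splitdelta} rewrites $t_\sigma(\tilde{\gamma}_{l+1}) = t_{(\sigma, j_{l+1})}(\tilde{\gamma}_l)$; then the inductive hypothesis (via Lemma~\ref{lem:sameendpt}) lets us replace $\tilde{\gamma}_l$ by $\gamma_{j_1}$, equation~\eqref{eq:hierkey} swaps $\gamma_{j_1}$ for $\gamma_{j_{l+1}}$, Lemma~\ref{lem:addj} (using $\gamma_{j_{l+1}} \in Y_{j_{l+1}}$) drops the trailing $j_{l+1}$, and \eqref{eq:hierkey} returns to $\gamma_{j_1}$, yielding $t_\sigma(\tilde{\gamma}_{l+1}) = t_\sigma(\gamma_{j_1})$ for all $\sigma$. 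Lemma~\ref{lem:sameendpt} then implies that the lift of $\tilde{\gamma}_{l+1}$ based at $\dot{x}$ ends at $\dot{x}'$.

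After $k-1$ iterations, $\tilde{\gamma}_k$ lies in $\bigcap_{i \in E} Y_i$ and lifts at $\dot{x}$ to a path ending at $\dot{x}'$; since the lift at $\dot{x}$ of any path in $Y_i$ is contained in the based elevation $\dot{Y}_i$, the lifted path lies in $\bigcap_{i \in E} \dot{Y}_i$, proving path-connectedness and hence connectedness. The main obstacle is really bookkeeping: the endpoint identity in the inductive step requires a careful four-move chain interleaving Lemmas~\ref{lem:splitdelta}, \ref{lem:addj}, \ref{lem:sameendpt} and \eqref{eq:hierkey}, and one has to track exactly which trailing index is currently in a specific $Y_\cdot$ so that Lemma~\ref{lem:addj} can be applied. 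Conceptually, the essential input is the hierarchical structure built in Section~\ref{sec:hierarchical}: at each step we append a new index $j_{l+1}$ to the imitator signature precisely so that the imitator can be trapped inside one further subcomplex while its endpoint remains forced to $\dot{x}'$ by the agreement in \eqref{eq:hierkey}.
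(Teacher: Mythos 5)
Your proof is correct and is essentially the paper's argument: you construct the connecting path as the iterated imitation of $\gamma_{j_1}$ through the remaining subcomplexes (by Lemma \ref{lem:splitdelta} your $\tilde{\gamma}_k$ is exactly the paper's one-shot path $\delta_{(1,\ldots,k)}(\gamma_1)$ up to reordering the indices), using the same ingredients -- Lemma \ref{lem:sameendpt} twice, Lemma \ref{lem:addj}, Lemma \ref{lem:splitdelta}, and Subcomplex Entrapment, which the paper packages as Lemma \ref{lem:inintersection}. The only differences are cosmetic: your telescoping of the $t_\sigma$-identities is organised as an induction rather than a single chain, and the verification that $\tilde{\gamma}_{l+1}$ ends at $x'$ (needed before invoking Lemma \ref{lem:sameendpt}), which the paper spells out by evaluating at the singleton index, follows immediately from your identities at $\sigma=(j_{l+1})$.
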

\begin{proof}
	Without loss of generality, suppose $E=\{1,...,k\}$ for some $1\leq k\leq n$.
	We know that $\dot{x}\in\cap_{i=1}^k\dot{Y}_i$, so consider another vertex $\dot{x}'\in\cap_{i=1}^k\dot{Y}_i$.
	Our task is to find a path in $\cap_{i=1}^k\dot{Y}_i$ joining $\dot{x}$ and $\dot{x}'$.
	Let $\dot{\gamma}_1,...,\dot{\gamma}_k$ be paths in $\dot{Y}_1,...,\dot{Y}_k$ respectively that go from $\dot{x}$ to $\dot{x}'$. Say they project to paths $\gamma_1,...,\gamma_k$ in $X$.	
	By Lemma \ref{lem:sameendpt}, for any $(i_0,...,i_m)\in\Sigma$ the vertices $t_{(i_0,...,i_m)}(\gamma_j)$ are the same for all $1\leq j\leq k$. But Lemma \ref{lem:addj} tells us that
	$$t_{(i_0,...,i_m,1,...,j-1)}(\gamma_j)=t_{(i_0,...,i_m,1,...,j)}(\gamma_j)$$
	for all $1\leq j\leq k$, so we deduce that
	$$t_{(i_0,...,i_m)}(\gamma_1)=t_{(i_0,...,i_m,1,...,k)}(\gamma_1).$$
	Now consider the path $\gamma:=\delta_{(1,...,k)}(\gamma_1)$, which lies in $\cap_{i=1}^k Y_i$ by Lemma \ref{lem:inintersection}. Lemma \ref{lem:splitdelta} implies that
	$$\delta_{(i_0,...,i_m,1,...,k)}(\gamma_1)=\delta_{(i_0,...,i_m)}(\gamma),$$
	hence
	\begin{equation}\label{gammagamma1}
t_{(i_0,...,i_m)}(\gamma_1)=t_{(i_0,...,i_m)}(\gamma).
	\end{equation}
The paths $\gamma_1,\gamma$ are both contained in $Y_1$, so $\gamma_1=\delta_{(1)}(\gamma_1)$ and $\gamma=\delta_{(1)}(\gamma)$, and (\ref{gammagamma1}) implies that $\gamma_1,\gamma$ have the same terminal vertex.
	Since (\ref{gammagamma1}) holds for all $(i_0,...,i_m)\in\Sigma$, we can apply Lemma \ref{lem:sameendpt} to deduce that the lift $\dot{\gamma}$ of $\gamma$ to $\dot{X}$ based at $\dot{x}$ has the same terminal vertex as $\dot{\gamma}_1$, namely the vertex $\dot{x}'$. So $\dot{\gamma}$ is the required path in $\cap_{i=1}^k\dot{Y}_i$ joining $\dot{x}$ and $\dot{x}'$.
\end{proof}

\begin{lem}
	The covering map $\dot{X}\to X$ is injective on $\cap_{i=1}^n \dot{Y}_i$.  
\end{lem}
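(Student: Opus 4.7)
The plan is to reduce this to the connectedness lemma just proved (Lemma \ref{lem:connected}) together with the ``universal terminal vertex'' criterion of Lemma \ref{lem:sameendpt}. Take two vertices $\dot{x}',\dot{x}''\in\bigcap_{i=1}^n\dot{Y}_i$ with the same image $x'\in X$; I want to show $\dot{x}'=\dot{x}''$. Because $\bigcap_{i=1}^n\dot{Y}_i$ is connected by Lemma \ref{lem:connected} (taking $E=\{1,\dots,n\}$), I can choose edge paths $\dot{\gamma}'$ and $\dot{\gamma}''$ inside $\bigcap_{i=1}^n\dot{Y}_i$ from $\dot{x}$ to $\dot{x}'$ and $\dot{x}''$ respectively. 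Projecting to $X$ gives paths $\gamma',\gamma''$ based at $x$ with common terminal vertex $x'$, each of which lies entirely inside $\bigcap_{i=1}^n Y_i$.

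The heart of the argument is to verify the hypothesis of Lemma \ref{lem:sameendpt}, i.e.\ that $t_{(i_0,\dots,i_m)}(\gamma')=t_{(i_0,\dots,i_m)}(\gamma'')$ for every sequence $(i_0,\dots,i_m)\in\Sigma$. In fact I will show the stronger statement $\delta_{(i_0,\dots,i_m)}(\gamma')=\gamma'$ (and similarly for $\gamma''$). Since $\gamma'\subset Y_{i_s}$ for each $s\in\{0,\dots,m\}$, I apply Lemma \ref{lem:addj} repeatedly, stripping off the rightmost index each time:
\[\delta_{(i_0,\dots,i_m)}(\gamma')=\delta_{(i_0,\dots,i_{m-1})}(\gamma')=\cdots=\delta_{(i_0)}(\gamma').\]
For the base case, $\gamma'$ lies in $Y_{i_0}$, so when the walker traverses $\gamma'$ imitator $(i_0)$ copies each edge edge-for-edge (the copying rule forces the imitator onto the unique edge parallel to the walker's edge inside $Y_{i_0}$, which is the same edge). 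Thus $\delta_{(i_0)}(\gamma')=\gamma'$, and its terminal vertex is $x'$. The identical argument applied to $\gamma''$ yields $t_{(i_0,\dots,i_m)}(\gamma'')=x'$ for every $(i_0,\dots,i_m)\in\Sigma$.

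Now Lemma \ref{lem:sameendpt} immediately gives that the lifts $\dot{\gamma}'$ and $\dot{\gamma}''$ (both based at $\dot{x}$) share a terminal vertex, hence $\dot{x}'=\dot{x}''$. I do not anticipate any genuine obstacle: the hierarchical-imitator formalism has been set up precisely so that paths lying in the total intersection are invariant under every imitator, and this invariance is exactly what the sameness-of-endpoint criterion requires. The only thing to be careful about is the base case of the iteration, where one must appeal directly to the construction of imitator $(i_0)$ rather than to Lemma \ref{lem:addj}, since $\Sigma$ consists of non-empty sequences.
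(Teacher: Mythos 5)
Your argument is correct. The computational heart is the same as the paper's -- namely that a path lying in $\cap_{i=1}^n Y_i$ is copied verbatim by every imitator, which you establish by stripping indices with Lemma \ref{lem:addj} and handling the base case $\delta_{(i_0)}(\gamma')=\gamma'$ directly from the copying rule (exactly as in the proof of Lemma \ref{lem:addj}) -- but you package it differently. The paper does not pass through Lemma \ref{lem:sameendpt} or (explicitly) Lemma \ref{lem:connected}: it observes that the restriction of $\dot{X}\to X$ to $\cap_{i=1}^n\dot{Y}_i$ is a covering of the component $Z$ of $\cap_{i=1}^n Y_i$ containing $x$, and shows this covering has degree $1$ because every loop $\gamma$ in $Z$ based at $x$ is traversed by all imitators, so $\theta_x\cdot[\gamma]=\theta_x$, i.e. $[\gamma]\in G_{\theta_x}$ and $\gamma$ lifts to a loop at $\dot{x}$. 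Your route instead proves injectivity pointwise on vertices: you use Lemma \ref{lem:connected} to connect two putative preimages $\dot{x}',\dot{x}''$ to $\dot{x}$ inside $\cap_{i=1}^n\dot{Y}_i$, and then the terminal-vertex criterion of Lemma \ref{lem:sameendpt}. Both are valid; the paper's version is shorter and makes the ``loops in $Z$ lift to loops'' statement explicit, which is the natural $\pi_1$-level formulation, while yours leans on machinery already proved and is closer in spirit to the proof of Lemma \ref{lem:connected}. One small point you leave implicit: you only verify injectivity on the $0$-skeleton of $\cap_{i=1}^n\dot{Y}_i$. This does suffice, since two distinct cubes of $\dot{X}$ sharing a vertex cannot have the same image under the covering map (uniqueness of lifts), and the paper uses the same reduction elsewhere (Proposition \ref{prop:completionhyps}), but it is worth a sentence.
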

\begin{proof}
Let $Z$ be the component of $\cap_{i=1}^n Y_i$ containing $x$.
The restriction of $\dot{X}\to X$ to $\cap_{i=1}^n \dot{Y}_i$ is a covering of $Z$, so our task is to show that this covering has degree 1.
We will do this by showing that every loop in $Z$ based at $x$ lifts to a loop in $\dot{X}$ based at $\dot{x}$.
If the walker traverses a loop $\gamma$ in $Z\subset\cap_{i=1}^n Y_i$ based at $x$ then all the imitators will also traverse $\gamma$.
Hence $\theta_x\cdot[\gamma]=\theta_x$ and $[\gamma]\in G_{\theta_x}$, so $\gamma$ lifts to a loop in $\dot{X}$ based at $\dot{x}$. 
\end{proof}

\bigskip
\section{Imitator covers}\label{sec:imitatorcovers}

In this section we offer an alternative construction of the imitator covers defined in Construction \ref{cons:hom} by interpreting the vertices and edges of the covers directly in terms of the movements of walker and imitator.
This alternative construction is formally the same as the canonical completion and retraction of Haglund--Wise \cite[Proposition 6.5]{HaglundWise08}, but is still described using the cartoon picture of walker and imitator from Construction \ref{cons:imitator}, so it ties together the walker-imitator viewpoint with the work of Haglund--Wise. Note that the canonical completion is defined slightly differently elsewhere in the literature, such as in \cite{WiseRiches}, by using a certain fibre product over a Salvetti complex of a RAAG -- but the two constructions coincide for local isometries of directly special cube complexes.

\begin{cons}(Imitator covers)\\\label{cons:imitatorcover}
	Let $\phi:Y\to X$ be a local isometry of directly special cube complexes. We consider the imitator and walker wandering around the 1-skeleta of $Y$ and $X$ respectively, as in Construction \ref{cons:imitator}. Define a cube complex $\sfC(Y,X)$ as follows (same notation as \cite{HaglundWise08}). The 0-skeleton $\sfC(Y,X)^0:=Y^0\times X^0$ is the set of possible positions of imitator and walker on vertices. The 1-skeleton describes all possible movements of walker and imitator: given $(y,x)\in \sfC(Y,X)^0$ and $e\in\link(x)$, suppose the walker traverses $e$ from $x$ to $x'$ while the imitator moves from $y$ to $y'$ (either traversing an edge $f$ from $y$ to $y'$ or staying put at $y=y'$), then $\sfC(Y,X)$ has an edge $\mathsf{e}(e;y,x)$ joining $(y,x)$ to $(y',x')$. Running this in reverse, if the walker traverses $e$ from $x'$ to $x$ then the imitator would move from $y'$ to $y$, and this defines the edge $\mathsf{e}(e;y',x')$ of $\sfC(Y,X)$, but in this case we define $\mathsf{e}(e;y,x)=\mathsf{e}(e;y',x')$ (one could think of these as two orientations of the same edge, but as we only work with 1-cells in this paper we will just consider them to be the same edge).
	
	By projecting to the positions of imitator and walker, we have combinatorial maps $\sfC(Y,X)^1\to Y^1$ and $\sfC(Y,X)^1\to X^1$, and the map $\sfC(Y,X)^1\to X^1$ is clearly a covering. By Lemma \ref{lem:gammasquare}, the boundary of each square in $X$ lifts to a collection of 4-cycles in $\sfC(Y,X)^1$, so we can form $\sfC(Y,X)^2$ by attaching squares with boundary maps equal to these 4-cycles. This gives a covering $\sfC(Y,X)^2\to X^2$, which can then be completed to a covering $\sfC(Y,X)\to X$ by adding higher dimensional cubes. Moreover, this will be a finite-sheeted covering if $Y$ is finite.
	
	The map $\sfC(Y,X)^1\to Y^1$ extends to a cellular map $r:\sfC(Y,X)\to Y$, called the \emph{canonical retraction}, as we now describe. Suppose $C$ is an $n$-cube in $\sfC(Y,X)$ with corner $(y,x)$, and say the edges of $C$ incident to $(y,x)$ project to edges $e_1,...,e_n\in\link(x)$. Suppose $f_1,...,f_m\in\link(y)$ satisfy $\phi(f_i)\parallel e_i$, and suppose that no $f\in\link(y)$ has $\phi(f)\parallel e_i$ for $m<i\leq n$. Then by arguments similar to those in Lemma \ref{lem:gammasquare}, we know that $f_1,...,f_m$ form the corner of a cube $C_Y$ in $Y$, and the projection $\sfC(Y,X)^1\to Y$ maps $C^1\to C^1_Y$, sending $\mathsf{e}(e_i;y,x)\mapsto f_i$ for $1\leq i\leq m$ and $\mathsf{e}(e_i;y,x)\mapsto y$ for $m<i\leq n$. So this extends to a map $C\to C_Y$ that collapses the dimensions of $C$ corresponding to $m<i\leq n$. These maps then fit together to give a cellular map $r:\sfC(Y,X)\to Y$.
	
	The map $Y^0\xhookrightarrow{}\sfC(Y,X),y\mapsto (y,\phi(y))$ extends to a combinatorial embedding $j:Y\xhookrightarrow{}\sfC(Y,X)$ called the \emph{canonical completion of $\phi:Y\to X$}. Note that the composition $r\circ j$ is the identity on $Y$, and we also have a commutative diagram:
	\begin{equation}\label{completioncommute}
		\begin{tikzcd}[
			ar symbol/.style = {draw=none,"#1" description,sloped},
			isomorphic/.style = {ar symbol={\cong}},
			equals/.style = {ar symbol={=}},
			subset/.style = {ar symbol={\subset}}
			]
			Y\ar{dr}[swap]{\phi}\ar{r}{j}&\sfC(Y,X)\ar{d}\\
			&X
		\end{tikzcd}
	\end{equation}
	
	In general $\sfC(Y,X)$ will not be connected.
	Fixing a basepoint $x\in X^0$, each component $C$ of $\sfC(Y,X)$ will contain a vertex $(y,x)$, and the based cover $(C,(y,x))\to(X,x)$ will correspond to the subgroup $G_y<G:=\pi_1(X,x)$ from Construction \ref{cons:hom}. Furthermore, the restriction of the canonical retraction to this component $r:(C,(y,x))\to(Y,y)$ will induce the homomorphism $\rho_{x,y}:G_y\to\pi_1(Y,y)$.
	If $\phi:(Y,y)\to(X,x)$ is a based map then $C$ is precisely the imitator cover of $\phi$ from Construction \ref{cons:hom}. Moreover, $C$ will contain $j(Y)$ in this case, and the map $j:Y\to C$ is the same as $j:Y\to\dot{X}$ from Proposition \ref{prop:completionhyps}.
\end{cons}

This construction naturally generalises to multiple imitators as we indicate below.

\begin{cons}(Imitator covers with multiple imitators)\\\label{cons:imitatormultiple}
	Let $Y_1,...,Y_n\to X$ be local isometries of directly special cube complexes.
	We can consider a set $\calI$ of imitators wandering around the $Y_i$ while a single walker wanders around in $X$. All the imitators try to copy the walker in the same way as in Construction \ref{cons:imitator}. Alternatively, some of the imitators could copy one another according to a hierarchy, as in Construction \ref{cons:hierarchy}. As in Construction \ref{cons:imitatorcover}, the movements of imitators and walker define a cover $\sfC_\calI(\{Y_i\},X)$ of $X$, whose vertices represent all possible positions of imitators and walker on the vertices of $\sqcup_i Y_i$ and $X$ respectively. This will be a finite-sheeted covering if both $\calI$ and the $Y_i$ are finite. For each $I\in\calI$ we will also have a cellular map $r_I:\sfC_\calI(\{Y_i\},X)\to\sqcup_i Y_i$ by taking the position of the imitator $I$.
\end{cons}

\begin{figure}[H]
	\centering
	\includegraphics[width=\textwidth,clip=true, trim=0 130 0 80]{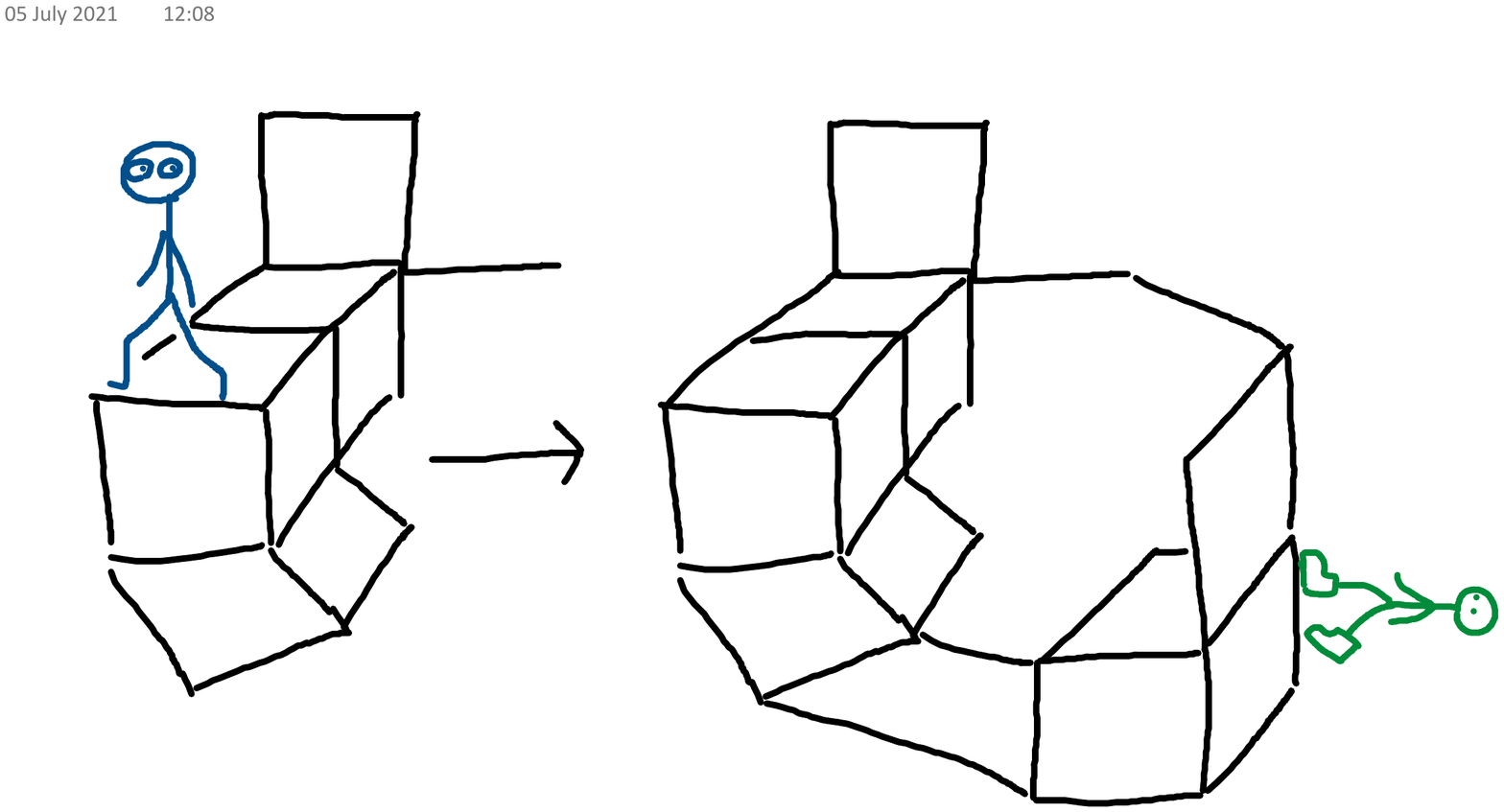}
	\caption{Cartoon of the walker and imitator.}
\end{figure}

\bibliographystyle{alpha}
\bibliography{Ref}

\end{document}